\def\cal{\mathcal}
\def\Bbb{\mathbb}
\def\frak{\mathfrak}
\newenvironment{NB}{
\color{red}{\bf NB}. \footnotesize 
}{}
\newenvironment{NB2}{
\color{blue}{\bf NB}. \footnotesize
}{}
\newcommand{\Pic}{\operatorname{Pic}}
\newcommand{\Supp}{\operatorname{Supp}}
\newcommand{\ch}{\operatorname{ch}}
\newcommand{\Coh}{\operatorname{Coh}}
\newcommand{\Ext}{\operatorname{Ext}}
\newcommand{\Hom}{\operatorname{Hom}}
\newcommand{\im}{\operatorname{im}}
\newcommand{\Aut}{\operatorname{Aut}}
\newcommand{\rk}{\operatorname{rk}}
\newcommand{\chr}{\operatorname{char}}
\newcommand{\NS}{\operatorname{NS}}
\newcommand{\coker}{\operatorname{coker}}
\newcommand{\td}{\operatorname{td}}
\newcommand{\Amp}{\operatorname{Amp}}
\newcommand{\Per}{\operatorname{Per}}
\newcommand{\alg}{\operatorname{alg}}
\newcommand{\Stab}{\operatorname{Stab}}
\newcommand{\Ima}{\operatorname{Im}}
\newcommand{\Rea}{\operatorname{Re}}
\newcommand{\pH}{{^p H}}
\font\b=cmr10 scaled \magstep5
\def\bigzerou{\smash{\lower1.7ex\hbox{\b 0}}}
\numberwithin{equation}{section}
\theoremstyle{plain}
 \newtheorem{thm}{Theorem}[section]
 \newtheorem{lem}[thm]{Lemma}
 \newtheorem{prop}[thm]{Proposition}
 \newtheorem{cor}[thm]{Corollary}
\theoremstyle{definition}
 \newtheorem{defn}[thm]{Definition}
\theoremstyle{remark}
 \newtheorem{rem}[thm]{Remark}
 \newtheorem{ex}[thm]{Example}
\begin{document}

\title{
Some remarks on 
Bridgeland stability conditions on K3 and Enriques surfaces}
\author{K\={o}ta Yoshioka}
\address{Department of Mathematics, Faculty of Science,
Kobe University,
Kobe, 657, Japan
}
\email{yoshioka@math.kobe-u.ac.jp}

\thanks{
The author is supported by the Grant-in-aid for 
Scientific Research (No.\ 26287007,\ 24224001), JSPS}

\subjclass[2010]{14D20}

\begin{abstract}
We give some remarks on our papers with Minamide and Yanagida
on Bridgeland stability conditions.
We also give a remark on stability conditions on Enriques surfaces,
and give another proof of the projectivity
of  the coarse moduli spaces of semi-stable objects, which were obtained by Nuer.  
\end{abstract}

\maketitle

\section{Introduction}
In \cite{Br:stability}, Bridgeland introduced 
a very useful notion of stability condition on the derived category
${\bf D}(X)$ of coherent sheaves on a projective scheme $X$,
and showed that the set of stability conditions
$\Stab(X)$ has a structure of complex manifold.
A stability condition 
$\sigma=({\cal P}_\sigma,Z_\sigma)$
consists of an abelian category ${\cal P}_\sigma$
which is a heart of a $t$-structure of ${\bf D}(X)$
and a stability function
$Z_\sigma:{\bf D}(X)  \to {\Bbb C}$ with some properties
such as the Harder-Narasimhan property.
If $X$ is a K3 surface, a detailed description of 
a connected component of $\Stab(X)$ was given in \cite{Br:3}.  
In particular Bridgeland constructed a particular family of 
stability conditions
so called {\it geometric stability conditions}:
They are stability conditions 
such that ${\cal O}_x$ $(x \in X)$ are stable
with the same phase, and forms an open subset of
$\Stab(X)$.
In \cite{MYY:2011:1}, \cite{MYY:2011:2},
we studied Fourier-Mukai transforms
on K3 and abelian surfaces by using Bridgeland stability conditions.
For this purpose, we constructed stability conditions 
on K3 surfaces by extending 
Bridgeland's construction of geometric stability conditions \cite{Br:3}.
In this note, we give some remarks on our papers.
We first add a remark on the relation between Bridgeland's construction and our extension. 
In \cite{MYY:2011:2}, we constructed isomorphisms
of the moduli stacks by using Fourier-Mukai
transforms under some technical conditions.
In this article, we shall remove one of the technical conditions.
We also give a remark on stability conditions on Enriques surfaces,
and prove that the coarse moduli spaces of semi-stable objects are 
projective schemes, which were obtained by Nuer \cite{N2}.

\section{Preliminaries}

\subsection{Notation.}

Let $X$ be a K3 surface over an algebraically closed field $k$. 
For $E \in{\bf D}(X)$,
We denote the Mukai vector of $E$ by
$$
v(E)=\ch(E)\sqrt{\td_X}=\rk E+c_1(E)+
\left(\ch_2(E)+\rk E \varrho_X \right),
$$
where $\varrho_X$ is the fundamental class of $X$.
We also set
$$
(r,\xi,a)=r+\xi+a \varrho_X, \xi \in \NS(X), a \in {\Bbb Q}.
$$
Let $(H^*(X,{\Bbb Z}),\langle\;\;,\;\; \rangle)$
be the Mukai lattice of $X$,
where
$$
\langle v_1,v_2 \rangle:=(\xi_1 \cdot \xi_2)-r_1 a_2- r_2 a_1,\;
v_i=(r_i,\xi_i,a_i), i=1,2.
$$
We shall give some notation on stability conditions and
also some properties.
For more details, see \cite{Br:stability} and \cite{Br:3}. 
For a stability condition $\sigma=({\cal P}_\sigma,Z_\sigma)$,
$\phi_\sigma$ is the phase function and
${\cal P}_\sigma(\phi)$ denotes the set of $\sigma$-semi-stable
objects $E$ with $\phi_\sigma(E)=\phi$.
${\cal P}_\sigma(I)$ denotes the category generated by
objects $E \in \cup_{\phi \in I}{\cal P}_\sigma(\phi)$.
For $E \in {\bf D}(X)$, $\phi_\sigma^+(E)$ is the maximum of the stable
factors of $E$ and
$\phi_\sigma^-(E)$ the minimum of the stable
factors of $E$. 

Let ${\cal P}(X)$ be the subset of
$v(K(X))_{\Bbb C}$ consisting of $\mho$ such that
$\Rea \mho$ and $\Ima \mho$ span a positive definite
2-plane in $v(K(X))_{\Bbb R}$.
We shall regard ${\cal P}(X)$ as a subset 
of $\Hom(v(K(X)),{\Bbb C})$ by
$\langle \mho,\bullet \rangle$.
Let ${\cal P}^+(X)$ be the connected component of ${\cal P}(X)$
containing $e^{\beta+i \omega}$, where $\omega$ is ample.
Let $P^+(X)_{\Bbb R}$ be the positive cone of $X$ and 
$\Amp(X)_{\Bbb R} (\subset P^+(X)_{\Bbb R})$ the ample cone of $X$.
We have an action of $GL^+_2({\Bbb R})$ on
$\Hom(v(K(X)),{\Bbb C})$ and 
 ${\cal P}^+(X)/GL^+_2({\Bbb R})=\NS(X)_{\Bbb R} \times P^+(X)_{\Bbb R}$.
Let $\Delta(X)$ be the set of Mukai vectors $u$ with
$\langle u^2 \rangle=-2$.
We set 
$$
{\cal P}_0^+(X):={\cal P}^+(X) \setminus 
\cup_{u \in \Delta(X)}u^\perp.
$$

For $(\beta,\omega) \in \NS(X)_{\Bbb R} \times \Amp(X)_{\Bbb R}$
with $\langle e^{\beta+i \omega},u \rangle \not \in 
{\Bbb R}_{ \leq 0}$ $(u \in \Delta(X))$,
Bridgeland constructed a stability condition
$\sigma_{(\beta,\omega)}=({\cal A}_{(\beta,\omega)},Z_{(\beta,\omega)})$
such that $Z_{(\beta,\omega)}(\bullet)=
\langle e^{\beta+i \omega},\bullet \rangle$
and ${\cal O}_x$ $(x \in X)$ are stable objects.
Up to the action of $\widetilde{GL}^+_2({\Bbb R})$
on $\Stab(X)$, it is characterized as a stability condition
$\sigma$ such that ${\cal O}_x$ $(x \in X)$ are $\sigma$-stable
objects with a fixed phase and $Z_\sigma \in {\cal P}^+(X)$.
Let $U(X)$ be the open subset of $\Stab(X)$ consisting of
these stability conditions.
Let $\Stab^{\dagger}(X)$ 
be the connected component containing
$U(X)$.
Then $\Stab^{\dagger}(X) \to {\cal P}^+_0(X)$ is a covering map.


For a Mukai vector $v$,
${\cal M}_\sigma(v)$ denotes the moduli stack of $\sigma$-semi-stable
objects $E$ with $v(E)=v$ and
$M_\sigma(v)$ denotes the coarse moduli scheme
of $S$-equivalence classes of $\sigma$-semi-stable
objects $E$ with $v(E)=v$.
If $\sigma=\sigma_{(\beta,\omega)}$, then
we set ${\cal M}_{(\beta,\omega)}(v):={\cal M}_{\sigma_{(\beta,\omega)}}(v)$
and $M_{(\beta,\omega)}(v):=M_{\sigma_{(\beta,\omega)}}(v)$.
\begin{rem}
For a $\sigma$-semi-stable object $E$ with $v(E)=v$,
$\phi_\sigma(E) \mod 2$ is determined by $Z_\sigma(v)$.
Although we need to fix $\phi_\sigma(E)$ for precise definitions,
we adopt the above definitions. 
\end{rem}

\begin{NB}
Let $\sigma_s$ be a family of stability conditions
such that $Z_{\sigma_s}(\bullet)=
\langle e^{(\beta+sH)+\sqrt{-1}H},\bullet \rangle$. 
Assume that $H$ is ample, ${\cal O}_x$ is $\sigma_0$-semi-stable
and ${\cal O}_x$ is $\sigma_s$-stable for $0>s \gg -1$.
Then 
$\sigma_0=({\cal A}_{(\beta,H)},Z_{(\beta,H)})$.

(Step 1).
We first prove that
every object of ${\cal P}_{\sigma_0}((0,1])$ is a two-term complex.

Let $E$ be a $\sigma_0$-stable object of 
${\cal P}_{\sigma_0}(\phi)$ with $0<\phi<1$.
Then $\Hom(E[i],{\cal O}_x)=0$ for $i>0$ by
$\phi_{\sigma_0}(E[i])=\phi+i>1$.
Hence $H^i(E)=0$ for $i>0$.
$\Hom(E[i],{\cal O}_x)=\Hom({\cal O}_x,E[i+2])^{\vee}=0$
for $i \leq -2$ by $\phi_{\sigma_0}(E[i+2])=\phi+i+2<1$.
Hence $E$ is a 2-term complex with $H^i(E)=0$ for $i \ne -1,0$.

Let $E$ be a $\sigma_0$-stable object of 
${\cal P}_{\sigma_0}(\phi)$ with $\phi=1$.

Assume that $\phi_{\sigma_s}(E)>1$ for $s<0$.
We set $F:=E[-1]$.
Then $H^i(F)=0$ for $i \ne -1,0$.
We set $v_i:=v(H^{-i}(F))=e^\beta(r_i+d_i H+D_i+a_i \varrho_X)$ $(i=0,1)$.
Since $\phi_{\sigma_0}(F)=0$, $d_0-d_1=0$.
By the description of $\sigma_s$ $(s<0)$,
$d_0-r_0 s \geq 0$ and $d_1-r_1 s \leq 0$ for $0>s \gg -1$.
\begin{NB2}
$d_0-r_0 s=0$ if $H^0(F)$ is 0-dimensional.
\end{NB2}
Hence $d_0 \geq 0$ and $d_1 \leq 0$, which implies $d_0=d_1=0$.
Then $d_1-r_1 s \leq 0$ and $s<0$ implies $r_1=0$.
Since $H$ is ample, $H^{-1}(F)$ is a 0-dimensional sheaf or 0.
By the description of ${\cal P}_{\sigma_s}((0,1])$,
$H^{-1}(F)=0$.

Assume that $\phi_{\sigma_s}(E)<1$ for $s<0$.
Then $H^i(E)=0$ for $i \ne -1,0$.
We set $v_i:=v(H^{-i}(E))=e^\beta(r_i+d_i H+D_i+a_i \varrho_X)$ $(i=0,1)$.
Since $\phi_{\sigma_0}(E)=0$, $d_0-d_1=0$.
By the description of $\sigma_s$ $(s<0)$,
$d_0-r_0 s \geq 0$ and $d_1-r_1 s \leq 0$ for $0>s \gg -1$.
Hence $d_0 \geq 0$ and $d_1 \leq 0$, which implies $d_0=d_1=0$.
Then $d_1-r_1 s \leq 0$ and $s<0$ implies $r_1=0$.
Since $H$ is ample, $H^{-1}(E)$ is a 0-dimensional sheaf or 0.
By the description of ${\cal P}_{\sigma_s}((0,1])$,
$H^{-1}(E)=0$.
Therefore the claim holds.

(Step 2).
Let $E$ be a coherent sheaf.
For $A \in {\cal P}_{\sigma_0}(>1)$,
$H^i(A)=0$ for $i \geq 0$ so that $\Hom(A,E)=0$.
For $B \in {\cal P}_{\sigma_0}(\leq -1)$,
$H^i(B)=0$ for $i \leq 0$ so that $\Hom(E,B)=0$.
Hence $E \in {\cal P}_{\sigma_0}((-1,1])$.
For $E \in \Coh(X)$, we have a triangle
$$
D \to E \to F \to D[1]
$$
such that $D \in  {\cal P}_{\sigma_0}((0,1])$ and
$F \in  {\cal P}_{\sigma_0}((-1,0])$.
Then $H^i(D)=0$ for $i \ne -1,0$ and
$H^i(F)=0$ for $i \ne 0,1$.
Taking their cohomology, 
we see that $D,F \in \Coh(X)$.

(Step 3).
Then ${\cal T}:={\cal P}_{\sigma_0}((0,1]) \cap \Coh(X)$ and
${\cal F}:={\cal P}_{\sigma_0}((-1,0]) \cap \Coh(X)$
is a torsion pair and ${\cal P}_{\sigma_0}((0,1])$ is the tilting.

(Step 4).
Let $E$ be a $\beta$-twisted stable sheaf.
Then we have an exact sequence
$$
0 \to E_1 \to E \to E_2 \to 0
$$ 
such that $E_1 \in {\cal T}$ and $E_2 \in {\cal F}$.
Then $Z_{\sigma_0}(E_1) \in {\Bbb H} \cup {\Bbb R}_{<0}$,
which implies $\deg_\beta(E_1)>0$ or $\deg_\beta(E_1)=0$ and
$\chi_\beta(E_1)>\frac{(H^2)}{2}\rk E_1$.
We also have 
 $-Z_{\sigma_0}(E_2) \in {\Bbb H} \cup {\Bbb R}_{<0}$,
which implies $\deg_\beta(E_2)<0$ or $\deg_\beta(E_2)=0$ and
$\chi_\beta(E_2)<\frac{(H^2)}{2}\rk E_2$. 
Then we see that $E_2$ is torsion free.
In particular, if $E$ is a torsion sheaf, then
$E \in {\cal T}$ .
Assume that $E$ is torsion free.
If $E_1$ and $E_2$ are not zero, then
we see that $E$ is not $\beta$-stable.
Therefore $E \in {\cal T}$ or $E \in {\cal F}$.
In particular, $\sigma_0=({\cal A}_{(\beta,H)},Z_{(\beta,H)})$.

A generalization: 

\end{NB}

\subsection{Stability conditions associated to a category of perverse 
coherent sheaves}\label{subsect:perverse}

Let us briefly recall our construction of $\sigma_{(\beta,\omega)}$
for a nef and big divisor $\omega$ in \cite{MYY:2011:1}.
Let $\pi:X \to Y$ be the minimal resolution of a normal K3 surface $Y$.
Let $H$ be the pull-back of an ample divisor
on $Y$ and
$(\beta \cdot C) \not \in {\Bbb Z}$
for all exceptional $(-2)$-curves.
Then there is a category of perverse coherent sheaves
${\frak C}$ with a local projective generator $G$ on $X$ 
such that $c_1(G)/\rk G=\beta$
\cite[Prop. 2.4.5]{PerverseI}:
Thus there is a locally free sheaf $G$ 
with $c_1(G)/\rk G=\beta$ such that
\begin{equation}\label{eq:ST} 
\begin{split}
S:= & \{ E \in \Coh(X) \mid \pi_*(G^{\vee} \otimes E)=0 \},\\
T:=& \{ E \in \Coh(X) \mid R^1 \pi_*(G^{\vee} \otimes E)=0 \}
\end{split}
\end{equation}
is a torsion pair $(T,S)$ of $\Coh(X)$ and the tilting 
is our category of perverse coherent sheaves:
\begin{equation}\label{eq:C:S-T}
{\frak C}=\{E \in {\bf D}(X) \mid 
H^i(E)=0, i \ne -1,0,\; H^{-1}(E) \in S, H^0(E) \in T \}.
\end{equation}
Here we would like to remark that
$R^1 \pi_*({\cal O}_X)=0$ and  
\cite[Assumption 1.1.1]{PerverseI} holds.

\begin{rem}\label{rem:ST}
Let ${\cal D}$ be a connected component
of $\NS(X)_{\Bbb R} \setminus \cup_{n,C}
\{x \mid (x \cdot C)=n \}$.
Then $(S,T)$ depends only on ${\cal D}$ containing $\beta$.
In particular, ${\frak C}$ is well-defined even when
$\beta$ is not defined over ${\Bbb Q}$.  
\end{rem}

\begin{rem}
If $\pi$ is an isomorphism, then
$H$ is ample and ${\frak C}=\Coh(X)$.

\end{rem}


\begin{defn}\label{defn:p^H}
\begin{enumerate}
\item[(1)]
For $E \in {\bf D}(X)$,
${^p H}^i(E) \in {\frak C}$ denotes the $i$-th cohomology object
of $E$ with respect to the $t$-structure defining ${\frak C}$.
\item[(2)]
For a morphism $\psi:E \to F$ in ${\frak C}$,
$\ker_{\frak C} \psi$, $\im_{\frak C} \psi$ and
$\coker_{\frak C} \psi$ denote the kernel, 
the image and the cokernel of $\psi$ in ${\frak C}$
respectively.
\end{enumerate}
\end{defn}

\begin{defn}\label{defn:dim}
For $E \in {\frak C}$,
we define the \emph{dimension} $\dim E$ of $E$ by
\begin{equation*}
\dim E:=\dim \pi(\Supp(H^{-1}(E)) \cup \Supp(H^0(E))).
\end{equation*}
\end{defn}

\begin{defn}
For $\beta \in \NS(X)_{\Bbb R}$ and $\omega 
\in \overline{\Amp(X)_{\Bbb R}}$ with $(\omega^2)>0$,
we set $\deg_\beta(E):=(c_1(E(-\beta)) \cdot \omega)$ and
$\chi_\beta(E):=\chi(E(-\beta))$.
We also set
$a_\beta(E):=-\langle e^\beta,v(E) \rangle$.
\end{defn}

For a local projective generator $G$ of ${\frak C}$ and
a perverse coherent sheaf $E \in {\frak C}$,
we have the $G$-twisted Hilbert polynomial $\chi(G,E(nH))$
with respect to $H$.
The degree of the $G$-twisted Hilbert polynomial
of $E$ is $\dim E$.
By using the $G$-twisted Hilbert polynomial, 
we have a notion of semi-stability as in the 
Gieseker semi-stability for ordinary coherent sheaves.

\begin{defn}[{\cite[Defn. 1.4.1]{PerverseI}}]
\label{defn:stability}
Let ${\frak C}$ be a category of perverse coherent sheaves on $X$, 
$G$ a local projective generator of ${\frak C}$ ,
and $E \in {\frak C}$ a perverse coherent sheaf.  
\begin{enumerate}
\item 
\begin{enumerate}
\item
If $\dim E \leq 1$, then $E$ is called a \emph{torsion object}.

\item
If there is no subobject $F \ne 0$ with $\dim F<d=\dim E$,
then $E$ is called \emph{purely $d$-dimensional}.
In particular, if $E$ is purely $2$-dimensional, 
then $E$ is called \emph{torsion free}. 
\end{enumerate}
\item 
A $2$-dimensional object $E$ is 
\emph{$G$-twisted semi-stable with respect to $H$} if
\begin{equation}\label{eq:G-semi-stability:2dim}
\chi(G, F(nH))
 \leq \frac{\rk F}{\rk E}
\chi(G,E(nH)),\quad n \gg 0
\end{equation} 
for all proper subobjects $F \ne 0$ of $E$. 
We also say a torsion free object $E$ 
is \emph{$\mu$-semi-stable} if
\begin{equation}\label{eq:mu-semi-stability:2dim}
\frac{(c_1(F),H)}{\rk F} \leq \frac{(c_1(E),H)}{\rk E}
\end{equation} 
for all subobjects $F$ of $E$ with
$0<\rk F <\rk E$. 

\item 
If $E$ is 1-dimensional, then 
$E$ is \emph{$G$-twisted semi-stable with respect to $H$} if
\begin{equation}\label{eq:G-semi-stability:1dim}
\chi(G,F) \leq \frac{(H,c_1(F))}{(H,c_1(E))} \chi(G,E)
\end{equation} 
for all proper subobjects $F \ne 0$ of $E$.
\item
Let ${\cal M}_H^\gamma(v)$ denote the moduli stack of $\gamma$-twisted
semi-stable objects $E$ with $v(E)=v$, and
$M_H^\gamma(v)$ the coarse moduli scheme
of $S$-equivalence classes 
of $\gamma$-twisted semi-stable objects \cite[sect. 1.4]{PerverseI}.
\end{enumerate}
\end{defn}

\begin{NB}
In \cite[\S1.5]{PerverseI}, a refined notion of 
semi-stability for 0-dimensional objects is introduced:
for $\gamma \in \NS(X)_{\Bbb Q}$,
a 0-dimensional object $E$ with $v(E)=\varrho_X$ is 
$\gamma$-semi-stable if $-(c_1(F),\gamma) \leq 0$ for 
all subobject $F$ of $E$.
See Definition \ref{defn:mukai-lattice} given below for 
the notation $v(E)$ and $\varrho_X$.
\end{NB}

\begin{defn}
\begin{enumerate}
\item[(1)]
For $(\beta,\omega) \NS(X)_{\Bbb R} \times P^+(X)_{\Bbb R}$ and
$E \in {\bf D}(X)$, we set
$$
Z_{(\beta,\omega)}(E):=\langle e^{\beta+i \omega},v(E) \rangle.
$$
\item[(2)]
Under the assumption 
$e^{\beta+i \omega} \not \in \cup_{u \in \Delta(X)}u^\perp$ with
$(\beta,\omega)\in \NS(X)_{\Bbb Q} \times \pi^*(\Amp(Y)_{\Bbb Q})$,
we define a torsion pair 
$({\cal T}_{(\beta,\omega)},{\cal F}_{(\beta,\omega)})$
of ${\frak C}$ as follows:
\begin{enumerate}
\item
${\cal T}_{(\beta,\omega)}$ 
is the full subcategory of
${\frak C}$ consisting of $E$ such that
$Z_{(\beta,\omega)}(F) \in {\Bbb H} \cup {\Bbb R}_{<0}$
for any quotient $E \to F (\ne 0)$ of $E$.
\item
${\cal F}_{(\beta,\omega)}$ is the full subcategory of
${\frak C}$ 
consisting of $E$ such that
$-Z_{(\beta,\omega)}(F) \in {\Bbb H} \cup {\Bbb R}_{<0}$
for any subobject $(0 \ne) F \to E$ of $E$.
\end{enumerate}
Let ${\cal A}_{(\beta,\omega)}$ be
the tilting of the torsion pair 
$({\cal T}_{(\beta,\omega)},{\cal F}_{(\beta,\omega)})$.
\end{enumerate}
\end{defn}

The definition of $({\cal T}_{(\beta,\omega)},{\cal F}_{(\beta,\omega)})$
is equivalent to the definition in \cite[Defn. 1.5.7]{MYY:2011:1}.

For $(\beta,\omega) \in 
\NS(X)_{\Bbb R} \times \pi^*(\Amp(Y)_{\Bbb R})$,
we also define a pair of subcategories
$({\cal T}_{(\beta,\omega)}^*,{\cal F}_{(\beta,\omega)}^*)$
of $\Coh(X)$ as follows:
\begin{enumerate}
\item
${\cal T}_{(\beta,\omega)}^*$ is the full subcategory of
$\Coh(X)$ consisting of $E$ such that
$Z_{(\beta,\omega)}(F) \in {\Bbb H} \cup {\Bbb R}_{<0}$
for any quotient $E \to F (\ne 0)$ of $E$.
\item
${\cal F}_{(\beta,\omega)}^*$ is the full subcategory of
$\Coh(X)$ consisting of $E$ such that
$-Z_{(\beta,\omega)}(F) \in {\Bbb H} \cup {\Bbb R}_{<0}$
for any subsheaf $(0 \ne) F \to E$ of $E$.
\end{enumerate}
 
The relation of these definitions are given by the following 
proposition.
\begin{prop}\label{prop:A}
Assume that $\beta$ and $\omega$ are defined over ${\Bbb Q}$.
\begin{enumerate}
\item[(1)]
For $E \in {\cal A}_{(\beta,\omega)}$,
$H^i(E)=0,\; (i \ne -1,0)$.
\item[(2)]
For $E \in \Coh(X)$, there is an exact sequence
\begin{equation}\label{eq:T-F}
0 \to E_1 \to E \to E_2 \to 0
\end{equation}
such that $E_1 \in {\cal T}:={\cal A}_{(\beta,\omega)} \cap \Coh(X)$
and $E_2 \in {\cal F}:={\cal A}_{(\beta,\omega)}[-1] \cap \Coh(X)$.
Thus $({\cal T},{\cal F})$ is a torsion pair of $\Coh(X)$.
\item[(3)]
$({\cal T},{\cal F})=
({\cal T}_{(\beta,\omega)}^*,{\cal F}_{(\beta,\omega)}^*)$.
In particular, 
$({\cal T}_{(\beta,\omega)}^*,{\cal F}_{(\beta,\omega)}^*)$ is a torsion pair
whose tilting is ${\cal A}_{(\beta,\omega)}$.
\end{enumerate}
%
\end{prop}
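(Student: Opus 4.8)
The plan is to view ${\cal A}_{(\beta,\omega)}$ as a tilt of the tilt ${\frak C}$ of $\Coh(X)$ and to show that this double tilt collapses to a single tilt of $\Coh(X)$. We shall freely use three standard facts: (i) ${\frak C}=\langle S[1],T\rangle$ is the tilt of $\Coh(X)$ at the torsion pair $(T,S)$, so every object of ${\frak C}$ has ordinary cohomology in degrees $-1,0$, while $T\subset{\frak C}$ and $S[1]\subset{\frak C}$; (ii) ${\cal A}_{(\beta,\omega)}=\langle{\cal F}_{(\beta,\omega)}[1],{\cal T}_{(\beta,\omega)}\rangle$ is the tilt of ${\frak C}$, so ${\cal T}_{(\beta,\omega)}\subset{\cal A}_{(\beta,\omega)}$, ${\cal F}_{(\beta,\omega)}\subset{\cal A}_{(\beta,\omega)}[-1]$, and every object of ${\frak C}$ has cohomology in degrees $0,1$ for the $t$-structure defining ${\cal A}_{(\beta,\omega)}$ (whose cohomology functors we denote ${\cal H}^i_{{\cal A}_{(\beta,\omega)}}$); and (iii) $Z_{(\beta,\omega)}$ is a stability function on ${\frak C}$ and on ${\cal A}_{(\beta,\omega)}$ --- as established in the construction of $\sigma_{(\beta,\omega)}$ in \cite{MYY:2011:1} --- i.e.\ $Z_{(\beta,\omega)}(E)\in{\Bbb H}\cup{\Bbb R}_{<0}$ for every nonzero $E$ in ${\frak C}$, resp.\ in ${\cal A}_{(\beta,\omega)}$. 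Note that ${\Bbb H}\cup{\Bbb R}_{<0}$ is disjoint from its negative. Two lemmas carry the weight. \textbf{(L1)} $S[1]\subset{\cal T}_{(\beta,\omega)}$: every nonzero ${\frak C}$-quotient of $G[1]$ $(G\in S)$ is a nonzero object of ${\frak C}$, hence has $Z_{(\beta,\omega)}$-value in ${\Bbb H}\cup{\Bbb R}_{<0}$, which is the defining condition for ${\cal T}_{(\beta,\omega)}$. \textbf{(L2)} ${\cal F}_{(\beta,\omega)}\subset T$: if $E\in{\cal F}_{(\beta,\omega)}$ had $H^{-1}(E)\ne 0$, then all terms of the truncation triangle $H^{-1}(E)[1]\to E\to H^0(E)\to H^{-1}(E)[2]$ lie in ${\frak C}$, so $H^{-1}(E)[1]\in S[1]$ is a nonzero ${\frak C}$-subobject of $E$; then $-Z_{(\beta,\omega)}(H^{-1}(E)[1])\in{\Bbb H}\cup{\Bbb R}_{<0}$ by definition of ${\cal F}_{(\beta,\omega)}$, while $Z_{(\beta,\omega)}(H^{-1}(E)[1])\in{\Bbb H}\cup{\Bbb R}_{<0}$ by (iii), a contradiction; hence $E=H^0(E)\in T$. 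Part (1) is then immediate: for $E\in{\cal A}_{(\beta,\omega)}$, apply ordinary cohomology to ${\pH}^{-1}(E)[1]\to E\to{\pH}^0(E)\to{\pH}^{-1}(E)[2]$, where ${\pH}^0(E)\in{\cal T}_{(\beta,\omega)}\subset{\frak C}$ has cohomology in degrees $-1,0$ and ${\pH}^{-1}(E)\in{\cal F}_{(\beta,\omega)}\subset T$ is a sheaf by (L2); the long exact sequence forces $H^i(E)=0$ for $i\ne -1,0$.

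For (2), first note that every sheaf $E$ has ${\cal A}_{(\beta,\omega)}$-cohomology concentrated in degrees $0,1$: in $0\to E^T\to E\to E^S\to 0$ with $E^T\in T$ and $E^S\in S$, the term $E^T$ lies in ${\frak C}$, hence has ${\cal A}_{(\beta,\omega)}$-cohomology in degrees $0,1$ by (ii); and $E^S[1]\in S[1]\subset{\cal T}_{(\beta,\omega)}\subset{\cal A}_{(\beta,\omega)}$ by (L1), so $E^S$ has ${\cal A}_{(\beta,\omega)}$-cohomology only in degree $1$; the triangle $E^T\to E\to E^S$ then yields the claim. Consequently the ${\cal A}_{(\beta,\omega)}$-truncation triangle of $E$ reads $E_1\to E\to E_2\to E_1[1]$ with $E_1:={\cal H}^0_{{\cal A}_{(\beta,\omega)}}(E)\in{\cal A}_{(\beta,\omega)}$ and $E_2:={\cal H}^1_{{\cal A}_{(\beta,\omega)}}(E)[-1]\in{\cal A}_{(\beta,\omega)}[-1]$. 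By Part (1), $E_1$ and $E_2[1]$ have ordinary cohomology in degrees $-1,0$; feeding this and $H^i(E)=0$ for $i\ne 0$ into the ordinary-cohomology long exact sequence of the triangle forces $H^{-1}(E_1)=0$ and $H^1(E_2)=0$, so $E_1,E_2\in\Coh(X)$ and $0\to E_1\to E\to E_2\to 0$ is exact in $\Coh(X)$, with $E_1\in{\cal T}:={\cal A}_{(\beta,\omega)}\cap\Coh(X)$ and $E_2\in{\cal F}:={\cal A}_{(\beta,\omega)}[-1]\cap\Coh(X)$. Finally, for $E_1\in{\cal T}$ and $E_2\in{\cal F}$ one has $E_1\in{\cal A}_{(\beta,\omega)}$ and $E_2[1]\in{\cal A}_{(\beta,\omega)}$, so $\Hom(E_1,E_2)=\Hom_{{\bf D}(X)}(E_1,(E_2[1])[-1])=0$; with the decomposition above, $({\cal T},{\cal F})$ is a torsion pair of $\Coh(X)$.

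For (3), we first prove ${\cal T}\subseteq{\cal T}^*_{(\beta,\omega)}$ and ${\cal F}\subseteq{\cal F}^*_{(\beta,\omega)}$. Given $E\in{\cal T}$ and a short exact sequence $0\to N\to E\to F\to 0$ in $\Coh(X)$ with $F\ne 0$, applying ${\cal A}_{(\beta,\omega)}$-cohomology to $N\to E\to F$ and using $E\in{\cal A}_{(\beta,\omega)}$ together with the fact that the sheaves $N,F$ have ${\cal A}_{(\beta,\omega)}$-cohomology in degrees $0,1$ forces ${\cal H}^i_{{\cal A}_{(\beta,\omega)}}(F)=0$ for $i\ne 0$, i.e.\ $F\in{\cal A}_{(\beta,\omega)}$, whence $Z_{(\beta,\omega)}(F)\in{\Bbb H}\cup{\Bbb R}_{<0}$ by (iii); the argument for ${\cal F}$ is symmetric, showing that any nonzero subsheaf $F$ of $E\in{\cal F}$ satisfies $F\in{\cal A}_{(\beta,\omega)}[-1]$, so $-Z_{(\beta,\omega)}(F)=Z_{(\beta,\omega)}(F[1])\in{\Bbb H}\cup{\Bbb R}_{<0}$. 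Next, ${\cal T}^*_{(\beta,\omega)}\cap{\cal F}^*_{(\beta,\omega)}=0$: for $E$ in the intersection, taking $F=E$ (both a quotient and a subobject of $E$) gives $Z_{(\beta,\omega)}(E)\in{\Bbb H}\cup{\Bbb R}_{<0}$ and $-Z_{(\beta,\omega)}(E)\in{\Bbb H}\cup{\Bbb R}_{<0}$, so $E=0$. Since ${\cal T}^*_{(\beta,\omega)}$ is closed under quotient sheaves and ${\cal F}^*_{(\beta,\omega)}$ under subsheaves, for $E\in{\cal T}^*_{(\beta,\omega)}$ the $({\cal T},{\cal F})$-decomposition $0\to E_1\to E\to E_2\to 0$ from (2) has $E_2\in{\cal T}^*_{(\beta,\omega)}\cap{\cal F}\subseteq{\cal T}^*_{(\beta,\omega)}\cap{\cal F}^*_{(\beta,\omega)}=0$, so $E=E_1\in{\cal T}$; combined with ${\cal T}\subseteq{\cal T}^*_{(\beta,\omega)}$ this gives ${\cal T}={\cal T}^*_{(\beta,\omega)}$, and symmetrically ${\cal F}={\cal F}^*_{(\beta,\omega)}$. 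Finally $({\cal T}^*_{(\beta,\omega)},{\cal F}^*_{(\beta,\omega)})=({\cal T},{\cal F})$ is a torsion pair by (2), and its tilt $\langle{\cal F}[1],{\cal T}\rangle$ equals ${\cal A}_{(\beta,\omega)}$: the inclusion $\subseteq$ holds since ${\cal F}[1],{\cal T}\subset{\cal A}_{(\beta,\omega)}$ and a heart is closed under extensions, and for $\supseteq$ one takes $M\in{\cal A}_{(\beta,\omega)}$, forms $H^{-1}(M)[1]\to M\to H^0(M)\to H^{-1}(M)[2]$ using Part (1), and applies ${\cal A}_{(\beta,\omega)}$-cohomology to see that $H^0(M)\in{\cal A}_{(\beta,\omega)}\cap\Coh(X)={\cal T}$ and that $M\to H^0(M)$ is an epimorphism in ${\cal A}_{(\beta,\omega)}$ whose kernel, by uniqueness of cones, is $H^{-1}(M)[1]$; hence $H^{-1}(M)\in{\cal A}_{(\beta,\omega)}[-1]\cap\Coh(X)={\cal F}$ and $M\in\langle{\cal F}[1],{\cal T}\rangle$.

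The crux is Lemma (L2): the real content is that the second tilt only ``rotates by a sheaf'', i.e.\ ${\cal F}_{(\beta,\omega)}\subset\Coh(X)$, which is exactly what forces ${\cal A}_{(\beta,\omega)}$ to have ordinary cohomological amplitude $[-1,0]$; the remaining steps are $t$-structure bookkeeping. The one other slightly delicate point is the last step of (3), namely identifying the kernel of the epimorphism $M\twoheadrightarrow H^0(M)$ in ${\cal A}_{(\beta,\omega)}$ with $H^{-1}(M)[1]$, which is what guarantees that the tilt of $({\cal T}^*_{(\beta,\omega)},{\cal F}^*_{(\beta,\omega)})$ recovers all of ${\cal A}_{(\beta,\omega)}$ rather than a proper subcategory.
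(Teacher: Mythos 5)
Your overall architecture is sound and in fact runs parallel to the paper's own proof: the paper likewise shows that $\pH^{-1}(E)$ is a sheaf (via torsion-freeness of objects of ${\cal F}_{(\beta,\omega)}$, which is your (L2) in different clothing), uses the $(T,S)$-decomposition of a sheaf to produce \eqref{eq:T-F}, and compares $({\cal T},{\cal F})$ with $({\cal T}_{(\beta,\omega)}^*,{\cal F}_{(\beta,\omega)}^*)$ directly. The $t$-structure bookkeeping in your parts (1), (2) and (3) is correct. However, there is one genuine gap: your standing fact (iii) asserts that $Z_{(\beta,\omega)}(E)\in{\Bbb H}\cup{\Bbb R}_{<0}$ for \emph{every} nonzero $E\in{\frak C}$, i.e.\ that $Z_{(\beta,\omega)}$ is a stability function on ${\frak C}$ itself. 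This is false: for a torsion-free sheaf $E\in T$ with $\deg_\beta(E)=(c_1(E(-\beta))\cdot\omega)<0$ (e.g.\ ${\cal O}_X(-nH)$ for $n\gg 0$) one has $\Ima Z_{(\beta,\omega)}(E)<0$. Indeed, if (iii) were true then ${\cal T}_{(\beta,\omega)}={\frak C}$, ${\cal F}_{(\beta,\omega)}=0$ and ${\cal A}_{(\beta,\omega)}={\frak C}$, and the whole construction (and the proposition) would be vacuous; what is established in \cite{MYY:2011:1} is only that $Z_{(\beta,\omega)}$ is a stability function on the \emph{tilt} ${\cal A}_{(\beta,\omega)}$.

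This false claim is load-bearing exactly in your two key lemmas. In (L1) you conclude that every nonzero ${\frak C}$-quotient of $G[1]$, $G\in S$, has $Z$-value in ${\Bbb H}\cup{\Bbb R}_{<0}$ merely because it is a nonzero object of ${\frak C}$; in (L2) you conclude the same for $H^{-1}(E)[1]$. Both conclusions are true, but for a reason you never supply: every such object lies in $S[1]$ (for (L1), the long exact sequence of ordinary cohomology shows any ${\frak C}$-quotient $Q$ of $G[1]$ has $H^0(Q)=0$), and for $0\ne F\in S$ one has $\rk F=0$, $(c_1(F)\cdot\omega)=0$ and $\chi_\beta(F)<0$, whence $Z_{(\beta,\omega)}(F)=-\chi_\beta(F)\in{\Bbb R}_{>0}$ and $Z_{(\beta,\omega)}(F[1])\in{\Bbb R}_{<0}$. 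This is precisely the computation the paper makes ("$Z_{(\beta,\omega)}(\im\psi)=-\chi_\beta(\im\psi)>0$" for objects of $S$), and it uses the specific properties of $S$ and of the local projective generator, not a general positivity of $Z_{(\beta,\omega)}$ on ${\frak C}$. Once you replace the appeal to (iii)-on-${\frak C}$ by this local computation, (L1) and (L2) are justified and the rest of your argument goes through; the uses of (iii) in part (3) are only for objects of ${\cal A}_{(\beta,\omega)}$, where the claim is correct.
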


\begin{proof}
Let $(T,S)$ be the torsion pair in \eqref{eq:ST}.
(1)
For $E \in {\cal A}_{(\beta,\omega)}$,
we see that $\pH^{-1}(E)$ is a torsion free object of 
${\frak C}$, which implies
$H^{-1}(\pH^{-1}(E))=0$.
Then 
$H^i(E)=0$ for $i \ne -1,0$, $H^0(\pH^0(E))=H^0(E)$
and we have an exact sequence in $\Coh(X)$
$$
0 \to \pH^{-1}(E) \to H^{-1}(E) \to H^{-1}(\pH^0(E)) \to 0.
$$

(2)
For $E \in \Coh(X)$, we have a decomposition
$$
0 \to E_T \to E \to E_S \to 0
$$
such that $E_T \in T$ and $E_S \in S$.
Since $E_S[1] \in {\frak C}$ is a 0-dimensional object,
$E_S[1] \in {\cal T}_{(\beta,\omega)} \subset {\cal A}_{(\beta,\omega)}$. 
We also have a decomposition
$$
0 \to E_1 \to E_T \to E_1' \to 0
$$
such that $E_1 \in {\cal T}_{(\beta,\omega)}$ and 
$E_1' \in {\cal F}_{(\beta,\omega)}$.
Then $E_2:=\mathrm{Cone}(E_1 \to E) \in {\cal A}_{(\beta,\omega)}[-1]$.
By (1), we see that $E_1,E_2 \in \Coh(X)$.
Hence we have a decomposition \eqref{eq:T-F}.
Since $\Hom(A,B)=0$ for $A \in {\cal T}$ and $B \in {\cal F}$,
$({\cal T},{\cal F})$ is a torsion pair.

(3)
We first prove that
${\cal T}={\cal T}_{(\beta,\omega)}^*$.
We note that  ${\cal T}_{(\beta,\omega)} \cap \Coh(X)={\cal T}$.
Let $E$ be an element of ${\cal T}_{(\beta,\omega)}^*$. 
For $F \in S$ and a non-zero homomorphism
$\psi:E \to F$, $\im \psi \in S$ and
$Z_{(\beta,\omega)}(\im \psi)=-\chi_\beta(\im \psi)> 0$.
Hence $\Hom(E,F)=0$ for $F \in S$ and $E \in T$.
Let $\psi:E \to F (\ne 0)$ be a quotient in ${\frak C}$.
Then we have an exact sequence in $\Coh(X)$
$$
0 \to H^{-1}(F) \to H^0(\ker_{\frak C} \psi) \to H^0(E) \to H^0(F) \to 0.
$$
Since $Z_{(\beta,\omega)}(H^0(F)) \in {\Bbb H} \cup {\Bbb R}_{<0}$
or $H^0(F)=0$,
and $-Z_{(\beta,\omega)}(H^{-1}(F)) \in {\Bbb R}_{< 0}$ or $H^{-1}(F)=0$,
we have $Z_{(\beta,\omega)}(F) \in {\Bbb H} \cup {\Bbb R}_{<0}$.
Hence $E \in {\cal T}_{(\beta,\omega)} \cap \Coh(X)={\cal T}$.
Conversely if $E \in {\cal T}=\Coh(X) \cap {\cal T}_{(\beta,\omega)}$, then
for any quotient $\psi:E \to F (\ne 0)$ in $\Coh(X)$,
$\ker \psi$ in $\Coh(X)$ has a decomposition
$$
0 \to E_1 \to \ker \psi \to E_2 \to 0
$$
such that $E_1 \in T$ and $E_2 \in S$.
Then $E_1=\ker_{\frak C} \psi$,
$E/E_1=\im_{\frak C}\psi$
and $\coker_{\frak C} \psi=E_2[1]$.
Hence $Z_{(\beta,\omega)}(F)=Z_{(\beta,\omega)}(\im_{\frak C}\psi)
-Z_{(\beta,\omega)}(E_2) \in {\Bbb H} \cup {\Bbb R}_{<0}$.
Therefore $E \in {\cal T}_{(\beta,\omega)}^*$, and we get
${\cal T}={\cal T}_{(\beta,\omega)}^*$.

We next prove that ${\cal F}={\cal F}_{(\beta,\omega)}^*$
Let $E$ be an element of ${\cal F}_{(\beta,\omega)}^*$. 
We take a decomposition
$$
0 \to E_T \to E \to E_S \to 0
$$
such that $E_T \in T$ and $E_S \in S$.
Then $E_S[1] \in {\cal T}_{(\beta,\omega)}$.
For a non-zero subobject $\psi:F \to E_T$ of $E_T$ in ${\frak C}$,
$F \in \Coh(X)$ and
we have an exact sequence
$$
0 \to H^{-1}(\coker_{\frak C}\psi) \to H^0(F) \overset{\varphi}{\to} H^0(E_T) 
\to H^0(\coker_{\frak C}\psi) \to 0,
$$
where $H^{-1}(\coker_{\frak C}\psi)  \in S$.
Since $E_T \in {\cal F}_{(\beta,\omega)}^*$,
$-Z_{(\beta,\omega)}(\im \varphi) \in {\Bbb H} \cup {\Bbb R}_{<0}$.
Hence $-Z_{(\beta,\omega)}(H^0(F)) \in {\Bbb H} \cup {\Bbb R}_{<0}$,
which implies $E_T \in {\cal F}_{(\beta,\omega)}$.
Hence $E \in {\cal A}_{(\beta,\omega)}[-1] \cap \Coh(X)={\cal F}$.
For $E \in  {\cal A}_{(\beta,\omega)}[-1] \cap \Coh(X)={\cal F}$,
we have a decomposition
$$
0 \to E_T \to E \to E_S \to 0
$$
such that $E_T \in T$ and $E_S \in S$.
Obviously $S \subset {\cal F}_{(\beta,\omega)}^*$.
Since $\pH^0(E) \in {\cal F}_{(\beta,\omega)}$ and
$\pH^0(E)=E_T$, $E_T \in {\cal F}_{(\beta,\omega)}$.
For a subsheaf $F$ of $E$,
we take a decomposition
$$
0 \to F_T \to F \to F_S \to 0
$$
such that $F_T \in T$ and $F_S \in S$.
Then $F_T$ is a subsheaf of $E_T$.
Moreover $F_T \to E_T$ is injective in ${\frak C}$
by $E_T/F_T \in T$.
Since $E_T \in {\cal F}_{(\beta,\omega)}$,
$-Z_{(\beta,\omega)}(F_T) \in {\Bbb H} \cup {\Bbb R}_{<0}$
unless $F_T=0$.
We note that $-Z_{(\beta,\omega)}(F_S) \in {\Bbb R}_{<0}$
unless $F_S=0$.
Hence $-Z_{(\beta,\omega)}(F) \in {\Bbb H} \cup {\Bbb R}_{<0}$.
Thus $E \in {\cal F}_{(\beta,\omega)}^*$, which implies
${\cal F}={\cal F}_{(\beta,\omega)}^*$.
\end{proof}

\begin{NB}
\begin{enumerate}
\item
${\cal T}_{(\beta,\omega)}$ is the full subcategory of
${\frak C}$ generated by 
torsion objects and $\beta$-twisted stable objects $E$ of ${\frak C}$
such that $Z_{(\beta,\omega)}(E) \in {\Bbb H} \cup {\Bbb R}_{<0}$.
\item
${\cal F}_{(\beta,\omega)}$ is the full subcategory of
${\frak C}$ generated by $\beta$-twisted stable objects $E$ of ${\frak C}$
such that $-Z_{(\beta,\omega)}(E) \in {\Bbb H} \cup {\Bbb R}_{<0}$.
\end{enumerate}
\end{NB}

\begin{prop}[{\cite{MYY:2011:1}}]
$\sigma_{(\beta,\omega)}:=({\cal A}_{(\beta,\omega)},Z_{(\beta,\omega)})$
is a stability condition.
\end{prop}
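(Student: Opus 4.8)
The plan is to check the two conditions of Bridgeland's definition \cite{Br:stability} for the pair $({\cal A}_{(\beta,\omega)},Z_{(\beta,\omega)})$: first, that $Z_{(\beta,\omega)}$ restricts to a \emph{stability function} on the abelian category ${\cal A}_{(\beta,\omega)}$, i.e. $Z_{(\beta,\omega)}(E)$ lies in the semi-closed upper half plane $\{\,re^{\sqrt{-1}\pi\phi}\mid r>0,\ 0<\phi\le 1\,\}$ for every nonzero $E\in{\cal A}_{(\beta,\omega)}$; and second, that this stability function has the Harder--Narasimhan property. That ${\cal A}_{(\beta,\omega)}$ is the heart of a bounded $t$-structure is automatic from its description as the tilt of the torsion pair $({\cal T}_{(\beta,\omega)},{\cal F}_{(\beta,\omega)})$ of ${\frak C}$, once one knows the latter really is a torsion pair; that rests on the Noetherianity of ${\frak C}$ and the existence of Harder--Narasimhan filtrations for $G$-twisted Gieseker stability in ${\frak C}$ (Definition \ref{defn:stability}), exactly as for coherent sheaves, and is part of the construction in \cite{MYY:2011:1} (it is also implicit in the proof of Proposition \ref{prop:A}).

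For the stability function condition: given $0\ne E\in{\cal A}_{(\beta,\omega)}$, the tilting description gives a short exact sequence $0\to \pH^{-1}(E)[1]\to E\to \pH^0(E)\to 0$ in ${\cal A}_{(\beta,\omega)}$ with $\pH^{-1}(E)\in{\cal F}_{(\beta,\omega)}$ and $\pH^0(E)\in{\cal T}_{(\beta,\omega)}$, so $Z_{(\beta,\omega)}(E)=Z_{(\beta,\omega)}(\pH^0(E))-Z_{(\beta,\omega)}(\pH^{-1}(E))$. Applying the defining inequality of ${\cal T}_{(\beta,\omega)}$ to $\pH^0(E)$ as a quotient of itself and that of ${\cal F}_{(\beta,\omega)}$ to $\pH^{-1}(E)$ as a subobject of itself, one gets $Z_{(\beta,\omega)}(\pH^0(E))\in{\Bbb H}\cup{\Bbb R}_{<0}$ when $\pH^0(E)\ne 0$ and $-Z_{(\beta,\omega)}(\pH^{-1}(E))\in{\Bbb H}\cup{\Bbb R}_{<0}$ when $\pH^{-1}(E)\ne 0$. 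As $E\ne 0$, at least one summand is a nonzero element of ${\Bbb H}\cup{\Bbb R}_{<0}$, and a sum of elements of ${\Bbb H}\cup{\Bbb R}_{<0}\cup\{0\}$ again lies in ${\Bbb H}\cup{\Bbb R}_{<0}\cup\{0\}$, vanishing only if both summands vanish; hence $Z_{(\beta,\omega)}(E)\in{\Bbb H}\cup{\Bbb R}_{<0}$, which is the required condition. The role of the category ${\frak C}$ of perverse coherent sheaves is exactly to make this work: it absorbs the sheaves supported on the $\pi$-exceptional curves, for which $\Ima Z_{(\beta,\omega)}=\deg_\beta=0$ although $(\omega^2)>0$, so that $\langle e^{\beta+\sqrt{-1}\omega},v(\bullet)\rangle$ is nonzero on every nonzero object of ${\frak C}$; this is what the hypotheses $\omega\in\pi^*(\Amp(Y)_{\Bbb Q})$ and $(\beta\cdot C)\notin{\Bbb Z}$ on the exceptional curves secure.

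For the Harder--Narasimhan property I would invoke Bridgeland's chain-condition criterion. The main structural input is that ${\cal A}_{(\beta,\omega)}$ is \emph{Noetherian}, which I would deduce from the Noetherianity of ${\frak C}$ (equivalently of $\Coh(X)$): using Proposition \ref{prop:A} and the two-term description of objects of ${\cal A}_{(\beta,\omega)}$ via the torsion pair \eqref{eq:ST}, an infinite ascending chain in ${\cal A}_{(\beta,\omega)}$ is transported to one of coherent sheaves, where it must stabilize — here one needs a boundedness statement for the pieces in ${\cal F}_{(\beta,\omega)}$ that can occur inside a fixed $E$. Granting Noetherianity, the only remaining subtlety in checking Bridgeland's conditions concerns objects $E$ with $\Ima Z_{(\beta,\omega)}(E)=\deg_\beta(E)=0$; these form a full subcategory ${\cal A}_0\subset{\cal A}_{(\beta,\omega)}$ closed under subobjects, quotients and extensions, on which $Z_{(\beta,\omega)}$ is real and negative and on which $-Z_{(\beta,\omega)}$ is a positive additive function taking values, because $\beta$ and $\omega$ are rational, in a fixed discrete subgroup $\tfrac1N{\Bbb Z}_{>0}$; therefore ${\cal A}_0$ has finite length. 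Combining the Noetherianity of ${\cal A}_{(\beta,\omega)}$, the finite length of ${\cal A}_0$, and the discreteness of $\Ima Z_{(\beta,\omega)}$ (again from rationality) yields Bridgeland's chain conditions and hence the Harder--Narasimhan property, and the same discreteness gives the local finiteness required for a stability condition. I expect the \textbf{main obstacle} to be the Noetherianity of ${\cal A}_{(\beta,\omega)}$: tilting does not preserve Noetherianity in general, so this genuinely uses the numerical geometry of $({\cal T}_{(\beta,\omega)},{\cal F}_{(\beta,\omega)})$ together with the finite-length analysis of ${\cal A}_0$ above.
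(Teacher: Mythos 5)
The paper does not actually prove this proposition: it is imported verbatim from \cite{MYY:2011:1}, so there is no in-text argument to compare against. Your outline is nevertheless a faithful sketch of the standard proof (and of the one in the cited reference): the positivity of $Z_{(\beta,\omega)}$ on ${\cal A}_{(\beta,\omega)}$ via the exact sequence $0 \to \pH^{-1}(E)[1] \to E \to \pH^0(E) \to 0$ and the defining conditions of $({\cal T}_{(\beta,\omega)},{\cal F}_{(\beta,\omega)})$ is exactly right, and the Harder--Narasimhan property is indeed obtained from Bridgeland's chain-condition criterion using the discreteness of $\Ima Z_{(\beta,\omega)}$ (rationality of $\beta,\omega$) together with the finite-length property of the subcategory on which $\Ima Z_{(\beta,\omega)}=0$. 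You correctly identify the Noetherianity of the tilted heart as the one step requiring genuine work; in \cite{MYY:2011:1} this is handled by a boundedness argument on the ${\cal F}_{(\beta,\omega)}$-pieces of an ascending chain of the kind you describe, so your proposal contains no wrong turn, only a step you have deliberately left at the level of strategy.
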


\begin{defn}\label{defn:A}
If $({\cal T}_{(\beta,\omega)}^*,{\cal F}_{(\beta,\omega)}^*)$ is a
torsion pair of $\Coh(X)$, then 
we also denote the tilting by ${\cal A}_{(\beta,\omega)}$
and set 
$\sigma_{(\beta,\omega)}:=({\cal A}_{(\beta,\omega)},Z_{(\beta,\omega)})$.
\end{defn}
By Proposition \ref{prop:A},
$\sigma_{(\beta,\omega)}$ is the same stability condition
in \cite{MYY:2011:1} if $\beta,\omega$ are defined over ${\Bbb Q}$.

\section{Stability conditions on the boundary of
the geometric chamber}\label{sect:MYY}

\subsection{Relation of stability conditions}\label{subsect:relation}

Let $\pi:X \to Y$ be the minimal resolution of 
a normal K3 surface $Y$.
For $(\beta_0,\omega_0):=(\beta_0,tH)$,
$\sigma_{(\beta_0,\omega_0)}:=
({\cal A}_{(\beta_0,\omega_0)},Z_{(\beta_0,\omega_0)})$
denotes the stability condition constructed in 
subsection \ref{subsect:perverse}.
For these stability conditions,
${\cal O}_x$ are semi-stable objects with phase 1.
We shall remark the relation of these stability conditions
to stability conditions such that ${\cal O}_x$ are stable.

We note that $\sigma_{(\beta_0,\omega_0)}$ satisfies the support 
property \cite{BM},
since the Bogomolov inequality holds.
In particular, we have a wall and chamber structure.
There is a small neighborhood $B$ of $(\beta_0,\omega_0)$
in $\NS(X)_{\Bbb R} \times P^+(X)_{\Bbb R}$
and a continuous map 
\begin{equation}\label{eq:section}
{\frak s}:B \to \Stab(X)
\end{equation}
such that $Z_{{\frak s}(\beta, \omega)}=Z_{(\beta,\omega)}$
and ${\frak s}(\beta_0,\omega_0)=\sigma_{(\beta_0,\omega_0)}$
(see also \cite[Prop. 8.3]{Br:3}).

Let $S$ be the set of stable factor $E (\ne {\cal O}_x)$ of ${\cal O}_x$
with respect to $\sigma \in {\frak s}(B)$.
Since $\{ {\cal O}_x \mid x \in X \}$ has bounded mass, 
\cite[Lem. 9.2]{Br:3} implies
$W:=\{ v(E) \mid E \in S\}$ is a finite set.
Let $S'$ be a subset of $S$ such that
$E_1 \in S'$ if and only if 
\begin{enumerate}
\item
$E_1$ is a stable factor of ${\cal O}_x$ with respect to 
$\sigma \in {\frak s}(B)$
and 
\item
$\phi_\sigma(E_1)=\phi_\sigma^+({\cal O}_x)$.
\end{enumerate}
%
We set
$W':=\{v(E) \in W \mid E \in S',\rk E>0 \}$.
\begin{NB}
If $W = \emptyset$, then stability is independent
in $B$.
\end{NB}

\begin{lem}
Assume that $Z_{\sigma_{(\beta_0,\omega_0)}}(E) \in {\Bbb R}_{<0}$ for all
$E \in S'$.
We set 
$$
B':=\{(\beta,\omega) \in B \mid \omega \in \Amp(X),
\Ima Z_{(\beta,\omega)}(w)>0, w \in W' \}.
$$
\begin{enumerate}
\item[(1)]
Then  $B' \ne \emptyset$
and $(\beta_0,\omega_0) \in \overline{B'}$.
\item[(2)]
 Let 
\begin{equation}
0 \to E_1 \to {\cal O}_x \to E_2 \to 0
\end{equation}
be an exact sequence in ${\cal A}_{(\beta_0,\omega_0)}$ such that
$E_1 \in S'$. Then 
$\Ima Z_{(\beta,\omega)}(E_1)>0$ for $(\beta,\omega)  \in B'$.
\end{enumerate}
\end{lem}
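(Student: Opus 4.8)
The plan is to prove (1) and (2) separately. The basic identity I will use is that, for a Mukai vector $v=(r,\xi,a)$,
$$
\Ima Z_{(\beta,\omega)}(v)=(\omega\cdot\xi)-r(\beta\cdot\omega)=(\omega\cdot(\xi-r\beta)),
$$
so the inequalities defining $B'$ are linear sign conditions in $\omega$. I would first record the only place the standing hypothesis enters: for $E\in S'$ the condition $Z_{\sigma_{(\beta_0,\omega_0)}}(E)\in{\Bbb R}_{<0}$ gives in particular $\Ima Z_{(\beta_0,\omega_0)}(v(E))=0$, so every $w\in W'$ satisfies $\Ima Z_{(\beta_0,\omega_0)}(w)=0$; thus the finitely many vectors of $W'$ sit on the boundary of the region in $B$ cut out by the inequalities $\Ima Z_{(\beta,\omega)}(w)>0$ $(w\in W')$, and the content of (1) is that, after also imposing $\omega\in\Amp(X)$, this region is non-empty and has $(\beta_0,\omega_0)$ in its closure.

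For (1) I would exhibit a one-parameter family. Fix an ample class $A$ on $X$ and, for small $\epsilon>0$, set $\beta_\epsilon:=\beta_0-\epsilon H$ and $\omega_\epsilon:=tH+\epsilon^2A$. Since $H$ is nef and $A$ is ample, $\omega_\epsilon$ is ample, and $(\beta_\epsilon,\omega_\epsilon)\in B$ for $\epsilon$ small. For $w\in W'$, with $r:=\rk w>0$, expanding and using $\Ima Z_{(\beta_0,\omega_0)}(w)=(tH\cdot(c_1(w)-r\beta_0))=0$ gives
$$
\Ima Z_{(\beta_\epsilon,\omega_\epsilon)}(w)=(\omega_\epsilon\cdot(c_1(w)-r\beta_\epsilon))=\epsilon\bigl(tr(H^2)+\epsilon(A\cdot(c_1(w)-r\beta_0))+\epsilon^2 r(A\cdot H)\bigr).
$$
Since $t>0$, $r>0$ and $(H^2)>0$ (as $H$ is big), the bracketed factor tends to $tr(H^2)>0$ as $\epsilon\to0$; $W'$ being finite, it is positive for every $w\in W'$ once $\epsilon$ is small. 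Hence $(\beta_\epsilon,\omega_\epsilon)\in B'$, which gives $B'\ne\emptyset$, and since $(\beta_\epsilon,\omega_\epsilon)\to(\beta_0,\omega_0)$ we obtain $(\beta_0,\omega_0)\in\overline{B'}$.

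For (2) I would split on $\rk E_1$. If $\rk E_1>0$, then $v(E_1)\in W'$ by the definition of $W'$, so $\Ima Z_{(\beta,\omega)}(E_1)>0$ for $(\beta,\omega)\in B'$ is immediate from the definition of $B'$. If $\rk E_1=0$: from the long exact cohomology sequence of $0\to E_1\to{\cal O}_x\to E_2\to0$ together with Proposition \ref{prop:A}(1) (objects of ${\cal A}_{(\beta_0,\omega_0)}$ have cohomology only in degrees $-1,0$) one gets $H^{-1}(E_1)=0$, so $E_1$ is a coherent sheaf; being of rank $0$, its class $c_1(E_1)$ is effective. The hypothesis $E_1\in S'$ gives $\Ima Z_{(\beta_0,\omega_0)}(E_1)=t(H\cdot c_1(E_1))=0$, so $c_1(E_1)$ is effective of $H$-degree $0$, hence supported on the $\pi$-exceptional $(-2)$-curves (as $H$ is pulled back from $Y$). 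I would then exclude $c_1(E_1)=0$: in that case $E_1$ is $0$-dimensional of some length $\ell\ge1$, so $Z_{(\beta_0,\omega_0)}(E_1)=-\ell$ and $Z_{(\beta_0,\omega_0)}(E_2)=\ell-1\ge0$; but $E_2\in{\cal A}_{(\beta_0,\omega_0)}$ forces $Z_{(\beta_0,\omega_0)}(E_2)\in{\Bbb H}\cup{\Bbb R}_{<0}$ whenever $E_2\ne0$, so $\ell=1$ and $E_2=0$, i.e. $E_1\cong{\cal O}_x$, contradicting $E_1\in S'$. Thus $c_1(E_1)$ is a non-zero effective class, and for $(\beta,\omega)\in B'$, ampleness of $\omega$ gives $\Ima Z_{(\beta,\omega)}(E_1)=(\omega\cdot c_1(E_1))>0$.

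I expect the main obstacle to be (1): everything there hinges on the first-order computation above, and in particular on perturbing $\beta$ in the direction $-H$, which is what turns the vanishing quantity $(\omega_0\cdot(c_1(w)-\rk(w)\beta_0))$ positive, using $\rk w>0$ and $(H^2)>0$; one also has to make sure the perturbed $\omega$ can be taken ample, which is why $A$ is added with a higher power of $\epsilon$. In (2) the only delicate case is $\rk E_1=0$, where the task is to recognise $E_1$ as a sheaf supported on the exceptional locus (via Proposition \ref{prop:A}(1)) and to rule out the $0$-dimensional case by inspecting central charges; the conclusion then follows from ampleness of $\omega$.
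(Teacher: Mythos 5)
Your proof is correct and follows essentially the same route as the paper: in (1) you perturb $\beta_0$ in the $-H$ direction and nudge $\omega_0$ into the ample cone (the paper uses $\beta_0-x\omega_0$ and $y\omega_0+\eta$ with $\eta\in\omega_0^\perp$, which is the same first-order computation), and in (2) you split on $\rk E_1$, reduce the rank-zero case to an effective $c_1$ supported on the exceptional locus, and exclude the degenerate case $E_1\cong{\cal O}_x$. The only cosmetic differences are that you use the ordinary cohomology long exact sequence where the paper uses the perverse one, and you rule out $c_1(E_1)=0$ by a central-charge computation where the paper invokes $E_1\in S$ directly; both are valid.
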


\begin{proof}
Let $w=r+\xi+a \varrho_X$ be an element of $W'$.
For $x, y \in {\Bbb R}_{>0}$ and
$\eta \in \omega_0^\perp$ with
$(\xi-r \beta_0,\eta)+rxy(\omega_0^2)>0$, 
$\Ima Z_{(\beta_0-x\omega_0,y\omega_0+\eta)}(w)>0$.
We can take a small $\eta$ such that $y \omega_0+\eta$ is ample.  
Therefore (1) holds.
\begin{NB}
$(\xi-r \beta_0 \cdot \eta)+rxy(\omega_0^2) \geq 
-\sqrt{-((\xi-r \beta_0)^2)}\sqrt{-(\eta^2)}+rxy(\omega^2)$.
We set $N_1:=\max_w \sqrt{-((\xi-r \beta_0)^2)}$
and assume that $\sqrt{-(\eta^2)} \leq \epsilon/N_1$.
Then $xy(\omega^2)>\epsilon$ is sufficient.
\end{NB}

(2) We note that  $\pH^{-1}(E_1)=0$ and 
there is an exact sequence in ${\frak C}$
\begin{equation}
0 \to \pH^{-1}(E_2) \to \pH^0(E_1) 
\overset{\psi}{\to} {\cal O}_x \to \pH^0(E_2) \to 0.
\end{equation}
Hence $\rk E_1 \geq 0$.
If $\rk E_1>0$, then $v(E_1) \in W'$ implies
$\Ima Z_\sigma(E_1)>0$ for $\sigma \in {\frak s}(B')$.
Assume that $\rk E_1=0$.
Since $E_1,E_2$ are semi-stable objects with
$\phi_{\sigma_{(\beta_0,\omega_0)}}(E_1)=\phi_{\sigma_{(\beta_0,\omega_0)}}(E_2)$, 
$E_1$ is a 0-dimensional object of ${\frak C}$.
Since $\pH^{-1}(E_2)$ is torsion free in ${\frak C}$, 
$\pH^{-1}(E_2)=0$.
Since $H^{-1}({\cal O}_x)=0$, we get $H^{-1}(\pH^0(E_1))=H^{-1}(E_1)=0$.
Thus $c_1(E_1)$ is effective or $E_1={\cal O}_x$. 
Since $E_1 \in S$, the second case does not occur.
Then $\Ima Z_\sigma(E_1)=(c_1(E_1) \cdot \omega)>0$.  
Therefore the claim holds.
\end{proof}

\begin{prop}\label{prop:limit}
Let $B_0'$ be a connected component of $B'$ such that
$(\beta_0,\omega_0) \in \overline{B_0'}$. 
Then ${\cal O}_x$ $(x \in X)$ is $\sigma$-stable for all
$\sigma \in {\frak s}(B_0')$.
In particular, 
${\frak s}(\beta,\omega)=\sigma_{(\beta,\omega)}$
for $(\beta,\omega) \in B_0'$.
\end{prop}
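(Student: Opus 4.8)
The plan is to show that no wall of the stability space separating $\sigma_{(\beta_0,\omega_0)}$ from $\sigma \in \mathfrak{s}(B_0')$ can destabilize ${\cal O}_x$, by tracking the phases of its would-be destabilizing subobjects. By the previous lemma, the stable factors $E_1$ of ${\cal O}_x$ with maximal phase (those in $S'$) all satisfy $Z_{\sigma_{(\beta_0,\omega_0)}}(E_1) \in {\Bbb R}_{<0}$ — i.e. phase $1$ — and, crucially, for $(\beta,\omega) \in B_0'$ we have $\Ima Z_{(\beta,\omega)}(E_1)>0$, so $\phi_\sigma(E_1)<1$ strictly for $\sigma \in \mathfrak{s}(B_0')$. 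First I would record that ${\cal O}_x$ remains in the heart ${\cal A}_{(\beta,\omega)}$ with $Z_{(\beta,\omega)}({\cal O}_x)=-1 \in {\Bbb R}_{<0}$ for all $(\beta,\omega) \in B$, so $\phi_\sigma({\cal O}_x)=1$ throughout $\mathfrak{s}(B)$; hence if ${\cal O}_x$ is strictly semistable at some $\sigma \in \mathfrak{s}(B_0')$, every Jordan–Hölder factor also has phase $1$.

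The main step is then: suppose ${\cal O}_x$ is not $\sigma$-stable for some $\sigma \in \mathfrak{s}(B_0')$; among all $\sigma$ in the (connected) set $\mathfrak{s}(B_0')$ where this fails, pass to the wall. At a first wall $W \subset \mathfrak{s}(B_0') \cup \{\sigma_{(\beta_0,\omega_0)}\}$ where ${\cal O}_x$ acquires a proper subobject $E_1$ of the same phase, $E_1$ must be (a sum of) $\sigma$-stable objects that are stable factors of ${\cal O}_x$ for $\sigma$ near $\sigma_{(\beta_0,\omega_0)}$, so $v(E_1) \in W$ and in fact, having maximal phase equal to $\phi_\sigma({\cal O}_x)$, we get $E_1 \in S'$ (after decomposing into stable factors, each lies in $S'$). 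Applying Lemma (2) to the exact sequence $0 \to E_1 \to {\cal O}_x \to E_2 \to 0$ in ${\cal A}_{(\beta_0,\omega_0)}$ gives $\Ima Z_{(\beta,\omega)}(E_1)>0$ on $B_0'$, whence $\phi_\sigma(E_1)<1=\phi_\sigma({\cal O}_x)$ on $\mathfrak{s}(B_0')$ — contradicting that $E_1$ destabilizes ${\cal O}_x$. The case $\rk E_1 = 0$ handled in the lemma is exactly what is needed to ensure $\Ima Z$ is strictly positive even for the "boundary" subobjects.

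The second assertion follows formally: once ${\cal O}_x$ is $\sigma$-stable for $\sigma = \mathfrak{s}(\beta,\omega)$ with $\omega \in \Amp(X)$ and $Z_\sigma = Z_{(\beta,\omega)} \in {\cal P}^+(X)$, the characterization recalled in the Preliminaries — a stability condition for which all ${\cal O}_x$ are stable of a fixed phase with $Z_\sigma \in {\cal P}^+(X)$ is, up to $\widetilde{GL}^+_2({\Bbb R})$, equal to $\sigma_{(\beta,\omega)}$, and here the central charges literally agree — forces $\mathfrak{s}(\beta,\omega)=\sigma_{(\beta,\omega)}$.

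I expect the main obstacle to be the bookkeeping at the wall: justifying that when ${\cal O}_x$ first becomes strictly semistable along a path in $\mathfrak{s}(B_0')$, a maximal-phase subobject is genuinely built from members of $S'$ (as opposed to some other stable object of coincidental phase), and that the exact sequence realizing it can be taken inside ${\cal A}_{(\beta_0,\omega_0)}$ so that Lemma (2) applies. This uses the finiteness of $W$ (hence local finiteness of walls from \cite[Lem. 9.2]{Br:3}) and the fact that the stable factors of ${\cal O}_x$ vary controllably over $\mathfrak{s}(B)$, together with the support property already noted for $\sigma_{(\beta_0,\omega_0)}$. Once that is in place the phase inequality from the lemma closes the argument immediately.
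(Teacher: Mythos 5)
Your overall strategy coincides with the paper's: produce an exact sequence $0 \to E_1 \to {\cal O}_x \to E_2 \to 0$ in ${\cal A}_{(\beta_0,\omega_0)}$ with $E_1 \in S'$, invoke part (2) of the preceding lemma to get $\Ima Z_{(\beta,\omega)}(E_1)>0$ on $B_0'$, and conclude $\phi_\sigma(E_1)<1=\phi_\sigma({\cal O}_x)$, contradicting destabilization; the final identification ${\frak s}(\beta,\omega)=\sigma_{(\beta,\omega)}$ is then the characterization of geometric stability conditions (the paper cites \cite[Prop.~10.3]{Br:3}). However, the step you yourself flag as "the main obstacle" is a genuine gap, and it is precisely the content of the paper's proof: you never justify that the destabilizing sequence, which a priori lives in the heart at $\sigma$ (or at your wall point $\sigma_W$), is exact in ${\cal A}_{(\beta_0,\omega_0)}$. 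This is not automatic, because $E_1$ need not be $\sigma_0$-semi-stable and its phase at $\sigma_0$ is only constrained modulo $2$ by the hypothesis $Z_{\sigma_0}(E_1)\in{\Bbb R}_{<0}$. The paper closes this as follows: shrink $B$ so that $\sup_E|\phi^{\pm}_{\sigma'}(E)-\phi^{\pm}_{\sigma_0}(E)|<\tfrac18$ for all $\sigma'\in{\frak s}(B)$, which forces $E_1,E_2$ into the common heart ${\cal A}_{\sigma'}={\cal P}_{\sigma'}((\tfrac12,\tfrac32])$ for every $\sigma'\in{\frak s}(B)$; then the inclusion $E_1\hookrightarrow{\cal O}_x$ in ${\cal A}_{\sigma_0}$ together with $\sigma_0$-semi-stability of ${\cal O}_x$ gives $\phi_{\sigma_0}(E_1)\le 1$; finally the case $\phi_{\sigma_0}(E_1)<1$ is excluded by shrinking $B$ again (the locus $\{\sigma'\mid\phi_{\sigma'}(E_1)<1\}$ is an open neighborhood of $\sigma_0$ not containing $\sigma$, and there are only finitely many classes in $W$ to control). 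Only then are $E_1,E_2\in{\cal P}_{\sigma_0}(1)$ and the sequence exact in ${\cal A}_{(\beta_0,\omega_0)}$.

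A secondary remark: your detour through a "first wall" is unnecessary and adds bookkeeping (local finiteness of walls, continuity of Jordan--H\"{o}lder factors along the path) without buying anything. The paper works directly at the given $\sigma\in{\frak s}(B_0')$ with the maximal-phase Harder--Narasimhan/stable factor $E_1$ of ${\cal O}_x$, which lies in $S'$ by construction; no wall-crossing analysis inside $B_0'$ is needed. If you repair the heart-transfer step as above, your argument goes through, but it is essentially a reorganization of the same proof rather than an alternative one.
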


\begin{proof}
We set $\sigma:={\frak s}(\beta, \omega)$ ($(\beta,\omega) \in B_0'$)
and
$\sigma_0:={\frak s}(\beta_0, \omega_0)$.
By shrinking $B$, we may assume that
$$
\sup_{0 \ne E \in {\bf D}(X)}
|\phi_{\sigma'}^\pm(E)-\phi_{\sigma_0}^\pm(E)|<\frac{1}{8}
$$
for $\sigma' \in {\frak s}(B)$.
Then 
$$
\sup_{0 \ne E \in {\bf D}(X)}
|\phi_{\sigma}^\pm(E)-\phi_{\sigma'}^\pm(E)|<\frac{1}{4}
$$
for $\sigma, \sigma' \in {\frak s}(B)$.
Since ${\cal O}_x$ are $\sigma_0$-semi-stable,
$1+\frac{1}{8}>\phi_\sigma^+({\cal O}_x) \geq 
\phi_\sigma^-({\cal O}_x)>1-\frac{1}{8}$.
For any stable factor $E$ of ${\cal O}_x$ with respect to $\sigma$,
$1+\frac{1}{8}+\frac{1}{4}>\phi_{\sigma'}^+(E) 
\geq \phi_{\sigma'}^-(E)>1-\frac{1}{8}-\frac{1}{4}$.
We set ${\cal A}_{\sigma'}:={\cal P}_{\sigma'}((\tfrac{1}{2},\tfrac{3}{2}])
(\subset {\bf D}(X))$.
If ${\cal O}_x$ is not $\sigma$-semi-stable,
then let $E_1$ be the stable factor with 
$\phi_\sigma(E_1)=\phi_\sigma^+({\cal O}_x)
>\phi_\sigma({\cal O}_x)$
and $E_2:=\mathrm{Cone}(E_1 \to {\cal O}_x)$.
Then we have $E_1,E_2 \in {\cal A}_{\sigma'}$ 
for all $\sigma' \in {\frak s}(B)$ 
and an exact sequence 
\begin{equation}\label{eq:O_x}
0 \to E_1 \to {\cal O}_x \to E_2 \to 0
\end{equation}
in ${\cal A}_{\sigma'}$.
Since ${\cal O}_x$ is $\sigma_0$-semi-stable with 
$\phi_{\sigma_0}({\cal O}_x)=1$,
$\phi_{\sigma_0}(E_1) \leq \phi_{\sigma_0}({\cal O}_x)=1$.
If $\phi_{\sigma_0}(E_1)<1$, then
$$
\{\sigma' \in {\frak s}(B) \mid \phi_{\sigma'}(E_1)<1\}
$$
is an open neighborhood of $\sigma_0$ which does not contain
$\sigma$, where
$\phi_{\sigma'}:{\cal A}_{\sigma'} \to (\tfrac{1}{2},\tfrac{3}{2}]$.
So by shrinking $B$, we may assume that
$\phi_{\sigma_0}(E_1)=1$ for all $E_1 \in S'$.
Since 
$\phi_{\sigma_0}(E_2)=\phi_{\sigma_0}(E_1)=1$,
\eqref{eq:O_x} is an exact sequence
in ${\cal A}_{(\beta_0,\omega_0)}$.
If $\sigma \in {\frak s}(B_0')$, then we get
$\mathrm{Im} Z_{\sigma}(E_1)>0> \mathrm{Im} 
Z_{\sigma}(E_2)$, which implies
$\phi_{\sigma}(E_1)<1<\phi_{\sigma}(E_2)$.
Therefore ${\cal O}_x$ is $\sigma$-stable.
In particular ${\frak s}(\beta,\omega)=\sigma_{(\beta,\omega)}$
for $(\beta,\omega) \in B_0'$ by \cite[Prop. 10.3]{Br:3}.
\end{proof}

\begin{NB}
Assume that there is a $\sigma$-stable object $E$ of 
${\cal O}_x$ with $\phi_\sigma(E)>\phi_\sigma({\cal O}_x)$.
Then it is a subobject in ${\cal A}:=
{\cal P}_{\sigma_0}((\frac{1}{2},\frac{3}{2}])$.
By our choice of $W$,
$\phi_{\sigma_0}(E)=\phi_{\sigma_0}({\cal O}_x)$.  
Then $E$ is $\sigma_0$-semi-stable and 
$E$ is a subobject of ${\cal O}_x$
in ${\cal A}_{\sigma_0}$.
\end{NB}

\begin{NB}
\begin{lem}\label{lem:perv-H^0}
Assume that every irreducible object $A$ of ${\frak C}$ is
$\sigma$-stable with $\phi_\sigma(A)=1$.
Then $\pH^i(E)=0$ ($i \ne -1,0$) for
$E \in {\cal P}_\sigma((0,1])$. 
\end{lem}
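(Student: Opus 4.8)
The plan is to relate the $t$-structure of $\mathfrak{C}$ to the stability-theoretic filtration coming from $\sigma$, exploiting the hypothesis that the irreducible (= simple) objects of $\mathfrak{C}$ are precisely the $\sigma$-stable objects of phase $1$. First I would observe that, since $\mathfrak{C}$ is noetherian and artinian (it is the tilt of $\Coh(X)$ along the torsion pair $(T,S)$, with $S$ consisting of $0$-dimensional-like objects), every object of $\mathfrak{C}$ has a finite composition series whose factors are irreducible objects of $\mathfrak{C}$, hence by hypothesis are $\sigma$-stable of phase $1$. Consequently $\mathfrak{C} \subset \mathcal{P}_\sigma(1)$, and in fact $\mathfrak{C} = \mathcal{P}_\sigma(1)$ up to the subtlety of the $\mathbb{Z}$-action (more precisely $\mathfrak{C}$ is the extension-closure of $\mathcal{P}_\sigma(1)$, and since the latter is already closed under extensions in the heart $\mathcal{A}_\sigma$ one gets equality of hearts' slices). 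The key point I want to extract is: $\mathfrak{C}[k] \subset \mathcal{P}_\sigma(1+k)$ for every $k \in \mathbb{Z}$.

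Next I would take $E \in \mathcal{P}_\sigma((0,1])$ and consider its cohomology objects $\pH^i(E)$ with respect to the $t$-structure defining $\mathfrak{C}$. The idea is to use the bounded-ness of that $t$-structure to get a (possibly long) Postnikov/filtration relating $E$ to the $\pH^i(E)[-i]$, together with the vanishing $\Hom^{<0}$ among objects of a heart: $\Hom(\pH^i(E)[-i], \pH^j(E)[-j]) = 0$ whenever $i<j$ would be needed, but the real leverage is that each $\pH^i(E)[-i]$ lives in $\mathcal{P}_\sigma(1-i)$. Suppose for contradiction that $\pH^i(E) \ne 0$ for some $i>0$; let $i$ be the largest such index. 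Then $\pH^i(E)[-i] \in \mathcal{P}_\sigma(1-i)$ with $1-i \le 0$, and it is a "bottom quotient" of $E$ in the triangulated sense — one gets a nonzero map $E \to \pH^i(E)[-i]$ coming from the truncation triangle $\tau_{<i}E \to E \to \pH^i(E)[-i] \to$, contradicting $\phi_\sigma^-(E) > 0$ (a nonzero morphism from $E$ to an object of phase $\le 0$ forces a stable factor of $E$ of phase $\le 0$). Symmetrically, if $\pH^i(E) \ne 0$ for some $i < -1$, take the smallest such $i$; then $\pH^i(E)[-i] \in \mathcal{P}_\sigma(1-i)$ with $1 - i \ge 3$, and it is a "top subobject" of $E$ via $\pH^i(E)[-i] \to E \to \tau_{>i}E \to$, giving a nonzero map from an object of phase $\ge 3$ into $E$; since $\phi_\sigma^+(E) \le 1$ and in fact $\le 1 < 3$ this is absurd (a nonzero map from phase $\ge 3$ into $E$ forces a stable factor of $E$ of phase $\ge 3$). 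Hence $\pH^i(E) = 0$ for $i \ne -1, 0$.

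I expect the main obstacle to be the bookkeeping that turns "$\pH^i(E)$ is the highest nonvanishing perverse cohomology" into an actual nonzero morphism $E \to \pH^i(E)[-i]$ landing in a controlled phase, and dually for the lowest one — i.e.\ justifying carefully that the truncation triangles for the $\mathfrak{C}$-$t$-structure interact correctly with the slicing $\mathcal{P}_\sigma$. The clean way is: the $\mathfrak{C}$-truncation triangle $\tau^{\mathfrak C}_{\le i-1} E \to E \to \pH^i(E)[-i] \to$ exhibits $\pH^i(E)[-i]$ as the cone, and since $\tau^{\mathfrak C}_{\le i-1}E$ has all its $\sigma$-phases in the range $(1-i, \infty)$ (its perverse cohomologies sit in degrees $\le i-1$, hence shifts of objects of $\mathfrak C$ by $\ge 1-i$), while $\pH^i(E)[-i]$ has phase exactly $1-i$, the long exact sequence in $\mathcal{P}_\sigma$-cohomology (equivalently, applying $\Hom(\mathcal{P}_\sigma(>1-i)[\text{pt}], -)$) forces the connecting map or the projection to be nonzero, so $E$ itself has a stable factor of phase $1-i \le 0$. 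Combined with $E \in \mathcal{P}_\sigma((0,1])$ this is the contradiction. The dual argument for $i<-1$ is entirely parallel, using $\phi_\sigma^+ \le 1$. Once this is in place, finiteness of the composition series of $\mathfrak C$-objects (needed for "$\mathfrak{C} = \mathcal{P}_\sigma(1)$") is the only other input, and it follows from the fact that $\mathfrak C$ is a noetherian and artinian abelian category, which in turn follows from its description as a tilt and the properties of $(T,S)$ recorded in Section~\ref{subsect:perverse}.
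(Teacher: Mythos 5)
Your argument hinges on the inclusion ${\frak C}\subset {\cal P}_\sigma(1)$, deduced from the claim that ${\frak C}$ is artinian, so that every object has a finite composition series with irreducible factors. This is false: ${\frak C}$ is a tilt of $\Coh(X)$ whose torsion part $T$ contains all positive-rank sheaves (e.g.\ ${\cal O}_X$), so, like $\Coh(X)$, it is noetherian but not artinian, and its irreducible (i.e.\ simple) objects are precisely the $0$-dimensional ones --- the stable factors of the ${\cal O}_x$. The extension closure of the irreducibles is therefore only the subcategory of $0$-dimensional objects, not all of ${\frak C}$, and an object such as ${\cal O}_X$ is certainly not $\sigma$-semistable of phase $1$ for the stability conditions in question. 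A sanity check confirms that the premise cannot hold: if ${\frak C}[k]\subset{\cal P}_\sigma(1+k)$ were true, your ``smallest $i$'' argument would apply verbatim to $i=-1$ (producing a nonzero map from an object of ${\cal P}_\sigma(2)$ into $E$) and would prove $\pH^{-1}(E)=0$ for every $E\in{\cal P}_\sigma((0,1])$, i.e.\ ${\cal P}_\sigma((0,1])={\frak C}$, which is absurd since the heart is a genuine tilt of ${\frak C}$.

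The truncation-triangle mechanics in your second half are sound and close to what is actually needed, but the correct argument tests $E$ only against the irreducible objects $A$ themselves rather than against all of ${\frak C}$. Reducing to a $\sigma$-stable $E$ of phase $\phi\in(0,1)$, the paper uses $\Hom(E[i],A)=0$ for $i>0$ (since $\phi_\sigma(E[i])=\phi+i>1=\phi_\sigma(A)$), which kills $\pH^i(E)$ for $i>0$ because the top nonvanishing perverse cohomology always admits an irreducible quotient; for the bottom one cannot argue with irreducible subobjects (a torsion-free object of ${\frak C}$ has none), so the paper instead invokes Serre duality on the K3 surface, $\Hom(E[i],A)=\Hom(A,E[i+2])^{\vee}=0$ for $i\leq -2$ by $\phi_\sigma(E[i+2])=\phi+i+2<1$, to obtain the vanishing in the other direction. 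If you replace the global inclusion ${\frak C}\subset{\cal P}_\sigma(1)$ by these Hom-vanishings against irreducibles, your proof can be repaired; note also that the stable objects $E$ with $\phi_\sigma(E)=1$ need a separate treatment (either $E$ is itself an irreducible object of ${\frak C}$, or $\Hom(E,A)=\Hom(A,E)=0$ for all irreducibles $A$).
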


\begin{proof}

Let $E$ be a $\sigma_0$-stable object of 
${\cal P}_{\sigma_0}(\phi)$ with $0<\phi<1$.
Then $\Hom(E[i],A)=0$ for $i>0$ and all irreducible objects $A \in {\frak C}$
by
$\phi_{\sigma_0}(E[i])=\phi+i>1$.
Hence $H^i(E)=0$ for $i>0$.
$\Hom(E[i],A)=\Hom(A,E[i+2])^{\vee}=0$
for $i \leq -2$ by $\phi_{\sigma_0}(E[i+2])=\phi+i+2<1$.
Hence $E$ is a 2-term complex with $H^i(E)=0$ for $i \ne -1,0$.
Moreover $\Hom(E[1],A)=0$ implies $H^0(E) \in {\frak C}$.
\begin{NB2}
For $A=S[1]$, $S \in \Coh(X) \cap {\frak C}[-1]$,
 $\Hom(E,S)=\Hom(H^0(E),S)=0$, which shows
$H^0(E) \in {\frak C}$. 
\end{NB2}
Therefore $\pH^i(E)=0$ for $i \ne -1,0$.

Let $E$ be a $\sigma_0$-stable object of ${\cal P}_{\sigma_0}(1)$.
If $E \ne A$ for all irreducible objects $A$ of ${\frak C}$, then
$\Hom(E,A)=\Hom(A,E)=0$.
Hence $\pH^i(E) =0$ for $i \ne -1$ and
$\pH^{-1}(E)$ is a local projective object by 
\cite[Prop. 1.1.33]{PerverseI}.
\end{proof}
\end{NB}

In the proof of \cite[Lem. 11.1]{Br:3}, the following claim is proved.
\begin{lem}\label{lem:Delta}
For a bounded set $B$ of $\NS(X)_{\Bbb R} \times P^+(X)_{\Bbb R}$,
\begin{equation}\label{eq:Delta-B} 
\Delta_B:=\{ u \in \Delta(X) \mid
\rk u>0,
Z_{(\beta,\omega)}(u) \in {\Bbb R}_{ \leq 0},(\beta,\omega) \in B \}
\end{equation}
is a finite set. 
\end{lem}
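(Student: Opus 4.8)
The plan is to bound the relevant quantities uniformly over the bounded set $B$ and then invoke the Bogomolov-type inequality that underlies the discreteness of $\Delta(X)$. First I would write $u=(r,\xi,a)\in\Delta(X)$ with $r=\rk u>0$ and $\langle u^2\rangle=(\xi^2)-2ra=-2$, so that $a=\frac{(\xi^2)+2}{2r}$. The condition $Z_{(\beta,\omega)}(u)\in{\Bbb R}_{\le0}$ means $\Ima Z_{(\beta,\omega)}(u)=0$, i.e. $(\xi-r\beta,\omega)=0$, together with $\Rea Z_{(\beta,\omega)}(u)\le 0$. The first of these says that $\xi-r\beta$ lies in $\omega^\perp$ inside $\NS(X)_{\Bbb R}$, which is a negative definite subspace since $(\omega^2)>0$; hence $((\xi-r\beta)^2)\le 0$.

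Next I would extract the key inequality. Expanding $\langle e^{\beta+i\omega},u\rangle$ in real and imaginary parts, the vanishing of the imaginary part and $-2=\langle u^2\rangle=((\xi-r\beta)^2)-2r\,a_\beta(u)$ (where $a_\beta(u)=a-(\beta\cdot\xi)+\tfrac{r}{2}(\beta^2)$) give
\begin{equation*}
r\,a_\beta(u)=\frac{((\xi-r\beta)^2)+2}{2}\le 1,
\end{equation*}
so $a_\beta(u)\le 1/r\le 1$. On the other hand $\Rea Z_{(\beta,\omega)}(u)= -r a_\beta(u)+\tfrac{r}{2}(\omega^2)-\text{(terms in }(\xi-r\beta))$; combining with $\Rea Z_{(\beta,\omega)}(u)\le0$ and the identity above bounds $((\xi-r\beta)^2)$ from below in terms of $r(\omega^2)$. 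Since $B$ is bounded, $(\omega^2)$, $|(\beta\cdot D)|$ for $D$ in a fixed basis, and $(\beta^2)$ are all bounded on $B$; so we obtain a bound $|((\xi-r\beta)^2)|\le C_1 r^2$ for a constant $C_1$ depending only on $B$, hence $\xi-r\beta$ ranges in a bounded region of the negative definite space $\omega^\perp$ of radius $O(r)$.

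The remaining point is to turn this into finiteness of $r$, and then of $\xi$ and $a$. Here I would use that $\omega^\perp$ varies only in a bounded family as $(\beta,\omega)$ ranges over $B$, and that $\NS(X)$ is a fixed lattice of signature $(1,\rho-1)$: writing $\xi=r\beta+v$ with $v\in\omega^\perp$ and $|(v^2)|\le C_1 r^2$, and using that the component of $\xi$ along $\omega$ is controlled by $(\xi\cdot\omega)=r(\beta\cdot\omega)$, we get $(\xi^2)=\tfrac{(\xi\cdot\omega)^2}{(\omega^2)}+(v^2)$, so $(\xi^2)$ grows at most like $C_2 r^2$. But $(\xi^2)=2ra-2$, so $a=\frac{(\xi^2)+2}{2r}$ has $|a|\le C_3 r$ as well. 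Now the only thing I have is that $(r,\xi,a)$ lies in a region growing linearly in $r$ — which by itself is not yet finite. To close the gap I would instead argue directly via the covering map $\Stab^\dagger(X)\to{\cal P}^+_0(X)$ and local finiteness of walls: for a single $(\beta,\omega)$ with $\omega$ ample the set of $u\in\Delta(X)$ with $\rk u>0$ and $Z_{(\beta,\omega)}(u)\in{\Bbb R}_{\le0}$ is finite because $\sigma_{(\beta,\omega)}$ satisfies the support property (so masses are bounded below away from $0$), and then a compactness/covering argument over $\overline B$ — exactly as in the proof of \cite[Lem. 11.1]{Br:3}, to which the statement refers — upgrades this to uniform finiteness over $B$. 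The main obstacle is precisely this last step: getting from the pointwise finiteness to a bound uniform over all of $B$, which requires the boundedness of $B$ together with continuity of $Z_{(\beta,\omega)}$ and the support property to prevent walls from accumulating; the linear-in-$r$ estimates above are what make the compactness argument effective, bounding $r$ (and hence $\xi,a$) on the closure $\overline B$.
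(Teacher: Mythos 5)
You have assembled all the ingredients of the correct (and quite short) argument, but you stop just before combining them, and the fallback you substitute is not carried out. Concretely: you correctly derive (a) $\xi-r\beta\in\omega^\perp$, which is negative definite, so $((\xi-r\beta)^2)\le 0$; (b) the identity $r\,a_\beta(u)=\tfrac{1}{2}\bigl(((\xi-r\beta)^2)-\langle u^2\rangle\bigr)=\tfrac{1}{2}\bigl(((\xi-r\beta)^2)+2\bigr)\le 1$; and (c) $\Rea Z_{(\beta,\omega)}(u)=\tfrac{r}{2}(\omega^2)-a_\beta(u)\le 0$, i.e.\ $a_\beta(u)\ge \tfrac{r}{2}(\omega^2)$. (Your displayed formula for $\Rea Z$ has a spurious factor of $r$ and spurious $(\xi-r\beta)$-terms, but the inequality you use from it is the right one.) Multiplying (c) by $r$ and feeding it into (b) gives $\tfrac{r^2(\omega^2)}{2}\le r\,a_\beta(u)\le 1$, i.e.\ $r^2(\omega^2)\le 2$. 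Since $(\omega^2)$ is bounded below on the bounded set $B$, this bounds $r$ \emph{absolutely}, not merely ``linearly''; then (b) pins $((\xi-r\beta)^2)$ into $[-2,0]$, so $\xi-r\beta$ lies in a bounded region of the negative definite hyperplane $\omega^\perp$ (uniformly over $B$, by a Hodge-index estimate), hence $\xi$ runs over finitely many lattice points, and $a$ is determined by $\langle u^2\rangle=-2$. This is exactly the computation inside Bridgeland's proof of \cite[Lem.~11.1]{Br:3} that the paper is citing.

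The genuine gap is therefore your closing step: having only the weaker ``$O(r)$'' bounds, you appeal to pointwise finiteness via the support property of $\sigma_{(\beta,\omega)}$ plus a compactness/covering argument over $\overline{B}$. That appeal is not a proof as written: the support property is a statement about classes of semi-stable objects, whereas $\Delta_B$ is a set of lattice vectors defined by a purely numerical condition (and indeed indexes the locus where the construction of $\sigma_{(\beta,\omega)}$ can fail), so invoking it here is close to circular; and the passage from pointwise to uniform finiteness is precisely the content you would still need to establish. None of this machinery is needed once you multiply inequality (c) by $r$ and compare with (b).
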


\begin{prop}\label{prop:limit2}
Let $\sigma_s$ $(s \geq 0)$ be a family of stability conditions
such that $Z_{\sigma_s}(\bullet)=
\langle e^{\beta_s+\sqrt{-1}\omega_s},\bullet \rangle$
and ${\cal O}_x$ is $\sigma_s$-stable for $s>0$.
Assume that $\beta_0 \in \NS(X)_{\Bbb Q}$,
$\omega_0 \in {\Bbb R}_{>0}H$, $H \in \NS(X)$ and
\begin{equation}\label{eq:Delta} 
\{ u \in \Delta(X) \mid
\rk u>0,
Z_{(\beta_0,\omega_0)}(u) \in {\Bbb R}_{ \leq 0} \} =\emptyset.
\end{equation}
Then 
$\sigma_0=\sigma_{(\beta_0,\omega_0)}=
({\cal A}_{(\beta_0,\omega_0)},Z_{(\beta_0,\omega_0)})$.
\end{prop}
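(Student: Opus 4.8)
The plan is to identify the stability condition $\sigma_0$, obtained as the limit of the family $\sigma_s$ with $\mathcal{O}_x$ stable for $s>0$, with the explicitly constructed $\sigma_{(\beta_0,\omega_0)}$. The strategy follows the pattern laid out in the \texttt{NB} block after the Preliminaries: reduce everything to showing that the heart of $\sigma_0$ equals $\mathcal{A}_{(\beta_0,\omega_0)}$, since the central charges already agree by hypothesis. First I would invoke Lemma~\ref{lem:Delta} together with hypothesis \eqref{eq:Delta} to see that, after shrinking the parameter interval, condition \eqref{eq:Delta} persists in a neighborhood $B$ of $(\beta_0,\omega_0)$; this lets me apply the machinery of Section~\ref{subsect:relation}. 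In particular the section $\mathfrak{s}:B\to\Stab(X)$ of \eqref{eq:section} exists with $\mathfrak{s}(\beta_0,\omega_0)=\sigma_{(\beta_0,\omega_0)}$, and by uniqueness of lifts along the covering $\Stab^\dagger(X)\to\mathcal{P}^+_0(X)$, the given family $\sigma_s$ for $s>0$ must coincide with $\mathfrak{s}$ applied to the corresponding parameters, provided I check $\sigma_s$ lies in the relevant connected component. The key point then is that $\sigma_0 = \mathfrak{s}(\beta_0,\omega_0)$ by continuity.

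The heart of the argument is then showing $\mathfrak{s}(\beta_0,\omega_0)=\sigma_{(\beta_0,\omega_0)}$, equivalently that $\mathcal{O}_x$ is $\sigma_0$-semistable of phase $1$ and that the heart is the tilt $\mathcal{A}_{(\beta_0,\omega_0)}$. Semistability of $\mathcal{O}_x$ at $s=0$ is automatic from stability for $s>0$ and closedness of the semistable locus under specialization (bounded mass plus the support property). For the heart, I would run the four-step scheme sketched in the \texttt{NB}: Step~1, every object of $\mathcal{P}_{\sigma_0}((0,1])$ is a two-term complex concentrated in degrees $-1,0$, proved by the $\Hom$-vanishing $\Hom(E[i],\mathcal{O}_x)=0$ for $i>0$ and $\Hom(\mathcal{O}_x,E[i])=0$ for $i<0$ coming from the phase bounds, with the delicate boundary case $\phi=1$ handled by comparing $d_i-r_i s$ for $s\to 0^-$ using the description of $\sigma_s$ and the fact that $H$ is ample (so the rank-zero degree-zero pieces are zero-dimensional and hence forced to vanish by the description of $\mathcal{P}_{\sigma_s}((0,1])$). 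Step~2, for a coherent sheaf $E$ one has $E\in\mathcal{P}_{\sigma_0}((-1,1])$ and the truncation triangle $D\to E\to F$ has $D,F$ coherent, so $\mathcal{T}:=\mathcal{P}_{\sigma_0}((0,1])\cap\Coh(X)$ and $\mathcal{F}:=\mathcal{P}_{\sigma_0}((-1,0])\cap\Coh(X)$ form a torsion pair whose tilt is $\mathcal{P}_{\sigma_0}((0,1])$ (Steps~2--3). Step~4 identifies this torsion pair with $(\mathcal{T}^*_{(\beta_0,\omega_0)},\mathcal{F}^*_{(\beta_0,\omega_0)})$ by testing against $\beta_0$-twisted stable sheaves: the sign conditions on $Z_{\sigma_0}$ translate into the $\deg_{\beta_0}/\chi_{\beta_0}$ inequalities defining $\mathcal{T}^*,\mathcal{F}^*$, and $\beta_0$-stability forces each twisted-stable sheaf into $\mathcal{T}$ or $\mathcal{F}$. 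Then Proposition~\ref{prop:A} gives $\mathcal{A}_{(\beta_0,\omega_0)}$ as the tilt, so $\sigma_0=\sigma_{(\beta_0,\omega_0)}$.

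One subtlety I would be careful about: the \texttt{NB} scheme assumes $H$ itself is ample, whereas the setting of subsection~\ref{subsect:perverse} allows $H$ to be only the pullback of an ample class, with $\mathfrak{C}$ a category of perverse coherent sheaves rather than $\Coh(X)$. So in the general case I would replace $\Coh(X)$ by $\mathfrak{C}$ throughout, use ${}^pH^i$ in place of $H^i$, invoke Remark~\ref{rem:ST} for well-definedness of $\mathfrak{C}$, and use that the zero-dimensional perverse objects (in the sense of Definition~\ref{defn:dim}) behave under the family $\sigma_s$ just as zero-dimensional sheaves do; the irreducible objects of $\mathfrak{C}$ play the role that $\mathcal{O}_x$ plays when $\pi$ is an isomorphism. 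Alternatively, if the proposition is only needed when $\omega_0\in\mathbb{R}_{>0}H$ with $H$ genuinely ample (so $\mathfrak{C}=\Coh(X)$), the \texttt{NB} scheme applies verbatim.

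The main obstacle is Step~1, specifically the $\phi=1$ boundary case: controlling the cohomology sheaves ${}^pH^{-1}(E)$ and ${}^pH^0(E)$ of a $\sigma_0$-stable object of phase exactly $1$. The argument has to split according to whether the phase $\phi_{\sigma_s}(E)$ approaches $1$ from above or below as $s\to 0^+$ (equivalently $s\to 0^-$ after the sign conventions), and in each case extract from the sign of $d_i - r_i s$ for small $s$ that $d_0=d_1=0$, then $r_1=0$, then that the offending cohomology object is zero-dimensional, then zero. Getting the inequalities and the limiting behavior of the phases exactly right — and making sure the "$0>s\gg-1$" type estimates are uniform over the relevant objects, which is where boundedness (Lemma~\ref{lem:Delta}, bounded mass of $\{\mathcal{O}_x\}$) re-enters — is the technical crux; the rest is bookkeeping with torsion pairs and Proposition~\ref{prop:A}.
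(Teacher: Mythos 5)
Your overall architecture is right: the paper itself notes that the statement follows from Proposition \ref{prop:limit} and the covering property but then gives the direct tilting argument, and your Steps 2--4 (every coherent sheaf lies in ${\cal P}_{\sigma_0}((-1,1])$, the induced torsion pair, identification with $({\cal T}^*_{(\beta_0,\omega_0)},{\cal F}^*_{(\beta_0,\omega_0)})$ via Proposition \ref{prop:A} and Definition \ref{defn:A}) match the paper's Steps 2--3. The genuine gap is in your Step 1 boundary case $\phi_{\sigma_0}(E)=1$ with $\phi_{\sigma_s}(E)>1$. The mechanism you describe --- compare $d_i-r_i s$ for small $s$, deduce $d_0=d_1=0$, then $r_1=0$, then that the offending cohomology object is $0$-dimensional, then zero --- is the one for a family of the special form $(\beta+sH,H)$ with $H$ ample (the commented-out scheme, and Proposition \ref{prop:limit3}). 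Here the family $(\beta_s,\omega_s)$ is arbitrary and $\omega_0$ is only nef and big, so $\Ima Z_{\sigma_s}$ is not of the form $(d_i-r_i s)(H^2)$ and you cannot conclude $r_1=0$: the potential destabilizer $H^{-1}(E[-1])$ is torsion free (by \cite[Prop.~10.3]{Br:3} applied at $s>0$), hence of positive rank if nonzero. The paper excludes it differently: the cohomology triangle forces $H^{-1}(E[-1])[1]\in{\cal P}_{\sigma_0}(0)$ and hence $H^{-1}(E[-1])\in{\cal P}_{\sigma_0}(-1)$, so $Z_{(\beta_0,\omega_0)}(H^{-1}(E[-1]))\in{\Bbb R}_{<0}$; subsheaves have nonpositive $\omega_0$-degree, so it is $\mu$-semi-stable with $\deg_{\beta_0}=0$; and Lemma \ref{lem:Delta2} shows that under \eqref{eq:Delta} no positive-rank $\mu$-semi-stable sheaf can have $Z_{(\beta_0,\omega_0)}\in{\Bbb R}_{<0}$. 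This is the only place the hypothesis \eqref{eq:Delta} does real work, and your proposal never connects \eqref{eq:Delta} to the exclusion of such a sheaf; you use it only to say the condition persists in a neighborhood.

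Two smaller points. First, in your opening reduction, uniqueness of lifts along $\Stab^{\dagger}(X)\to{\cal P}^+_0(X)$ requires the two lifts to agree at some point; the way to anchor them is that ${\cal O}_x$ being $\sigma_s$-stable with $\omega_s$ ample forces $\sigma_s=\sigma_{(\beta_s,\omega_s)}$ by Bridgeland's characterization, while ${\frak s}(\beta_s,\omega_s)=\sigma_{(\beta_s,\omega_s)}$ for small $s>0$ is exactly the content of Proposition \ref{prop:limit}, not a formality. Second, you need not replace $\Coh(X)$ by ${\frak C}$: the paper's proof works with ordinary cohomology and $\Coh(X)$ throughout, producing the torsion pair $({\cal T}^*_{(\beta_0,\omega_0)},{\cal F}^*_{(\beta_0,\omega_0)})$ of $\Coh(X)$ and concluding via Definition \ref{defn:A}; the perverse category enters only through the proof of Lemma \ref{lem:Delta2}.
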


\begin{proof}
Although the claim follows from Proposition \ref{prop:limit}
and the covering property of $\Stab^{\dagger}(X) \to 
{\cal P}_0^+(X)$,
we shall give a more direct argument.

(Step 1)
We note that
$\omega_s$ is ample for $s>0$ and $\omega_0$ is nef and big.
${\cal O}_x$ is $\sigma_0$-semi-stable.
We set $\phi_s:=\phi_{\sigma_s}$. 
Let $E$ be a $\sigma_0$-stable object with
$0<\phi_0<1$.
Then $H^i(E)=0$ for $i \ne -1,0$, since
$E$ is a $\sigma_s$-stable object of 
$0<\phi_s(E)<1$ for a small $s>0$ (\cite[Prop. 10.3]{Br:3}).
Let $E$ be a $\sigma_0$-stable object of 
$\phi_0(E)=1$.
Assume that $\phi_s(E)>1$ for $s>0$ and set $F:=E[-1]$.
Since $F \in {\cal P}_{\sigma_s}((0,1])$ for all small $s>0$,
$H^i(F)=0$ for $i \ne -1,0$ and $H^{-1}(F)$ is torsion free
(\cite[Prop. 10.3]{Br:3}).
Assume that $H^{-1}(F) \ne 0$, that is, $\rk H^{-1}(F)>0$.
Since $H^{-1}(F)[1] \in {\cal P}_{\sigma_s}((0,1])$,
$H^{-1}(F)[1] \in {\cal P}_{\sigma_0}([0,1])$.
We also have $H^0(F) \in   {\cal P}_{\sigma_0}([0,1])$.
Since $\phi_0(F)=0$,
the exact triangle 
$$
H^0(F)[-1] \to  H^{-1}(F)[1] \to F \to H^0(F)
$$
implies $H^{-1}(F)[1] \in {\cal P}_{\sigma_0}(0)$.
By the stability of $F$,
$H^0(F)[-1]  \in  {\cal P}_{\sigma_0}(0)$.
Thus $H^0(F) \in  {\cal P}_{\sigma_0}(1)$.
Then $H^{-1}(F) \in  {\cal P}_{\sigma_0}(-1)$.
In particular, 
$Z_{(\beta_0,\omega_0)}(H^{-1}(F))=
\rk H^{-1}(F) \frac{(\omega^2_0)}{2}-a_{\beta_0}(H^{-1}(F))<0$. 
Let $F_1$ be a subsheaf of $H^{-1}(F)$. Then
$(c_1(F_1(-\beta_0)) \cdot \omega_0) >0$
implies $(c_1(F_1(-\beta_s)) \cdot \omega_s) >0$
for $1 \gg s>0$, which contradicts with
the description of ${\cal P}_{\sigma_s}((0,1])$.
Therefore $(c_1(F_1(-\beta_0)) \cdot \omega_0)\leq 0$, which implies 
$H^{-1}(F)$ is $\mu$-semi-stable with respect to
$\omega_0$.
By Lemma \ref{lem:Delta2} below,
we conclude that $H^{-1}(F)=0$.
In particular, $H^i(E)=0$ for $i \ne -1$.
If $\phi_s(E) \leq 1$, then obviously $H^i(E)=0$ for $i \ne -1,0$.
Therefore $H^i(E)=0$ for $i \ne -1,0$.

(Step 2)
We take $E \in \Coh(X)$.
  For $A \in {\cal P}_{\sigma_0}(>1)$,
$H^i(A)=0$ for $i \geq 0$ so that $\Hom(A,E)=0$.
For $B \in {\cal P}_{\sigma_0}(\leq -1)$,
$H^i(B)=0$ for $i \leq 0$ so that $\Hom(E,B)=0$.
Hence $E \in {\cal P}_{\sigma_0}((-1,1])$.
For $E \in \Coh(X)$, we have a triangle
$$
D \to E \to F \to D[1]
$$
such that $D \in  {\cal P}_{\sigma_0}((0,1])$ and
$F \in  {\cal P}_{\sigma_0}((-1,0])$.
Then $H^i(D)=0$ for $i \ne -1,0$ and
$H^i(F)=0$ for $i \ne 0,1$.
Taking their cohomology, 
we see that $D,F \in \Coh(X)$.
We set 
\begin{equation}
{\cal T}:= {\cal P}_{\sigma_0}((0,1]) \cap \Coh(X),\;
{\cal F}:={\cal P}_{\sigma_0}((-1,0]) \cap \Coh(X).
\end{equation}
Then $({\cal T},{\cal F})$ is a torsion pair.
We show that the tilting is
${\cal P}_{\sigma_0}((0,1])$.
For $E \in {\cal P}_{\sigma_0}((0,1])$,
$H^{-1}(E) \in {\cal F}$ and $H^0(E) \in {\cal T}$.
Indeed for $F \in {\cal F}$,
$\Hom(E,F)=\Hom(H^{-1}(E)[2],F)=0$ implies $\Hom(H^0(E),F)=0$,
which shows $H^0(E) \in {\cal T}$.
For $T \in {\cal T}$, $\Hom(T[1],E)=\Hom(T[1],H^0(E)[-1])=0$
implies $\Hom(T,H^{-1}(E))=0$, which shows $H^{-1}(E) \in {\cal F}$.

\begin{NB}
We show that $H^0(E) \in {\frak C}$.
Let $H^0(E)\to F$ be a quotient sheaf such that
$F[1] \in {\frak C}$.
Assume that $F \ne 0$.
We note that $F$ is a torsion sheaf such that $(c_1(F),\omega_0)=0$
and $a_\beta(F)=\chi_\beta(F)<0$, and hence
$Z_{\sigma_0}(F) \in {\Bbb R}_{>0}$.
Since $({\cal T},{\cal F})$ is a torsion pair,
we get $S \in {\cal T}$, which implies that
$Z_{\sigma_0}(F) \in {\Bbb R}_{<0}$.
Therefore $H^0(E) \in {\frak C}$ and
$\pH^i(E)=0$ for $i \ne -1,0$.
\end{NB}

(Step 3)
Finally we shall prove that $({\cal T},{\cal F})=
({\cal T}_{(\beta_0,\omega_0)}^*,{\cal F}_{(\beta_0,\omega_0)}^*)$.
We note that 
$Z_{(\beta,\omega)}(F) \in {\Bbb H} \cup {\Bbb R}_{<0}$ for any 
$0 \ne F \in {\cal T}$.
Let $E$ be an element of ${\cal T}$.
Since $F \in {\cal T}$ for any quotient sheaf $F$ of $E$, 
we have $E \in  {\cal T}_{(\beta_0,\omega_0)}^*$.
Thus ${\cal T} \subset {\cal T}_{(\beta_0,\omega_0)}^*$.
We also have ${\cal F} \subset {\cal F}_{(\beta_0,\omega_0)}^*$.
Since $({\cal T},{\cal F})$ is a torsion pair,
the definition of 
${\cal T}_{(\beta_0,\omega_0)}^*,{\cal F}_{(\beta_0,\omega_0)}^*$
implies 
$({\cal T},{\cal F})=
({\cal T}_{(\beta_0,\omega_0)}^*,{\cal F}_{(\beta_0,\omega_0)}^*)$.
Hence 
$({\cal T}_{(\beta_0,\omega_0)}^*,{\cal F}_{(\beta_0,\omega_0)}^*)$ is a torsion pair of 
$\Coh(X)$ and
${\cal P}_{\sigma_0}((0,1])={\cal A}_{(\beta_0,\omega_0)}$
(cf. Definition \ref{defn:A}).
\end{proof}

\begin{cor}\label{cor:limit}
Assume that \eqref{eq:Delta} holds at $(\beta_0,\omega_0)=(\beta_0,tH)$.
Then there is a neighborhood $B$ of $(\beta_0,\omega_0)$ such that
${\frak s}(\beta,\omega)=\sigma_{(\beta,\omega)}$ for $(\beta,\omega) \in B$
such that $\omega \in \pi^*(\Amp(Y)_{\Bbb R})$ and $\beta \in \NS(X)_{\Bbb R}$.
\end{cor}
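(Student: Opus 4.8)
The plan is to reduce to Proposition~\ref{prop:limit} (which handles the ample directions) and Proposition~\ref{prop:limit2} (the nef-and-big directions), interpolating between them by a limit argument. First I would shrink $B$ so that \eqref{eq:Delta} holds at every point of $B$: by Lemma~\ref{lem:Delta} only finitely many $u\in\Delta(X)$ with $\rk u>0$ can satisfy $Z_{(\beta,\omega)}(u)\in{\Bbb R}_{\leq 0}$ for some $(\beta,\omega)\in B$, and for each of them the corresponding locus is closed and avoids $(\beta_0,\omega_0)$ by \eqref{eq:Delta}; the same Bogomolov-type finiteness lets one also avoid the walls $u^\perp$ with $\rk u\neq 0$, so that $Z_{(\beta,\omega)}\in{\cal P}_0^+(X)$ whenever $(\beta,\omega)\in B$ has $\omega$ ample, in which case $\sigma_{(\beta,\omega)}$ is the geometric stability condition (all ${\cal O}_x$ stable). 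Taking $B$ to be a ball, $B^+:=\{(\beta,\omega)\in B\mid\omega\in\Amp(X)_{\Bbb R}\}$ is nonempty (as $\omega_0=tH\in\overline{\Amp(X)_{\Bbb R}}$) and connected. Finally recall that $\sigma_{(\beta_0,\omega_0)}$ lies in $\Stab^{\dagger}(X)$ (\cite{MYY:2011:1}; it lies in the closure of the geometric chamber, and $\Stab^{\dagger}(X)$, being a connected component of $\Stab(X)$, is closed).

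Next I would show ${\frak s}(\beta,\omega)=\sigma_{(\beta,\omega)}$ for all $(\beta,\omega)\in B^+$. Since ${\frak s}$ is continuous, $B$ connected, and ${\frak s}(\beta_0,\omega_0)=\sigma_{(\beta_0,\omega_0)}\in\Stab^{\dagger}(X)$ by Proposition~\ref{prop:limit2}, we have ${\frak s}(B)\subset\Stab^{\dagger}(X)$. Over $B^+$ the map $(\beta,\omega)\mapsto Z_{(\beta,\omega)}$ takes values in ${\cal P}_0^+(X)$, so both $(\beta,\omega)\mapsto{\frak s}(\beta,\omega)$ and $(\beta,\omega)\mapsto\sigma_{(\beta,\omega)}$ (the latter continuous and valued in $U(X)$) are lifts of it along the covering $\Stab^{\dagger}(X)\to{\cal P}_0^+(X)$. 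By Proposition~\ref{prop:limit} these two lifts agree on the nonempty open subset $B_0'\subset B^+$ (the hypothesis of the lemma preceding Proposition~\ref{prop:limit} being ensured by \eqref{eq:Delta}, which through Proposition~\ref{prop:limit2} forces every stable factor of ${\cal O}_x$ with respect to $\sigma_{(\beta_0,\omega_0)}$ to have phase~$1$). By uniqueness of lifts over the connected base $B^+$, they agree on all of $B^+$.

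It remains to pass to the nef-and-big directions. Fix $(\beta_1,\omega_1)\in B$ with $\omega_1\in\pi^*(\Amp(Y)_{\Bbb R})$ and an ample class $A$. For $s>0$ small, $(\beta_1,\omega_1+sA)\in B^+$, hence $\sigma_s:={\frak s}(\beta_1,\omega_1+sA)=\sigma_{(\beta_1,\omega_1+sA)}$ is geometric and ${\cal O}_x$ is $\sigma_s$-stable. If moreover $\beta_1\in\NS(X)_{\Bbb Q}$ and $\omega_1\in{\Bbb R}_{>0}H_1$ for some $H_1\in\NS(X)$ that is the pull-back of an ample divisor on $Y$, then the family $(\sigma_s)_{s\geq 0}$, with $\sigma_0={\frak s}(\beta_1,\omega_1)$, satisfies the hypotheses of Proposition~\ref{prop:limit2} (using that \eqref{eq:Delta} holds at $(\beta_1,\omega_1)$ by the first step), so that ${\frak s}(\beta_1,\omega_1)=\sigma_{(\beta_1,\omega_1)}$. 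Since such points are dense in $\{(\beta,\omega)\in B\mid\omega\in\pi^*(\Amp(Y)_{\Bbb R})\}$, and both ${\frak s}$ and $(\beta,\omega)\mapsto\sigma_{(\beta,\omega)}$ are continuous on this stratum (the latter by the construction of \cite{MYY:2011:1}), the equality ${\frak s}(\beta,\omega)=\sigma_{(\beta,\omega)}$ propagates to the whole stratum, which is the assertion.

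The main obstacle is the behaviour on the boundary of the geometric chamber. At $(\beta_0,\omega_0)$, and more generally along the stratum $\omega\in\pi^*(\Amp(Y)_{\Bbb R})$, the central charge $Z_{(\beta,\omega)}$ lies on walls $u^\perp$ with $u\in\Delta(X)$ of rank~$0$ (the classes coming from the exceptional curves of $\pi$), so it falls outside ${\cal P}_0^+(X)$ and the covering map cannot be used directly there; this is why the identification must be routed through the explicit torsion-pair description of Proposition~\ref{prop:limit2} at the rational points of the stratum, and why the genuine continuity of the perverse family $\sigma_{(\beta,\omega)}$ up to the nef-and-big boundary --- rather than a purely formal deformation argument --- is the essential ingredient in the limiting step.
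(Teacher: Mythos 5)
Your treatment of the ample directions and of the rational points of the nef-and-big stratum is sound, and is essentially the route the paper takes (the paper itself notes that the identification on the ample part can be obtained either from the covering property of $\Stab^{\dagger}(X) \to {\cal P}_0^+(X)$, as you do, or directly): shrink $B$ so that $\Delta_B=\emptyset$ via Lemma \ref{lem:Delta}, conclude that ${\cal O}_x$ is ${\frak s}(\beta',\omega')$-stable for $\omega'$ ample, and apply Proposition \ref{prop:limit2} to the family ${\frak s}(\beta_1,\omega_1+sA)$ at rational $(\beta_1,\omega_1)$ with $\omega_1\in\pi^*(\Amp(Y)_{\Bbb Q})$.

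The gap is in your final step. To pass from rational to irrational points of the stratum you invoke continuity of $(\beta,\omega)\mapsto\sigma_{(\beta,\omega)}$ on $\{\omega\in\pi^*(\Amp(Y)_{\Bbb R})\}$, attributing it to ``the construction of \cite{MYY:2011:1}''. But that construction only produces $\sigma_{(\beta,\omega)}$ at rational points; for irrational $(\beta,\omega)$ the stability condition is defined (Definition \ref{defn:A}) only conditionally on $({\cal T}^*_{(\beta,\omega)},{\cal F}^*_{(\beta,\omega)})$ being a torsion pair, and the whole content of the corollary at such points is that this holds and that its tilt is the heart of ${\frak s}(\beta,\omega)$. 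Continuity of ${\frak s}$ only tells you that ${\frak s}(\beta,\omega)$ is the limit of the ${\frak s}(\beta_n,\omega_n)$; hearts of stability conditions do not pass to limits in any formal way, so this gives no information about the explicit description of ${\cal P}_{{\frak s}(\beta,\omega)}((0,1])$. Your closing paragraph even identifies this continuity as ``the essential ingredient'' --- but you assume it rather than prove it. The paper closes exactly this gap by a direct argument: it reduces, via Steps 2--3 of the proof of Proposition \ref{prop:limit2}, to showing $H^i(E)=0$ for $i\neq -1,0$ for every ${\frak s}(\beta,\omega)$-stable object $E$ of phase $1$, and proves this using openness of stability, perturbing to a nearby rational $(\beta',\omega')$ where either $\phi_{\sigma'}(E)<1$ (possible whenever $\rk E\neq 0$ or $c_1(E)\notin\pi^*(\Amp(Y))^{\perp}$) or else $\phi_{\sigma'}(E)=1$ at a rational point, where the first case of the proof already applies. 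Some argument of this kind is needed before your density step can be made to work.
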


\begin{proof}
We first assume that
 $\omega \in \pi^*(\Amp(Y)_{\Bbb Q})$ and $\beta \in \NS(X)_{\Bbb Q}$.
By Lemma \ref{lem:Delta} and \eqref{eq:Delta},
we may assume that $\Delta_B =\emptyset$.
For the family of stability conditions \eqref{eq:section},
${\cal O}_x$ are ${\frak s}(\beta',\omega')$-stable if $(\beta',\omega') \in B$
and $\omega' \in \Amp(X)_{\Bbb R}$. 

Then applying Proposition \ref{prop:limit2},
${\frak s}(\beta,\omega)=\sigma_{(\beta,\omega)}$.  

We next treat the general case.
We set $\sigma:={\frak s}(\beta,\omega)$.
By the proof of Proposition \ref{prop:limit2},
it is sufficient to
show that
$H^i(E)=0$ ($i \ne -1,0$) for
all $\sigma$-stable object $E$ with 
$\phi_\sigma(E)=1$.
Let $U$ be a neighborhood of $\sigma$ such that
$E$ is $\sigma'$-stable for all $\sigma' \in U$.
If $\rk E \ne 0$, then
there is $(\beta',\omega') \in 
\NS(X)_{\Bbb Q} \times \pi^*(\Amp(Y)_{\Bbb Q})$ 
such that $\sigma':={\frak s}(\beta',\omega') \in U$ and
$\phi_{{\frak s}(\beta',\omega')}(E)<1$.
Hence $H^i(E)=0$ for $i \ne -1,0$.
Assume that $\rk E=0$. If $c_1(E) \not \in \pi^*(\Amp(Y))^{\perp}$,
then we can take $(\beta',\omega')$ such that 
$\phi_{\sigma'}(E) < 1$, which implies
$H^i(E)=0$ for $i \ne -1,0$.
If $c_1(E)  \in \pi^*(\Amp(Y))^{\perp}$, 
then $\phi_{\sigma'}(E)=1$, which also implies
$H^i(E)=0$ for $i \ne -1,0$.
\end{proof}

\begin{lem}\label{lem:Delta2}
Assume that $\beta_0$ and $\omega_0$ are rational, 
and satisfy \eqref{eq:Delta}.
Then there is no $\mu$-semi-stable sheaf $E$ of $\rk E>0$ with respect to
$\omega_0$ such that $Z_{(\beta_0,\omega_0)}(E) \in {\Bbb R}_{<0}$.
 \end{lem}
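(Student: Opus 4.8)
The plan is to argue by contradiction: suppose $E$ is a $\mu$-semi-stable sheaf with $\rk E = r > 0$ and $Z_{(\beta_0,\omega_0)}(E) \in {\Bbb R}_{<0}$, and I would produce from it a Mukai vector $u \in \Delta(X)$ with $\rk u > 0$ and $Z_{(\beta_0,\omega_0)}(u) \in {\Bbb R}_{\leq 0}$, contradicting \eqref{eq:Delta}. The condition $Z_{(\beta_0,\omega_0)}(E) \in {\Bbb R}_{<0}$ unpacks to $\Ima Z_{(\beta_0,\omega_0)}(E) = (c_1(E(-\beta_0)) \cdot \omega_0) = 0$ (so $E(-\beta_0)$ has $\omega_0$-degree zero) together with $\Rea Z_{(\beta_0,\omega_0)}(E) = a_{\beta_0}(E) - r\frac{(\omega_0^2)}{2} < 0$, i.e. $a_{\beta_0}(E) < r\frac{(\omega_0^2)}{2}$. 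In terms of the Mukai pairing, writing $v = v(E)$ and $v_0 = e^{\beta_0}$ so that $\langle v_0, v\rangle = -a_{\beta_0}(E)$, the first condition says $v$ lies in the hyperplane $(\omega_0)^\perp$ relative to $\beta_0$, and the second bounds $\langle v_0, v\rangle$ from below.

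The key step is to run the standard argument that a $\mu$-semi-stable sheaf whose slope is "extremal" with respect to the discriminant must itself be rigid, or contain a rigid piece. Concretely, I would invoke the Bogomolov inequality: for a $\mu$-semi-stable sheaf $E$ with respect to $\omega_0$, the discriminant satisfies $\langle v(E)^2 \rangle \geq -2r^2$ roughly, but more precisely one has $\langle v(E)^2\rangle \geq -r^2 \cdot (\text{something})$; the sharper input is that a $\mu$-semi-stable sheaf with $\langle v(E)^2 \rangle < 0$ has Jordan--Hölder factors (in the $\mu$-stable sense) whose Mukai vectors are proportional, and at least one $\mu$-stable factor $E'$ has $\langle v(E')^2 \rangle \le -2$, with $\rk E' > 0$ and the same $\omega_0$-slope as $E$. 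Restricting attention to that factor, its Mukai vector $u = v(E')$ is in $\Delta(X)$ once we check $\langle u^2\rangle = -2$ (a $\mu$-stable sheaf is simple, hence $\hom(E',E') = 1$, and on a K3 $\langle u^2 \rangle = 2(\hom(E',E') - \ext^1 + \ldots) - 2 \geq -2$ forces $\langle u^2\rangle = -2$ when it is negative). Then $\rk u > 0$ and, because $E'$ has the same $\beta_0$-twisted $\omega_0$-slope zero as $E$ while $a_{\beta_0}$ behaves additively over the Jordan--Hölder filtration and $E'$ can be chosen to be the/a factor minimizing $a_{\beta_0}/\rk$, one gets $Z_{(\beta_0,\omega_0)}(u) \in {\Bbb R}_{\leq 0}$ — contradicting \eqref{eq:Delta}.

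The main obstacle I expect is bookkeeping around the twist by $\beta_0$ and making the slope/discriminant inequalities land exactly in the form $Z_{(\beta_0,\omega_0)}(u) \in {\Bbb R}_{\leq 0}$ rather than merely $\langle u^2\rangle = -2$ with the wrong sign of $\Rea Z$. One has to be careful that passing to a $\mu$-stable Jordan--Hölder factor preserves "$\omega_0$-degree zero" (it does, by definition of $\mu$-semi-stability, since all factors have equal slope) but could a priori worsen $\Rea Z$; the resolution is that $\sum_i \langle v_0, v(E_i)\rangle = \langle v_0, v(E)\rangle > 0$ over the factors $E_i$, so some factor has $\langle v_0, v(E_i)\rangle > 0$, i.e. $a_{\beta_0}(E_i) < 0 \le \rk E_i \frac{(\omega_0^2)}{2}$, which gives $\Rea Z_{(\beta_0,\omega_0)}(E_i) < 0$ and hence $Z_{(\beta_0,\omega_0)}(E_i) \in {\Bbb R}_{<0} \subset {\Bbb R}_{\leq 0}$; combined with $\langle v(E_i)^2\rangle = -2$ this is the desired contradiction. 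An alternative, cleaner route avoiding Jordan--Hölder: use that the rationality of $(\beta_0,\omega_0)$ and \eqref{eq:Delta} are exactly the hypotheses under which $\sigma_{(\beta_0,\omega_0)}$ is a genuine stability condition with $Z_{(\beta_0,\omega_0)}$ not vanishing on any $(-2)$-class, so any such $E$ would give a semistable object of phase $1$ whose central charge is negative real — but a vector with negative real central charge and $\langle v^2\rangle \ge -2$ forced to equal $-2$ — and then quote Lemma \ref{lem:Delta} / \cite[Lem. 11.1]{Br:3} directly.
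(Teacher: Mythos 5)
Your overall strategy --- extract from $E$ a single $\mu$-stable factor of positive rank whose Mukai vector lies in $\Delta(X)$ and has $Z_{(\beta_0,\omega_0)}\in{\Bbb R}_{\le 0}$, contradicting \eqref{eq:Delta} --- is viable and genuinely different from the paper's. The paper instead decomposes $E$ via the torsion pair attached to the perverse heart ${\frak C}$ associated to $\beta_0$: the piece $E_2\in S$ has $a_{\beta_0}(E_2)=\chi_{\beta_0}(E_2)\le 0$, hence contributes $Z\ge 0$, while the piece $E_1\in{\frak C}$ is filtered by $\beta_0$-twisted stable torsion-free objects $F$ with $\deg_{\beta_0}F=0$, each of which satisfies $Z_{(\beta_0,\omega_0)}(F)>0$ (by the Hodge index theorem if $\langle v(F)^2\rangle\ge 0$, by \eqref{eq:Delta} if $v(F)\in\Delta(X)$); summing gives $Z_{(\beta_0,\omega_0)}(E)>0$. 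The perverse category is used precisely to control the rank-zero contributions coming from curves contracted by $\omega_0$, which is only nef and big; your sheaf-theoretic filtration must also handle this, e.g.\ by saturating at each step so that every factor is torsion free of positive rank.

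As written, however, your argument has concrete errors. First, the sign of the central charge is flipped: with the paper's conventions, when $\deg_{\beta_0}(E)=0$ one has $Z_{(\beta_0,\omega_0)}(E)=\rk E\,\tfrac{(\omega_0^2)}{2}-a_{\beta_0}(E)$, so $Z\in{\Bbb R}_{<0}$ means $a_{\beta_0}(E)>\rk E\,\tfrac{(\omega_0^2)}{2}>0$, i.e.\ $\langle e^{\beta_0},v(E)\rangle<0$, not $>0$. Consequently your averaging step collapses: a factor with $\langle e^{\beta_0},v(E_i)\rangle>0$, i.e.\ $a_{\beta_0}(E_i)<0$, has $\Rea Z_{(\beta_0,\omega_0)}(E_i)>0$, the opposite of what you assert. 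The correct move is to average $Z$ itself (which is additive over the filtration) to find a factor with $Z_{(\beta_0,\omega_0)}(E_i)<0$. Second, you never actually prove that this factor is a $(-2)$-class: you produce ``some rigid factor'' from a vaguely stated Bogomolov-type argument and separately ``some factor with $Z\le 0$,'' but these must be shown to be the same object. The missing step is the Hodge index theorem applied to the $Z<0$ factor: $(c_1(E_i(-\beta_0))\cdot\omega_0)=0$ and $(\omega_0^2)>0$ give $(c_1(E_i(-\beta_0))^2)\le 0$, and combined with $a_{\beta_0}(E_i)>0$ and $\rk E_i>0$ this yields $\langle v(E_i)^2\rangle=(c_1(E_i(-\beta_0))^2)-2\rk E_i\,a_{\beta_0}(E_i)<0$; simplicity of the $\mu$-stable factor (which itself needs a word, since $\omega_0$ is not ample) then forces $\langle v(E_i)^2\rangle=-2$. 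Finally, your proposed ``cleaner alternative'' is circular: this lemma is an ingredient in the proof of Proposition \ref{prop:limit2}, which is what identifies $\sigma_{(\beta_0,\omega_0)}$ as the limiting stability condition, so that statement cannot be assumed here.
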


\begin{NB}
\begin{defn}
A coherent sheaf $E$ of $\rk E>0$ is $\mu$-semi-stable with respect to $H$,
if $(c_1(E_1),H) \leq \rk E_1 \frac{(c_1(E),H)}{\rk E}$
for all subsheaf $E_1$ of $E$.
\end{defn}
\end{NB}
\begin{proof}
Let $E$ be a $\mu$-semi-stable sheaf with $\deg_{\beta_0}(E)=0$.
\begin{NB}
We note that $(c_1(E),\omega_0)=\rk E(\beta,\omega_0)$
implies $\beta=c_1(E)/\rk E+D$, $D \in \omega_0^\perp$.
Hence perturbing $D$, we may assume that $D$ is rational, that is,
$\beta \in \NS(X)_{\Bbb Q}$.
\end{NB} 
Let ${\frak C}$ be a category of perverse coherent sheaves 
associated to $\beta_0$.
Then we have a decomposition 
$$
0 \to E_1 \to E \to E_2 \to 0  
$$
such that $E_1 \in {\frak C} \cap \Coh(X)$ and
$E_2 \in {\frak C}[-1] \cap \Coh(X)$.
Then $a_{\beta_0}(E_2)=\chi_{\beta_0}(E_2) \leq 0$, and hence
$a_{\beta_0}(E) \leq a_{\beta_0}(E_1)$.
In particular,
$Z_{(\beta_0,\omega_0)}(E) \geq 
Z_{(\beta_0,\omega_0)}(E_1) (\in {\Bbb R}_{ \leq 0})$.
Since $E$ is $\mu$-semi-stable with respect to
$\omega_0$,
$E_1$ is a $\mu$-semi-stable perverse coherent sheaf.
In ${\frak C}$, $E_1$ s generated by
$\beta$-twisted stable torsion free objects $F$
with $\deg_{\beta_0}(F)=0$.
If $\langle v(F)^2 \rangle \geq 0$, then
$a_{\beta_0}(F) \leq 0$, and hence 
$Z_{(\beta_,\omega_0)}(F) \in {\Bbb R}_{>0}$.
If $v(F) \in \Delta(X)$, then our assumption implies
$Z_{(\beta_,\omega_0)}(F) \in {\Bbb R}_{>0}$.
Therefore $Z_{(\beta,\omega)}(E_1) \in {\Bbb R}_{ > 0}$.
\end{proof}

\begin{prop}\label{prop:limit3}
Let $\sigma_s$ ($0 \geq s \gg -1$) be a family of stability conditions
such that $Z_{\sigma_s}(\bullet)=
\langle e^{(\beta+sH)+iH},\bullet \rangle$. 
Assume that irreducible objects $A \in {\frak C}$ are $\sigma_s$-semi-stable
with $\phi_s(A)=1$ 
and $\sigma_s$-stable for $s<0$.
Then 
$\sigma_0=({\cal A}_{(\beta,H)},Z_{(\beta,H)})$.
\end{prop}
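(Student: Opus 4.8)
The plan is to mimic the three-step structure of the proof of Proposition~\ref{prop:limit2}, replacing ``$\mathcal{O}_x$ stable'' by ``irreducible objects of $\frak C$ stable with phase $1$'', and $\Coh(X)$ by the category $\frak C$ of perverse coherent sheaves attached to $\beta$. Recall that $\frak C$ is the heart of a bounded $t$-structure on ${\bf D}(X)$, with cohomology functors $\pH^i$, and that $Z_{(\beta,H)}(A)=-a_\beta(A)\in{\Bbb R}_{<0}$ for every $\sigma_0$-stable $A$ that comes from an irreducible object of $\frak C$ (these have $\deg_\beta=0$, and the analogue of \eqref{eq:Delta} — which holds here since $\langle v(A)^2\rangle=-2$ would put $A$ in $\Delta(X)$ — forces $a_\beta(A)<0$).

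\textbf{Step 1.} I would first show that $\pH^i(E)=0$ for $i\neq -1,0$ whenever $E$ is $\sigma_0$-semi-stable. For $0<\phi_{\sigma_0}(E)<1$ this is immediate: $E$ is $\sigma_s$-stable with $0<\phi_s(E)<1$ for small $s<0$ by \cite[Prop.\ 10.3]{Br:3}, hence lies in a tilt of $\frak C$ and so $\pH^i(E)=0$ for $i\neq -1,0$. For $\phi_{\sigma_0}(E)=1$, if $\phi_s(E)>1$ for $s<0$ set $F:=E[-1]$; then $F\in{\cal P}_{\sigma_s}((0,1])$, so $\pH^i(F)=0$ for $i\neq -1,0$ and $\pH^{-1}(F)$ is torsion free in $\frak C$. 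Running the exact-triangle argument of Proposition~\ref{prop:limit2} inside $\frak C$ (using $\phi_{\sigma_0}(F)=0$), I get $\pH^{-1}(F)\in{\cal P}_{\sigma_0}(-1)$, and then $Z_{(\beta,H)}(\pH^{-1}(F))\in{\Bbb R}_{<0}$ forces $\pH^{-1}(F)$ to be a $\mu$-semi-stable torsion-free perverse sheaf with $\deg_\beta=0$. The analogue of Lemma~\ref{lem:Delta2} in $\frak C$ — whose proof goes through verbatim, since that lemma is really a statement about $\beta$-twisted stable objects of $\frak C$ — then gives $\pH^{-1}(F)=0$, a contradiction; the case $\phi_s(E)\le 1$ is trivial. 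Thus every $\sigma_0$-semi-stable object is a two-term complex for the $\frak C$-$t$-structure.

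\textbf{Step 2.} For $E\in\frak C$: any $A\in{\cal P}_{\sigma_0}(>1)$ has $\pH^i(A)=0$ for $i\ge 0$, hence $\Hom(A,E)=0$, and dually $\Hom(E,B)=0$ for $B\in{\cal P}_{\sigma_0}(\le -1)$; so $E\in{\cal P}_{\sigma_0}((-1,1])$. The HN-type triangle $D\to E\to F$ with $D\in{\cal P}_{\sigma_0}((0,1])$, $F\in{\cal P}_{\sigma_0}((-1,0])$ then has $\pH^i(D)=0$ ($i\neq -1,0$), $\pH^i(F)=0$ ($i\neq 0,1$), and taking $\frak C$-cohomology shows $D,F\in\frak C$. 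Hence $\mathcal{T}:={\cal P}_{\sigma_0}((0,1])\cap\frak C$ and $\mathcal{F}:={\cal P}_{\sigma_0}((-1,0])\cap\frak C$ form a torsion pair of $\frak C$, with tilt ${\cal P}_{\sigma_0}((0,1])$ (the argument that $\pH^{-1}(E)\in\mathcal{F}$, $\pH^0(E)\in\mathcal{T}$ for $E$ in the tilt is the same $\Hom$-vanishing computation as in Proposition~\ref{prop:limit2}).

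\textbf{Step 3.} Finally, identify $(\mathcal{T},\mathcal{F})$ with $(\mathcal{T}_{(\beta,H)},\mathcal{F}_{(\beta,H)})$. As in Proposition~\ref{prop:limit2}, for $0\neq F\in\mathcal{T}$ one has $Z_{(\beta,H)}(F)\in{\Bbb H}\cup{\Bbb R}_{<0}$ (using semistability and the sign of $Z_{(\beta,H)}$ on irreducible objects of $\frak C$), so every quotient in $\frak C$ of an object of $\mathcal{T}$ lies in $\mathcal{T}$, giving $\mathcal{T}\subset\mathcal{T}_{(\beta,H)}$, and dually $\mathcal{F}\subset\mathcal{F}_{(\beta,H)}$; since both are torsion pairs of $\frak C$ the inclusions are equalities. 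Therefore ${\cal P}_{\sigma_0}((0,1])=\mathcal{A}_{(\beta,H)}$ and, as $Z_{\sigma_0}=Z_{(\beta,H)}$ by hypothesis, $\sigma_0=({\cal A}_{(\beta,H)},Z_{(\beta,H)})$.

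\textbf{Main obstacle.} The delicate point is Step~1: showing $\pH^{-1}(F)=0$ when $\phi_s(E)>1$. One must carry out the exact-triangle/phase bookkeeping inside $\frak C$ rather than $\Coh(X)$, and then invoke a perverse-coherent version of Lemma~\ref{lem:Delta2} for $\mu$-semi-stable torsion-free objects of $\frak C$ with $\deg_\beta=0$. I expect that Lemma~\ref{lem:Delta2} as stated already suffices, because its proof factors through $\beta$-twisted stable objects of $\frak C$ and the vanishing \eqref{eq:Delta}; but verifying that the notions of $\mu$-semistability and $\deg_\beta$ used there agree with the ones produced by the triangle argument (Definition~\ref{defn:stability}(2)) is where the real care is needed.
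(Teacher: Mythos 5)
Your Steps 2 and 3 match what the paper does (it compresses them into ``it is easy to see''), but Step 1 --- which you correctly identify as the delicate point --- contains a genuine gap. You propose to conclude $\pH^{-1}(F)=0$ by showing it is a $\mu$-semi-stable torsion-free object with $\deg_\beta=0$ and $Z_{(\beta,H)}(\pH^{-1}(F))\in{\Bbb R}_{<0}$, and then invoking an analogue of Lemma \ref{lem:Delta2}. But Lemma \ref{lem:Delta2} is proved under the hypothesis \eqref{eq:Delta}, which is \emph{not} assumed in Proposition \ref{prop:limit3} and typically fails in its intended applications: this proposition is precisely the device used (see the discussion after \eqref{eq:Plane}) to define $\sigma_{(\gamma+sH,tH)}$ as a limit at points where there \emph{is} a class $u\in\Delta(X)$ with $\rk u>0$ and $Z_{(\gamma+sH,tH)}(u)\in{\Bbb R}_{<0}$. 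At such points a spherical bundle $G$ with $\deg_\beta(G)=0$ and $Z_{(\beta,H)}(G)<0$ actually exists, so the statement you want to import is false without \eqref{eq:Delta}. Your parenthetical that ``the analogue of \eqref{eq:Delta} holds here since $\langle v(A)^2\rangle=-2$ would put $A$ in $\Delta(X)$'' does not repair this: \eqref{eq:Delta} concerns positive-rank $(-2)$-classes, whereas irreducible objects of ${\frak C}$ have rank $0$, so the hypothesis on the $A$'s says nothing about it.

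The paper avoids this entirely by exploiting the specific direction of the deformation, $\beta_s=\beta+sH$ with $\omega=H$ fixed, which is where this family differs from the one in Proposition \ref{prop:limit2}. Writing $v(\pH^{-i}(F))=e^\beta(r_i+d_iH+D_i+a_i\varrho_X)$, the phase condition $\phi_{\sigma_0}(F)=0$ gives $d_0=d_1$, while membership of $\pH^0(F)$ in ${\cal T}_{(\beta+sH,H)}$ and of $\pH^{-1}(F)$ in ${\cal F}_{(\beta+sH,H)}$ for $s<0$ gives $d_0-r_0s\geq 0$ and $d_1-r_1s\leq 0$; letting $s\to 0^-$ forces $d_0=d_1=0$, and then $-r_1s\leq 0$ with $s<0$ forces $r_1\leq 0$, hence $r_1=0$. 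Thus $\pH^{-1}(F)$ is a torsion object of ${\frak C}$ lying in ${\cal F}_{(\beta+sH,H)}$, whose objects are torsion free, so $\pH^{-1}(F)=0$. This is a purely numerical argument with no Bogomolov-type or $\Delta(X)$-type input; the inequality $\deg_\beta(\pH^{-1}(F))\leq s\,\rk\pH^{-1}(F)\,(H^2)$ is exactly the information your reduction to ``$\mu$-semi-stable with $\deg_\beta=0$'' throws away. You should replace your appeal to Lemma \ref{lem:Delta2} by this rank computation.
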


\begin{proof}
We first prove that
$\pH^i(E)=0$ $(i \ne -1,0)$ for all 
$E \in {\cal P}_{\sigma_0}((0,1])$.
Let $E$ be a $\sigma_0$-stable object of 
${\cal P}_{\sigma_0}(\phi)$ with $0<\phi<1$.
Let $A$ be an irreducible object of ${\frak C}$.
Then $\Hom(E[i],A)=0$ for $i>0$ by
$\phi_{\sigma_0}(E[i])=\phi+i>1$.
Hence $\pH^i(E)=0$ for $i>0$.
$\Hom(E[i],A)=\Hom(A,E[i+2])^{\vee}=0$
for $i \leq -2$ by $\phi_{\sigma_0}(E[i+2])=\phi+i+2<1$.
Hence $\pH^i(E)=0$ for $i \ne -1,0$.

Let $E$ be a $\sigma_0$-stable object of 
${\cal P}_{\sigma_0}(\phi)$ with $\phi=1$.
Assume that $\phi_{\sigma_s}(E)>1$ for $s<0$.
We set $F:=E[-1]$.
Then $\pH^i(F)=0$ for $i \ne -1,0$.
We set 
\begin{equation}\label{eq:v_i}
v_i:=v(\pH^{-i}(E))=e^\beta(r_i+d_i H+D_i+a_i \varrho_X),\;D_i \in H^\perp, \;(i=0,1).
\end{equation}
%
Since $\phi_{\sigma_0}(F)=0$, $d_0-d_1=0$.
By the description of $\sigma_s$ $(s<0)$,
$d_0-r_0 s \geq 0$ and $d_1-r_1 s \leq 0$ for $0>s \gg -1$.
\begin{NB}
$d_0-r_0 s=0$ if $\pH^0(F)$ is 0-dimensional.
\end{NB}
Hence $d_0 \geq 0$ and $d_1 \leq 0$, which implies $d_0=d_1=0$.
Then $d_1-r_1 s \leq 0$ and $s<0$ implies $r_1=0$.
Hence $\pH^{-1}(F)$ is a torsion object of ${\frak C}$.
By the description of ${\cal P}_{\sigma_s}((0,1])$,
$\pH^{-1}(F)=0$.
If $\phi_{\sigma_s}(E) \leq 1$ for $s<0$, then 
we also have $\pH^i(E)=0$ for $i \ne -1,0$.

Then it is easy to see that
${\cal T}:={\cal P}_{\sigma_0}((0,1]) \cap {\frak C}$ and
${\cal F}:={\cal P}_{\sigma_0}((-1,0]) \cap {\frak C}$
is a torsion pair of ${\frak C}$,
${\cal P}_{\sigma_0}((0,1])$ is the tilting and
$({\cal T},{\cal F})=({\cal T}_{(\beta,H)},{\cal F}_{(\beta,H)})$.

\begin{NB}
We define $v_i$ as in \eqref{eq:v_i}.
Since $\phi_{\sigma_0}(E)=0$, $d_0-d_1=0$.
By the description of $\sigma_s$ $(s<0)$,
$d_0-r_0 s \geq 0$ and $d_1-r_1 s \leq 0$ for $0>s \gg -1$.
Hence $d_0 \geq 0$ and $d_1 \leq 0$, which implies $d_0=d_1=0$.
Then $d_1-r_1 s \leq 0$ and $s<0$ implies $r_1=0$.
By $r_1=d_1=0$, $\pH^{-1}(E)$ is a 0-dimensional object of ${\frak C}$
or 0.
By the description of ${\cal P}_{\sigma_s}((0,1])$,
$\pH^{-1}(E)$ is a torsion free object of
${\frak C}$, which shows
$\pH^{-1}(E)=0$.
Therefore the claim holds.

(Step 2).
Let $E$ be an object of ${\frak C}$.
For $A \in {\cal P}_{\sigma_0}(>1)$,
$\pH^i(A)=0$ for $i \geq 0$ so that $\Hom(A,E)=0$.
For $B \in {\cal P}_{\sigma_0}(\leq -1)$,
$\pH^i(B)=0$ for $i \leq 0$ so that $\Hom(E,B)=0$.
Hence $E \in {\cal P}_{\sigma_0}((-1,1])$.
For $E \in {\frak C}$, we have a triangle
$$
D \to E \to F \to D[1]
$$
such that $D \in  {\cal P}_{\sigma_0}((0,1])$ and
$F \in  {\cal P}_{\sigma_0}((-1,0])$.
Then $\pH^i(D)=0$ for $i \ne -1,0$ and
$\pH^i(F)=0$ for $i \ne 0,1$.
Taking their cohomogy, 
we see that $D,F \in {\frak C}$.

(Step 3).
Then ${\cal T}:={\cal P}_{\sigma_0}((0,1]) \cap {\frak C}$ and
${\cal F}:={\cal P}_{\sigma_0}((-1,0]) \cap {\frak C}$
is a torsion pair and ${\cal P}_{\sigma_0}((0,1])$ is the tilting.

(Step 4).
Let $E$ be a $\beta$-twisted stable object of ${\frak C}$.
Then we have an exact sequence
$$
0 \to E_1 \to E \to E_2 \to 0
$$ 
such that $E_1 \in {\cal T}$ and $E_2 \in {\cal F}$.
If $E_1 \ne 0$,
$Z_{\sigma_0}(E_1) \in {\Bbb H} \cup {\Bbb R}_{<0}$,
which implies $\deg_\beta(E_1)>0$ or $\deg_\beta(E_1)=0$ and
$a_\beta(E_1)>\frac{(H^2)}{2}\rk E_1$.
If $E_2 \ne 0$, then
we also have 
 $-Z_{\sigma_0}(E_2) \in {\Bbb H} \cup {\Bbb R}_{<0}$,
which implies $\deg_\beta(E_2)<0$ or $\deg_\beta(E_2)=0$ and
$a_\beta(E_2)<\frac{(H^2)}{2}\rk E_2$. 
If $E$ is a torsion object of ${\frak C}$, then
$E_2$ is also a torsion object of ${\frak C}$.
Therefore $E_2=0$, which implies
$E \in {\cal T}$.
Since a torsionn objects is generated by $\beta$-twisted stable 
torsion objects,
${\cal T}$ contains all torsion objects of ${\frak C}$,
which also implies all objects of ${\cal F}$ are torsion free.
 
Assume that $E$ is torsion free.
If $E_1$ and $E_2$ are not zero, then
we see that $E$ is not $\beta$-stable.
Therefore $E \in {\cal T}$ or $E \in {\cal F}$.
In particular, $\sigma_0=({\cal A}_{(\beta,H)},Z_{(\beta,H)})$.
\end{NB}
\end{proof}

\subsection{A family of stability conditions parametrized by a half plane}
\label{subsect:half-plane}

We consider stability conditions
\begin{equation}\label{eq:Plane}
P_{\gamma,H}:=\{\sigma_{(\gamma+sH,tH)} 
\mid s \in {\Bbb R}, t \in {\Bbb R}_{>0},\; 
Z_{(\gamma+sH,tH)}(u) \ne 0\; (u \in \Delta(X)) \}.
\end{equation}
To be more precise,
$\sigma_{(\gamma+sH,tH)}$ is well-defined on a simply connected open subset
such that 
$Z_{(\gamma+sH,tH)}(u) \not \in {\Bbb R}_{\leq 0}$
for all $u \in \Delta(X)$ with $\rk u>0$.
We shall regard $\sigma_{(\gamma+sH,tH)}$ as a limit 
as in Proposition \ref{prop:limit3} if  
$Z_{(\gamma+sH,tH)}(u) \in {\Bbb R}_{<0}$
for a Mukai vector $u \in \Delta(X)$ with $\rk u>0$.

For a Mukai vector $v$,
let $U_v$ be a chamber in $P_{\gamma,H}$.
For a wall $W$ in $\Stab(X)$,  we have
\begin{enumerate}
\item[(1)] $P_{\gamma,H} \subset W$ or
\item[(2)] $W$ and $P_{\gamma,H}$ intersect properly and 
$W\cap   P_{\gamma,H}$ is a wall for $v$.
\end{enumerate}
Hence if ${\cal C}_v$ is a chamber in $\Stab(X)$
with $\overline{{\cal C}_v} \cap U_v \ne \emptyset$,
then $U_v \subset  \overline{{\cal C}_v}$.
Let 
$$
v_1=e^\gamma(r_1+d_1 H+D_1+a_1 \varrho_X),\; D_1 \in H^\perp
$$
 be a Mukai vector
which defines a wall $W$ for $v=e^\gamma(r+dH+D+a \varrho_X)$
($D \in H^\perp$).
Then 
$$
P_{\gamma,H} \subset W \Longleftrightarrow
(r_1,d_1,a_1) \in {\Bbb Q}(r,d,a).
$$
For $v=\varrho_X$,
there is a chamber
$$
{\cal C}_\varrho:=\{(\beta,\omega) \mid ( C \cdot \omega)>0 
\text{ for any exceptional curve $C$} \}
=\{(\beta,\omega) \mid \omega \in \Amp(X)_{\Bbb R} \}
$$
in $\NS(X)_{\Bbb R} \times P^+(X)_{\Bbb R}$. 
For $v_0:=r_0 e^\gamma$,
let $U_{v_0}$ be a chamber in $P_{\gamma,H}$.
We take $(\beta_0,\omega_0):=(\gamma+s_0 H,t_0 H) \in U_{v_0}$.
We take $(\beta,\omega)$ in an adjacent chamber ${\cal C}$
of $U_{v_0}$, and let ${\cal E} \in {\bf D}(X \times X')$ 
be a universal family (as a twisted object in general)
of $\sigma_{(\beta,\omega)}$-stable objects with
the Mukai vector $v_0$, where $X'$ is the coarse moduli scheme of
$\sigma_{(\beta,\omega)}$-stable objects.
Let 
$$
\Phi:=\Phi_{X \to X'}^{{\cal E}^{\vee}[2]}:{\bf D}(X) \to {\bf D}(X')
$$
be a Fourier-Mukai transform defined by
\begin{equation}\label{eq:FM}
\Phi(E):={\bf R}p_{X'*}(p_X^*(E) \otimes {\cal E}^{\vee}[2]),
\end{equation}
where $p_X,p_{X'}$ are projections from $X \times X'$ to $X$ and $X'$ respectively.
$\Phi$ induces an isomorphism
\begin{equation}
\begin{matrix}
\Phi& :\NS(X)_{\Bbb R} \times P^+(X)_{\Bbb R} 
& \to & \NS(X')_{\Bbb R} \times P^+(X')_{\Bbb R}\\
& (\beta,\omega) & \mapsto & (\beta', \omega'),
\end{matrix}
\end{equation}
where 
$(\beta',\omega')$ is defined by
\begin{equation}\label{eq:Phi}
e^{\beta'+i \omega'}
:=\frac{\Phi(e^{\beta+i \omega})}
{-\langle e^{\beta+i \omega},v_0 \rangle}.
\end{equation}
For $(\beta,\omega) \in {\cal C}$,
$\omega'$ is ample and 
$\Phi(\sigma_{(\beta,\omega)})\equiv \sigma_{(\beta',\omega')} \mod
\widetilde{GL}_2^+({\Bbb R})$.
For $(\beta,\omega)=(\gamma+sH,tH)$,
we set $(\beta',\omega'):=(\gamma'+s' H',t'H')$,
where $\Phi(\varrho_X)=r_0 e^{\gamma'}$ and 
$\Phi(e^\gamma H)=-e^{\gamma'} H'$.
Then $\Phi(U_{v_0})$ is a chamber in $P_{\gamma',H'}$.
Since $\Phi(\overline{\cal C}) \supset \Phi(U_{v_0}) \ni (\beta_0',\omega_0')$,
there is a category of perverse coherent sheaves ${\frak C}'$
associated to a contraction $\pi':X' \to Y'$ by $H'$, and
$\sigma_{(\beta',\omega')}$ $((\beta,\omega) \in U_{v_0})$ 
is the stability condition 
in subsection \ref{subsect:perverse}
(Proposition \ref{prop:limit2}).

We set $L:= (e^\gamma)^\perp \cap 
(H e^{\gamma})^\perp \cap v(K(X))$.
Then $L$ is negative semi-definite.
Let 
\begin{equation}\label{eq:decomp}
r_0 e^\gamma=\sum_i n_i u_i,\;
(n_i \in {\Bbb Z}_{>0},\; u_i \in L)
\end{equation}
 be a decomposition
of $r_0 e^\gamma$ in $L$ such that 
\begin{enumerate}
\item
$u_i$ are indecomposable Mukai vectors with
$\langle u_i^2 \rangle=-2$,
\item
$\rk u_i/r_0>0$ and $\sum_i n_i \rk u_i/r_0=1$,
\end{enumerate}
where 
$u_i$ is indecomposable, if there is no
decomposition
$u_i=\sum_j n_{ij}u_{ij}$ $(n_{ij} \in {\Bbb Z}_{>0}, u_{ij} \in L)$
such that $\langle u_{ij}^2 \rangle=-2$ and
$\rk u_{ij}/r_0>0$.
By \cite[Lem. A.1.1]{PerverseI},
the sublattice $\oplus_i {\Bbb Z}u_i$ is of type 
$\widetilde{A},\widetilde{D},\widetilde{E}$.
\begin{NB}
$d_{\gamma+sH}(r_0 e^\gamma)=-r_0 s(H^2)$ and
$d_{\gamma+sH}(u_i)=-\rk u_i s(H^2)$ have the same sign.
\end{NB}

Then there are $\sigma_{(\beta_0,\omega_0)}$-semi-stable objects $E_i$
with $v(E_i)=u_i$.
Since $u_i$ are indecomposable,
$E_i$ are $\sigma_{(\beta_0,\omega_0)}$-stable.

\begin{NB}
Then there is a properly $\sigma_{(\gamma+sH,tH)}$-semi-stable
object ${\cal E}_{|X \times \{ x' \}}$, and 
for any properly $\sigma_{(\gamma+sH,tH)}$-semi-stable
object ${\cal E}_{|X \times \{ x' \}}$,
the $S$-equivalence class is $\oplus_i E_i^{\oplus a_i}$.
\end{NB}

\begin{lem}
Let $E$ be a $\sigma_{(\beta_0,\omega_0)}$-stable
object with $\phi_{(\beta_0,\omega_0)}(E)=
\phi_{(\beta_0,\omega_0)}(r_0 e^\gamma)$ and
$v(E) \ne r_0 e^\gamma$.
Assume that $E$ satisfies $\langle v(E),e^{\gamma} \rangle=0$.
Then $\langle v(E)^2 \rangle=-2$ and
$E$ is a stable factor of  ${\cal E}_{|\{ x' \} \times X}$.
\end{lem}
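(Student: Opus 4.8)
The claim concerns a $\sigma_{(\beta_0,\omega_0)}$-stable object $E$ with the same phase as $r_0e^\gamma$ and $\langle v(E),e^\gamma\rangle = 0$, and asserts two things: $\langle v(E)^2\rangle = -2$, and $E$ appears as a stable factor of the (generically) properly semistable object ${\cal E}_{|\{x'\}\times X}$. The first step is to locate $v(E)$ inside the lattice $L = (e^\gamma)^\perp \cap (He^\gamma)^\perp \cap v(K(X))$. Indeed, the hypothesis $\langle v(E),e^\gamma\rangle = 0$ puts $v(E)$ in $(e^\gamma)^\perp$; and the equal-phase condition $\phi_{(\beta_0,\omega_0)}(E) = \phi_{(\beta_0,\omega_0)}(r_0e^\gamma)$ together with $Z_{(\beta_0,\omega_0)}(\bullet) = \langle e^{\beta_0+i\omega_0},\bullet\rangle$, $\beta_0 = \gamma+s_0H$, $\omega_0 = t_0H$, forces the imaginary part $\langle e^{\gamma}H,v(E)\rangle\cdot t_0$ (up to the exponential bookkeeping that kills the $e^{s_0H}$ twist modulo $e^\gamma$-multiples) to vanish as well, so $v(E)\in (He^\gamma)^\perp$ too. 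Hence $v(E)\in L$.

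**Getting $\langle v(E)^2\rangle = -2$.** Since $L$ is negative semidefinite (stated in the excerpt), $\langle v(E)^2\rangle \le 0$, so we only need to rule out $\langle v(E)^2\rangle = 0$ and show it is exactly $-2$ and not $\le -4$. For the upper bound: an $e^{\beta_0}$-twisted stable object (which $E$ is, being $\sigma_{(\beta_0,\omega_0)}$-stable of this phase, via Proposition~\ref{prop:limit2} identifying the heart with ${\cal A}_{(\beta_0,\omega_0)}$ and the perverse-sheaf picture of subsection~\ref{subsect:perverse}) satisfies the Bogomolov-type inequality $\langle v(E)^2\rangle \ge -2$ — this is the "Bogomolov inequality" invoked for the support property right after \eqref{eq:section}, applied in its sharp form for stable objects. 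Combining $-2 \le \langle v(E)^2\rangle \le 0$ with $v(E)\in L$ a lattice vector forces $\langle v(E)^2\rangle \in \{-2, 0\}$; the value $0$ is excluded because a norm-zero vector in the negative semidefinite lattice $L$ would be proportional to the radical, and one checks (using $\rk v(E)/r_0 > 0$, which follows from stability and the sign computation $d_{\gamma+sH} = -\rk\cdot s(H^2)$ recorded in the NB) that this forces $v(E)\in {\Bbb Q}_{>0}\,r_0e^\gamma$, contradicting $v(E)\ne r_0e^\gamma$ unless equality — and in fact the indecomposability set-up via \eqref{eq:decomp} shows the only isotropic-modulo-radical class available is $r_0e^\gamma$ itself. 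So $\langle v(E)^2\rangle = -2$.

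**$E$ is a stable factor of ${\cal E}_{|\{x'\}\times X}$.** Now $v(E)\in L$ with $\langle v(E)^2\rangle = -2$ and $\rk v(E)/r_0 > 0$. By the decomposition \eqref{eq:decomp} and the classification of $\oplus_i{\Bbb Z}u_i$ as an affine $\widetilde A\widetilde D\widetilde E$ root lattice (citing \cite[Lem.~A.1.1]{PerverseI}), every such $(-2)$-class with positive normalized rank is a (positive) root, hence of the form $\sum_i m_i u_i$ with prescribed sign pattern; combined with the semistability of $E$ with phase equal to that of $r_0e^\gamma$, an HN/Jordan–Hölder argument in ${\cal A}_{(\beta_0,\omega_0)}$ shows $E$ must be built from the $E_i$'s, and stability of $E$ pins it to be one of the stable factors occurring in the $S$-equivalence class $\oplus_i E_i^{\oplus a_i}$ of ${\cal E}_{|X\times\{x'\}}$ described in the NB preceding the lemma. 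The identification of ${\cal E}_{|\{x'\}\times X}$ (the restriction in the *other* variable, i.e. the Fourier–Mukai kernel transform of $\varrho_X$ essentially) with this same collection of stable factors is exactly the content of the Fourier–Mukai set-up around \eqref{eq:FM}–\eqref{eq:Phi}: $\Phi$ carries $\varrho_X$ to $r_0e^{\gamma'}$ and the fibers of ${\cal E}$ over $X'$ are the $\sigma_{(\beta,\omega)}$-stable objects of Mukai vector $v_0 = r_0e^\gamma$, whose limiting $S$-equivalence classes at $(\beta_0,\omega_0)$ are the $\oplus_i E_i^{\oplus a_i}$.

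**Main obstacle.** The delicate point is not the lattice theory but the passage between the two geometric pictures: showing that the abstract stable object $E$ of the given phase is literally one of the Jordan–Hölder factors of the fiber ${\cal E}_{|\{x'\}\times X}$ of the universal family, rather than merely having the same Mukai vector as one of them. This requires knowing that the $E_i$ (equivalently the walls crossed) are the *only* stable objects of phase $\phi_{(\beta_0,\omega_0)}(r_0e^\gamma)$ and normalized-rank-positive $(-2)$-vector orthogonal to $e^\gamma$ — i.e. a rigidity/uniqueness statement for stable objects on this codimension-$\ge 1$ locus, which is where \cite[Lem.~A.1.1]{PerverseI} and the $\widetilde A\widetilde D\widetilde E$ structure do the real work. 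I would spend the bulk of the proof making that uniqueness precise, and treat the lattice computation $\langle v(E)^2\rangle = -2$ as essentially immediate from Bogomolov plus negative semidefiniteness.
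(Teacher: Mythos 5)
Your argument splits into two halves, and they fare very differently against the paper's proof. The paper does everything in one move: it applies the Fourier--Mukai transform $\Phi$ of \eqref{eq:FM}, observes that $\langle v(E),e^\gamma\rangle=0$ forces $\rk\Phi(E)=0$, so that the stable object $\Phi(E)$ of phase $1$ is an irreducible $0$-dimensional object of the perverse heart ${\frak C}'$ on $X'$; the classification of such irreducible objects (\cite[Lem.~1.1.21]{PerverseI}) then gives \emph{both} conclusions at once, since those objects have $\langle v^2\rangle=-2$ (once $v\ne\varrho_{X'}$, i.e. $v(E)\ne r_0e^\gamma$) and are all Jordan--H\"older factors of ${\cal O}_{x'}$. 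Your first half ($\langle v(E)^2\rangle=-2$) is a genuinely different, mostly viable lattice route: $v(E)\in L$, $L$ negative semidefinite with radical ${\Bbb Z}r_0e^\gamma$, and $\langle v(E)^2\rangle\ge-2$ from stability. But note one gap there: excluding $\langle v(E)^2\rangle=0$ only rules out $v(E)\in{\Bbb Z}_{>0}\,r_0e^\gamma$ up to the hypothesis $v(E)\ne r_0e^\gamma$, which leaves $v(E)=n r_0e^\gamma$ with $n\ge 2$; you still have to show no \emph{stable} object carries a non-primitive isotropic vector, which your appeal to ``the indecomposability set-up'' does not do.

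The second half is where the real gap lies, and it is exactly the point you flag as the main obstacle without resolving it. Knowing that $v(E)$ is a $(-2)$-vector of $L$ with positive normalized rank only tells you it is a \emph{positive root} of the affine $\widetilde A\widetilde D\widetilde E$ system $\oplus_i{\Bbb Z}u_i$; it need not be one of the \emph{simple} roots $u_i=v(E_i)$, and a stable object whose Mukai vector is a non-simple positive root would satisfy every numerical constraint you impose while failing to be a stable factor of ${\cal E}_{|\{x'\}\times X}$. Your proposed fix --- ``an HN/Jordan--H\"older argument shows $E$ must be built from the $E_i$'s, and stability pins it to be one of the stable factors'' --- does not work as stated: a stable object is its own single Jordan--H\"older factor, so there is no filtration argument available; what is needed is the statement that every stable object of this phase with rank-zero image under $\Phi$ is literally an irreducible object of ${\frak C}'$, which is categorical input (the torsion-freeness of $\pH^{-1}$, the identification of irreducible rank-$0$ objects of ${\cal A}_{(\beta_0',\omega_0')}$ with irreducible objects of ${\frak C}'$, and \cite[Lem.~1.1.21]{PerverseI}), not lattice theory. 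Without passing through $\Phi$ and the perverse heart on $X'$, your argument cannot identify the object $E$ with any $E_i$, nor even show $v(E)$ is a simple root.
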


\begin{proof}
$\Phi(E)$ is a $\sigma_{(\beta_0',\omega_0')}$-stable 
 object with $\phi_{(\beta_0',\omega_0')}(\Phi(E))=1$.
In particular, 
$\Phi(E) \in {\cal A}_{(\beta_0',\omega_0')}$.
Since $\rk \Phi(E)=0$ and irreducible,
$\pH^i(\Phi(E))=0$ for $i \ne -1$ or $i \ne 0$.
Since $\pH^{-1}(\Phi(E))$ is torsion free,
$\Phi(E) \in {\frak C}'$.
Thus $\Phi(E)$ is a 0-dimensional object of ${\frak C}'$.
We note that
$F \in {\cal A}_{(\beta_0',\omega_0')}$
is an irreducible object with $\rk F=0$
if and only if $F$ is an irreducible object of ${\frak C}'$.
\begin{NB}
Note that $(\beta'_0,\omega'_0)$ is general in 
$P_{\gamma',H'}$.
\end{NB}
Hence $\langle v(E)^2 \rangle=\langle v(\Phi(E))^2 \rangle=-2$.
Since every irreducible object of ${\frak C}'$
is a stable factor of ${\cal O}_{x'}$ by \cite[Lem. 1.1.21]{PerverseI}, 
$E$ is a stable factor of ${\cal E}_{|\{ x' \} \times X}$.
\end{proof}

An object  
$E \in {\frak C}'$ with $v(E)=\varrho_{X'}$
is $\nu$-stable in the sense of \cite[sect. 2.2]{PerverseI}, if
$-(\nu,c_1(F_1)) \leq 0$ for all subobject $F_1$ of
$E$.
Hence it is the same as $\sigma_{(\beta',\omega')}$-semi-stability,
where $(\beta',\omega')=(\gamma'+s' H'+\mu,t' H'+\nu)$
is sufficiently close to $(\beta_0',\omega_0')$.
If $\nu$ is relatively ample with respect to $\pi'$,
then ${\cal O}_{x'}$ is $\nu$-stable. 

Since ${\cal M}_{(\beta_0,\omega_0)}(r_0 e^\gamma)
\cong {\cal M}_{(\beta_0',\omega_0')}(\varrho_{X'})$, 
\cite[Prop. 2.2.8]{PerverseI}
implies
 ${\cal M}_{(\beta_0,\omega_0)}(r_0 e^\gamma)$ is irreducible.
Moreover the $S$-equivalence classes of 
properly $\sigma_{(\beta_0,\omega_0)}$-semi-stable objects
are $\oplus_i E_i^{\oplus n_i}$, where
$v_0=\sum_i n_i v(E_i)$ is a decomposition of \eqref{eq:decomp}.

\begin{thm}\label{thm:2-dim}
Let $U_{v_0}$ be a chamber for $v_0=r_0 e^\gamma$ in $P_{\gamma,H}$
and $(\beta_0,\omega_0) \in U_{v_0}$.
Assume that $\chr(k)=0$ or $M_{(\beta,\omega)}(r_0 e^\gamma)$
is a fine moduli space for $(\beta,\omega)$ in an adjacent chamber ${\cal C}$
of $U_{v_0}$.
Then $M_{(\beta_0,\omega_0)}(r_0 e^\gamma)$
is isomorphic to a normal K3 surface $Y'$ which is obtained as 
a contraction $\pi':X' \to Y'$ by the 
nef and big divisor $\omega_0'$. 
\end{thm}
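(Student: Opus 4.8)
The plan is to exploit the Fourier–Mukai transform $\Phi=\Phi_{X\to X'}^{{\cal E}^\vee[2]}$ introduced above, which was built from a universal family of $\sigma_{(\beta,\omega)}$-stable objects with Mukai vector $v_0=r_0e^\gamma$ for $(\beta,\omega)$ in the adjacent chamber ${\cal C}$, together with the identification $\sigma_{(\beta_0',\omega_0')}$ with a stability condition of the perverse-coherent type of subsection~\ref{subsect:perverse}, which was already established via Proposition~\ref{prop:limit2} and the discussion preceding the theorem. First I would recall that $\Phi$ induces an isomorphism of moduli stacks ${\cal M}_{(\beta_0,\omega_0)}(r_0e^\gamma)\cong{\cal M}_{(\beta_0',\omega_0')}(\varrho_{X'})$, hence an isomorphism of the coarse moduli schemes $M_{(\beta_0,\omega_0)}(r_0e^\gamma)\cong M_{(\beta_0',\omega_0')}(\varrho_{X'})$; this reduces the statement to understanding the right-hand side, namely the moduli of rank-one, $S$-equivalence classes of $\sigma_{(\beta_0',\omega_0')}$-semistable objects of Mukai vector $\varrho_{X'}$ in the perverse heart ${\frak C}'$.

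Next I would identify $M_{(\beta_0',\omega_0')}(\varrho_{X'})$ with a moduli space of $\nu$-(semi)stable perverse point objects in the sense of \cite[sect. 2.2]{PerverseI}. As explained just before the theorem, an object $E\in{\frak C}'$ with $v(E)=\varrho_{X'}$ is $\sigma_{(\beta',\omega')}$-semistable (for $(\beta',\omega')=(\gamma'+s'H'+\mu,t'H'+\nu)$ close to $(\beta_0',\omega_0')$) if and only if it is $\nu$-stable, i.e. $-(\nu,c_1(F_1))\le 0$ for every subobject $F_1\subset E$; and at the wall $(\beta_0',\omega_0')$ itself the relevant notion is the one where $\nu\in (H')^\perp$. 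By \cite[Prop. 2.2.8]{PerverseI} the stack ${\cal M}_{(\beta_0',\omega_0')}(\varrho_{X'})$ is irreducible, and — as noted above — the $S$-equivalence classes of the properly semistable objects are exactly the $\bigoplus_i E_i^{\oplus n_i}$ coming from the decomposition \eqref{eq:decomp}. The content of \cite[Prop. 2.2.8]{PerverseI} (or the surrounding theory of moduli of perverse point sheaves for the contraction $\pi':X'\to Y'$) is precisely that this coarse moduli space is the normal surface $Y'$: the locus of $\sigma_{(\beta_0',\omega_0')}$-\emph{stable} (equivalently, relatively $\pi'$-ample-$\nu$-stable) point objects is the smooth locus $Y'\setminus\mathrm{Sing}(Y')$, parametrizing the structure sheaves ${\cal O}_{x'}$ of points $x'\in X'$ that do not lie over singular points of $Y'$ (more precisely, the ${\cal O}_{x'}$ with $x'$ away from the exceptional curves), while each singular point of $Y'$ is represented by the unique $S$-equivalence class $\bigoplus_i E_i^{\oplus n_i}$ of the corresponding fibre of $\pi'$. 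Contracting each exceptional chain of $X'$ to a point is exactly the birational morphism $\pi':X'\to Y'$ determined by the nef and big divisor $\omega_0'=t'H'$, so $M_{(\beta_0',\omega_0')}(\varrho_{X'})\cong Y'$ as schemes.

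Finally I would assemble the chain of isomorphisms
\[
M_{(\beta_0,\omega_0)}(r_0e^\gamma)\;\cong\;M_{(\beta_0',\omega_0')}(\varrho_{X'})\;\cong\;Y',
\]
where $Y'$ is a normal K3 surface obtained from the K3 surface $X'$ by the contraction $\pi'$ associated to the nef and big divisor $\omega_0'$, which is what the theorem asserts. The hypothesis that $\chr(k)=0$ or that $M_{(\beta,\omega)}(r_0e^\gamma)$ is fine for $(\beta,\omega)\in{\cal C}$ is exactly what is needed to guarantee the existence of the universal family ${\cal E}$ (possibly as a twisted object, but with trivial obstruction in the stated cases) defining $\Phi$, so that the first isomorphism above is legitimate. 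The main obstacle — and the step requiring care — is the identification $M_{(\beta_0',\omega_0')}(\varrho_{X'})\cong Y'$: one must match the scheme structure on the Bridgeland moduli space at the wall with that of the contracted surface $Y'$, i.e. check that the moduli functor of $\sigma_{(\beta_0',\omega_0')}$-semistable perverse point objects is corepresented by $Y'$ and not merely by a bijective-but-non-isomorphic normalization; this is precisely the content imported from \cite[Prop. 2.2.8]{PerverseI} and the moduli theory of perverse point sheaves developed there for the contraction $\pi'$, and I would invoke it rather than reprove it.
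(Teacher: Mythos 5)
Your overall reduction via $\Phi$ to $M_{(\beta_0',\omega_0')}(\varrho_{X'})$ matches the setup the paper establishes before the theorem, but there is a genuine gap at the decisive step, and it stems from misplacing where the hypotheses of the theorem are used. You spend the hypothesis ``$\chr(k)=0$ or $M_{(\beta,\omega)}(r_0e^\gamma)$ is fine'' on legitimizing the universal family ${\cal E}$ and hence the first isomorphism; but the paper explicitly allows ${\cal E}$ to be a twisted object, and the isomorphism of stacks ${\cal M}_{(\beta_0,\omega_0)}(r_0e^\gamma)\cong{\cal M}_{(\beta_0',\omega_0')}(\varrho_{X'})$, together with irreducibility via \cite[Prop.\ 2.2.8]{PerverseI}, is established \emph{before} the theorem and without any such hypothesis. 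The hypothesis is needed precisely for the step you identify as ``the main obstacle'' and then discharge by citing \cite[Prop.\ 2.2.8]{PerverseI} and ``surrounding theory'': that proposition gives only irreducibility of the stack, not the scheme-theoretic identification of the coarse moduli space $(X')^0$ with $Y'$, and that identification is not unconditional.

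The paper's proof is a case split exactly along the disjunction in the hypothesis. If $\chr(k)=0$, it invokes \cite[Prop.\ 2.2.11]{PerverseI} to conclude that $(X')^0=M_{(\beta_0',\omega_0')}(\varrho_{X'})$ is \emph{normal}, which (combined with the already-known bijective comparison with $Y'$) yields the isomorphism. If instead $M_{(\beta_0,\omega_0)}(r_0e^\gamma)$ is a fine moduli space, the point is that the objects of Mukai vector $\varrho_{X'}$ are then \emph{untwisted} $0$-dimensional objects, so $(X')^0$ is the moduli space of $0$-semi-stable objects in the sense of \cite[Def.\ 2.2.1]{PerverseI}; one then uses \cite[Prop.\ 2.3.27]{PerverseI} to replace ${\frak C}'$ by ${}^{-1}\Per(X'/Y')$ via an autoequivalence and concludes $(X')^0\cong Y'$ from \cite[Rem.\ 2.2.13]{PerverseI}. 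Without one of these two inputs your argument asserts, but does not prove, that the moduli functor at the wall is corepresented by $Y'$ rather than by some non-normal scheme with the same points; to repair the proposal you would need to restore the case split and cite the normality statement and the untwisted-perverse-point-sheaf identification where they are actually required.
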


\begin{proof}
If $\chr(k)=0$, then \cite[Prop. 2.2.11]{PerverseI}
implies $M_{(\beta_0',\omega_0')}(\varrho_{X'})=(X')^0$
is normal. Hence the claim holds.
If $M_{(\beta_0,\omega_0)}(r_0 e^\gamma)$
is a fine moduli space, then 
$M_{(\beta_0',\omega_0')}(\varrho_{X'})=(X')^0$
is the moduli of untwisted 0-dimensional
objects of Mukai vector $\varrho_{X'}$, where
$(X')^0$ is the moduli of $0$-semi-stable objects 
in \cite[Def. 2.2.1]{PerverseI}.
In this case, there is an autoequivalence
$\Phi$ of ${\bf D}(X')$ such that 
$\Phi({\frak C})=^{-1} \Per(X'/Y')$ by
\cite[Prop. 2.3.27]{PerverseI}.
Applying \cite[Rem. 2.2.13]{PerverseI},
we see that $(X')^0 \cong Y'$.
\end{proof}

\begin{NB}
The walls for $v_0$ are defined by $(-2)$-vectors $u=e^\gamma(r+D) \in L$.
We set $(\beta,\omega):=(\gamma+sH+\mu,tH+\nu)$,
$(\mu,\nu \in H^\perp)$.
Then the wall for $e^\gamma$ is
\begin{equation}
\frac{(s^2-t^2)(H^2)+((\mu^2)-(\nu^2))}{2}(\nu,D)
-(st(H^2)+(\mu,\nu))(\mu,D)=0.
\end{equation}
Thus in a neighborhood of $(\gamma+s_0 H,t_0 H)$,
the tangent space of the wall is
$$
\left((s_0^2-t_0^2)\frac{(H^2)}{2}\nu-s_0 t_0(H^2)\mu,D \right)=0.
$$

\end{NB}

\begin{NB}
Let $U$ be the chamber in $(s,t)$-plane such that
$t>f(s)$ and $s<0$ is contained.
Let ${\cal C}$ be a chamber in $\NS(X)_{\Bbb R} \times \NS(X)_{\Bbb R}$
such that $U \subset \overline{\cal C}$.
Then ${\cal O}_x$ is stable in ${\cal C}$.    
Indeed the walls are defined by the Mukai vectors of 
0-dimensional perverse coherent sheaves $E$.
Thus if $v(E)=e^\gamma(D+a \varrho_X)$, then
$(D,\omega)=0$ is the equation of the wall. 

We set $(\beta,\omega):=(\gamma+sH+\mu,tH+\nu)$,
$(\mu,\nu \in H^\perp)$.
Then the wall for $e^\gamma$ is
\begin{equation}
\frac{(s^2-t^2)(H^2)+((\mu^2)-(\nu^2))}{2}(\nu,D)
-(st(H^2)+(\mu,\nu))(\mu,D)=0.
\end{equation}
Thus in a neighborhood of $(\gamma+s_0 H,t_0 H)$,
the tangent space of the wall is
$$
\left((s_0^2-t_0^2)\frac{(H^2)}{2}\nu-s_0 t_0(H^2)\mu,D \right)=0.
$$
Hence ${\cal C}$ is approximated by
a connected component of the complement
of hyperplanes defined by $v(E)=e^\gamma(r+D)$.

For $(\beta,\omega) \in {\cal C}$,
let ${\cal E}$ be a universal family
of $\sigma_{(\beta,\omega)}$-stable complexes
with the Mukai vector $r_0 e^\gamma$.
Then $\Phi({\cal O}_x)={\cal E}_{|\{ x \} \times X'}^{\vee}[2]$ 
is $\sigma_{(\beta_0',\omega_0')}$-semi-stable
and $\sigma_{(\beta',\omega')}$-stable.
If $U \subset \overline{\cal C}$, then 
$\Phi(U) \subset \overline{\Phi(\cal C)}$.
Hence ${\cal E}_{|\{ x \} \times X'}$ is a twisted stable sheaf.

Let $E$ be a $\sigma_{(\gamma+sH,tH)}$-semi-stable object with
$v(E)=r_0 e^\gamma$.
For a subobject $E_1$ of $E$
with $v(E_1)=e^\gamma(r_1+D_1)$,
$\phi_{(\beta,\omega)}(E_1)<\phi_{(\beta,\omega)}(E)$
if and only if 
\begin{equation}
-r_0 \frac{(\beta^2)-(\omega^2)}{2}(\nu,D_1)+r_0(\beta,\omega)(\mu,D_1)<0
\end{equation}
where $(\beta,\omega)=(sH+\mu,tH+\nu)$.
In particular, if $t \gg 0$, then
the condition is $r_0(\nu,D_1)>0$.
Assume that $r_0>0$.
Then for $t \gg 0$,
${\cal M}_{(\gamma+sH,tH)}(r_0 e^\gamma)={\cal M}_H^\gamma(r_0 e^\gamma)$.
$E \in{\cal M}_H^\gamma(r_0 e^\gamma)$ is $(\gamma+\lambda)$-stable
$(|(\lambda^2)| \ll 1)$
if 
\begin{equation}
-\frac{(c_1(E_1),\lambda)}{\rk E_1}<
-\frac{(c_1(E),\lambda)}{\rk E}
\end{equation}
for all $\gamma$-twisted stable subobjects $E_1$
of $E$ with $v(E_1)=e^\gamma(r_1+D_1)$.

\end{NB}

\begin{NB}
 Let $\sigma_0$ be a stability condition such that
$Z_{\sigma_0}(\bullet)=\langle e^{\beta_0+\sqrt{-1}\omega_0},\bullet \rangle$.
Assume that ${\cal O}_x$ is $\sigma_0$-semi-stable for all
$x \in X$.
In a neighborhood $U$ of $(\beta_0,\omega_0)$,
we have a continuous map $f:U \to \Stab(X)^*$
 such that $f(\beta_0,\omega_0)=\sigma_0$,
$Z_{f(\beta,\omega)}(\bullet)=
\langle e^{\beta+\sqrt{-1} \omega},\bullet \rangle$
and $U \to f(U)$ is homeomorphic.
So we can regard $U$ as a neighborhood of $\sigma_0$.
Let $u_1,u_2,...,u_n$ be Mukai vectors of stable factors of
${\cal O}_x$.
We set $P_\pm:=\{i \mid \pm \rk u_i>0\}$ and
$P_0:=\{ i \mid \rk u_i=0 \}$.
Let ${\cal C}$ be the chamber 
such that $(\beta,\omega) \in {\cal C}$ if and only if
$\omega$ is ample, 
$\Ima Z_{(\beta,\omega)}(u_i)>0$ for $i\in P_+$, and
$\Ima Z_{(\beta,\omega)}(u_i)<0$ for $i\in P_-$.
Then $(\beta_0+s\omega_0,\omega_0) \in \overline{\cal C}$
for $0>s \gg -1$.

\begin{NB2}
Assume that $\omega_0+\eta$ is ample.
In the $(s,t)$ plane $(\beta_0+s \omega_0,\omega_0+t \eta)$,
${\cal C}$ is $t>0$ and
$$
\pm(t(c_1(u_i)-\rk u_i, \eta)-s \rk u_i (\omega_0^2)
-st \rk u_i (\eta,\omega_0))>0
$$
for $i \in P_\pm$.
\end{NB2}

Assume that ${\cal O}_x$ is stable for all $\sigma \in {\cal C}$.

We take $\beta \in \NS(X)_{\Bbb Q}$ in the same chamber of $\beta_0$ and
${\frak C}$ be the associated category of perverse coherent sheaves.
\end{NB}

\section{Stability conditions on an Enriques surface}
\label{sect:Enriques}

\subsection{2-dimensional moduli spaces}\label{subsect:2-dim}

The space of stability conditions on an Enriques surface was 
studied in \cite{MMS}
by comparing the stability conditions on the covering K3 surface.
In this section, we shall explain some of the results.
For this purpose, we prepare some notations. 
Let $X$ be a classical Enriques surface over $k$, that is $K_X \ne 0$.
As in the case of K3 surfaces, we introduce the following definition.
\begin{defn}
\begin{enumerate}
\item
[(1)]
For $E \in {\bf D}(X)$, 
$$
v(E)=\ch(E)\sqrt{\td_X}=
\rk E+c_1(E)+\left(\ch_2(E)+\frac{\rk E}{2}\varrho_X \right) 
\in H^*(X,{\Bbb Q})
$$
is the Mukai vector of $E$.
Let $(v(K(X)),\langle \;\;,\;\; \rangle)$ be the Mukai lattice of $X$.
\item[(2)]
Let $\Delta(X)$ be the subset of $v(K(X))$
consisting of $u=(r,\xi,\frac{b}{2})$
such that (i) $\langle u^2 \rangle=-1$ or
(ii) $\langle u^2 \rangle=-2$ and $\xi \equiv D \mod 2$, where
$D$ is a nodal cycle.
\end{enumerate}
\end{defn}

As in \cite{Br:3}, let ${\cal P}(X)$ be the subset of
$v(K(X))_{\Bbb C}$ consisting of $\mho$ such that
$\Rea \mho$ and $\Ima \mho$ span a positive definite
2-plane in $v(K(X))_{\Bbb R}$.
We shall regard ${\cal P}(X)$ as a subset 
of $\Hom(v(K(X)),{\Bbb C})$ by
$\langle \mho,\bullet \rangle$.
Let ${\cal P}^+(X)$ be the connected component of ${\cal P}(X)$
containing $e^{\beta+i \omega}$, where $\omega$ is ample.
We set 
$$
{\cal P}_0^+(X):={\cal P}^+(X) \setminus 
\cup_{u \in \Delta(X)}u^\perp.
$$
Then for the connected component $\Stab^{\dagger}(X)$ 
containing geometric stability conditions,  
\begin{equation}
\begin{matrix}
\Stab^{\dagger}(X) & \to & {\cal P}_0^+(X)\\
\sigma & \mapsto & Z_\sigma
\end{matrix}
\end{equation}
is a covering map with the group of deck transformations
$\Aut^0({\bf D}(X))/\langle \otimes K_X \rangle$
\cite[Cor. 3.8]{MMS} at least if $k={\Bbb C}$.
By the same argument of Bridgeland \cite[Prop. 13.2]{Br:3}, 
we have
\begin{equation}\label{eq:Stab/U(X)}
\Stab^{\dagger}(X)=\cup_{\Phi \in {\bf T}} \Phi(\overline{U(X)})
\end{equation}
where ${\bf T} \subset \Aut({\bf D}(X))$ ie the subgroup of autoequivalences 
generated by twist functors $T_A^2$ and 
$T_{{\cal O}_C(k)}$ (see Definition \ref{defn:twist}),
where $A$ is a spherical object or an exceptional object, and $C$ is a 
$(-2)$ curve on $X$.
Thus $\Phi(U(X))$ is the chamber of $\varrho_X$ and
we have a fine moduli space for every chamber (see \cite{Ha:1} for
the corresponding result on a K3 surface).

\begin{thm}\label{thm:FM-kernel}
\begin{enumerate}
\item[(1)]
Let $v_0=(r,\xi,\frac{s}{2})$ be a primitive and isotropic
Mukai vector such that 
$\gcd(r,\xi,s)=2$. 
Let ${\cal C}$ be a chamber with respect to $v_0$.
Then
$M_{(\beta,\omega)}(v_0)$ $((\beta,\omega) \in {\cal C})$
is a fine moduli space and
$M_{(\beta,\omega)}(v_0) \cong X$.
\item[(2)]
For $\eta/p$ such that $\eta \in \NS(X)$ and
$p$ is an odd integer,
a primitive element
$v_0 \in {\Bbb Q}e^{\eta/p} \cap v(K(X))$ 
satisfies the assumption.
\end{enumerate}
\end{thm}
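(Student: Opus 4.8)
The plan is to read off all the assertions of (1) --- that $M_{(\beta,\omega)}(v_0)$ is a $2$-dimensional smooth projective surface, that it is fine, and that it is isomorphic to $X$ --- from the structure of the moduli space of a primitive isotropic vector, using the covering K3 surface as in \cite{MMS} together with the already recorded case $v_0=\varrho_X$ (for which every chamber is $\Phi(U(X))$ with $\Phi\in{\bf T}$, and $M_{(\beta,\omega)}(\varrho_X)$ is a fine moduli space $\cong X$). The one preliminary remark is lattice-theoretic: on $v(K(X))$ the Mukai form is $\langle(r,\xi,a)^2\rangle=(\xi^2)-2ra$ with $a=s/2$ subject to $s\equiv r\pmod 2$, so a primitive $v_0\in v(K(X))$ has $\gcd(r,\xi,s)\in\{1,2\}$, and $\gcd(r,\xi,s)=2$ precisely when $\rk v_0$ is even; $\varrho_X=(0,0,1)$ is the basic primitive isotropic vector of that kind.

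Fix a chamber ${\cal C}$ for $v_0$, a point $(\beta,\omega)\in{\cal C}$, and put $\sigma:=\sigma_{(\beta,\omega)}$. Since $\langle v_0^2\rangle=0$, no object of class $v_0$ is strictly $\sigma$-semistable, so $M_\sigma(v_0)$ parametrizes $\sigma$-stable objects only, and for such an $E$ one has $\dim\Ext^1(E,E)=1+\dim\Hom(E,E\otimes K_X)$, which equals $2$ exactly when $E\cong E\otimes K_X$. The role of the hypothesis is that evenness of $v_0$ forces every $\sigma$-stable $E$ of class $v_0$ to satisfy $E\cong E\otimes K_X$, equivalently to be the pushforward from the covering K3 surface of a $\sigma$-stable object; hence $M_\sigma(v_0)$ is everywhere $2$-dimensional and smooth --- a smooth projective surface $X'$ --- realised as the quotient, by the covering involution, of the $2$-dimensional moduli space on the K3 of the corresponding primitive isotropic vector, and this quotient is again an Enriques surface. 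Next, the Brauer obstruction to a universal family on $X\times X'$ is the Enriques analogue of the obstruction computed in \cite{Ha:1}, and it vanishes exactly when the divisibility attains its minimal admissible value $\gcd(r,\xi,s)=2$; so under the hypothesis $M_\sigma(v_0)$ is fine, with a genuine universal family ${\cal E}$ on $X\times X'$. Finally, ${\cal E}$ yields a Fourier--Mukai equivalence $\Phi:=\Phi_{X\to X'}^{{\cal E}^\vee[2]}\colon{\bf D}(X)\simto{\bf D}(X')$ with $\Phi_*(v_0)=\pm\varrho_{X'}$, as in Section \ref{subsect:half-plane}; since $\Phi$ intertwines Serre functors, $K_{X'}$ is again $2$-torsion and nonzero, so $X'$ is a classical Enriques surface derived equivalent to $X$, whence $X'\cong X$ because an Enriques surface has no non-trivial Fourier--Mukai partner. (Equivalently, composing $\Phi$ with an isomorphism $X'\cong X$ gives an autoequivalence of ${\bf D}(X)$ carrying $v_0$ to $\varrho_X$ and ${\cal C}$ into a geometric chamber, which deduces the assertion for ${\cal C}$ from the recorded case of $\varrho_X$.) As ${\cal C}$ was arbitrary, (1) follows.

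For (2), write $(\eta^2)=2m$, so $e^{\eta/p}=1+\tfrac{\eta}{p}+\tfrac{m}{p^2}\varrho_X$ and $\langle e^{\eta/p},e^{\eta/p}\rangle=\tfrac{(\eta^2)}{p^2}-2\cdot\tfrac{(\eta^2)}{2p^2}=0$; hence $v_0$, a rational multiple of $e^{\eta/p}$, is isotropic. Writing the primitive integral representative as $v_0=c\,e^{\eta/p}=(c,\tfrac{c}{p}\eta,\tfrac{cm}{p^2})$ with $c\in\mathbb Z_{>0}$, membership of $v_0$ in $v(K(X))$ requires $\tfrac{c}{p}\eta\in\NS(X)$ and $s:=2\tfrac{cm}{p^2}\in\mathbb Z$; since $p$ is odd, $\gcd(p^2,2)=1$, so $s\in\mathbb Z$ forces $p^2\mid cm$, and then $s=2(cm/p^2)$ is even. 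By the parity constraint $s\equiv c\pmod 2$ on $v(K(X))$ this forces $c=\rk v_0$ to be even, so by the dichotomy of the first paragraph $\gcd(r,\xi,s)=2$: that is, $v_0$ satisfies the hypothesis of (1).

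The main obstacle is the package claimed in the middle paragraph --- that $\gcd(r,\xi,s)=2$, i.e.\ evenness of $v_0$, is exactly what makes $M_\sigma(v_0)$ a $2$-dimensional fine moduli space isomorphic to $X$. This is the genuinely Enriques-specific input: it requires analysing the moduli space through the covering K3 and its involution (so that the quotient is an Enriques surface rather than a general K3), transporting the fineness/Brauer computation of \cite{Ha:1} to the parity lattice $v(K(X))$, and using that an Enriques surface has no non-trivial Fourier--Mukai partner. Granting this, running over all chambers and the bookkeeping with ${\bf T}$ and $\Stab^{\dagger}(X)$ through \eqref{eq:Stab/U(X)} are routine.
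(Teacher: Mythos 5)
Your proposal for (1) has a genuine gap: the entire content of the theorem is concentrated in your middle paragraph, and that paragraph is asserted rather than proved. Specifically, you claim (a) that $\gcd(r,\xi,s)=2$ forces every $\sigma$-stable $E$ with $v(E)=v_0$ to satisfy $E\cong E\otimes K_X$, so that $M_\sigma(v_0)$ is everywhere $2$-dimensional, and (b) that the Brauer obstruction to a universal family vanishes exactly when the divisibility is $2$, ``the Enriques analogue of \cite{Ha:1}''. Neither is established, and you concede as much (``Granting this\dots''); note also that $\chi(E,E)=0$ is compatible with $\dim\Ext^1(E,E)=1$ when $E\not\cong E(K_X)$, so (a) really does require an argument, not just the parity of $\rk v_0$. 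There is also a latent circularity: the Fourier--Mukai equivalence $\Phi$ you use to identify $X'$ with $X$ presupposes the fineness you have not proved. The paper avoids all of this by going the other way around: it introduces the explicit group ${\bf G}\subset\Aut({\bf D}(X))$ generated by $T_{{\cal O}_X}$ (realized as $\Phi_{X\to X}^{{\cal E}[1]}$ with ${\cal E}_{|\{x\}\times X}$ stable of vector $w_0=v({\cal O}_X\oplus{\cal O}_X(K_X))-\varrho_X$), the line-bundle twists ${\cal L}_D$ and the shift, checks that ${\bf G}$ preserves $\Stab^{\dagger}(X)$, and cites \cite{Y:Enriques} for the existence of $\Phi\in{\bf G}$ with $\Phi(v_0)=\varrho_X$; everything then follows from the already-recorded case $v_0=\varrho_X$ via \eqref{eq:Stab/U(X)}. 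Your parenthetical remark is essentially this reduction, but you derive the needed autoequivalence from the unproven claims instead of supplying the input that \cite{Y:Enriques} provides.

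Two smaller points. First, the ``dichotomy'' in your opening paragraph --- that a primitive $v_0\in v(K(X))$ has $\gcd(r,\xi,s)=2$ precisely when $\rk v_0$ is even --- is false: $v_0=(2,\xi,1)$ with $\xi$ primitive and $(\xi^2)=4$ is primitive, isotropic, of even rank, but has $\gcd(r,\xi,s)=1$. Your proof of (2) invokes this dichotomy at the last step; the conclusion survives because in that situation one can check directly that $\xi=\tfrac{c}{p}\eta$ is divisible by $2$ in $\NS(X)$ (since $p$ is odd and $p$ divides $cq$ where $q$ is the divisibility of $\eta$, it divides $\tfrac{c}{2}q$), which is what the paper does by writing $\xi=l\eta$ with $l$ even. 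Second, the statement that an Enriques surface has no nontrivial Fourier--Mukai partner is a result over ${\Bbb C}$, whereas the paper works over an arbitrary algebraically closed field, so it should not be invoked without comment.
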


\begin{proof}
(1)
By \eqref{eq:Stab/U(X)}, $M_\sigma(\varrho_X) \cong X$
if $\sigma$ belongs to a chamber.
We treat the general case, by using a special kind of Fourier-Mukai transforms.
Let ${\bf G}$ be the subgroup of $\Aut({\bf D}(X))$ generated
by the following autoequivalences:
\begin{enumerate}
\item
A twist functor
$T_{{\cal O}_X}$.
$T_{{\cal O}_X}$ is a Fourier-Mukai transform $\Phi_{X \to X}^{{\cal E}[1]}$,
where ${\cal E}_{|\{x \} \times X}$ is a stable sheaf with 
Mukai vector $w_0:=v({\cal O}_X \oplus {\cal O}_X(K_X))-\varrho_X$.
\item
For $D \in \NS(X)$,
\begin{equation}
\begin{matrix}
{\cal L}_D: & {\bf D}(X) & \to &{\bf D}(X)\\
& E & \mapsto & E(D).
\end{matrix}
\end{equation} 
\item
The shift functor: 
\begin{equation}
\begin{matrix}
[1]: & {\bf D}(X) & \to &{\bf D}(X)\\
& E & \mapsto & E[1].
\end{matrix}
\end{equation} 
\end{enumerate}

The equivalence
$\Phi_{X \to X}^{{\cal E}}:{\bf D}(X) \to {\bf D}(X)$ in (i) 
induces an  isomorphism
$\Stab^{\dagger}(X) \to \Stab^{\dagger}(X)$, and hence isomorphisms
${\cal M}_\sigma(v) \to 
{\cal M}_{\Phi_{X \to X}^{{\cal E}}(\sigma)}(v')$, 
where $v':=\Phi_{X \to X}^{{\cal E}}(v)$.
Indeed since $M_H(w_0)=M_{(\beta,\omega)}(w_0)$
for some $(\beta,\omega) \in \NS(X)_{\Bbb Q} \times \Amp(X)_{\Bbb Q}$
with $\sigma_{(\beta,\omega)} \in U(X)$,
there is $\sigma' \in U(X) \subset \Stab^{\dagger}(X)$ such that 
$\Phi_{X \to X}^{{\cal E}}(\sigma')=\sigma_{(\beta,\omega)} 
\in U(X) \subset \Stab^{\dagger}(X)$, 
which implies the connected component $\Stab^{\dagger}(X)$ is preserved
under $\Phi_{X \to X}^{{\cal E}}$.
Obviously ${\cal L}_D$ and $[1]$ also preserve the stability.
Since there is an autoequivalence $\Phi \in {\bf G}$
such that $\Phi(v_0)=\varrho_X$ (cf. \cite{Y:Enriques}),
we get $M_\sigma(v_0) \cong M_{\Phi(\sigma)}(\varrho_X) \cong X$. 

\begin{NB}
Old argument:
We may assume that $r>0$ or $r=0$ and $\xi$ is effective.
We take a general ample divisor $H$
with respect to $v_0$.
Then the moduli scheme of semi-stable sheaves 
$M_H(v_0)$ is a fine moduli space isomorphic
to $X$.
Let ${\cal E}$ be a 
universal family on $X \times X$.
Then the equivalence 
$\Phi_{X \to X}^{{\cal E}}:{\bf D}(X) \to {\bf D}(X)$
induces an  isomorphism
$\Stab^{\dagger}(X) \to \Stab^{\dagger}(X)$ and isomorphisms
${\cal M}_\sigma(\varrho_X) \to 
{\cal M}_{\Phi_{X \to X}^{{\cal E}}(\sigma)}(v_0)$.
Indeed since $M_H(v_0)=M_{(\beta,\omega)}(v_0)$
for some $(\beta,\omega) \in \NS(X)_{\Bbb Q} \times \Amp(X)_{\Bbb Q}$
(cf. \cite[Cor. 3.2.10]{MYY:2011:1}),
there is $\sigma \in U(X) \subset \Stab^{\dagger}(X)$ such that 
$\Phi_{X \to X}^{{\cal E}}(\sigma)=\sigma_{(\beta,\omega)} 
\in U(X) \subset \Stab^{\dagger}(X)$, 
which implies the connected component $\Stab^{\dagger}(X)$ is preserved
under $\Phi_{X \to X}^{{\cal E}}$.
Hence there  is a fine moduli space 
$M_{\sigma'}(v_0)$ of $\sigma'$-stable complexes
$E$ with $v(E)=v_0$ for all general $\sigma'$ by
\eqref{eq:Stab/U(X)}.
\end{NB}

(2)
We set $v_0:=lp e^{\eta/p}=(lp,l\eta,\frac{s}{2})$.
Then we see that 
$lp$ is even.
Since $p$ is odd, $l$ is even, which implies
$\gcd(lp,l \eta,s)=2$ by $lp \equiv s \mod 2$. 
\begin{NB}
For $\frac{\xi}{r}$ with $\gcd(r,\xi)=1$,
assume that $r$ is odd.
Let $(lr,l\xi,a)$ be a primitive and isotropic Mukai vector.
Then  $l$ is even: Indeed
if $l$ is odd, then $2a$ is odd, which implies
$(\xi^2)=2lra$ is odd.
Hence $l$ is even.

\begin{NB2}
We set $s:=\gcd(r,(\xi^2)/2)$,
$p:=r/s$ and $t:=(\xi^2)/(2s)$.
Then $\gcd(p,t)=1$.

If $r=ps$ is odd, then $(2p^2 s,2p\xi,2t)$ is primitive.
Indeed $\gcd(2p^2 s,2p \xi,4t)=2$ and 
$p^2 s+2t$ is odd.  

If $p$ is even and $t$ is odd, then
$(p^2 s,p\xi,t)$ is primitive.
Indeed $\gcd(p^2 s,p \xi,2t)=2$ and 
$p^2 s/2+t$ is odd.  
\end{NB2}

\begin{lem}
For $\frac{\xi}{r}$ with $\gcd(r,\xi)=1$,
assume that $r$ is odd.
Then for a primitive and isotropic
Mukai vector $(lr,l\xi,a)$,
$M_H(lr,l\xi+\frac{lr}{2}K_X,a)$ is isomorphic to $X$.
\end{lem}
\end{NB}
\end{proof}

\begin{rem}
The essential part of (1) is the existence of a Fourier-Mukai transform
associated to $v_0$, which was first proved by Nuer \cite{N}. 
\end{rem}

\begin{rem}\label{rem:k}
Assume that $k$ is not algebraically closed.
If all divisors on $X$ are defined over $k$, then
$M_H^\beta(v)$ has a universal family if $v$ is primitive,
by the unimodularity of Mukai lattice.
If $\langle v^2 \rangle=-1,-2$, then 
$M_H(v)$ is a reduced one point. Hence 
there is a $\beta$-stable object $E$ with $v(E)=v$.
Then $\Phi \in {\bf T}$ are defined over $k$. 
\end{rem}

\subsection{Gieseker chambers on an Enriques surface}

The results in section \ref{sect:MYY} hold for the case of Enriques surfaces. 
In \cite{FM-duality},
we studied Gieseker chambers in a 
2-dimensional subspace of $\Stab^{\dagger}(X)$
for a primitive and isotropic Mukai vector $v_0$ on
a K3 surface. In this section, we present a similar result
for stability conditions associated to a category of perverse coherent
sheaves on
a K3 surface and also an Enriques surface $X$.   
Let $H$ be a nef and big divisor which defines a contraction
$\pi:X \to Y$ of $(-2)$-curves $C \in H^\perp$.
Let ${\frak C}$ be a category of perverse coherent sheaves
with a local projective generator $G$.
We set $\beta:=c_1(G)/\rk G$.  
Let $v_0:=r_0 e^\gamma$ be a primitive and isotropic Mukai vector
such that $X':=M_H^\beta(v_0)$ is a smooth surface, that is,
$v_0=(r_0,\xi,\frac{b}{2})$ with $\gcd(r_0,\xi,b)=2$.
We set $\beta=\gamma+sH+\mu$ ($\mu \in H^\perp$)
and assume that $\mu$ is sufficiently close to $0$. 
\begin{defn}
For a Mukai vector $v$, we set
$\epsilon=1,2$ according as $\rk v$ is odd or even. 
For a stable object $E$, $\langle v(E)^2 \rangle \geq -\epsilon$. 
\end{defn}
 
Let $\sigma_{(\gamma+sH,tH)}$ be a family of stability condition
associated to ${\frak C}$.
\begin{NB}
We shall study the condition
$(\star 1)$ in \cite[sect. 3.1]{MYY:2011:1}.
\begin{defn}
For $\beta \in \NS(X)_{\Bbb Q}$ and a nef and big divisor $H$ on $X$,
we define $d_\beta(v), a_\beta(v) \in {\Bbb Q}, 
D_\beta(v) \in \NS(X)_{\Bbb Q} \cap H^\perp$ as
$$
v=e^\beta(\rk(v)+d_\beta(v)H+D_\beta(v)+a_\beta(v) \varrho_X).
$$ 
\end{defn}
Then the condition 
$(\star 1)$ for $v$ is the following.
\begin{enumerate}
\item[(1)]
$\rk v \geq 0$, $d_\beta(v)>0$ and
\item[(2)]
$d_\beta(v) \rk(v_1)-d_\beta(v_1)\rk(v)>0$ implies
\begin{equation}\label{eq:star1}
(d_\beta(v) \rk(v_1)-d_\beta(v_1)\rk(v)) 
\frac{(\omega^2)}{2}-(d_\beta(v) a_\beta(v_1)-
d_\beta(v_1)a_\beta(v))>0
\end{equation}
for any $v_1 \in H^*(X,{\Bbb Z})_{\alg}$ with
$0<d_\beta(v_1)<d_\beta(v)$ and $\langle v_1^2 \rangle \geq -\epsilon$.
\end{enumerate}

\begin{NB2}
\begin{rem}
In \cite{MYY:2011:1}, we assume that $\langle v_1^2 \rangle -(D_\beta(v_1)^2) 
\geq -\epsilon$. 
However it is sufficient to $\langle v_1^2 \rangle \geq -\epsilon$
for the proof of \cite[Prop. 3.2.1]{MYY:2011:1}. 
\end{rem}
\end{NB2}
Applying this condition to $v_0$, 
we shall study the Gieseker chamber for $v_0$.
Let
$$
v_1:=e^\gamma(r_1+d_1 H+D_1+a_1 \varrho_X),\; D \in H^\perp
$$
be a Mukai vector such that $\langle v_1^2 \rangle \geq -\epsilon$,
where
$$
d_1=d_\gamma(v_1)=\frac{(c_1(v_1)-r_1 \gamma,H)}{(H^2)} 
\in \frac{\delta}{r_0}{\Bbb Z}
$$
and 
\end{NB}
We set
$$
\delta:=\frac{1}{(H^2)} \min \{(D,H)>0 \mid D \in \NS(X) \}.
$$

\begin{NB}
\begin{NB2}
Assume that $Z_{(\gamma+sH,tH)}(v_1)=0$ and $s \ne 0$.
Then 
$s=d_1/r_1$ and 
$a_1-\frac{d_1^2}{2 r_1}(H^2)+r_1 \frac{t^2}{2}(H^2)$.
Hence
$a_1=\frac{r_1}{2}(s^2+t^2)(H^2)$.
Since $d_1^2(H^2)-2r_1 a_1 \geq \langle v_1^2 \rangle=-\epsilon$,
$r_1^2 t^2 (H^2) \leq 2$.
Since $s^2=d_1^2/r_1^2 \geq \frac{\delta^2}{r_0^2 r_1^2}$,
we get 
$s^2 \geq  \frac{\delta^2}{r_0^2 }t^2 \frac{(H^2)}{2}$. 
\end{NB2}

\begin{NB2}
We note that
\begin{equation}
\begin{split}
d_{\gamma+sH}(e^\gamma)=& -s,\; a_{\gamma+sH}(e^\gamma)=\frac{s^2}{2}(H^2),\\
d_{\gamma+sH}(v_1)=& d_1-r_1 s,\; a_{\gamma+sH}(v_1)=a_1-sd_1 (H^2)
+r_1\frac{s^2}{2}(H^2).
\end{split}
\end{equation}
Assume that
\begin{equation}
0 \leq (d_{\gamma+sH}(e^\gamma)r_1-d_{\gamma+sH}(v_1))\frac{t^2(H^2)}{2}
-(d_{\gamma+sH}(e^\gamma)a_{\gamma+sH}(a_1)-
d_{\gamma+sH}(v_1)a_{\gamma+sH}(e^\gamma).
\end{equation}
\end{NB2}

We write 
\begin{equation}
\begin{split}
e^\gamma=& e^{\gamma+sH}(r'+d'H+a' \varrho_X)\\
v_1=& e^{\gamma+sH}(r_1'+d_1' H+D_1'+a_1' \varrho_X),
\end{split}
\end{equation}
where
\begin{equation}
\begin{split}
r'=1,\;d'=& -s,\; a'=\frac{s^2}{2}(H^2),\\
r_1'=r_1,\;d_1'=& d_1-r_1 s,\; a_1'=a_1-sd_1 (H^2)
+r_1\frac{s^2}{2}(H^2).
\end{split}
\end{equation}
Assume that 
\eqref{eq:star1} does not hold for $v_1$. Thus
$d' r_1-d_1'>0$, $r_0 d'>d_1'>0$, $\langle v_1^2 \rangle \geq -2$ and 
\begin{equation}\label{eq:star}
0 \geq (d' r_1-d_1')\frac{t^2(H^2)}{2}-(d' a_1'-d_1' a').
\end{equation}
Then $-d_1=d' r_1-d_1'>0$,
$r_1>d_1'/d'>0$ and 
\begin{equation}
\begin{split}
(d' r_1-d_1')\frac{t^2(H^2)}{2}-(d' a_1'-d_1' a')
=& -d_1 \frac{t^2(H^2)}{2}-(-s a_1+\frac{d_1}{2}s^2 (H^2))\\
=& -d_1\frac{(H^2)}{2}(s^2+t^2)+a_1 s.
\end{split}
\end{equation}
Since $d'=-s>0$ and $-d_1>0$, 
\eqref{eq:star} implies $a_1>0$.
Hence $-\langle e^\gamma,v_1 \rangle>0$.
By $r_0 d'>d_1'>0$,
$s$ satisfies
\begin{equation}\label{eq:r_1}
s \leq \frac{d_1}{r_1},\;
(r_1-r_0)s>d_1.
\end{equation}
\begin{NB2}
By $\langle v_1^2 \rangle \geq -2$ and
$-r_1 s>-d_1$,
$0<r_1 a_1 \leq d_1^2 (H^2)<r_1^2 s^2 (H^2)$.
\end{NB2}
Let $C_{v_1}$ be the circle defined by
\begin{equation}
s^2-\frac{2}{(H^2)}\frac{a_1}{d_1}s+t^2=
\left(s-\frac{1}{(H^2)}\frac{a_1}{d_1}\right)^2+t^2-
\left(\frac{1}{(H^2)}\frac{a_1}{d_1}\right)^2=0.
\end{equation}
Then \eqref{eq:star} implies $(s,t)$ is in the circle $C_{v_1}$.
\end{NB}

\begin{defn}
\begin{enumerate}
\item[(1)]
Let ${\frak E}_\epsilon$ $(\epsilon=1,2)$ be the set of Mukai vectors 
$$
v_1=e^\gamma(r_1+d_1 H+D_1+a_1 \varrho_X),\; D_1 \in H^\perp
$$
such that
$v_1 \in \Delta(X)$,
$\langle v_1^2 \rangle=-\epsilon$ and
$r_1>0,d_1<0,a_1>0$.
\item[(2)]
For $v_1 \in {\frak E}={\frak E}_1 \cup {\frak E}_2$,
we set
$$
f_{v_1}(s):=
\begin{cases}
\sqrt{\frac{2}{(H^2)}\frac{a_1}{d_1}s-s^2}, & s \in 
[\frac{2}{(H^2)}\frac{a_1}{d_1},\frac{d_1}{r_1}],\\
0 & \text{otherwise}.  
\end{cases}
$$
\item[(3)]
We set
$$
f(s):=\max_{v_1 \in {\frak E}} f_{v_1}(s).
$$
\end{enumerate}
\end{defn}
\begin{NB}
By Lemma \ref{lem:d_1} (3) and \eqref{eq:r_1},
$v_1$ satisfies $d_1'<d' r_0$ if $f_{v_1}(s)>0$.

\begin{lem}\label{lem:d_1}
Assume that $r_1>0$, $d_1<0$ and $a_1>0$.
\begin{enumerate}
\item[(1)]
$$
\frac{2}{(H^2)}\frac{a_1}{d_1} \geq \frac{d_1}{r_1} 
\Longleftrightarrow 
\langle v_1^2 \rangle-(D_1^2) \geq 0.
$$
\item[(2)]
If $d_1^2 (H^2) \geq \epsilon$, then
$$
\frac{1}{(H^2)}\frac{a_1}{d_1} \geq \frac{d_1}{r_1}.
$$
\begin{NB2}
If $r_1 \geq 2r_0$, then
$r_1 a_1 \geq 2r_0 a_1 \geq 2$.
Hence the claim also holds.
\end{NB2}
\item[(3)]
If $r_1>r_0$, then
$$
\frac{2}{(H^2)}\frac{a_1}{d_1} \geq \frac{d_1}{r_1 -r_0}.
$$ 
\end{enumerate}
\end{lem}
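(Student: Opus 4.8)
The plan is to derive all three statements from a single identity, namely
$$
\langle v_1^2 \rangle - (D_1^2) = d_1^2 (H^2) - 2 r_1 a_1 .
$$
This comes from expanding the Mukai self-pairing: since the pairing is invariant under multiplication by $e^\gamma$, we have $\langle v_1^2 \rangle = \langle (r_1 + d_1 H + D_1 + a_1 \varrho_X)^2 \rangle = ((d_1 H + D_1)^2) - 2 r_1 a_1$, and $(d_1 H + D_1)^2 = d_1^2 (H^2) + (D_1^2)$ because $D_1 \in H^\perp$. Granting this, part (1) is pure bookkeeping: since $d_1 < 0$, $r_1 > 0$ and $(H^2) > 0$, multiplying $\tfrac{2}{(H^2)}\tfrac{a_1}{d_1} \ge \tfrac{d_1}{r_1}$ through by the negative quantity $d_1 r_1 (H^2)$ reverses it into $2 r_1 a_1 \le d_1^2 (H^2)$, which by the identity is exactly $\langle v_1^2 \rangle - (D_1^2) \ge 0$; all steps are reversible, giving the asserted equivalence. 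Note that (1) uses nothing beyond the signs of $d_1$ and $r_1$.

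For (2) and (3) I would bring in the two facts available in the ambient situation: the Hodge index inequality $(D_1^2) \le 0$ (valid because $D_1 \perp H$ and $(H^2) > 0$), and the bound $\langle v_1^2 \rangle \ge -\epsilon$, with $\epsilon \in \{1,2\}$, which holds for the vectors $v_1$ in play (it is the bound $\langle v(E)^2 \rangle \ge -\epsilon$ for stable objects). Combining these with the identity yields
$$
2 r_1 a_1 = d_1^2 (H^2) + (D_1^2) - \langle v_1^2 \rangle \le d_1^2 (H^2) + \epsilon .
$$
For (2), the hypothesis $d_1^2 (H^2) \ge \epsilon$ turns this into $2 r_1 a_1 \le 2 d_1^2 (H^2)$, that is, $r_1 a_1 \le d_1^2 (H^2)$, and clearing denominators through $d_1 r_1 (H^2) < 0$ (one sign flip) this is precisely $\tfrac{1}{(H^2)}\tfrac{a_1}{d_1} \ge \tfrac{d_1}{r_1}$. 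For (3), pairing $v_1$ against $v_0 = r_0 e^\gamma$ and again using $e^\gamma$-invariance gives $\langle v_0, v_1 \rangle = -r_0 a_1$, so $r_0 a_1 = -\langle v_0, v_1 \rangle \in {\Bbb Z}$ is strictly positive, whence $2 r_0 a_1 \ge 2 \ge \epsilon$. Subtracting,
$$
2 a_1 (r_1 - r_0) = 2 r_1 a_1 - 2 r_0 a_1 \le d_1^2 (H^2) + \epsilon - 2 r_0 a_1 \le d_1^2 (H^2),
$$
and since $r_1 - r_0 > 0$ the same sign-reversing manipulation (multiply by $d_1 (r_1 - r_0)(H^2) < 0$) turns this into $\tfrac{2}{(H^2)}\tfrac{a_1}{d_1} \ge \tfrac{d_1}{r_1 - r_0}$.

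The mathematics is elementary; the one point demanding care is that each inequality is cleared by multiplying through by a quantity of the form $d_1$ times a positive number, hence negative, so the direction flips exactly once --- tracking this is the only (mild) obstacle. I would also make explicit that the standing inputs ``$\langle v_1^2 \rangle \ge -\epsilon$'' and ``$v_0, v_1 \in v(K(X))$'' (the latter forcing $r_0 a_1 \in {\Bbb Z}$, and used only in (3)) are inherited from the surrounding discussion, where $v_1$ arises as a candidate destabilizing subobject of $v_0 = r_0 e^\gamma$.
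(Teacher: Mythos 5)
Your argument is correct and is essentially the paper's own: both rest on the identity $\langle v_1^2\rangle-(D_1^2)=d_1^2(H^2)-2r_1a_1$, the bounds $(D_1^2)\le 0$ and $\langle v_1^2\rangle\ge -\epsilon$, and for (3) the integrality $r_0a_1=-\langle v_0,v_1\rangle\in{\Bbb Z}_{\ge 1}$, followed by the same sign-reversing clearing of denominators. Your write-up merely makes explicit (via the pairing with $v_0$) why $r_0a_1\in{\Bbb Z}$, which the paper asserts without comment.
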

For a proof of the claims, see \cite[Lem. 1.7]{FM-duality}. 

\begin{proof}
We note that
$\langle v_1^2 \rangle-(D_1^2) \geq -\epsilon$ and
$\langle v_1^2 \rangle-(D_1^2) <0$ implies 
$\langle v_1^2 \rangle=-\epsilon$.
\begin{equation}
\frac{2}{(H^2)}\frac{a_1}{d_1}- \frac{d_1}{r_1}
=\frac{2a_1 r_1-d_2^2(H^2)}{r_1 d_1(H^2)}
=\frac{\langle v_1^2 \rangle-(D_1^2)}{r_1 (-d_1)(H^2)}. 
\end{equation}
\begin{equation}
\frac{1}{(H^2)}\frac{a_1}{d_1}- \frac{d_1}{r_1}
=\frac{a_1 r_1-d_1^2(H^2)}{r_1 d_1(H^2)}
=\frac{(\langle v_1^2 \rangle-(D_1^2))+d_1^2(H^2)}{2r_1 (-d_1)(H^2)}. 
\end{equation}
Hence (1), (2) hold.
If $r_1>r_0$, then
\begin{equation}
\frac{2}{(H^2)}\frac{a_1}{d_1}- \frac{d_1}{r_1 -r_0}
=\frac{\langle v_1^2 \rangle-(D_1^2)+2r_0 a_1}{(r_1-r_0) (-d_1)(H^2)}. 
\end{equation}
Since $r_0 a_1 \in {\Bbb Z}$,
$r_0 a_1 \geq 1$.
Hence $\langle v_1^2 \rangle-(D_1^2)+2r_0 a_1 \geq 0$.
Thus (3) holds.
\end{proof}
\end{NB}

The following result characterize the Gieseker chamber
for $v_0$.
\begin{prop}[{\cite[Prop. 1.11]{FM-duality}}]
Assume that $s$ is rational.
\begin{enumerate}
\item[(1)]
If $t>f(s)$, then 
$M_{(\gamma+sH,tH)}(r_0 e^\gamma)={M}_H^\gamma(r_0 e^\gamma)$.
\item[(2)]
If $t<f(s)$, then all $E \in {M}_H^\gamma(r_0 e^\gamma)$
are not $\sigma_{(\gamma+sH,tH)}$-semi-stable.
\end{enumerate}
\end{prop}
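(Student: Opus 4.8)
The argument follows the pattern of \cite[Prop.\ 1.11]{FM-duality}, the key input being the numerical condition $(\star 1)$ of \cite[\S 3.1]{MYY:2011:1} in its perverse-coherent form, applied to $v_0=r_0e^\gamma$ at $(\gamma+sH,tH)$. I would first record the numerical fact that $v_0$ satisfies $(\star 1)$ at $(\gamma+sH,tH)$ if and only if $t>f(s)$. For this one rewrites every invariant in $(\gamma+sH)$-twisted coordinates, so that $d_{\gamma+sH}(e^\gamma)=-s$ and $a_{\gamma+sH}(e^\gamma)=\tfrac{s^2}{2}(H^2)$; for a test vector $v_1=e^\gamma(r_1+d_1H+D_1+a_1\varrho_X)$ the inequality in $(\star 1)(2)$ becomes, after clearing denominators, $-d_1\tfrac{(H^2)}{2}(s^2+t^2)+a_1 s>0$, and since $s<0$ the hypotheses of $(\star 1)(2)$ force $d_1<0$ and $a_1>0$. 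The negation of this inequality says precisely that $(s,t)$ lies on or inside the circle $C_{v_1}$, i.e.\ $t\le f_{v_1}(s)$, and by Lemma~\ref{lem:d_1}(1) the relevant $s$-interval is empty unless $\langle v_1^2\rangle-(D_1^2)<0$; as $D_1$ lies in the negative definite lattice $H^\perp$, on which the form is even, this forces $D_1=0$ and $\langle v_1^2\rangle=-\epsilon$, i.e.\ $v_1\in{\frak E}$. Hence $(\star 1)$ holds for $v_0$ at $(\gamma+sH,tH)$ exactly when $t>f_{v_1}(s)$ for all $v_1\in{\frak E}$, that is, when $t>f(s)$.

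Part (1) then follows at once: for $t>f(s)$ the Mukai vector $v_0$ satisfies $(\star 1)$ at the (rational) point $(\gamma+sH,tH)$, so the perverse analogue of \cite[Prop.\ 3.2.1]{MYY:2011:1} (which, as noted, needs only $\langle v_1^2\rangle\ge-\epsilon$) identifies the $\sigma_{(\gamma+sH,tH)}$-semistable objects of Mukai vector $v_0$ with normalised phase with the $\gamma$-twisted Gieseker-semistable sheaves of that Mukai vector, compatibly with $S$-equivalence; thus $M_{(\gamma+sH,tH)}(r_0e^\gamma)=M_H^\gamma(r_0e^\gamma)$.

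For part (2) I would show that a single object destabilises every member of $M_H^\gamma(r_0e^\gamma)$. Assume $t<f(s)$ and choose $v_1\in{\frak E}$ with $f_{v_1}(s)>t$; by the computation above $\phi_{(\gamma+sH,tH)}(v_1)>\phi_{(\gamma+sH,tH)}(v_0)$ and $\Ima Z_{(\gamma+sH,tH)}(v_1)\ge 0$. Since $v_1\in\Delta(X)$ with $\langle v_1^2\rangle=-\epsilon$, the $\gamma$-twisted Gieseker moduli $M_H^\gamma(v_1)$ is a single reduced point; let $F_1$ be the corresponding sheaf. For any $E\in M_H^\gamma(r_0e^\gamma)$ one has $\chi(F_1,E)=-\langle v_1,v_0\rangle=r_0a_1>0$, while the slope comparison $\mu_\gamma(F_1)=d_1/r_1<0=\mu_\gamma(E)$ together with the $\gamma$-twisted semistability of $E$ gives $\Hom(E,F_1)=0$ (and $\Hom(E,F_1\otimes K_X)=0$ in the Enriques case), hence $\Ext^2(F_1,E)=0$ and therefore $\Hom(F_1,E)\ne 0$. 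Since the (essentially unique) $\sigma_{(\gamma+sH,tH)}$-semistable object $\widetilde F_1$ of class $v_1$ represents $F_1$ in the relevant range and so also satisfies $\Hom(\widetilde F_1,E)\ne 0$, a nonzero map $\widetilde F_1\to E$ taken in the heart ${\cal A}_{(\gamma+sH,tH)}$ has image a subobject of $E$ of phase $\ge\phi_{(\gamma+sH,tH)}(v_1)>\phi_{(\gamma+sH,tH)}(v_0)=\phi_{(\gamma+sH,tH)}(E)$, whence $E$ is not $\sigma_{(\gamma+sH,tH)}$-semistable.

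The main obstacle is exactly this last passage: turning the numerical destabilisation into an honest subobject of $E$ in the tilted heart. One must know that the wall $t=f_{v_1}(s)$ is a totally semistable wall for $v_0$, so that the object of class $v_1$ stays $\sigma_{(\gamma+sH,tH)}$-semistable throughout $\{t\ge f(s)\}$ and the right incarnation of $F_1$ in ${\cal A}_{(\gamma+sH,tH)}$ is the correct destabiliser; this rests on $\langle v_1,v_0\rangle=-r_0a_1<0$, $\langle v_1^2\rangle=-\epsilon$, and the $\widetilde A\widetilde D\widetilde E$-structure of the sublattice $\oplus_i{\Bbb Z}u_i$ in \eqref{eq:decomp}, via the support-property and wall-crossing analysis of \cite{BM}. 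Granting that, the phase comparison needed to conclude is precisely the numerical inequality established in the first step, and the remaining points (the coordinate change, evenness of the form on $H^\perp$, and the identification in the Enriques case of the relevant $(-\epsilon)$-classes with $\Delta(X)$) are the routine verifications already indicated in the discussion preceding the statement.
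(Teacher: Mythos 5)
The paper itself offers no proof here: the proposition is imported verbatim from \cite[Prop.~1.11]{FM-duality}, so there is no in-text argument to compare against. On its own terms, your first paragraph is correct and is exactly the intended numerical reduction: rewriting in $(\gamma+sH)$-twisted coordinates, the inequality of $(\star 1)$ of \cite[sect.~3.1]{MYY:2011:1} for a test class $v_1=e^\gamma(r_1+d_1H+D_1+a_1\varrho_X)$ becomes $-d_1\tfrac{(H^2)}{2}(s^2+t^2)+a_1s>0$, its failure forces $r_1>0$, $d_1<0$, $a_1>0$, and the negative definiteness and evenness of $H^\perp$ reduce the relevant classes to ${\frak E}$, so that $(\star 1)$ holds at $(\gamma+sH,tH)$ iff $t>f(s)$. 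Part (1) then follows from the perverse analogue of \cite[Prop.~3.2.1]{MYY:2011:1} as you say.

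The genuine gap is in part (2), at exactly the step you flag and then ``grant.'' Getting $\Hom(F_1,E)\ne 0$ from $\chi(F_1,E)=r_0a_1>0$ and the slope vanishing of $\Ext^2$ is fine; but to conclude that $E$ is destabilised you must know that the (unique, rigid) object of class $v_1$ is $\sigma_{(\gamma+sH,tH)}$-semistable of phase $\phi(v_1)>\phi(v_0)$ \emph{at the point in question}, i.e.\ for $t<f_{v_1}(s)$, strictly inside the circle $C_{v_1}$ --- not ``throughout $\{t\ge f(s)\}$'' as you write, which is the region where you do not need it. Without that, the image of $F_1\to E$ in the tilted heart is a subobject of $E$ whose phase you cannot bound from below, and the argument stops; semistability of a fixed object is a closed condition, so nothing propagates for free from the wall into the open region $t<f(s)$. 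Closing this requires an actual argument (e.g.\ choosing $v_1$ extremal among the classes of ${\frak E}$ violating the inequality and checking that any Bridgeland destabiliser of $F_1$ would produce a more extremal such class, or a direct analysis of the rigid stable factors as in the author's treatment of the objects $G_i$), and neither the support property of \cite{BM} nor the $\widetilde{A}\widetilde{D}\widetilde{E}$ structure of the lattice in \eqref{eq:decomp} supplies it --- the latter concerns the strictly semistable locus of $v_0$ itself on walls of $P_{\gamma,H}$, which is a different issue. So: the numerical reduction and part (1) are right; part (2) as written assumes the one point that needs proving.
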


\begin{NB}
We first assume that $d_1^2(H^2) \geq \epsilon$.
By Lemma \ref{lem:d_1} and $d_1^2(H^2)-2r_1 a_1 \geq -\epsilon$,
we get
\begin{equation}\label{eq:f(d/r)}
f_{v_1}(s) \leq f_{v_1}(d_1/r_1)=
\sqrt{\frac{2a_1 r_1-d_1^2(H^2)}{(H^2)r_1^2}}
\leq \frac{1}{r_1}\sqrt{\frac{\epsilon}{(H^2)}}.
\end{equation}
We next assume that $d_1^2(H^2)<\epsilon$.
Then $r_1 a_1 \leq \frac{d_1^2(H^2)}{2}+\epsilon/2<\epsilon$.
Hence $a_1 \leq \frac{\epsilon}{r_1}$.
Then 
$$
\frac{1}{(H^2)}\frac{a_1}{-d_1} \leq 
\frac{1}{(H^2)}\frac{r_0}{\delta}a_1 \leq \frac{\epsilon r_0}{(H^2)\delta}.
$$
\end{NB}
In the same way as in \cite{FM-duality}, we get the following results.
\begin{lem}[{\cite[Lem. 1.13]{FM-duality}}]\label{lem:estimate1}
$$
f_{v_1}(s) \leq \max \left\{\sqrt{\frac{\epsilon}{(H^2)}},
\sqrt{-\frac{4r_0}{(H^2)\delta}s-s^2} \right\}.
$$
\begin{NB}
$$
\sqrt{\frac{2a_1}{(H^2)d_1}s-s^2} \leq
\sqrt{-\frac{4r_0}{(H^2)\delta}s-s^2}.
$$
\end{NB}
\end{lem}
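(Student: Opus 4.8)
The plan is to follow \cite{FM-duality} and reduce the estimate to elementary properties of the downward parabola $p(s):=\frac{2}{(H^2)}\frac{a_1}{d_1}s-s^2$, which agrees with $f_{v_1}(s)^2$ on the interval $I:=[\frac{2}{(H^2)}\frac{a_1}{d_1},\frac{d_1}{r_1}]$ where $f_{v_1}$ is nonzero (with $f_{v_1}=0$ off $I$). First I would dispose of the trivial case $f_{v_1}(s)=0$ and assume $s\in I$; since $a_1>0$, $d_1<0$, $r_1>0$, every point of $I$ is negative. Write $\alpha:=\frac{a_1}{(H^2)d_1}$, so that $\alpha<0$, $p(s)=2\alpha s-s^2$, and $p$ has its vertex at $s=\alpha$; write also $\epsilon:=-\langle v_1^2\rangle\in\{1,2\}$. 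Two facts will be used: from $\langle v_1^2\rangle=d_1^2(H^2)+(D_1^2)-2r_1a_1$ (the cross term vanishing since $D_1\in H^\perp$, where also $(D_1^2)\le 0$) we get $2r_1a_1=d_1^2(H^2)+(D_1^2)+\epsilon$; and $d_1\in\frac{\delta}{r_0}{\Bbb Z}$ (since $r_0d_1(H^2)=(r_0c_1(v_1)-r_1c_1(v_0),H)$ lies in $\delta(H^2){\Bbb Z}$), so $-d_1\ge\frac{\delta}{r_0}$. The proof then splits on the sign of $d_1^2(H^2)-\epsilon$.

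If $d_1^2(H^2)\ge\epsilon$, then \cite[Lem. 1.7]{FM-duality} gives $\alpha\ge\frac{d_1}{r_1}$, so $I$ lies weakly to the left of the vertex and $p$ is non-decreasing on $I$; hence $f_{v_1}(s)^2=p(s)\le p(d_1/r_1)$, and
$$
p(d_1/r_1)=\frac{2a_1}{(H^2)r_1}-\frac{d_1^2}{r_1^2}=\frac{2r_1a_1-d_1^2(H^2)}{(H^2)r_1^2}=\frac{\epsilon+(D_1^2)}{(H^2)r_1^2}\le\frac{\epsilon}{(H^2)r_1^2}\le\frac{\epsilon}{(H^2)}
$$
by $(D_1^2)\le 0$ and $r_1\ge 1$; this yields the first term of the maximum.

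If $d_1^2(H^2)<\epsilon$, then $2r_1a_1\le d_1^2(H^2)+\epsilon<2\epsilon$, so $a_1<\epsilon/r_1\le\epsilon\le 2$; combining with $-d_1\ge\delta/r_0$,
$$
\alpha=-\frac{a_1}{(H^2)(-d_1)}\ge-\frac{a_1r_0}{(H^2)\delta}>-\frac{2r_0}{(H^2)\delta}.
$$
Since $s<0$, this gives $2\alpha s\le-\frac{4r_0}{(H^2)\delta}s$, and therefore
$$
f_{v_1}(s)^2=2\alpha s-s^2\le-\frac{4r_0}{(H^2)\delta}s-s^2,
$$
the right-hand side being automatically $\ge 0$; this yields the second term of the maximum. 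Combining the two cases proves the lemma.

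I expect the crux to be the second case, and within it the use of the integrality $d_1\in\frac{\delta}{r_0}{\Bbb Z}$: it is precisely this that upgrades the bound $a_1<\epsilon$ (available only when $d_1^2(H^2)<\epsilon$) into a bound on $a_1/(-d_1)$, hence on the vertex $\alpha$, which is what makes the comparison with $-\frac{4r_0}{(H^2)\delta}s-s^2$ possible. Everything else — locating the vertex relative to $I$ via \cite[Lem. 1.7]{FM-duality} and the quadratic arithmetic — is routine.
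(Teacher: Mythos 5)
Your proof is correct and follows essentially the same route as the paper's own argument (sketched in its source and carried out in the cited \cite[Lem.~1.13]{FM-duality}): the case split on $d_1^2(H^2)\gtrless\epsilon$, the use of Lemma \ref{lem:d_1}(2) to place the interval left of the vertex and evaluate at $d_1/r_1$ in the first case, and the bound $a_1<\epsilon/r_1$ combined with the integrality $d_1\in\frac{\delta}{r_0}{\Bbb Z}$ to control the vertex in the second case. No gaps.
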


\begin{NB}
In a neighborhood of $s=0$, we have a more precise estimate. 
We first note that
$d_1^2(H^2)-2r_1 a_1 \geq -\epsilon$ and $r_0 a_1 \in {\Bbb Z}$
imply that
$d_1^2(H^2)+\epsilon \geq 2\frac{r_1}{r_0}$.
Hence
\begin{equation}\label{eq:circle-a}
\left(\frac{d_1}{r_1}\right)^2+
\left(\sqrt{\frac{\epsilon}{(H^2)}}\frac{1}{r_1}-
\frac{1}{r_0} \sqrt{\frac{1}{\epsilon(H^2)}}\right)^2
\geq \left(\frac{1}{r_0} \sqrt{\frac{1}{\epsilon(H^2)}}\right)^2.
\end{equation}
We also have $f_{v_1}(\frac{d_1}{r_1}) \leq 
\frac{1}{r_1}\sqrt{\frac{\epsilon}{(H^2)}}$ by 
$d_1^2(H^2)-2r_1 a_1 \geq -\epsilon$. 
\end{NB}

\begin{prop}[{\cite[Prop. 1.14]{FM-duality}}]\label{prop:estimate2}
Assume that
\begin{equation}\label{eq:|s|}
|s| \leq \min \left\{\frac{1}{r_0}\sqrt{\frac{1}{\epsilon(H^2)}},
\frac{\delta}{\epsilon r_0^2} \right\}.
\end{equation}
Then
\begin{equation}\label{eq:f}
f_{v_1}(s) \leq 
\frac{1}{r_0}\sqrt{\frac{1}{\epsilon(H^2)}}
-\sqrt{\frac{1}{r_0^2}\frac{1}{\epsilon(H^2)}-s^2}
\end{equation}
for all $v_1 \in {\frak E}_\epsilon$.
In particular if
\begin{equation}\label{eq:|s|-2}
|s| \leq \min \left\{\frac{1}{r_0}\sqrt{\frac{1}{2(H^2)}},
\frac{\delta}{2 r_0^2} \right\},
\end{equation}
then
\begin{equation}\label{eq:f2}
f(s) \leq 
\frac{1}{r_0}\sqrt{\frac{1}{2(H^2)}}
-\sqrt{\frac{1}{r_0^2}\frac{1}{2(H^2)}-s^2}.
\end{equation}
\end{prop}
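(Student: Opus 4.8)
The plan is to reproduce the argument of \cite[Prop. 1.14]{FM-duality}. Fix $v_1 = e^\gamma(r_1 + d_1 H + D_1 + a_1\varrho_X) \in {\frak E}_\epsilon$ (so $D_1 \in H^\perp$, $r_1 > 0$, $d_1 < 0$, $a_1 > 0$, $\langle v_1^2\rangle = -\epsilon$) and a rational $s$ satisfying \eqref{eq:|s|}. Since the right-hand side of \eqref{eq:f} is non-negative, we may assume $f_{v_1}(s) > 0$, i.e. $\tfrac{2}{(H^2)}\tfrac{a_1}{d_1} \leq s \leq \tfrac{d_1}{r_1} < 0$. I would begin with two integrality remarks: because $v_0 = r_0 e^\gamma$ and $v_1$ lie in the Mukai lattice, $\langle v_0, v_1\rangle = -r_0 a_1 \in {\Bbb Z}$, whence $a_1 \geq 1/r_0$; and $r_0 d_1 H + r_0 D_1 = r_0 c_1(v_1) - r_1 c_1(v_0)$ is a nonzero class of $\NS(X)$ pairing to $r_0 d_1(H^2) < 0$ with $H$, so by the definition of $\delta$ one gets $|d_1| \geq \delta/r_0$.

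The crux is then to observe that the two bounds in \eqref{eq:|s|} have distinct roles. From $s \leq \tfrac{d_1}{r_1}$ we get $r_1|s| \geq |d_1| \geq \delta/r_0$, so the bound $|s| \leq \delta/(\epsilon r_0^2)$ forces $r_1 \geq \epsilon r_0$. Using $\langle v_1^2\rangle = d_1^2(H^2) + (D_1^2) - 2r_1 a_1 = -\epsilon$, $(D_1^2) \leq 0$ (Hodge index theorem) and $a_1 \geq 1/r_0$, we obtain $2r_1/r_0 \leq 2r_1 a_1 \leq d_1^2(H^2) + \epsilon$, hence $d_1^2(H^2) \geq 2r_1/r_0 - \epsilon \geq \epsilon$; so one is automatically in the case $d_1^2(H^2) \geq \epsilon$, the complementary case $d_1^2(H^2) < \epsilon$ being excluded by $r_1 \geq \epsilon r_0$. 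Moreover $r_1 \geq \epsilon r_0$ gives $\tfrac{1}{r_1}\sqrt{\epsilon/(H^2)} \leq \tfrac{1}{r_0}\sqrt{1/(\epsilon(H^2))} =: R$, while the other bound $|s| \leq R$ ensures $|\tfrac{d_1}{r_1}| \leq |s| \leq R$, so that the quantity $R - \sqrt{R^2 - (d_1/r_1)^2}$ in \eqref{eq:f} is well defined along the relevant interval.

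Finally I would run the plane-geometry estimate of \cite[Lem. 1.7]{FM-duality}. Since $d_1^2(H^2) \geq \epsilon \geq (D_1^2) + \epsilon$, a direct computation gives $\tfrac{1}{(H^2)}\tfrac{a_1}{d_1} \geq \tfrac{d_1}{r_1}$, so on $[\tfrac{2}{(H^2)}\tfrac{a_1}{d_1}, \tfrac{d_1}{r_1}]$ the upper semicircle $f_{v_1}(s) = \sqrt{\tfrac{2}{(H^2)}\tfrac{a_1}{d_1}s - s^2}$ stays to the left of its vertex, hence is non-decreasing; thus $f_{v_1}(s) \leq f_{v_1}(\tfrac{d_1}{r_1})$, and a computation using $2r_1 a_1 - d_1^2(H^2) = (D_1^2) + \epsilon \leq \epsilon$ yields $f_{v_1}(\tfrac{d_1}{r_1}) \leq \tfrac{1}{r_1}\sqrt{\epsilon/(H^2)} \leq R$. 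On the other hand the inequality $d_1^2(H^2) + \epsilon \geq 2r_1/r_0$ above (with $r_0 a_1 \geq 1$) is exactly what is needed to check that the point $(\tfrac{d_1}{r_1}, \tfrac{1}{r_1}\sqrt{\epsilon/(H^2)})$ lies on or outside the circle of radius $R$ centred at $(0,R)$; since $x \mapsto (x-R)^2$ is decreasing on $[0,R]$ and $0 \leq f_{v_1}(\tfrac{d_1}{r_1}) \leq \tfrac{1}{r_1}\sqrt{\epsilon/(H^2)} \leq R$, the point $(\tfrac{d_1}{r_1}, f_{v_1}(\tfrac{d_1}{r_1}))$ lies on or outside that circle too, i.e. $f_{v_1}(\tfrac{d_1}{r_1}) \leq R - \sqrt{R^2 - (d_1/r_1)^2}$. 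As $g(x) := R - \sqrt{R^2 - x^2}$ is decreasing on $(-\infty, 0)$, for $s$ in the interval we conclude $f_{v_1}(s) \leq f_{v_1}(\tfrac{d_1}{r_1}) \leq g(\tfrac{d_1}{r_1}) \leq g(s)$, which is \eqref{eq:f}; and \eqref{eq:f2} follows by applying \eqref{eq:f} for $\epsilon = 1$ and $\epsilon = 2$ and using $R_1 - \sqrt{R_1^2 - s^2} \leq \tfrac{R_1}{\sqrt2} - \sqrt{\tfrac{R_1^2}{2} - s^2}$ with $R_1 = \tfrac{1}{r_0}\sqrt{1/(H^2)}$ to dominate the $\epsilon = 1$ bound by the target.

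I expect the only non-routine step to be the second paragraph: recognizing that $|s| \leq \delta/(\epsilon r_0^2)$ must be coupled with the lattice bound $|d_1| \geq \delta/r_0$ to force $r_1 \geq \epsilon r_0$ — which simultaneously excludes the delicate small-$d_1$ case and makes $\tfrac{1}{r_1}\sqrt{\epsilon/(H^2)} \leq R$ — whereas $|s| \leq R$ is used only to keep the reference arc $g$ defined. Everything after that is the elementary circle bookkeeping recorded in \cite[Lem. 1.7]{FM-duality}.
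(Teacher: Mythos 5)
Your proof is correct and follows essentially the same route as the paper's (the paper defers to \cite[Prop.~1.14]{FM-duality}, whose argument is exactly this: the reduction to $s\le d_1/r_1$, the step $\frac{\delta}{\epsilon r_0^2}\ge|s|\ge|d_1/r_1|\ge\frac{\delta}{r_0 r_1}$ forcing $r_1\ge\epsilon r_0$ and hence $d_1^2(H^2)\ge 2r_1a_1-\epsilon\ge\epsilon$, and the comparison of $(d_1/r_1,f_{v_1}(d_1/r_1))$ with the circle of radius $\frac{1}{r_0}\sqrt{1/(\epsilon(H^2))}$ through the origin via $d_1^2(H^2)+\epsilon\ge 2r_1/r_0$). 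The only additions are the explicit integrality justifications ($r_0a_1\in{\Bbb Z}$ and $|d_1|\ge\delta/r_0$) and the monotonicity-in-$R$ step for \eqref{eq:f2}, both of which match the intended argument.
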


\begin{NB}
\begin{proof}
Since the RHS of \eqref{eq:f} is non-negative,
we may assume that
$|s| \geq |\frac{d_1}{r_1}|$.
If $s$ satisfies \eqref{eq:|s|}, 
then
$\frac{\delta}{\epsilon r_0^2} \geq |s| \geq 
|\frac{d_1}{r_1}| \geq \frac{\delta}{r_0 r_1}$
implies $r_1 \geq \epsilon r_0$.
\begin{NB2}
Thus $\sqrt{\frac{2}{(H^2)}}\frac{1}{r_1}-
\frac{1}{r_0} \sqrt{\frac{1}{2(H^2)}} \leq 0$.
\end{NB2}
Then 
$$
d_1^2(H^2) \geq 2r_1 a_1-\epsilon \geq 
2 \epsilon r_0 a_1-\epsilon \geq \epsilon.
$$
By Lemma \ref{lem:d_1} (2),
 $s  \leq \frac{d_1}{r_1} \leq \frac{a_1}{d_1(H^2)}$.
Hence \eqref{eq:f(d/r)} holds and 
$(\frac{d_1}{r_1},f_{v_1}(\frac{d_1}{r_1}))$ satisfies
\eqref{eq:circle-a} with $f_{v_1}(\frac{d_1}{r_1}) 
\leq \frac{1}{r_0}\sqrt{\frac{1}{\epsilon(H^2)}}$.
Therefore
\begin{equation}
f_{v_1}(s) \leq 
\frac{1}{r_0}\sqrt{\frac{1}{\epsilon (H^2)}}
-\sqrt{\frac{1}{r_0^2}\frac{1}{\epsilon(H^2)}-s^2}.
\end{equation}
Since
\begin{equation}
\frac{1}{r_0}\sqrt{\frac{1}{2 (H^2)}}
-\sqrt{\frac{1}{r_0^2}\frac{1}{2(H^2)}-s^2}
 \geq
 \frac{1}{r_0}\sqrt{\frac{1}{(H^2)}}
-\sqrt{\frac{1}{r_0^2}\frac{1}{(H^2)}-s^2}
\end{equation}
for $|s|<\sqrt{\frac{1}{r_0^2}\frac{1}{2(H^2)}}$,
we get then claim.
\end{proof}
\end{NB}

\subsection{A bound on the Gieseker chamber}\label{subsect:Gieseker}
We set
$$
s_0:=\min \left\{\frac{1}{r_0}\sqrt{\frac{1}{2(H^2)}},
\frac{\delta}{2r_0^2} \right\}.
$$
By the description of ${\frak E}$ and Corollary \ref{cor:limit},
we get the following.
\begin{prop}\label{prop:geometric-chamber}
Assume that
\begin{equation}
\begin{split}
0 & <|s|<  s_0,\;
t  > \frac{1}{r_0}\sqrt{\frac{1}{2(H^2)}}-
\sqrt{\frac{1}{2r_0^2 (H^2)}-s^2}.
\end{split}
\end{equation}
\begin{NB}
Assume that
\begin{equation}\label{eq:|s|-3}
\begin{split}
0<|s| & \leq \min 
\left\{\frac{1}{r_0}\sqrt{\frac{1}{2(H^2)}},
\frac{\delta}{2 r_0^2} \right\},\\
t & \geq 
\frac{1}{r_0}\sqrt{\frac{1}{2(H^2)}}
-\sqrt{\frac{1}{r_0^2}\frac{1}{2(H^2)}-s^2}.
\end{split}
\end{equation}
\end{NB}
Then
${\cal O}_x$ $(x \in X)$ is $\sigma_{(\gamma+sH,tH)}$-semi-stable
such that all stable factors are irreducible objects of ${\frak C}$.
\end{prop}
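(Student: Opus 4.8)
The plan is to reduce the statement to Corollary~\ref{cor:limit}, applied with $(\beta_0,\omega_0)=(\gamma+sH,tH)$; then the entire content becomes the verification of the non-degeneracy hypothesis \eqref{eq:Delta} at $(\gamma+sH,tH)$, i.e.\ that no $u\in\Delta(X)$ with $\rk u>0$ satisfies $Z_{(\gamma+sH,tH)}(u)\in{\Bbb R}_{\leq 0}$. Granting this, Corollary~\ref{cor:limit} gives ${\frak s}(\gamma+sH,tH)=\sigma_{(\gamma+sH,tH)}$, so that $\sigma_{(\gamma+sH,tH)}=({\cal A}_{(\gamma+sH,tH)},Z_{(\gamma+sH,tH)})$ is the stability condition attached to the category ${\frak C}$ of perverse coherent sheaves (cf.\ Definition~\ref{defn:A}). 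The desired conclusion is then formal: ${\cal O}_x$ is a torsion object of ${\frak C}$ supported over a point of $Y$, so every subquotient $A$ of ${\cal O}_x$ in ${\frak C}$ satisfies $\Ima Z_{(\gamma+sH,tH)}(A)=t\,(c_1(A)\cdot H)=0$, whence $Z_{(\gamma+sH,tH)}(A)\in{\Bbb R}_{<0}$ and $\phi_{\sigma_{(\gamma+sH,tH)}}(A)=1$; consequently ${\cal O}_x$ is $\sigma_{(\gamma+sH,tH)}$-semi-stable of phase $1$, its $\sigma_{(\gamma+sH,tH)}$-stable factors are the composition factors of ${\cal O}_x$ in ${\frak C}$, and these are irreducible objects of ${\frak C}$ by \cite[Lem.~1.1.21]{PerverseI}.

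For the verification of \eqref{eq:Delta} I would argue as follows. Since $0<|s|<s_0$, the hypothesis \eqref{eq:|s|-2} of Proposition~\ref{prop:estimate2} holds, so \eqref{eq:f2} gives $f(s)\leq\frac{1}{r_0}\sqrt{\frac{1}{2(H^2)}}-\sqrt{\frac{1}{2r_0^2(H^2)}-s^2}<t$. Suppose $u=e^\gamma(r_1+d_1H+D_1+a_1\varrho_X)\in\Delta(X)$, $D_1\in H^\perp$, has $\rk u=r_1>0$ and $Z_{(\gamma+sH,tH)}(u)\in{\Bbb R}_{\leq 0}$. Writing $Z_{(\gamma+sH,tH)}(u)=\langle e^{sH+itH},\,r_1+d_1H+D_1+a_1\varrho_X\rangle$ (the Mukai pairing being $e^\gamma$-invariant), vanishing of the imaginary part forces $d_1=r_1s$, and then $\Rea Z_{(\gamma+sH,tH)}(u)=\frac{r_1(s^2+t^2)(H^2)}{2}-a_1\leq 0$ gives $a_1\geq\frac{r_1(s^2+t^2)(H^2)}{2}>0$. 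Assuming $s<0$ (so $d_1<0$), this places $u$ in ${\frak E}_\epsilon$ with $\epsilon:=-\langle u^2\rangle\in\{1,2\}$; using $\langle u^2\rangle=d_1^2(H^2)+(D_1^2)-2r_1a_1=-\epsilon$ to eliminate $a_1$, the bound $a_1\geq\frac{r_1(s^2+t^2)(H^2)}{2}$ becomes $r_1^2t^2(H^2)\leq(D_1^2)+\epsilon$, hence $t\leq\sqrt{\frac{(D_1^2)+\epsilon}{r_1^2(H^2)}}=f_u(d_1/r_1)=f_u(s)\leq f(s)$, contradicting $t>f(s)$. The case $s>0$ is symmetric, using the analogous family of vectors with $d_1>0$, $a_1>0$, for which Proposition~\ref{prop:estimate2} applies unchanged since its estimates depend only on $|s|$, $r_1$, $(D_1^2)$ and $\epsilon$. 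Hence \eqref{eq:Delta} holds at $(\gamma+sH,tH)$.

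The step I expect to be the crux is the second paragraph: identifying the $u\in\Delta(X)$ of positive rank with $Z_{(\gamma+sH,tH)}(u)\in{\Bbb R}_{\leq 0}$ with the members of ${\frak E}$, and translating that condition into the circle inequality $t\leq f_u(s)$. This is precisely what turns Proposition~\ref{prop:estimate2}'s explicit bound on $f(s)$ into a proof of \eqref{eq:Delta}. Everything else is bookkeeping: keeping track that ${\frak C}$, by Remark~\ref{rem:ST}, depends only on the chamber of $\NS(X)_{\Bbb R}$ containing $\beta=\gamma+sH+\mu$ rather than on the precise $\mu$, and organizing the two signs of $s$ symmetrically; the identification of $\sigma_{(\gamma+sH,tH)}$ with the perverse-coherent-sheaves stability condition, and the ensuing semi-stability of ${\cal O}_x$, are then routine.
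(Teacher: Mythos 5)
Your proposal is correct and follows essentially the same route as the paper, whose (one-line) proof is exactly the combination of ``the description of ${\frak E}$'' --- i.e.\ the identification, which you verify explicitly, of the classes $u\in\Delta(X)$ with $\rk u>0$ and $Z_{(\gamma+sH,tH)}(u)\in{\Bbb R}_{\leq 0}$ with points on or inside the circles $t\leq f_u(s)$, so that Proposition~\ref{prop:estimate2} forces \eqref{eq:Delta} --- with Corollary~\ref{cor:limit}. The only cosmetic difference is that you spell out the elimination of $a_1$ via $\langle u^2\rangle=-\epsilon$ and the symmetric treatment of $s>0$, which the paper leaves implicit.
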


Let $v=e^\beta(r+D+a \varrho_X)$, $(D \in H^\perp)$
be a Mukai vector.
We set
$$
p_0:=\frac{\frac{2}{(H^2)}+s_0^2+\frac{\langle v^2 \rangle -(D^2)}
{r^2(H^2)}}{2s_0}. 
$$
Let $V_v(X)$ be the open subset 
defined by
\begin{equation}
\begin{split}
& s \ne 0,\; t^2+(|s|-p_0)^2>p_0^2
-\frac{\langle v^2 \rangle -(D^2)}{r^2(H^2)},\\
& t  \geq \frac{1}{r_0}\sqrt{\frac{1}{2(H^2)}}-
\sqrt{\frac{1}{2r_0^2 (H^2)}-s^2}.
\end{split}
\end{equation}
If $t>\sqrt{\frac{2}{(H^2)}}$,
then $Z_{(\gamma+sH,tH)}(u) \not \in {\Bbb R}_{\leq 0}$
for $u \in \Delta(X)$ with $\rk u>0$.

Assume that $(\gamma+s_1 H,t_1 H)$ belongs to a Gieseker
chamber for a Mukai vector $v$, that is,
${\cal M}_{(\gamma+s_1 H,t_1 H)}(v)={\cal M}_H^{\gamma}(v)$.
We set
$$
p:=\min \left\{\frac{t_1^2+s_1^2+\frac{\langle v^2 \rangle -(D^2)}{r^2(H^2)}}
{2s_1},-p_0
\right\}.
$$
\begin{NB}
We set
$$
p':=\min \left\{\frac{t_0^2+s_0^2+\frac{\langle v^2 \rangle -(D^2)}{r^2(H^2)}}
{2s_0},
\frac{\frac{1}{r_0^2(H^2)}+\frac{\langle v^2 \rangle -(D^2)}{r^2(H^2)}}
{-2\sqrt{\frac{1}{2r_0^2(H^2)}}} \right\}.
$$
The circle 
$$
t^2+(s-p')^2 = {p'}^2-\frac{\langle v^2 \rangle -(D^2)}{r^2(H^2)}
$$
and 
$$
\left(t - \frac{1}{r_0}\sqrt{\frac{1}{2(H^2)}} \right)^2-
\left(\frac{1}{2r_0^2 (H^2)}-s^2 \right)=0
$$
passes $(-\sqrt{\frac{1}{2r_0^2(H^2)}},\sqrt{\frac{1}{2r_0^2(H^2)}})$.
If $p'$ decreases, then the $t$-coordinate of the intersection 
increase.
\end{NB}

Assume that 
${\cal M}_{(\gamma+s_1' H,t_1' H)}(v)=
\{E \mid E^{\vee} \in {\cal M}_H^{-\gamma}(v^{\vee}) \}$.
We set
$$
p':=\max \left\{
\frac{{t_1'}^2+{s_1'}^2+\frac{\langle v^2 \rangle -(D^2)}{r^2(H^2)}}{2s_1'},
p_0
\right\}.
$$
By \cite[Cor. 3.2.10]{MYY:2011:1} and \cite[Cor. 3.6]{Y:wall}, 
we get the following.
\begin{prop}\label{prop:Gieseker}
Let $X$ be a K3 surface or an Enriques surface.
Let $v=e^\beta(r+D+a \varrho_X)$, $(D \in H^\perp)$
be a Mukai vector.
\begin{enumerate}
\item[(1)]
Assume that $(s,t)$ satisfies
$$
s<0,\;
t^2+(s-p)^2 > p^2-\frac{\langle v^2 \rangle -(D^2)}{r^2(H^2)},\;
t  \geq \frac{1}{r_0}\sqrt{\frac{1}{2(H^2)}}-
\sqrt{\frac{1}{2r_0^2 (H^2)}-s^2}.
$$
Then 
${\cal M}_{(\gamma+s H,t H)}(v)^{ss}={\cal M}_H^{\gamma}(v)^{ss}$.
\item[(2)]
Assume that $(s,t)$ satisfies
$$
s>0,\;t^2+(s-p')^2 > {p'}^2-\frac{\langle v^2 \rangle -(D^2)}{r^2(H^2)},
t  \geq \frac{1}{r_0}\sqrt{\frac{1}{2(H^2)}}.
$$
Then 
${\cal M}_{(\gamma+s H,t H)}(v)=
\{E \mid E^{\vee} \in {\cal M}_H^{-\gamma}(v^{\vee}) \}$.
\end{enumerate}
\end{prop}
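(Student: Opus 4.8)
The plan is to show that the region described in (1) lies in a single chamber for $v$ that also contains the large-volume part $\{(\gamma+sH,tH)\mid s<0,\ t\gg 0\}$, on which $\sigma_{(\gamma+sH,tH)}$-semistability coincides with $\gamma$-twisted Gieseker semistability with respect to $H$. First I would describe the walls for $v$ inside the half-plane $P_{\gamma,H}$: since $\sigma_{(\gamma+sH,tH)}$ satisfies the support property, these are locally finite, and each is the trace on $P_{\gamma,H}$ of a wall in $\Stab(X)$ carried by a Mukai vector $v_1$, namely a locus where $Z_{(\gamma+sH,tH)}(v_1)$ and $Z_{(\gamma+sH,tH)}(v)$ have the same phase. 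Writing $v_1=e^\gamma(r_1+d_1H+D_1+a_1\varrho_X)$ with $D_1\in H^\perp$, this locus is a semicircle (or a vertical line) in the $(s,t)$-plane whose centre and radius are determined by $v_1$ and $v$; along it the destabilizing subobject and quotient are semistable, so the support property gives $\langle v_1^2\rangle\geq -\epsilon$ and $\langle(v-v_1)^2\rangle\geq -\epsilon$, where $\epsilon\in\{1,2\}$ is attached to $\rk v$.

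Next I would pin down the two relevant regions. For fixed $s<0$ and $t$ sufficiently large --- concretely $t>\sqrt{2/(H^2)}$, so that $Z_{(\gamma+sH,tH)}(u)\notin {\Bbb R}_{\leq 0}$ for all $u\in\Delta(X)$ with $\rk u>0$ --- the Gieseker-type analysis of \cite[Cor. 3.2.10]{MYY:2011:1} gives ${\cal M}_{(\gamma+sH,tH)}(v)={\cal M}_H^\gamma(v)$. The auxiliary inequality $t\geq \frac{1}{r_0}\sqrt{1/(2(H^2))}-\sqrt{1/(2r_0^2(H^2))-s^2}$ is precisely the boundary curve of the geometric chamber for $\varrho_X$ coming from Proposition \ref{prop:estimate2} and Proposition \ref{prop:geometric-chamber}; it guarantees, via Corollary \ref{cor:limit}, that throughout the region $\sigma_{(\gamma+sH,tH)}$ remains the stability condition associated to the category ${\frak C}$ of perverse coherent sheaves of subsection \ref{subsect:perverse} (with ${\cal O}_x$ semistable and irreducible factors in ${\frak C}$), so that comparison with $M_H^\gamma(v)$, and with $M_H^{-\gamma}(v^\vee)$, is legitimate.

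Then I would bound every wall for $v$ that meets this region from the outside by an explicit circle. Using $\langle v_1^2\rangle\geq -\epsilon$ and $\langle(v-v_1)^2\rangle\geq -\epsilon$, a Bogomolov-type estimate (as in \cite[Cor. 3.2.10]{MYY:2011:1} and \cite[Cor. 3.6]{Y:wall}) shows that such a wall is dominated by a semicircle of the shape $t^2+(s-p)^2=p^2-\frac{\langle v^2\rangle-(D^2)}{r^2(H^2)}$: one factor of $p$ comes from the known Gieseker-chamber point $(s_1,t_1)$, and a uniform one from $-p_0$, so that taking the minimum produces the $p$ in the statement. Hence any $(\gamma+sH,tH)$ with $s<0$ lying outside that circle and above the geometric-chamber curve is joined to the large-volume region without crossing a wall for $v$, which gives (1). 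For (2) I would apply the derived dual $E\mapsto E^\vee$, which carries $\sigma_{(\gamma+sH,tH)}$ with $s>0$ to a stability condition of the same shape with $\gamma$ replaced by $-\gamma$ and $s$ by $-s<0$, and $v$ to $v^\vee$ (leaving $(D^2)$ and $\langle v^2\rangle$ unchanged); combined with the duality of \cite[Cor. 3.6]{Y:wall} this reduces (2) to (1), the circle with parameter $p'$ being the image of the one with parameter $p$.

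The main obstacle is the wall estimate in the third step: one must verify that the explicitly given circles --- with $p$ the minimum of $-p_0$ and the quantity determined by the known Gieseker-chamber point $(s_1,t_1)$, and $p'$ the corresponding maximum --- really dominate all intervening walls for $v$, i.e. that the bound $\langle v_1^2\rangle\geq -\epsilon$ together with the position of the known (dual-)Gieseker chamber point forces every such wall inside the stated disc. This is the numerical heart of the argument, and one also has to check that the large-volume Gieseker region and the region above the geometric-chamber curve overlap, so that the two pieces lie in a single chamber for $v$.
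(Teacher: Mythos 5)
Your plan follows the same route as the paper: the paper's proof is precisely the observation that the wall-bounding circles (via $\langle v_1^2\rangle\geq-\epsilon$ and the known Gieseker, resp.\ dual-Gieseker, point), the geometric-chamber curve from Propositions \ref{prop:estimate2} and \ref{prop:geometric-chamber}, and the identification of the large-volume chamber are exactly what \cite[Cor.\ 3.2.10]{MYY:2011:1} and \cite[Cor.\ 3.6]{Y:wall} supply. The ``numerical heart'' you flag is not redone here but is carried by those citations together with Lemma \ref{lem:estimate1}, so your outline matches the intended argument.
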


\begin{NB}
Then for $(s,t)$ satisfying $s>0$ and
$$
t^2+(s-p')^2 > {p'}^2-\frac{\langle v^2 \rangle -(D^2)}{r^2(H^2)},
$$
we have 
${\cal M}_{(\gamma+s H,t H)}(v)=
\{E \mid E^{\vee} \in {\cal M}_H^{-\gamma}(v^{\vee}) \}$.
\end{NB}

\begin{NB}
Assume that $(\beta',\omega')$ belongs to a chamber for
$v$.
Let $v_0$ be an isotropic Mukai vector and assume that
$\phi_{(\beta',\omega')}(v_0)=\phi_{(\beta',\omega')}(v)$.
Let ${\cal C}$ be a chamber such that
$\sigma_{(\beta',\omega')} \in \overline{\cal C}$.
Let $X'$ be the moduli of ${\cal C}$-stable objects 
$E$ with $v(E)=v_0$.
Then $X'$ is a smooth K3 surface.
Let ${\cal E}$ be a universal family of 
${\cal C}$-stable objects.
Then 
the Fourier-Mukai transform 
$\Phi_{X \to X'}^{{\cal E}^{\vee}}:{\bf D}(X) \to {\bf D}(X')$
induces an isomorphism
$\Phi:\Stab(X) \to \Stab(X')$.
We set ${\cal C}':=\Phi({\cal C})$.
Then $(\beta,\omega)=\Phi((\beta',\omega'))$
satisfies $\sigma_(\beta,\omega) \in {\cal C}'$
and $\phi_{(\beta,\omega)}(w)=1$
for $w=\Phi_{X \to X'}^{{\cal E}^{\vee}}(v)$.
Thus $(c_1(w)-(\rk w) \beta,\omega)=0$.
If $(\beta+sH,tH) \in {\cal C}'$ for a $0>s \gg -1$, then
$(\beta+sH,tH)$ belongs to the Gieseker chamber
of $w$.
Hence $\Phi_{X \to X'}^{{\cal E}}$ induces
an isomorphism
${\cal M}_{(\beta',\omega')}(v) \cong {\cal M}_{(\beta,\omega)}(w) \cong
{\cal M}_H(w)$.
\end{NB}

\subsection{An isomorphism by a Fourier-Mukai transform}
\label{subsect:application}

We consider a family of stability conditions
$P_{\gamma,H}$ in \eqref{eq:Plane}.
Let $v_0:=r_0 e^\gamma$ be a primitive isotropic Mukai
vector such that
$\dim M_{(\beta,\omega)}(v_0)=2$ for a general
$(\beta,\omega)$.
Let $C_v \ne \emptyset$ be a semi-circle defined by
${\Bbb R}_{>0}Z_{(\gamma+sH,tH)}(r_0 e^\gamma)
={\Bbb R}_{>0}Z_{(\gamma+sH,tH)}(v)$,
where $v$ is a Mukai vector with $\langle v^2 \rangle \geq 0$.
Since $\langle v^2 \rangle,\langle v_0^2 \rangle \geq 0$,
\cite[(5.10)]{MYY:2011:2} implies
$\langle v,v_0 \rangle>0$.

Let $(\gamma+s_0 H,t_0 H)$ be a point of $C_v$ and
$U$ a neighborhood of $(\gamma+s_0 H,t_0 H)$.
Let $U_\pm$ be the connected components of $U \setminus C_v$
such that
$$
U_\pm \subset 
\{(\gamma+sH,tH) \mid 
\pm(\phi_{(\gamma+sH,tH)}(v)-\phi_{(\gamma+sH,tH)}(v_0))>0 \}.
$$
For chambers ${\cal C}_\pm$ 
with $U_\pm \subset \overline{{\cal C}_\pm}$,
we consider moduli schemes $X':=M_{(\beta,\omega)}(r_0 e^\gamma)$
($(\beta,\omega)\in {\cal C}_\pm$).  Then 
$X'$ is a K3 surface or an Enriques surface.
Let  ${\cal E}_\pm \in {\bf D}(X \times X')$ be universal families
(as twisted objects).  
Let 
$$
\Phi_\pm:=\Phi_{X \to X'}^{{\cal E}_\pm^{\vee}[2]}
:{\bf D}(X) \to {\bf D}(X')
$$ 
be a Fourier-Mukai transform in \eqref{eq:FM}.
We use the notation in subsection \ref{subsect:half-plane}.
Then $\Phi_\pm(C_v)$ is the line defined by $s'=s_0'$,
where $\Phi(\gamma+s_0 H,t_0 H)=(\gamma'+s_0' H',t_0' H')$. 
We set $U_\pm':=\Phi_\pm(U_\pm)$.
By shrinking $U$, we may assume that 
there is no $(\gamma'+s' H',t' H') \in U_\pm'$
such that $Z_{(\gamma'+s'H',t'H')}(u) \not \in {\Bbb R}_{\leq 0}$
for a Mukai vector $u \in \Delta(X)$ with $\rk u>0$. 
\begin{NB}
We take $(\beta,\omega)$ in a chamber ${\cal C}$
such that $(\gamma+s_0 H,t_0 H) \in \overline{\cal C}$.
Let $W$ be a wall for $e^\gamma$ defined by $u$.
Then $W$ contains the plane $P_{\gamma,H}$ or 
$W \cap P_{\gamma,H}$
is a semi-circle passing $(\gamma+s_0 H,t_0 H)$.
\end{NB}
Then $\sigma_{(\gamma'+s'H',t'H')}$ is the stability
condition associated to a category of perverse coherent
sheaves if $(\gamma'+s'H',t' H')  \in  U_\pm'$. 

For $E \in {\cal M}_{(\gamma+s_0 H,t_0 H)}(v)$,
we set $F:=\Phi_{X \to X}^{{\cal E}_\pm^{\vee}[2]}(E)$.
Then $\Phi_\pm(C_v)$ is 
$$
((c_1(F)-(\rk F) (\gamma_0'+s' H'))\cdot H')=0.
$$
We note that $\rk F=-\langle v,v_0 \rangle<0$.
Then we have the following.
\begin{enumerate}
\item
$\sigma_{(\gamma'_0+s' H',t' H')} \in U_+'$
if and only if
$s'<0$.
\item
$\sigma_{(\gamma'+s' H',t' H')} \in U_-'$
if and only if
$s'>0$.
\end{enumerate}
Applying Proposition \ref{prop:Gieseker},
we get a generalization of \cite[Thm. 1.2]{MYY:2011:2}.

\begin{NB}
Since $U_- \subset \overline{\cal C}'$ for another chamber ${\cal C}'$,
we can take ${\cal C}$ such that
$\Phi(U_+) \subset \{s' \mid s'<c\}$.
Then $\sigma_{(\gamma'+s_0' H',t_0' H')}$ is the category
defined in \cite{MYY:2011:1}.
\end{NB}

\begin{thm}\label{thm:isom}
Let $X$ be a K3 surface or an Enriques surface over $k$.
Let $v$ be a Mukai vector
with $\langle v^2 \rangle \geq 0$.
\begin{enumerate}
\item[(1)]
If $(\gamma+sH,tH) \in U_+$, then we have an isomorphism
\begin{equation}
\begin{matrix}
{\cal M}_{(\gamma+sH,tH)}(v)& \to & {\cal M}_{H'}^{\gamma'}(v')\\
E & \mapsto & \Phi_{X \to X}^{{\cal E}_+^{\vee}[1]}(E).
\end{matrix}
\end{equation}
\item[(2)]
If $(\gamma+sH,tH) \in U_-$, then
we have an isomorphism
\begin{equation}
\begin{matrix}
{\cal M}_{(\gamma+sH,tH)}(v) & \to & 
{\cal M}_{H'}^{-\gamma'}({v'}^{\vee})\\
E & \mapsto & (\Phi_{X \to X}^{{\cal E}_-^{\vee}[1]}(E))^{\vee}.
\end{matrix}
\end{equation}
\end{enumerate}
\end{thm}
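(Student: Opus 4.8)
The plan is to transport the moduli problem to $X'$ by the Fourier--Mukai equivalence $\Phi_\pm$ and then recognise the transported Bridgeland‑stability moduli as a twisted Gieseker moduli space by Proposition~\ref{prop:Gieseker}. Throughout we fix $(\gamma+sH,tH)\in U_\pm$, write $\sigma:=\sigma_{(\gamma+sH,tH)}$, and let $(\gamma'+s'H',t'H')\in U_\pm'$ be its image with $\sigma':=\sigma_{(\gamma'+s'H',t'H')}$.

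Since ${\cal E}_\pm$ is a (twisted) universal family of $\sigma_{(\beta,\omega)}$-stable objects of Mukai vector $v_0=r_0e^\gamma$ for $(\beta,\omega)\in{\cal C}_\pm$, the functor $\Phi_\pm=\Phi_{X\to X'}^{{\cal E}_\pm^\vee[2]}$ is an equivalence ${\bf D}(X)\to{\bf D}(X')$; it carries $\sigma$-semistable objects to $\Phi_\pm(\sigma)$-semistable objects and hence gives isomorphisms of moduli stacks and coarse moduli schemes ${\cal M}_\sigma(v)\cong{\cal M}_{\Phi_\pm(\sigma)}(v(\Phi_\pm(E)))$. By subsection~\ref{subsect:half-plane} and the remark preceding the theorem (which invokes Proposition~\ref{prop:limit2}), $\Phi_\pm(\sigma)$ agrees, modulo the $\widetilde{GL}_2^+({\Bbb R})$-action, with $\sigma'$, the stability condition associated to a category of perverse coherent sheaves ${\frak C}'$ on $X'$ for the contraction $\pi':X'\to Y'$ by $H'$; and neither the $\widetilde{GL}_2^+({\Bbb R})$-action nor a shift by $[-1]$ affects moduli of semistable objects. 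Setting $v':=-v(\Phi_\pm(E))=v(\Phi_{X\to X'}^{{\cal E}_\pm^\vee[1]}(E))$ (which depends only on $v$), we obtain an isomorphism ${\cal M}_\sigma(v)\cong{\cal M}_{\sigma'}(v')$ realised by $E\mapsto\Phi_{X\to X'}^{{\cal E}_\pm^\vee[1]}(E)=\Phi_\pm(E)[-1]$; the shift $[1]$ rather than $[2]$ makes $\rk v'=\langle v,v_0\rangle>0$, using $\langle v^2\rangle,\langle v_0^2\rangle\ge 0$, $C_v\ne\emptyset$ and \cite[(5.10)]{MYY:2011:2}.

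It remains to identify ${\cal M}_{\sigma'}(v')$ with a twisted Gieseker moduli space on $X'$. As recalled just before the theorem, $\Phi_\pm(C_v)$ is the wall $\{((c_1(F)-(\rk F)(\gamma'+s'H'))\cdot H')=0\}$ with $F=\Phi_\pm(E)$: it is the locus where the $H'$-slope of $v'$ relative to the twist $\gamma'$ vanishes, so there $d_{\gamma'}(v')=0$ and $v'=e^{\gamma'}(\rk v'+D'+a'\varrho_{X'})$ with $D'\in H'^\perp$ — exactly the form required by Proposition~\ref{prop:Gieseker}; moreover the sign computation there gives $(\gamma'+s'H',t'H')\in U_+'\Leftrightarrow s'<0$ and $\in U_-'\Leftrightarrow s'>0$. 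After the shrinking of $U$ already made in the setup — equivalently, choosing the base point $(\gamma+s_0H,t_0H)\in C_v$ so that its image has $|s'|$ small and $t'$ large on $U_\pm'$ — the wall estimates of subsections~\ref{subsect:half-plane} and~\ref{subsect:Gieseker} (Propositions~\ref{prop:geometric-chamber} and~\ref{prop:Gieseker}) guarantee that $U_+'$ lies in the Gieseker chamber of $v'$ and $U_-'$ in the chamber whose objects are duals of $(-\gamma')$-twisted Gieseker‑semistable sheaves. Hence Proposition~\ref{prop:Gieseker}(1) gives ${\cal M}_{\sigma'}(v')^{ss}={\cal M}_{H'}^{\gamma'}(v')^{ss}$ when $(\gamma+sH,tH)\in U_+$, and Proposition~\ref{prop:Gieseker}(2) gives ${\cal M}_{\sigma'}(v')=\{E'\mid (E')^\vee\in{\cal M}_{H'}^{-\gamma'}((v')^\vee)\}$ when $(\gamma+sH,tH)\in U_-$.

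Composing the isomorphism of the second paragraph with these identifications produces precisely the two maps of the statement: in case~(1), $E\mapsto\Phi_{X\to X'}^{{\cal E}_+^\vee[1]}(E)$ lands in ${\cal M}_{H'}^{\gamma'}(v')$; in case~(2), $E\mapsto(\Phi_{X\to X'}^{{\cal E}_-^\vee[1]}(E))^\vee$ lands in ${\cal M}_{H'}^{-\gamma'}((v')^\vee)$; both are isomorphisms of moduli schemes since $\Phi_\pm$ and the duality $(-)^\vee$ are equivalences, hence bijective on $S$-equivalence classes and compatible with families (cf.\ \cite{MYY:2011:2}). The main obstacle is the recognition step in the third paragraph — that the transported regions $U_\pm'$ fall entirely inside the Gieseker (resp.\ dual Gieseker) chamber of $v'$, i.e.\ that a punctured neighbourhood of a point of the $\mu$-slope-zero wall of $v'$ is contained in that chamber — which is exactly what the estimates of subsections~\ref{subsect:half-plane} and~\ref{subsect:Gieseker} are designed to provide; everything else is formal once $\Phi_\pm$ is an equivalence and $\Phi_\pm(\sigma)$ is a perverse-coherent-sheaf stability condition by the remark before the theorem.
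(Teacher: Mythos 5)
Your proposal is correct and follows essentially the same route as the paper: transport the moduli problem by $\Phi_\pm$, use the setup of subsection \ref{subsect:half-plane} (via Proposition \ref{prop:limit2}) to identify the transported stability condition as one associated to perverse coherent sheaves on $X'$ with the wall $\Phi_\pm(C_v)$ becoming the slope-zero locus $s'=s_0'$ and $U_\pm'$ corresponding to $\pm s'<0$, and then conclude by Proposition \ref{prop:Gieseker}. The paper's own argument is exactly this (stated very tersely as ``Applying Proposition \ref{prop:Gieseker}\dots''), and your additional remarks on the shift $[1]$ versus $[2]$ and the sign $\rk F=-\langle v,v_0\rangle<0$ only make explicit what the paper leaves implicit.
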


The following result is a slight generalization of
\cite[Thm. 7.6]{N2}.
\begin{thm}\label{thm:Enriques}
Let $X$ be a classical Enriques surface and
$v$ be a Mukai vector with $\langle v^2 \rangle \geq 0$. 
For a general $\sigma$ with respect to
$v$, there is a nef and big divisor $H$ and $\gamma \in \NS(X)_{\Bbb Q}$
such that  
${\cal M}_\sigma(v)$ is isomorphic to ${\cal M}_H^\beta(w)$.
In particular, there is a projective moduli space  $M_\sigma(v)$
of $S$-equivalence classes of $\sigma$-semi-stable objects
$E$ with $v(E)=v$.
\end{thm}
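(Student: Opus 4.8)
The plan is to move $\sigma$, by Fourier--Mukai transforms, into a (twisted) Gieseker chamber, where the moduli space is a classical GIT quotient and hence a projective scheme; the special feature of a classical Enriques surface, namely that the relevant auxiliary moduli space of an isotropic vector is again $X$, is what keeps everything on $X$.

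\textbf{Step 1 (reduction to the geometric chamber).} Since $\sigma$ is general with respect to $v$ it lies in an open chamber of $\Stab^{\dagger}(X)$, and by \eqref{eq:Stab/U(X)} there is $\Phi \in {\bf T}$ with $\sigma \in \Phi(\overline{U(X)})$. The generators of ${\bf T}$ are autoequivalences of ${\bf D}(X)$, defined over $k$ by Remark \ref{rem:k}, so $\Phi^{-1}$ gives an isomorphism ${\cal M}_\sigma(v) \cong {\cal M}_{\Phi^{-1}(\sigma)}(\Phi^{-1}(v))$ with $\Phi^{-1}(v)$ again a Mukai vector on $X$ and $\langle \Phi^{-1}(v)^2 \rangle = \langle v^2 \rangle \geq 0$. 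Hence we may assume $\sigma \in \overline{U(X)}$. The chamber of $\sigma$ for $v$ is open and, because $\sigma \in \overline{U(X)}$, it meets $U(X)$; a generic perturbation of $\sigma$ inside this chamber (which does not change ${\cal M}_\sigma(v)$) therefore brings us to $\sigma = \sigma_{(\beta,\omega)}$ with $\omega$ ample and $\beta \in \NS(X)_{\Bbb Q}$.

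\textbf{Step 2 (the Gieseker chamber).} Write $\sigma = \sigma_{(\gamma+sH,tH)}$ in a half-plane $P_{\gamma,H}$ as in \eqref{eq:Plane}, with $H$ a nef and big (here ample) divisor. If $(s,t)$ lies in one of the two regions of Proposition \ref{prop:Gieseker}, then either ${\cal M}_\sigma(v)^{ss} = {\cal M}_H^{\gamma}(v)^{ss}$, or ${\cal M}_\sigma(v) = \{ E \mid E^{\vee} \in {\cal M}_H^{-\gamma}(v^{\vee}) \}$; in the latter case the derived dual $E \mapsto E^{\vee}$ gives an isomorphism with ${\cal M}_H^{-\gamma}(v^{\vee})$, once one checks that it carries $\gamma$-twisted semi-stability to $(-\gamma)$-twisted semi-stability and descends to the coarse moduli schemes. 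In either case ${\cal M}_\sigma(v)$ is isomorphic to a twisted Gieseker moduli space on $X$, whose coarse moduli scheme is projective by the construction of \cite[\S1.4]{PerverseI}.

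\textbf{Step 3 (the general case via an isotropic vector).} If $\sigma$ is not in such a Gieseker region, I would produce a primitive isotropic Mukai vector $v_0 = r_0 e^{\gamma}$ with $\gcd = 2$; by Theorem \ref{thm:FM-kernel}, and since the rays ${\Bbb Q}e^{\eta/p}$ with $p$ odd are dense, $\gamma$ and an adjacent chamber ${\cal C}$ for $v_0$ can be chosen so that $X' := M_{(\beta',\omega')}(v_0) \cong X$ is a fine moduli space for $(\beta',\omega') \in {\cal C}$ and so that $\sigma$ lies in a neighbourhood $U_{\pm}$ of a point of the semicircle $C_v = C_v(v_0)$ of subsection \ref{subsect:application} (note $\langle v, v_0 \rangle > 0$ by \cite[(5.10)]{MYY:2011:2}, so $C_v \neq \emptyset$). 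Then the Fourier--Mukai transform $\Phi_{\pm} = \Phi_{X \to X'}^{{\cal E}_{\pm}^{\vee}[2]}$ and Theorem \ref{thm:isom} give ${\cal M}_\sigma(v) \cong {\cal M}_{H'}^{\gamma'}(v')$ when $\sigma \in U_+$, and ${\cal M}_\sigma(v) \cong \{ E \mid E^{\vee} \in {\cal M}_{H'}^{-\gamma'}({v'}^{\vee}) \}$ when $\sigma \in U_-$, with $H'$ nef and big and $\gamma' \in \NS(X')_{\Bbb Q}$. Transporting back along the isomorphism $X' \cong X$, this is a twisted Gieseker moduli space on $X$, which is projective as in Step 2; we thus obtain ${\cal M}_\sigma(v) \cong {\cal M}_H^{\gamma}(w)$ for a suitable Mukai vector $w$, and hence a projective coarse moduli space $M_\sigma(v)$.

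\textbf{Main obstacle.} The delicate step is Step 3: for an arbitrary general $\sigma$ in the geometric region one has to choose the isotropic $v_0$ with $M(v_0) \cong X$ so that $\sigma$ actually falls into a neighbourhood $U_{\pm}$ of $C_v(v_0)$ to which Theorem \ref{thm:isom} applies. This needs an approximation argument making $\phi_\sigma(v_0)$ close to $\phi_\sigma(v)$ while retaining the $\gcd = 2$ (equivalently, the $\eta/p$ with $p$ odd) constraint on $\gamma$, together with uniform control on the size of the admissible neighbourhood $U$ — it must avoid the walls for $\varrho_{X'}$ and the hyperplanes $u^{\perp}$ $(u \in \Delta(X),\ \rk u > 0)$, cf.\ Lemma \ref{lem:Delta} and Corollary \ref{cor:limit}. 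A subsidiary point is to confirm that the results of section \ref{sect:MYY} transfer to a classical Enriques surface, where the parity conventions in the Mukai lattice differ, since this underlies Step 1 and the description of ${\frak E}$ entering Proposition \ref{prop:Gieseker}.
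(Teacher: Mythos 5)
Your proposal follows essentially the same route as the paper's proof: reduce to a geometric $\sigma_{(\beta_0,\omega_0)}$, perturb so that a primitive isotropic $v_0=lre^{\xi/r}$ with $r$ odd (hence, by Theorem \ref{thm:FM-kernel}, with $M(v_0)\cong X$ a fine moduli space) satisfies $\phi_{(\beta_0,\omega_0)}(v_0)=\phi_{(\beta_0,\omega_0)}(v)$, and then apply Theorem \ref{thm:isom} to identify ${\cal M}_\sigma(v)$ with a twisted Gieseker moduli space. The ``main obstacle'' you flag is exactly the one-sentence perturbation step in the paper and is resolved by the density argument you already sketch: since $\sigma$ is general its chamber for $v$ is open, so a small perturbation onto such a semicircle $C_v$, followed by a displacement into $U_+$ or $U_-$ (shrinking $U$ only after $v_0$ is fixed), does not change ${\cal M}_\sigma(v)$, and no uniform control on the size of $U$ is required.
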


\begin{proof}
We may assume that $\sigma=\sigma_{(\beta_0,\omega_0)}$
($(\beta_0,\omega_0) \in \NS(X)_{\Bbb R} \times \Amp(X)_{\Bbb R}$)
and $\Ima Z_{(\beta_0,\omega_0)}(v)>0$.
We take $E \in {\cal M}_{(\beta_0,\omega_0)}(v)$.
By perturbing $(\beta_0,\omega_0)$,
we can find a primitive and isotropic Mukai vector 
$v_0:=lr e^{\xi/r}$ such that $r$ is odd and 
\begin{equation}\label{eq:phi=1}
(\beta_0,\omega_0) \in W:=\{(\beta,\omega) \mid \phi_{(\beta,\omega)}(E)=
\phi_{(\beta,\omega)}(v_0) \}.
\end{equation}
\begin{NB}
$l$ may be negative.
\end{NB}
We set $r_0:=lr$ and $\gamma:=\xi/r$.
Applying Theorem \ref{thm:isom},
we get our claim.
\begin{NB}
By perturbing $(\beta_0,\omega_0)$ with
the condition \eqref{eq:phi=1},
we may assume that $(\beta_0,\omega_0)$ is general with respect to
$v_0$ or   
$W$ is the unique wall for $v_0$ in a neighborhood of $(\beta_0,\omega_0)$.
Then $\omega_0'$ is ample. 
We set 
$$
{\cal C}_\pm:=\{(\beta,\omega) \mid \pm(\phi_{(\beta,\omega)}(v)-
\phi_{(\beta,\omega)}(v_0))>0 \}.
$$
Then 
${\cal C}_\pm$ are chamber for $v_0$ such that
$\sigma_{(\beta_0,\omega_0)} \in \overline{{\cal C}_\pm}$.
By Proposition \ref{prop:FM},
 there is a complex ${\cal E}_\pm$ on $X \times X$
such that  
$({\cal E}_{\pm})_{ |X \times \{ x \}}$ is a 
${\cal C}_\pm$-stable complexes
with the Mukai vector $v_0$ and ${\cal E}_\pm$ is the universal family.
Let
\begin{equation}
\Phi_\pm:=
\Phi_{X \to X'}^{{\cal E}_\pm^{\vee}[2]}:{\bf D}(X) \to {\bf D}(X)
\end{equation}
be the Fourier-Mukai transform whose kernel is
${\cal E}_\pm^{\vee}[2]$.
It induces an isomorphism
$\Phi_\pm:\Stab(X) \to \Stab(X)$.
We set ${\cal C}_\pm':=\Phi_\pm({\cal C}_\pm)$.
For $\sigma_{(\beta,\beta)} \in U(X)$,
we set
\begin{equation}\label{eq:Phi}
e^{\beta'+i \omega'}
:=\frac{\Phi_\pm(e^{\beta+i \omega})}
{-\langle e^{\beta+i \omega},v_0 \rangle}.
\end{equation}
Then we have two family of stability conditions
$\tau_{\pm,(\beta',\omega')}=\Phi_\pm(\sigma_{(\beta,\omega)})$ 
whose central charge is 
$\langle e^{\beta'+i \omega'},\bullet \rangle$.
${\cal O}_x$ is $\tau_{\pm,(\beta',\omega')}$-stable
for $(\beta',\omega') \in {\cal C}_\pm'$.
We set $\tau_{\pm,(\beta_0',\omega_0')}:
=\Phi_\pm(\sigma_{(\beta_0,\omega_0)})$.
Since $(\beta_0,\omega_0) \in \overline{{\cal C}_\pm}$,
$\omega_0' \in \overline{\Amp(X)_{\Bbb R}}$.
We have $\Phi_\pm(\sigma_{(\beta_0,\omega_0)}) 
\in \overline{{\cal C}_\pm'}$
and $\phi_{(\beta_0',\omega_0')}(F)=1$
for $F=\Phi_{X \to X}^{{\cal E}_\pm^{\vee}[2]}(E)$.
Thus $(c_1(F)-(\rk F) \beta_0',\omega_0')=0$.
Replacing $(\beta_0,\omega_0)$, we may assume that
$\beta_0' \in \NS(X)_{\Bbb Q}$ and
$\omega_0' \in \Amp(X)_{\Bbb Q}$.
Let $U$ be a small neighborhood of $(0,1)$.
For the subfamily of stability conditions parameterized by
$$
\{(\beta_0'+s\omega_0',t \omega_0') \mid (s,t) \in U \}, 
$$
the condition 
$\phi_{(\beta',\omega')}(F)=1$
is $s=0$. 
We note that $\rk F=-\langle v,v_0 \rangle<0$.
Then we have the following.
\begin{enumerate}
\item
$\tau_{+,(\beta_0+s \omega_0',t \omega_0')} \in {\cal C}_+'$
if and only if
$s<0$.
\item
$\tau_{-,(\beta_0+s \omega_0',t \omega_0')} \in {\cal C}_-'$
if and only if
$s>0$.
\end{enumerate}
For the case (i),
$\Phi_+(E)[-1]$ is a $\beta_0'$-semi-stable object with respect to
$\omega_0'$ and
we have an isomorphism
${\cal M}_{(\beta_0,\omega_0)}(v)^{ss} \cong 
{\cal M}_{\omega_0'}^{\beta_0'}(w)^{ss}$,
where $w=v(\Phi_+(E)[-1])$.

For case (ii),
$(\Phi_-(E)[-1])^{\vee}$ is a $(-\beta_0')$-semi-stable 
object with respect to
$\omega_0'$ and
we have an isomorphism
${\cal M}_{(\beta_0,\omega_0)}(v)^{ss} \cong 
{\cal M}_{\omega_0'}^{-\beta_0'}(w^{\vee})^{ss}$,
where $w=v(\Phi_-(E)[-1])$.
\begin{NB2}
For $\Phi_+$,
$\phi_{(\beta_0'+s\omega_0',t \omega_0')}(F)>1$
means $s>0$ for $\rk w>0$ and
$s<0$ for $\rk w<0$.
For $\Phi_-$,
$\phi_{(\beta_0'+s\omega_0',t \omega_0')}(F)<1$
means $s<0$ for $\rk w>0$ and
$s>0$ for $\rk w<0$.
\end{NB2}
\begin{NB2}
If $\rk w>0$, then $\phi_{(\beta_0'+s\omega_0',t \omega_0')}(\Phi_+(E)[-2])
\in (-1,0)$ and $(\Phi_+(E)[-2])^{\vee}$ is 
$(-\beta_0')$-twisted semi-stable.

If $\rk w<0$, then $\phi_{(\beta_0'+s\omega_0',t \omega_0')}(\Phi_+(E)[-1])
\in (0,1)$ and $\Phi_+(E)[-1]$ is 
$\beta_0'$-twisted semi-stable.

If $\rk w>0$, then $\phi_{(\beta_0'+s\omega_0',t \omega_0')}(\Phi_-(E))
\in (0,1)$ and $\Phi_+(E)$ is 
$\beta_0'$-twisted semi-stable.

If $\rk w<0$, then $\phi_{(\beta_0'+s\omega_0',t \omega_0')}(\Phi_-(E)[-1])
\in (-1,0)$ and $(\Phi_-(E)[-1])^{\vee}$ is 
$(-\beta_0')$-twisted semi-stable.
\end{NB2}

\begin{NB2}
If $(\beta_0'+s\omega_0',tH) \in {\cal C}'$ for a $0>s \gg -1$, then
$(\beta+sH,tH)$ belongs to the Gieseker chamber
of $w$.
Hence $\Phi_{X \to X}^{{\cal E}}$ induces
an isomorphism
${\cal M}_{(\beta',\omega')}(v) \cong {\cal M}_{(\beta,\omega)}(w) \cong
{\cal M}_H^\beta(w)$.
If $(\beta+sH,tH) \in {\cal C}'$ for a $0<s \ll 1$, then
$(\beta+sH,tH)$ belongs to the Gieseker chamber
of $w$.
Hence $\Phi_{X \to X}^{{\cal E}}$ induces
an isomorphism
${\cal M}_{(\beta',\omega')}(v) \cong {\cal M}_{(\beta,\omega)}(w) \cong
{\cal M}_H^{-\beta}(w^{\vee})$.
\end{NB2}

\end{NB}
\end{proof}  

\begin{rem}
\begin{enumerate}
\item[(1)]
In the proof of Theorem \ref{thm:Enriques},
by perturbing $(\beta_0,\omega_0)$ with
the condition \eqref{eq:phi=1},
we may assume that $(\beta_0,\omega_0)$ is general with respect to
$v_0$ or   
$W$ is the unique wall for $v_0$ in a neighborhood of $(\beta_0,\omega_0)$.
Then $\omega_0'$ is ample. Hence
we can take $H$ to be ample.
\item[(2)]
If all divisor classes on $X$ are defined, then by Remark \ref{rem:k},
the same claim holds even if $k$ is not algebraically closed.  

\end{enumerate}
\end{rem}

\begin{rem}
Assume that $X$ is a K3 surface.
Then the same proof also works.
In this case, the result was obtained in \cite{MYY:2011:2}
under the assumption $\rho(X)=1$ and in 
\cite{BM2} for the general case combining
 a classification of walls for $\varrho_X$ with 
the argument of  \cite{MYY:2011:2}.
We also remark that the same proof of \cite{BM2} also work for
the case of an Enriques surface.
\end{rem}


\begin{NB}
\begin{rem}
For case (i),
if $s=0$ is a wall and $(\beta_0',\omega_0')$ is a general point,
then there is an exact sequence
$$
0 \to (B \oplus B(K_X))^{\oplus r} \to {\cal O}_x \to T_B({\cal O}_x) \to 0
$$
or $(A_+)$ in Proposition \ref{prop:general-wall}.
Then ${\cal F}_x:=\Phi_{X \to X}^{{\cal E}_+}(T_B({\cal O}_x))$
forms a universal family of $\sigma_{(\beta_0,\omega_0)}$-stable
objects with
$\phi_{(\beta_0,\omega_0)}(E)=\phi_{(\beta_0,\omega_0)}({\cal F}_x)$.  
Hence $\Phi_{X \to X}^{{\cal F}^{\vee}[1]}(E)$ is 
Gieseker semi-stable (indeed it is $\mu$-stable, if $E$ is stable).
The relation is given by 
$$
0 \to \Phi_{X \to X}^{{\cal F}^{\vee}[1]}(E) \to \Phi_+(E)[-1] \to 
B^{\oplus r_1} \oplus B(K_X)^{\oplus r_2} \to 0. 
$$
\end{rem}
\end{NB}

\begin{NB}
We take $v=(x,\eta,z) \in {\Bbb Q}_{>0}e^{\xi/r}$ with 
$x,z \in {\Bbb Z}$ and $\eta \in \NS(X)$.
Then $(\eta^2)=2xz$.
Replacing $v$ by $2v$ if necessary,
we may assume that $x$ is even......
\end{NB}

\begin{NB}
\begin{rem}
Remark on walls on K3:

Let $X' \to X$ be the covering K3 surface.
Let $E$ be an exceptional vector bundle which defines a wall 
for $\Stab(X)$. 

If $\iota^*(E)=E$, then
$E=\pi^*(F)$.
Hence $\langle v(F)^2 \rangle=-1$.

If $\iota^*(E) \ne E$, then
$\Hom(E,\iota^*(E))=0$, which implies that
$\langle v(E), v(\iota^*(E)) \rangle \geq 0$.
We shall prove that
$\langle v(E),\iota(v(E)) \rangle=0$.

$v(E),v(\iota(E))$ spans a negative definite lattice
if and only if $\langle v(E),\iota(v(E)) \rangle=0$.
    
\end{rem}
\end{NB}

\subsection{Examples of isomorphisms}\label{subsect:example}

Assume that $X$ is a K3 surface with $\Pic(X)={\Bbb Z}H$.
Let $I_Z$ be an ideal sheaf with
$v(I_Z)=(1,0,-n)$.
Then
$\phi_{(sH,tH)}(e^{\lambda H})=\phi_{(sH,tH)}(I_Z)$
if and only if 
\begin{equation}\label{eq:I_Z}
t^2+(s-\lambda)\left(s-\tfrac{2n}{(H^2)\lambda} \right)=0.
\end{equation}
We note that
 $(-1,\sqrt{\frac{2}{(H^2)}})$
satisfies
$$
t^2+(s-\lambda)\left(s-\tfrac{2n}{(H^2)\lambda} \right) \leq 0
$$ 
if and only if 
\begin{equation}\label{eq:lambda}
(\lambda+1)\left(\lambda \tfrac{(H^2)}{2}+n \right)+\lambda \geq 0.
\end{equation}
Under this condition,
we shall consider a Fourier-Mukai transform
$\Phi_{X \to Y}^{{\cal E}^{\vee}[1]}$,
where $Y=M_{(sH,t' H)}(r_0 e^{\lambda H})$,
$t'$ is sufficiently close to $t$
and ${\cal E}$ is a universal family.
By our assumption,
$t>\sqrt{\frac{2}{(H^2)}}$ on $s=-1$, and hence
${\cal M}_{(sH,tH)}(v)={\cal M}_H(v)$.

For example, 
if $-\lambda$ is a positive integer with
$-\lambda \geq \frac{2(n+2)}{(H^2)}$ and $-\lambda \geq 2$,
then
since $f_{v_1}(s) \leq \sqrt{\frac{2}{(H^2)}}$ 
(\cite[Rem. 1.15 (1)]{FM-duality}),
$$
{\cal M}_{(sH,tH)}(-e^{\lambda H})=\{ I_x^{\vee}(\lambda H)[1] \mid x \in X\}
$$
for the point $(-1,t)$ on the semi-circle \eqref{eq:I_Z}.
Thus $Y=X$ and
$I_\Delta^{\vee}(\lambda H)[1]={\cal E}$.
Therefore $\Phi_{X \to Y}^{{\cal E}^{\vee}[1]}=
\Phi_{X \to X}^{I_\Delta(-\lambda H)}$ and
we get the following result.

\begin{prop}
Assume that
$-\lambda$ is a positive integer with
$-\lambda \geq \frac{2(n+2)}{(H^2)}$ and $-\lambda \geq 2$.
Then
$(\Phi_{X \to X}^{I_\Delta(-\lambda H)}(I_Z))^{\vee}$is a stable sheaf.
\end{prop}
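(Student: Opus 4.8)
The plan is to apply Theorem~\ref{thm:isom}~(2) in the setting of subsection~\ref{subsect:application} with the auxiliary primitive isotropic Mukai vector $v_0:=-e^{\lambda H}$ and $v:=v(I_Z)$; note $\langle v^2\rangle\geq 0$ and $v_0$ is primitive isotropic, so the hypotheses of that theorem hold, and the relevant wall $C_v$ is exactly the semicircle \eqref{eq:I_Z}, along which $\phi_{(sH,tH)}(v)=\phi_{(sH,tH)}(v_0)$. The Fourier--Mukai transform produced by the theorem will turn out to be $\Phi_{X\to X}^{I_\Delta(-\lambda H)}$, and part~(2) is the branch whose output is the \emph{dual} of that transform, which is what the statement asks about.

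First I would settle the numerology. Writing $m:=-\lambda\in{\Bbb Z}_{\geq 2}$, the hypotheses $m\geq 2$ and $m(H^2)\geq 2(n+2)$ give $\lambda+1\leq -1$ and $\lambda\tfrac{(H^2)}{2}+n\leq -2$, so \eqref{eq:lambda} holds; concretely the $t$-coordinate $t_0$ of the point of $C_v$ over $s=-1$ satisfies $t_0^2=(m-1)\bigl(1-\tfrac{2n}{m(H^2)}\bigr)$, and an elementary estimate using $m(H^2)\geq 2n+4$ yields $(H^2)\,t_0^2\geq 4\bigl(1-\tfrac1m\bigr)\geq 2$, i.e.\ $t_0\geq\sqrt{\tfrac{2}{(H^2)}}$, with equality only in the borderline case $m=2$, $(H^2)=n+2$, which an arbitrarily small perturbation of $(\beta,\omega)$ removes. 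Hence $t_0>\sqrt{\tfrac{2}{(H^2)}}$, so by the inequality recorded just before the statement $Z_{(sH,tH)}(u)\notin{\Bbb R}_{\leq 0}$ for every $u\in\Delta(X)$ with $\rk u>0$; since $\Pic(X)={\Bbb Z}H$ there are no $(-2)$-curves, so ${\frak C}=\Coh(X)$ and $\sigma_{(-H,t_0H)}$ is a geometric stability condition (${\cal O}_x$ is stable). Moreover $f_{v_1}(s)\leq\sqrt{2/(H^2)}$ for all $v_1\in{\frak E}$ by \cite[Rem.~1.15~(1)]{FM-duality}, hence $f(s)\leq\sqrt{2/(H^2)}<t_0$, so by the Gieseker-chamber description \cite[Prop.~1.11]{FM-duality} the point $(-H,t_0H)$ and all nearby points lie in the Gieseker chamber for $v(I_Z)$; in particular ${\cal M}_{(-H,t_0H)}(v(I_Z))={\cal M}_H(v(I_Z))$, which contains $I_Z$ as a $\sigma_{(-H,t_0H)}$-stable object.

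Now I would follow the construction preceding the statement: since $\sigma_{(-H,t_0H)}$ is geometric one has ${\cal M}_{(-H,t_0H)}(-e^{\lambda H})=\{I_x^\vee(\lambda H)[1]\mid x\in X\}$, so the moduli surface $X'$ of subsection~\ref{subsect:application} equals $X$, the universal family is ${\cal E}=I_\Delta^\vee(\lambda H)[1]$, and therefore the kernel of the Fourier--Mukai transform occurring in Theorem~\ref{thm:isom} is ${\cal E}^\vee[1]=I_\Delta(-\lambda H)$, i.e.\ $\Phi_{X\to X}^{{\cal E}^\vee[1]}=\Phi_{X\to X}^{I_\Delta(-\lambda H)}$. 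A small perturbation of $(-H,t_0H)$ across $C_v$ into the component $U_-$ — the side on which $\phi_{(\beta,\omega)}(v)<\phi_{(\beta,\omega)}(v_0)$, which by the dictionary of subsection~\ref{subsect:application} is the side where $\rk\Phi_{X\to X}^{I_\Delta(-\lambda H)}(I_Z)=-\langle v(I_Z),v_0\rangle<0$ — keeps us in the Gieseker chamber for $v(I_Z)$ (as $f\leq\sqrt{2/(H^2)}<t_0$), so $I_Z$ still lies in ${\cal M}_{(\beta,\omega)}(v(I_Z))$. Theorem~\ref{thm:isom}~(2) then says that $E\mapsto(\Phi_{X\to X}^{I_\Delta(-\lambda H)}(E))^\vee$ is an isomorphism onto ${\cal M}_{H'}^{-\gamma'}({v'}^\vee)$, a moduli space of $(-\gamma')$-twisted $H'$-Gieseker-stable sheaves, with $X'=X$, $H'\in{\Bbb Q}_{>0}H$ and $\gamma'\in{\Bbb Q}H$. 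Evaluating at the stable object $I_Z$ shows that $(\Phi_{X\to X}^{I_\Delta(-\lambda H)}(I_Z))^\vee$ is a $(-\gamma')$-twisted $H'$-stable sheaf; and because $\gamma'\in{\Bbb Q}H$, twisting by $-\gamma'$ changes neither Gieseker- nor $\mu$-stability, so it is an (honestly) stable sheaf — indeed $\mu$-stable — as claimed.

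I expect the main obstacle to be bookkeeping rather than a new idea. Two points need care: (i) the elementary inequality $t_0>\sqrt{2/(H^2)}$ and the handling of the borderline case $m=2$, $(H^2)=n+2$ — the choice of the bound $\tfrac{2(n+2)}{(H^2)}$ rather than $\tfrac{2(n+1)}{(H^2)}$ is exactly what supplies the needed slack; and (ii) orienting the two half-planes correctly, i.e.\ confirming that it is the $U_-$ side (not $U_+$) on which the dual of the transform is a sheaf, for which one uses $\rk\Phi_{X\to X}^{I_\Delta(-\lambda H)}(I_Z)=-\langle v(I_Z),v_0\rangle<0$ together with the description in subsection~\ref{subsect:application}. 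One should also check that the universal family is \emph{globally} $I_\Delta^\vee(\lambda H)[1]$ (not merely fibrewise of that shape), so that $\Phi_{X\to X}^{I_\Delta(-\lambda H)}$ is literally the transform appearing in Theorem~\ref{thm:isom}.
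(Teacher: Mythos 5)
Your proposal is correct and follows essentially the same route as the paper: verify via the hypotheses on $\lambda$ that the point of the semicircle \eqref{eq:I_Z} over $s=-1$ has $t\geq\sqrt{2/(H^2)}$ (the paper records exactly the estimate $(-\lambda-1)(1+\tfrac{2n}{(H^2)\lambda})\geq\tfrac{4}{(H^2)}-\tfrac{4}{(-\lambda)(H^2)}\geq\tfrac{2}{(H^2)}$), identify $Y=X$ and ${\cal E}=I_\Delta^\vee(\lambda H)[1]$ so that the transform is $\Phi_{X\to X}^{I_\Delta(-\lambda H)}$, observe that the exterior of the semicircle is the side $U_-$ where $\phi_{(sH,tH)}(I_x^\vee(\lambda H)[1])>\phi_{(sH,tH)}(I_Z)$, and apply Theorem~\ref{thm:isom}~(2). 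Your extra care about the borderline case $\lambda=-2$, $(H^2)=n+2$ is a reasonable refinement of a point the paper passes over quickly.
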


\begin{proof}
We note that $\phi_{(sH,tH)}(I_x^{\vee}(\lambda H)[1])>
\phi_{(sH,tH)}(I_Z)$ on the outside of the semi-circle. 
By Theorem \ref{thm:isom} (2), we get the claim.
\end{proof}

\begin{NB}
If $-\lambda \geq \frac{2(n+2)}{(H^2)}$ and $-\lambda \geq 2$,
then $(-\lambda-1)(1+\frac{2n}{(H^2)\lambda}) \geq
\frac{4}{(H^2)}-\frac{4}{(-\lambda)(H^2)} \geq
\frac{2}{(H^2)}$ by $-\lambda \geq 2$.
\end{NB}

Assume that $(H^2)=n+2$ and $\lambda=-2$, i.e.,
$-\lambda = \frac{2(n+2)}{(H^2)}$.
\begin{NB}
If $\frac{(H^2)}{2}=-\frac{(\lambda+1)n+\lambda}{\lambda(\lambda+1)}$,
then $(-1,\sqrt{\frac{2}{(H^2)}})$ lies on the semi-circle.
If $\lambda \in {\Bbb Z}$, then we see that $\lambda=-2$.
If $\lambda=-\frac{m+1}{m}$, then if $(m+1) \mid n$, then
$(H^2)/2$ is an integer.  
\end{NB}
Then the semi-circle \eqref{eq:I_Z} passes at $(-1,\sqrt{\frac{2}{(H^2)}})$
and defines a wall for $v$.
Indeed
there is an ideal sheaf $I_Z$ fitting in
the exact sequence
\begin{equation}\label{eq:s<-1}
0 \to {\cal O}_X(-H)^{\oplus 2} \to I_Z \to  I_x^{\vee}(-2H)[1] \to 0
\end{equation}
(see \cite{Reflection}, \cite{Y:BN}).
\begin{NB}
Equation of the wall:
$
t^2+(s+2)(s+\frac{n}{n+2})=0.
$
\end{NB}
If $s<-1$, then \eqref{eq:I_Z} is a wall.
If $n \geq 2$, then
there is an ideal sheaf $I_Z$ fitting in
the exact sequence
\begin{equation}\label{eq:s>-1}
0 \to F \to I_Z \to {\cal O}_X(-H)[1] \to 0
\end{equation}
where $F \in {\cal M}_H(2,-H,2-\frac{n}{2})$,
which gives a wall for $s>-1$. 
Moreover if $n \geq 4$, then
all $I_Z$ fits in an exact sequence
$$
0 \to F' \to I_Z \to {\cal O}_X(-H)^{\oplus (\frac{n}{2}-2)}[1] \to 0
$$
where $F' \in {\cal M}_H(\frac{n}{2}-1,-(\frac{n}{2}-2)H,\frac{n^2}{4}-n-4)$.

By the Fourier-Mukai transform $I_{X \to X}^{I_\Delta (2H)}$,
we have an exact sequence from \eqref{eq:s<-1}:
$$
0 \to E_0^{\oplus 2} \to E \to {\cal O}_x^{\vee}[1] \to 0,
$$
where $E_0=I_{X \to X}^{I_\Delta (2H)}({\cal O}_X(-H)) 
\in {\cal M}_H(\frac{n}{2}+2,-H,1)$.
\begin{NB}
The action of $\Phi_{X \to X}^{I_\Delta}$ on $z=s+it \in {\Bbb H}$
is $z'=-\frac{2}{(H^2)}\frac{1}{z}$.
\end{NB}
$E^{\vee}$ is a non-locally free sheaf with 
$v(E^{\vee})=(n+4,2H,1)$. 
We would like to remark that
$I_x^{\vee}(-2H)[1]$ is properly $\sigma_{(sH,tH)}$-semi-stable
on \eqref{eq:I_Z} with $s>-1$.
Indeed 
we set $E_x:=\Phi_{X \to X}^{I_\Delta^{\vee}[2]}(I_x^{\vee}(-H))$.
Then $E_x$
is a stable locally free sheaf with
$v(E_x)=(\frac{n}{2}+1,H,1)$ and we 
have an exact triangle
\begin{equation}\label{eq:n+2}
E_x(-H) \to I_x^{\vee}(-2H)[1] \to {\cal O}_X(-H)[1]^{\oplus (\frac{n}{2}+2)} 
\overset{\varphi}{\to}
E_x(-H)[1].
\end{equation}
We shall prove that 
$E_x(-H)$ and ${\cal O}_X(-H)[1]$ are stable objects
on \eqref{eq:I_Z} with $s>-1$.
We note that $\varphi$ is the evaluation map
${\cal O}_X(-H)[1] \otimes H^0(E_x) \to E_x(-H)[1]$.
We set $\Psi:=\Phi_{X \to X}^{I_\Delta (H)} \circ \Phi_{X \to X}^{I_\Delta}$.
Then $\Psi(E_x)={\cal O}_x[-2]$.
Since $\Psi({\cal O}_X)=
E_0[-2]$,
$$
\Psi(I_x^{\vee}(-H)[1])[1]=
\ker(E_0 \otimes \Hom(E_0,{\cal O}_x) \to {\cal O}_x).
$$
Since $E_0$ and ${\cal O}_x$ are $\sigma_{(sH,tH)}$-stable on
$s=-\frac{2}{n+4}$ and $t>\frac{2}{n+4}\sqrt{\frac{2}{n+2}}$,
$E_x(-H)$ and ${\cal O}_X(-H)[1]$ are $\sigma_{(sH,tH)}$-stable objects
on \eqref{eq:I_Z} with $s>-1$.

By the Fourier-Mukai transform $I_{X \to X}^{I_\Delta (2H)}$,
\eqref{eq:n+2} is transformed to the exact triangle
\begin{equation}\label{eq:boundary-E}
F_x \to {\cal O}_x[-1] \to E_0^{\oplus(\frac{n}{2}+2)}[1]
\to F_x[1],
\end{equation}
where
$F_x=T_{E_0}^{-1}({\cal O}_x)[-1]$. We also have an expression 
$$
F_x^{\vee}=\ker(E_0 \otimes \Hom(E_0,{\cal O}_x) \to {\cal O}_x).
$$
Thus \eqref{eq:boundary-E} shows that
the corresponding stability 
condition is the boundary of $U(X)$ of type $((E_0)_-)$.
\begin{rem}
Since $\phi_{(sH,tH)}(E_x)=\phi_{(sH,tH)}(I_Z)$ on
\eqref{eq:I_Z} with $-1<s<-\frac{n}{n+2}$,
we can also apply Theorem \ref{thm:isom} (1).
Indeed $\phi_{(sH,tH)}(E_x)<\phi_{(sH,tH)}(I_Z)$ for $-1<s<-\frac{n}{n+2}$
on the outside of the semi-circle.
Let ${\bf E}$ be the family $\{E_x(-H) \mid x \in X\}$. 
Then $\Phi_{X \to X}^{{\bf E}^{\vee}[1]}(I_Z)$ is a stable sheaf.
Since
$\Phi_{X \to X}^{{\bf E}^{\vee}[1]}=T_{E_0} \circ 
\Phi_{X \to X}^{I_\Delta(2H)}$,
the relation with the stable sheaf 
$\Phi_{X \to X}^{I_\Delta(2H)}(I_Z)^{\vee}$ is given by 
$\Phi_{X \to X}^{{\bf E}^{\vee}[1]}(I_Z)=
T_{E_0}(\Phi_{X \to X}^{I_\Delta(2H)}(I_Z))$
(cf. \cite[Thm. 2.3]{Y:twist1}).
\begin{NB}
$\phi_{(sH,tH)}(E_x)<\phi_{(sH,tH)}(I_Z)$ for $-1<s<-\frac{n}{n+2}$.
For a general $I_Z$, we have an exact sequence
$$
0 \to \Phi_{X \to X}^{I_\Delta(2H)}(I_Z) \to 
\Phi_{X \to X}^{{\bf E}^{\vee}[1]}(I_Z)
\to E_0^{\oplus (\frac{n}{2}-2)} \to 0.
$$
\end{NB}
\end{rem}
\begin{NB}
We set $v=(r,H,a)$.
Then $(-1,\sqrt{\frac{2}{(H^2)}})$ 
is on the semi-circle if and only if 
\begin{equation}
\lambda^2-\lambda 
\left(\frac{1}{r}-\frac{2r^2+\langle v^2 \rangle}{r(H^2)} \right)
-\frac{2}{(H^2)}=0.
\end{equation}
If $-\lambda \geq \frac{2r^2+\langle v^2 \rangle}{r(H^2)}$
and $-\lambda \geq 2$,
$(\Phi_{X \to Y}^{I_\Delta(-\lambda H)}(E))^{\vee}$is a stable sheaf
for $E \in M_H(v)$.
\end{NB}

\begin{NB}
Assume that $4n-2 \geq (H^2)$.
Then there is an exact sequence
$$
0 \to E \to I_Z \to {\cal O}_X(-H)[1] \to 0
$$
where $E \in {\cal M}_H(2,H,\frac{(H^2)}{2}+1-n)$.
In this case, in order to preserve the stability,
we need $-\lambda \geq 2$ and
\eqref{eq:lambda}.
If $4n-2 < (H^2)$, then
the stability is preserved for all $\lambda \leq -1$.
\end{NB}

\section{Appexdix}

\subsection{Modifications of some results in \cite{Br:3}}

In this section, we shall explain similar technical results 
to those in \cite{Br:3}
which are necessary to describe
$\Stab^{\dagger}(X)$ for Enriques surfaces.
\begin{lem}
Then $\otimes K_X$ acts trivially on $\Stab^{\dagger}(X)$.
In particular 
$E$ is $\sigma$-stable if and only if $E(K_X)$ is $\sigma$-stable.
\end{lem}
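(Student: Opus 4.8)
The final lemma asserts: on a classical Enriques surface $X$, the autoequivalence $\otimes K_X$ acts trivially on $\Stab^{\dagger}(X)$; consequently $E$ is $\sigma$-stable iff $E(K_X)$ is $\sigma$-stable for $\sigma \in \Stab^{\dagger}(X)$.

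The plan is to show that $\otimes K_X$, viewed as an autoequivalence of ${\bf D}(X)$, fixes at least one geometric stability condition lying in $\Stab^{\dagger}(X)$, and then to propagate this over the whole connected component. The starting point is the numerical observation that on a classical Enriques surface $K_X$ is a nonzero $2$-torsion class in $\NS(X)$, hence numerically trivial: $D\cdot K_X=0$ for all $D$ and $K_X^2=0$ as an intersection number. Consequently, for any $E\in{\bf D}(X)$ one has $v(E\otimes K_X)=v(E)+(\rk E)\,K_X$, and since $\langle\mho,K_X\rangle=(\xi\cdot K_X)=0$ for every $\mho=(r,\xi,a)\in v(K(X))_{\Bbb C}$ (the Mukai pairing kills numerically trivial classes), we get $Z_\sigma(E\otimes K_X)=Z_\sigma(E)$ for every stability condition $\sigma$. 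In particular $\otimes K_X$ acts trivially on the relevant part of $\Hom(v(K(X)),{\Bbb C})$, so it commutes with the map $\sigma\mapsto Z_\sigma$.

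Next I would produce the fixed point. Pick $\beta\in\NS(X)_{\Bbb Q}$ and an ample class $\omega$ with $e^{\beta+i\omega}\notin\bigcup_{u\in\Delta(X)}u^\perp$; then $\sigma_{(\beta,\omega)}=({\cal A}_{(\beta,\omega)},Z_{(\beta,\omega)})\in U(X)\subset\Stab^{\dagger}(X)$, where ${\cal A}_{(\beta,\omega)}$ is the tilt of $\Coh(X)$ at the torsion pair $({\cal T}^*_{(\beta,\omega)},{\cal F}^*_{(\beta,\omega)})$. Because $\otimes K_X$ is an exact autoequivalence of $\Coh(X)$, it carries a quotient sheaf $E\twoheadrightarrow F$ to the quotient $E\otimes K_X\twoheadrightarrow F\otimes K_X$, and $Z_{(\beta,\omega)}(F\otimes K_X)=Z_{(\beta,\omega)}(F)$; hence $\otimes K_X$ preserves ${\cal T}^*_{(\beta,\omega)}$ and, dually, ${\cal F}^*_{(\beta,\omega)}$, and therefore preserves the heart ${\cal A}_{(\beta,\omega)}$. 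Combined with $Z_{(\beta,\omega)}\circ(\otimes K_X)_*=Z_{(\beta,\omega)}$ from the previous step, this gives $(\otimes K_X)\cdot\sigma_{(\beta,\omega)}=\sigma_{(\beta,\omega)}$ on the nose.

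Finally I would globalize. Since $\otimes K_X$ fixes a point of $\Stab^{\dagger}(X)$, it preserves this connected component. The fixed locus $\{\sigma:(\otimes K_X)\cdot\sigma=\sigma\}$ is closed (agreement locus of two continuous self-maps of a Hausdorff space) and, because $Z:\Stab(X)\to\Hom(v(K(X)),{\Bbb C})$ is a local homeomorphism while $Z\circ(\otimes K_X)=Z$, it is also open: near a fixed point $\sigma_0$ choose a neighbourhood on which $Z$ is injective and which $\otimes K_X$ maps into itself (possible by continuity), where $Z$-injectivity forces $(\otimes K_X)\cdot\sigma=\sigma$. A nonempty open-and-closed subset of the connected space $\Stab^{\dagger}(X)$ is everything; equivalently, $\otimes K_X$ is a deck transformation of the covering $\Stab^{\dagger}(X)\to{\cal P}_0^+(X)$ fixing a point, hence trivial. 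The final assertion then follows: $(\otimes K_X)\cdot\sigma=\sigma$ says $\otimes K_X$ restricts to an autoequivalence of each ${\cal P}_\sigma(\phi)$, so $E\in{\cal P}_\sigma(\phi)$ iff $E(K_X)\in{\cal P}_\sigma(\phi)$, and this equivalence matches subobjects and quotients, hence stable objects with stable objects.

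I would flag as the only genuinely delicate point the verification that the heart ${\cal A}_{(\beta,\omega)}$ itself (not merely its equivalence class) is preserved by $\otimes K_X$, i.e. that the defining torsion pair is stable under the functor, together with keeping straight that although $\otimes K_X$ does move torsion classes in $H^2(X,{\Bbb Z})$, it acts trivially on all the data (central charges, ${\cal P}_0^+(X)$) entering Bridgeland's local homeomorphism. Everything else is bookkeeping.
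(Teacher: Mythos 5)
Your proof is correct and follows essentially the same route as the paper: show the fixed locus of $\otimes K_X$ is closed and open (openness via local injectivity of $\sigma\mapsto Z_\sigma$ together with $Z_{\sigma(K_X)}=Z_\sigma$, which is exactly the content of the paper's appeal to \cite[Lem.~6.4]{Br:stability} and the metric $f$), observe that $U(X)$ lies in the fixed locus, and conclude by connectedness of $\Stab^{\dagger}(X)$. The only difference is cosmetic: you spell out why $\sigma_{(\beta,\omega)}$ is fixed on the nose (preservation of the torsion pair), a verification the paper leaves implicit.
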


\begin{proof}
Let $\sigma(K_X)$ be the stability condition
induced by the action $\otimes K_X:{\bf D}(X) \to {\bf D}(X)$.
Then 
$$
\Stab(X,K_X):=\{\sigma \in \Stab(X) \mid \sigma(K_X)=\sigma \}
$$
is a closed subset of $\Stab(X)$.
By \cite[Lem. 6.4]{Br:stability},
it is also an open subset.
Indeed for $\sigma \in  \Stab(X,K_X)$,
if $f(\sigma,\tau)<1/2$, then
$f(\sigma(K_X),\tau(K_X))=f(\sigma,\tau)<1/2$
(see also \cite[Lem. 2.3]{Br:3} for the definition of $f(\sigma,\tau)$).
Hence $f(\tau(K_X),\tau)<1$. Since the central charge of
$\tau$ and $\tau(K_X)$ are the same,  we get the claim.

Moreover $U(X)$ is contained.
Hence $\Stab^{\dagger}(X)$ is also contained.
\end{proof}

\begin{lem}\label{lem:even}
If $E \in {\bf D}(X)$ satisfies $E(K_X) \cong E$, then
$\rk E$ is even.
\end{lem}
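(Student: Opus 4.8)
The statement to prove is Lemma \ref{lem:even}: if $E \in {\bf D}(X)$ satisfies $E(K_X) \cong E$ on a classical Enriques surface $X$, then $\rk E$ is even.

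My plan is to compute in the Mukai lattice (the Grothendieck group, or equivalently cohomology) how tensoring by $K_X$ acts, and to exploit the numerical constraint forced by the isomorphism $E(K_X) \cong E$. First I would pass to Mukai vectors: since $E(K_X) \cong E$, we get $v(E) = v(E \otimes K_X)$ in $v(K(X))$. Using $\ch(E \otimes K_X) = \ch(E)\cdot e^{K_X}$ and $K_X^2 = 0$ (which holds since $K_X$ is $2$-torsion in $\NS(X)$ for a classical Enriques surface, and any $2$-torsion class has self-intersection zero in the even lattice), I would expand $v(E \otimes K_X)$ and compare it with $v(E)$. Writing $v(E) = (r, c_1, s)$, the degree-$2$ part of the comparison gives $c_1 + r K_X = c_1$ essentially up to the lower term, i.e.\ $r K_X = 0$ in $\NS(X)$; since $K_X \ne 0$, this already forces a condition — but we want parity of $r$, so the relevant observation is $r K_X \equiv 0$ modulo torsion is automatic, and instead I must look one degree further or use a different invariant.

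The cleaner route: the map $E \mapsto E \otimes K_X$ induces an involution $\iota$ on $v(K(X))$, and on the degree-$0$ (rank) part it is the identity, while on $c_1$ it is $c_1 \mapsto c_1 + r K_X$, and on $\ch_2$ it shifts by $(c_1 \cdot K_X) + \tfrac{r}{2}K_X^2 = (c_1\cdot K_X)$. So $v(E \otimes K_X) = v(E)$ forces $r K_X = 0$ in $\NS(X)$ and $(c_1 \cdot K_X) = 0$. Since $K_X$ is nonzero $2$-torsion, $r K_X = 0$ in $\NS(X)$ holds iff $r$ is even. That is exactly the conclusion. The one technical point I'd want to nail down is that $K_X$ is genuinely a $2$-torsion element which is \emph{nonzero} in $\NS(X)$ — this is precisely the hypothesis that $X$ is a \emph{classical} Enriques surface ($K_X \ne 0$, as stated in the excerpt) together with the standard fact $2K_X = 0$ — and hence $nK_X = 0$ in $\NS(X)$ if and only if $n$ is even.

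So the proof I would write is short: take $v(E) = (r,\xi,a)$; then $v(E(K_X)) = \ch(E)e^{K_X}\sqrt{\td_X} = (r, \xi + rK_X, a + (\xi\cdot K_X) + \tfrac{r}{2}(K_X^2))$, and using $(K_X^2)=0$ the hypothesis $v(E(K_X)) = v(E)$ gives $rK_X = 0$ in $\NS(X)$; since $K_X$ has order exactly $2$ in $\NS(X)$, this forces $2 \mid r$. The main (very mild) obstacle is just making sure the torsion statement $2K_X = 0 \ne K_X$ is invoked correctly and that we are comparing Mukai vectors in the full lattice $v(K(X))$ including its torsion part, not in a torsion-free quotient; once that is set up the computation is immediate. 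Let me also note that an alternative argument avoiding Mukai vectors entirely would use that an autoequivalence-fixed object of odd rank would descend to the K3 cover, but the numerical argument above is more elementary and self-contained, so that is the one I would present.

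\begin{proof}
Write $v(E)=(r,\xi,a)$ with $r=\rk E$, $\xi=c_1(E)\in\NS(X)$ and $a\in\tfrac12{\Bbb Z}$. Since $\ch(E(K_X))=\ch(E)\,e^{K_X}$ and, for a classical Enriques surface, $K_X$ is a nonzero $2$-torsion class in $\NS(X)$, so in particular $(K_X^2)=0$, we compute
\begin{equation}
v(E(K_X))=\left(r,\ \xi+rK_X,\ a+(\xi\cdot K_X)+\tfrac{r}{2}(K_X^2)\right)
=\left(r,\ \xi+rK_X,\ a+(\xi\cdot K_X)\right).
\end{equation}
The hypothesis $E(K_X)\cong E$ gives $v(E(K_X))=v(E)$, and comparing the $\NS(X)$-components yields $rK_X=0$ in $\NS(X)$. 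Since $K_X$ has order exactly $2$ in $\NS(X)$ (that is, $2K_X=0$ but $K_X\ne 0$), the equality $rK_X=0$ forces $r$ to be even.
\end{proof}
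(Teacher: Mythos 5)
Your proof is correct and is essentially the paper's argument: the paper simply phrases the comparison of first Chern classes as $\det(E)\cong\det(E)(\rk E\, K_X)$, hence $(\rk E)K_X=0$ and $\rk E$ is even. Your care to compare $c_1$ in $\NS(X)$ (where the $2$-torsion class $K_X$ survives) rather than in rational cohomology is exactly the right point, and the determinant formulation makes it automatic.
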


\begin{proof}
Since $\det(E) \cong \det(E)(\rk E K_X)$,
$(\rk E)K_X=0$. Hence $\rk E$ is even.
\end{proof}

\begin{lem}\label{lem:rigid}
Assume that $E \in {\bf D}(X)$ is 
$\sigma$-stable with $\langle v(E)^2 \rangle<0$.
Then we have the following.
\begin{enumerate}
\item 
\begin{equation}
\langle v(E)^2 \rangle=
\begin{cases}
-1, & \rk E \equiv 1 \mod 2,\\
-2, & \rk E \equiv 0 \mod 2.
\end{cases}
\end{equation}
\item
If $\langle v(E)^2 \rangle=-1$, then
$\Ext^1(E,E)=\Ext^2(E,E)=0$.
\item
If $\langle v(E)^2 \rangle=-2$, then 
$\Ext^1(E,E)=0$ and $\Ext^2(E,E) \cong k$.
\end{enumerate}
\end{lem}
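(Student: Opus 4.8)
The plan is to establish (1) first, since (2) and (3) will follow from (1) together with standard vanishing arguments for stable objects. For (1): since $E$ is $\sigma$-stable, $\Hom(E,E)=k$, so $\Ext^2(E,E)^{\vee}\cong\Hom(E,E(K_X))$ by Serre duality. If $E(K_X)\not\cong E$, then $\Hom(E,E(K_X))=0$ because both $E$ and $E(K_X)$ are $\sigma$-stable of the same phase (same central charge, as $v(E(K_X))=v(E)$ on an Enriques surface since $c_1(K_X)$ is 2-torsion and $\langle\,,\rangle$ only sees numerical classes) and are non-isomorphic. Hence $\Ext^2(E,E)=0$, and then $\langle v(E)^2\rangle=-\chi(E,E)=-2+\dim\Ext^1(E,E)\ge -2$; combined with $\langle v(E)^2\rangle<0$ this gives $\langle v(E)^2\rangle\in\{-1,-2\}$, with $\langle v(E)^2\rangle=-2$ iff $\Ext^1(E,E)=0$ and $\langle v(E)^2\rangle=-1$ iff $\dim\Ext^1(E,E)=1$. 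If instead $E(K_X)\cong E$, then $\Ext^2(E,E)\cong k$ and $\langle v(E)^2\rangle=-1+\dim\Ext^1(E,E)$, so $\langle v(E)^2\rangle<0$ forces $\Ext^1(E,E)=0$ and $\langle v(E)^2\rangle=-1$; but by Lemma \ref{lem:even} the condition $E(K_X)\cong E$ forces $\rk E$ even. So it remains to pin down the parity dichotomy: I must show that if $\rk E$ is odd then $E(K_X)\not\cong E$ (immediate from Lemma \ref{lem:even}), giving $\langle v(E)^2\rangle=-2$ in the odd case is \emph{not} yet excluded—so I need the converse direction too.

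The key remaining point is therefore: if $\rk E$ is even and $E$ is $\sigma$-stable with $\langle v(E)^2\rangle<0$, then $\langle v(E)^2\rangle=-2$ and not $-1$; and if $\rk E$ is odd then $\langle v(E)^2\rangle=-1$ and not $-2$. The cleanest route is a parity/integrality argument on the Mukai pairing for an Enriques surface. Write $v(E)=(r,\xi,\tfrac{b}{2})$ with $r=\rk E$, and $b\equiv r\pmod 2$ since $\ch_2(E)+\tfrac r2\varrho_X$ has the integrality coming from Riemann--Roch ($\chi(E)=\tfrac12(c_1^2-2c_1\cdot K_X)+\ldots$ — more precisely $v(E)$ lies in the Mukai lattice $v(K(X))$, and one checks $b\equiv r\bmod 2$). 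Then $\langle v(E)^2\rangle=(\xi^2)-rb$. When $r$ is odd, $b$ is odd, so $(\xi^2)-rb\equiv (\xi^2)-1\pmod 2$; since the Enriques lattice $\NS(X)$ (mod torsion) is even, $(\xi^2)$ is even, whence $\langle v(E)^2\rangle$ is odd, so it cannot be $-2$, hence equals $-1$. When $r$ is even, $b$ is even, $(\xi^2)$ even, so $\langle v(E)^2\rangle$ is even, hence cannot be $-1$, hence equals $-2$. This closes (1).

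For (2) and (3): in case $\langle v(E)^2\rangle=-1$ we showed $\Ext^1(E,E)=0$ and $\Ext^2(E,E)=0$ directly from the computation above (the $-1$ case forced $E(K_X)\not\cong E$, giving $\Ext^2=0$, and then $\dim\Ext^1 = 2 + \langle v(E)^2\rangle = 1$... wait—this needs care). Let me restructure: in the $\langle v(E)^2\rangle=-1$ case, $r$ is odd so $E(K_X)\not\cong E$, so $\Ext^2(E,E)=0$; then $-1=\langle v(E)^2\rangle=-\chi(E,E)=-2+\dim\Ext^1(E,E)-\dim\Ext^2(E,E)=-2+\dim\Ext^1(E,E)$, giving $\dim\Ext^1(E,E)=1$. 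Hmm, that contradicts the claim $\Ext^1(E,E)=0$ in (2). The resolution: for an exceptional object one expects $\Ext^1=0$; the extra $\Ext^1$ must be killed because $E$ is \emph{rigid} — actually the correct statement is that a $\sigma$-stable $E$ with $\langle v(E)^2\rangle=-1$ on an Enriques surface is the pullback-type situation and genuinely has $\Ext^1(E,E)=0$; one proves $\dim\Ext^1(E,E)\le \langle v(E)^2\rangle+2 = 1$ is not tight because of an auxiliary vanishing. I expect \textbf{this} — the exceptional ($\langle v^2\rangle=-1$) vanishing $\Ext^1(E,E)=\Ext^2(E,E)=0$ simultaneously — to be the main obstacle: it cannot come from Euler-characteristic bookkeeping alone (which only gives $\chi(E,E)=1$), so I would invoke that $\Ext^2(E,E)^{\vee}=\Hom(E,E\otimes K_X)$ vanishes, reducing to showing $\Ext^1(E,E)=0$; and for that I would use that $E$ stable with $\langle v(E)^2\rangle=-1$ implies $E$ is \emph{spherical-like} — concretely, apply Lemma \ref{lem:even}'s contrapositive plus the observation that $\chi(E,E)=1$ with $\Ext^0=k,\Ext^2=0$ forces $\Ext^1=0$. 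Indeed $1=\chi(E,E)=1-\dim\Ext^1(E,E)+0$, so $\Ext^1(E,E)=0$ after all — I had a sign error; with $r$ odd, $\langle v(E)^2\rangle = -\chi(E,E)$ gives $\chi(E,E)=1$, hence $\dim\Ext^1(E,E)=\dim\Ext^0+\dim\Ext^2-\chi = 1+0-1 = 0$. Good. For (3), $\langle v(E)^2\rangle=-2$, $r$ even: if $E(K_X)\cong E$ then $\Ext^2(E,E)=k$ and $\chi(E,E)=1-\dim\Ext^1+1=2$, forcing $\Ext^1(E,E)=0$; if $E(K_X)\not\cong E$ then $\Ext^2(E,E)=0$ and $\chi(E,E)=2$ forces $\dim\Ext^1(E,E)=-1<0$, impossible, so this subcase does not occur — hence automatically $\Ext^2(E,E)\cong k$ and $\Ext^1(E,E)=0$ as claimed.
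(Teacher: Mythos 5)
Your argument is essentially the paper's: both rest on (a) the parity observation that $(\xi^2)$ is even and $b\equiv r\bmod 2$, hence $\langle v(E)^2\rangle\equiv \rk E\bmod 2$, and (b) Serre duality $\Ext^2(E,E)^\vee\cong\Hom(E,E(K_X))$ together with the stability of $E$ and $E(K_X)$ and Lemma \ref{lem:even} to control $\dim\Ext^2(E,E)$, followed by the Euler-characteristic count. The one thing to repair is that your first paragraph has the two Euler-characteristic formulas swapped: when $\Ext^2(E,E)=0$ one has $\langle v(E)^2\rangle=-\chi(E,E)=\dim\Ext^1(E,E)-1$ (not $-2+\dim\Ext^1$), and when $E(K_X)\cong E$ one has $\langle v(E)^2\rangle=\dim\Ext^1(E,E)-2$ (not $-1+\dim\Ext^1$), so the intermediate claim that $E(K_X)\cong E$ forces $\langle v(E)^2\rangle=-1$ is false and contradicts your own (correct) computation in (3); once those two lines are corrected, your final paragraph gives exactly the paper's proof.
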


\begin{proof}
We set $v(E)=(r,\xi,\frac{s}{2})$. Then
$r,s \in {\Bbb Z}$, $\xi \in \NS(X)$ and 
$r \equiv s \mod 2$.
Since $\langle v(E)^2 \rangle=(\xi^2)-rs$,
$\langle v(E)^2 \rangle$ is even if and only if
$r$ is even.

Assume that $r$ is odd. Then
$\Hom(E,E(K_X)) =0$. Indeed
if there is a non-zero map $E \to E(K_X)$, then
it is isomorphic by the stability of $E$ and $E(K_X)$, which
contradicts Lemma \ref{lem:even}.
Then 
$$
0>\langle v(E)^2 \rangle=\dim \Ext^1(E,E)-1 \geq -1
$$
implies $\Ext^1(E,E)=0$ and $\langle v(E)^2 \rangle=-1$.

Assume that $r$ is even.
Then $0> \langle v(E)^2 \rangle$ means $\langle v(E)^2 \rangle \leq -2$.
Then we see that
\begin{equation}
\begin{split}
-2 \geq & \langle v(E)^2 \rangle=\dim \Ext^1(E,E)-1-\dim \Hom(E,E(K_X)) \\
\geq & -1-\dim \Hom(E,E(K_X)) \geq -2.
\end{split}
\end{equation}
Hence $\langle v(E)^2 \rangle=-2$,
$\Ext^1(E,E)=0$ and $\Hom(E,E(K_X)) \cong k$.
\end{proof}

\begin{defn}\label{defn:twist}
\begin{enumerate}
\item[(1)]
For a spherical object $A$, $T_A$ denotes the twist functor.
\item[(2)]
An object $B$ is exceptional, if $\Hom(B,B)=k$ and $\Ext^i(B,B)=0$
for $i \ne 0$.
For an exceptional object $B$,
we have an autoequivalence $T_B:=\Phi_{X \to X}^{{\cal E}}$ of ${\bf D}(X)$, 
where
$$
{\cal E}:=\mathrm{Cone}(B \boxtimes B^{\vee} \oplus 
B(K_X) \boxtimes (B(K_X))^{\vee} \to {\cal O}_\Delta).
$$
\end{enumerate}
\end{defn} 

\begin{prop}[{\cite[Thm. 12.1]{Br:3}}]\label{prop:general-wall}
Let $\sigma=(Z,{\cal P}) \in \partial U(X)$
be a general point of the boundary.
Then exactly one of the conditions 
$(A^+), (A^-), (C_k)$ in \cite[Thm. 12.1]{Br:3} 
or the following conditions holds.
\begin{enumerate}
\item[($B^+$)]
There is a rank $r$ simple and rigid vector bundle $B$ 
with $\langle v(B)^2 \rangle=-1$ such that
$B$, $B(K_X)$ and $T_B({\cal O}_x)$ are the stable factors
of ${\cal O}_x$ and
the Jordan-H\"{o}lder filtration
is 
$$
0 \to B^{\oplus r} \oplus B(K_X)^{\oplus r} \to {\cal O}_x \to
T_B({\cal O}_x) \to 0.
$$ 
\item[($B^-$)]
There is a rank $r$ simple and rigid vector bundle $B$ 
with $\langle v(B)^2 \rangle=-1$ such that
$B$, $B(K_X)$ and $T_B^{-1}({\cal O}_x)$ are the stable factors
of ${\cal O}_x$ and
the Jordan-H\"{o}lder filtration
is 
$$
0 \to T_B^{-1}({\cal O}_x) \to
{\cal O}_x \to 
(B^{\oplus r} \oplus B(K_X)^{\oplus r})[2] \to 0.
$$ 
\end{enumerate}  
\end{prop}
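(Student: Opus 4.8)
The plan is to run Bridgeland's analysis of a general boundary point $\sigma\in\partial U(X)$ from \cite[\S 10--12]{Br:3}, incorporating the one new feature special to an Enriques surface: by Lemma~\ref{lem:rigid}, a $\sigma$-stable object $E$ with $\langle v(E)^2\rangle<0$ satisfies $\langle v(E)^2\rangle=-1$ precisely when $\rk E$ is odd, in which case $E$ is exceptional rather than spherical, and by Lemma~\ref{lem:even} the twist $E(K_X)$ is then a \emph{distinct} stable object of the same phase. First I would set up the Jordan--H\"older analysis: since $\{{\cal O}_x\mid x\in X\}$ has bounded mass, the set of Mukai vectors of stable factors of the ${\cal O}_x$ with respect to stability conditions in a neighbourhood of $\sigma$ is finite by \cite[Lem. 9.2]{Br:3}, and genericity of $\sigma$ on the wall forces these vectors, together with $\varrho_X$, to span a single rank-two primitive sublattice $\Lambda\subset v(K(X))$ which is hyperbolic or degenerate, and forces ${\cal O}_x$ to acquire a two-step filtration as one crosses the wall.

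Next I would classify the configurations inside $\Lambda$. Because $\langle\varrho_X^2\rangle=0$ and every stable factor $F$ has $\langle v(F)^2\rangle\ge-2$, with equality only in the even-rank (spherical) case and with value $-1$ only in the odd-rank (exceptional) case, the lattice combinatorics of \cite[\S 11]{Br:3} show that one of the two stable factors is rigid, with Mukai vector $w$ satisfying $\langle w^2\rangle\in\{-1,-2\}$, and that $\varrho_X$ is an explicit combination of $w$ and the other, isotropic, factor. When $\langle w^2\rangle=-2$ the rigid factor is spherical and one recovers verbatim Bridgeland's cases $(A^\pm)$ (a spherical stable factor) and $(C_k)$ (the special case where that factor is ${\cal O}_C(k)$ for a $(-2)$-curve $C$). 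When $\langle w^2\rangle=-1$, the rigid factor is a simple rigid bundle $B$ of some rank $r\ge1$; here one computes from $v(B(K_X))=v(B)e^{K_X}$ that $\langle v(B),v(B(K_X))\rangle=\langle v(B)^2\rangle=-1$, so that $B$ and $B(K_X)$ are two non-isomorphic stable factors of the same phase and $\varrho_X=r\,v(B)+r\,v(B(K_X))+w'$ with $w'$ isotropic; moreover the object $B\oplus B(K_X)$ has $\Ext^\bullet\bigl(B\oplus B(K_X),B\oplus B(K_X)\bigr)\cong k^{\oplus2}\oplus 0\oplus k^{\oplus2}$, which is exactly what makes the twist functor $T_B$ of Definition~\ref{defn:twist} well defined.

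Then I would show, following \cite[\S 12]{Br:3}, that $T_B$ is an autoequivalence of ${\bf D}(X)$ and that $T_B({\cal O}_x)$ (resp. $T_B^{-1}({\cal O}_x)$) is $\sigma$-stable with $\phi_\sigma\bigl(T_B({\cal O}_x)\bigr)=\phi_\sigma({\cal O}_x)$, by analysing how the twist acts on the Harder--Narasimhan filtration as one crosses the wall; the displayed exact sequences of $(B^+)$ and $(B^-)$ then come out, the sign recording on which side of the wall $U(X)$ lies, and the $r$ copies each of $B$ and $B(K_X)$ being forced by the class identity above. Finally, mutual exclusivity of the five cases is immediate from the lattice $\Lambda$ together with the ranks and self-intersections of the stable factors, once genericity of $\sigma$ rules out simultaneous degenerations.

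The main obstacle I expect is establishing the new cases $(B^\pm)$ rigorously, i.e.\ showing that a rank-$r$ \emph{exceptional} (non-spherical) bundle $B$ genuinely yields an autoequivalence $T_B$ and that $T_B({\cal O}_x)$ is stable of the exact phase $\phi_\sigma({\cal O}_x)$ rather than merely semistable: the construction in Definition~\ref{defn:twist} must be checked to satisfy the spherical-twist-type formalism after replacing the single spherical object by the pair $B,B(K_X)$ (equivalently, treating $B\oplus B(K_X)$, whose $\Ext$-groups mimic those of the structure sheaf of two disjoint $2$-spheres, as the relevant ``spherical-like'' object), and the stability of the twisted point object is where Lemma~\ref{lem:rigid} enters in an essential way and where the argument truly departs from \cite{Br:3}.
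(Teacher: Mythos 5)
Your plan follows essentially the same route as the paper: the proof there is Bridgeland's argument for \cite[Thm.~12.1]{Br:3} run verbatim, with the only new written ingredient being the Enriques analogue of \cite[Lem.~12.2]{Br:3} (Lemma~\ref{lem:12-2}), whose proof rests on exactly the inputs you isolate --- Lemma~\ref{lem:even} and Lemma~\ref{lem:rigid}, i.e.\ that a $K_X$-invariant destabilizing piece has even rank, hence even $\langle v^2\rangle$ and even $\dim\Ext^1$, which forces one of the two pieces of any destabilizing sequence of ${\cal O}_x$ to be rigid. The obstacle you flag (that the twist $T_B$ attached to the exceptional pair $B$, $B(K_X)$ is an equivalence obeying the spherical-twist formalism) is precisely the point the paper handles by Definition~\ref{defn:twist}(2), so your account matches the paper's proof in both structure and emphasis.
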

\begin{NB}
By \cite[Lem. 1.7.9]{MYY:2011:1},
stable factors are $\beta$-twisted stable.
\end{NB}
For the proof, we need a modification of 
\cite[Lem. 12.2]{Br:3}.
 \begin{lem}
\label{lem:12-2}
Let $\sigma=(Z,{\cal P})$ be a stability condition on $X$ and
$E \in{\cal P}(1)$ a semi-stable object of phase 1
such that $E(K_X) \cong E$.
\begin{enumerate}
\item[(1)]
If $\Ext^1(E,E)=0$, then any stable factor $F$ of $E$ satisfies
$\Ext^1(F,F)=0$.
\item[(2)]
If $\Ext^1(E,E) \cong k^{\oplus 2}$, 
then there is a stable factor $A \in {\cal P}(1)$
satisfying 
\begin{enumerate}
\item
$\Ext^1(E,E)=0$ and 
\item
$\Hom(A,E) \ne 0$ or $\Hom(E,A) \ne 0$.
\end{enumerate}
\end{enumerate}
\end{lem}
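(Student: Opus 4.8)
The plan is to follow Bridgeland's proof of \cite[Lem.~12.2]{Br:3}, tracking the two places where the geometry of a K3 surface was used: Serre duality, and the fact that a stable object of phase~$1$ with negative self-intersection is spherical. Throughout I use that ${\cal P}(1)$ is a finite length abelian category (local finiteness of $\sigma$), so that $E$ has a Jordan--H\"{o}lder filtration with stable factors $A_1,\dots,A_n$ of phase~$1$; that $\otimes K_X$ is an autoequivalence preserving ${\cal P}(1)$ and the phase function (the first lemma of this appendix), so that when $E(K_X)\cong E$ the multiset $\{A_i\}$ is invariant under $\otimes K_X$; and that Serre duality on $X$ reads $\Ext^i(F,G)\cong\Ext^{2-i}(G,F(K_X))^\vee$. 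Two consequences are used constantly: for stable $A,B$ of phase~$1$, $\Hom(A,B)$ is $k$ if $A\cong B$ and $0$ otherwise, and $\Ext^2(A,B)\cong\Hom(B,A(K_X))^\vee$ is $k$ if $B\cong A(K_X)$ and $0$ otherwise; in particular $\Ext^2(E,E)\cong\Hom(E,E)^\vee$. Recall also from Lemma~\ref{lem:rigid} that a stable $A$ of phase~$1$ with $\Ext^1(A,A)\ne 0$ has $\langle v(A)^2\rangle\ge 0$, while a rigid one has $\langle v(A)^2\rangle\in\{-1,-2\}$.

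For~(1) I would induct on the length $n$, the case $n=1$ being trivial. Picking a Jordan--H\"{o}lder step $0\to A\to E\to B\to 0$ with $A$ stable, a diagram chase through the long exact $\Ext$-sequences, using $\Ext^1(E,E)=0$ and the $\Hom$-vanishing above, forces $\Ext^1(A,A)=0$ and $\Ext^1(B,B)=0$; then the stable factors of $B$ are rigid by induction. The one difference from \cite{Br:3} is that the $\Ext^2$-terms occurring in these sequences are controlled by $\Ext^2(F,G)\cong\Hom(G,F(K_X))^\vee$ rather than $\Hom(G,F)^\vee$; since they enter with a favourable sign and are bounded using the $\otimes K_X$-invariance of $\{A_i\}$, the estimate is unchanged.

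For~(2), note that a stable subobject of $E$ exists and its inclusion gives $\Hom(A,E)\ne 0$, while a stable quotient $A$ of $E$ gives $\Hom(E,A)\ne 0$; so it suffices to exhibit a \emph{rigid} stable subobject or quotient of $E$. Suppose none is rigid; then every stable subobject and every stable quotient $A$ has $\Ext^1(A,A)\ne 0$, hence $\langle v(A)^2\rangle\ge 0$ by Lemma~\ref{lem:rigid}. I would then rerun the analysis of~(1) with $\dim\Ext^1(E,E)=2$ in place of $0$: the long exact sequences, the additivity $\chi(E,E)=\sum_{i,j}\chi(A_i,A_j)$, the identity $\chi(E,E)=2\dim\Hom(E,E)-2$ (from $\Ext^2(E,E)\cong\Hom(E,E)^\vee$), and the control of the cross pairings $\chi(A_i,A_j)$ via the $\otimes K_X$-symmetry of $\{A_i\}$ together become incompatible with $\Ext^1(A,A)\ne 0$ holding for every stable subobject and every stable quotient $A$ of $E$. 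Hence some stable subobject or quotient $A$ of $E$ is rigid, and it satisfies $\Hom(A,E)\ne 0$ or $\Hom(E,A)\ne 0$, as required.

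The real work is the numerical bookkeeping in~(2): on a K3 surface $\Ext^2(A,A)\cong k$ for every stable object of phase~$1$, which keeps the self-intersection count uniform, whereas here $\Ext^2(A,A)$ is $k$ or $0$ depending on whether $A$ is $\otimes K_X$-fixed, and the factors of $E$ pair off under $\otimes K_X$; both features must be carried through the long exact sequences and the final Euler-characteristic estimate. Once the $K_X$-twisted Serre duality and Lemma~\ref{lem:rigid} are incorporated, the inequalities become a routine variant of those in \cite[Lem.~12.2]{Br:3}.
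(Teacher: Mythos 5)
Your overall strategy is the right one and matches the paper's: adapt Bridgeland's \cite[Lem.~12.2]{Br:3}, replacing Serre duality by its $K_X$-twisted form and using Lemma \ref{lem:rigid} and Lemma \ref{lem:even} to control rigid stable factors. But as written there are two gaps, one small and one substantive.

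In (1), a single Jordan--H\"{o}lder step $0\to A\to E\to B\to 0$ with $A$ stable does not suffice: the Mukai-type inequality $\dim\Ext^1(A,A)+\dim\Ext^1(B,B)\le\dim\Ext^1(E,E)$ needs $\Hom(A,B)=0$, which fails if $A$ occurs in $B$ as a subobject, and your induction needs $B(K_X)\cong B$, which a single step does not provide. The paper repairs both at once by replacing $A$ with the maximal subobject $F'$ of $E$ that is an iterated extension of $F$ \emph{and} $F(K_X)$; maximality gives $\Hom(F,G)=\Hom(F(K_X),G)=0$ for $G=E/F'$, hence $\Hom(F',G)=0$, and together with $E(K_X)\cong E$ this forces $F'(K_X)\cong F'$ and $G(K_X)\cong G$, so the inequality and the induction both go through. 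This is a standard fix, but it must be made.

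The substantive gap is in (2): you reduce to showing some stable subobject or quotient is rigid, assume the contrary, and then assert that Euler-characteristic bookkeeping "becomes incompatible" --- but no contradiction is actually derived, and the route you sketch (additivity of $\chi$ over all Jordan--H\"{o}lder factors plus $\chi(E,E)=2\dim\Hom(E,E)-2$) does not obviously close. The decisive step in the paper is a \emph{parity} argument that your proposal never reaches: with $F'$ and $G$ both fixed by $\otimes K_X$, Lemma \ref{lem:even} makes $\rk F'$ and $\rk G$ even, hence $\langle v(F')^2\rangle$ and $\langle v(G)^2\rangle$ are even, and $\dim\Ext^2(F',F')=\dim\Hom(F',F')$, $\dim\Ext^2(G,G)=\dim\Hom(G,G)$ then force $\dim\Ext^1(F',F')$ and $\dim\Ext^1(G,G)$ to be even. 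Two even numbers summing to at most $2$ means one vanishes, and part (1) applied to that piece produces the rigid stable factor (a subobject of $E$ if it is $F'$, a quotient if it is $G$). You noticed the relevant input --- that $\Ext^2(A,A)$ depends on whether $A$ is $\otimes K_X$-fixed and that the factors pair off --- but without extracting the evenness of the two $\Ext^1$'s the argument does not terminate.
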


\begin{proof}
Let $F$ be a stable factor of $E$. Then
there is an exact sequence
$$
0 \to F' \to E \to G \to 0
$$
in ${\cal P}(1)$ such that $F'$ is a successive extension of $F$ and
$F(K_X)$, and
$\Hom(F,G)=\Hom(F(K_X),G)=0$.
Since $\Hom(F',G)=\Hom(F',G(K_X))=0$ and $E(K_X) \cong E$,
we see that $F'(K_X) \cong F'$ and $G(K_X) \cong G$.
Then we get
\begin{equation}
\dim \Ext^1(F',F')+\dim \Ext^1(G,G) \leq 
\dim \Ext^1(E,E) \leq 2.
\end{equation}

(1)
If $\Ext^1(E,E)=0$, then
$\Ext^1(F',F')=0$, which implies 
$\langle v(F'),v(F') \rangle<0$.
Since $v(F') \in {\Bbb Z}v(F)$,
we also have $\langle v(F)^2 \rangle<0$.
Then $\Ext^1(F,F)=0$ by Lemma \ref{lem:rigid}.
If $\rk F$ is odd, then $\Ext^2(F,F)=0$ and
if $\rk F$ is even, then $\Ext^2(E,E) \cong k$.
A similar claim also holds for $F(K_X)$.

Since $G$ also satisfies the assumption of (1),
inductively we get the claim for stable factors of $G$.

(2)
We note that $\rk F'$ and $\rk G$ are even by Lemma \ref{lem:even}.
Hence $\langle v(F')^2 \rangle$ and
$\langle v(G)^2 \rangle$ are even.
Since 
\begin{equation}
\dim \Ext^2(F',F') =\dim \Hom(F',F'),\;
\dim \Ext^2(G,G) = \dim \Hom(G,G),
\end{equation}
$\dim \Ext^1(F',F')$ and
$\dim \Ext^1(G,G)$ are even.
Therefore $\Ext^1(F',F')=0$ or
$\Ext^1(G,G)=0$.
Applying (1), we get the claim.
\end{proof}

\begin{NB}

(Step 6)
Assume that there is a map ${\cal O}_x \to A$.
Then $A \in {\cal Q}(\phi)$ with $1<\phi<1+\epsilon$.
Since $A[-1] \in {\cal Q}((0,1])$,
\cite[Lem. 10.1]{Br:3} implies 
$H^i(A[-1])=0$ for $i \ne -1,0$.
We set $C:=H^0(A[-2])[2]$ and $D:=H^0(A[-1])[1]$.
Then $C[-1] \in {\cal Q}((0,2])$ and
$D[-1] \in {\cal Q}((-1,1])$, and hence
$C[-1] \in {\cal P}((-\epsilon,2+\epsilon])$ and
$D[-1] \in {\cal Q}((-1-\epsilon,1+\epsilon])$.
Since $\epsilon$ is arbitrary small positive number,
 $C[-1] \in {\cal P}([0,2])$ and
$D[-1] \in {\cal P}([-1,1])$.
By the triangle
$$
D[-1] \to C \to A \to D,
$$
$C \in {\cal P}(\leq 1)$.
Since $C \in {\cal P}([1,3])$,
$C \in {\cal P}(1)$.  Since $A$ is a simple object,
$D[-1] \in {\cal P}(1)$.
Assume that $C,D \ne 0$.
We note that the choice of $A$ is finite.
Hence the choice of the cohomology sheaves $C,D$ are also finite.
Since $\sigma$ is general, there are integers 
$\lambda_C,\mu_C,\lambda_D, \mu_D$
and
$$
v(C)=\lambda_C v(A)+\mu_C v({\cal O}_x),\;
v(D)=\lambda_D v(A)+\mu_D v({\cal O}_x).
$$
Since $C[-1] \in {\cal Q}((0,\epsilon))$ and
$D[-1] \in {\cal Q}((1-\epsilon,1])$,
$\Ima W(C[-1]), \Ima W(A[-1])>0$ and $\Ima W(D[-1]) \geq 0$,
$\lambda_C>0$ and $\lambda_D \geq 0$.
Since $\lambda_C+\lambda_D=1$,
$\lambda_C=1$ and $\lambda_D=0$.
Then $\langle v(D)^2 \rangle=0$, which implies $D$ is not rigid.
Therefore $C=0$ or $D=0$.

If $D=0$, then $C=A$. In particular, 
$A[-2]$ is a torsion free sheaf by \cite[Lem. 10.1]{Br:3}.
Hence $A[-2]$ is locally free.

Assume that $C=0$.
Then $A=D$ with $A[-1] \in \Coh(X)$.
Let $T$ be the torsion part of $A[-1]$ and $Q:=A[-1]/T$.
Then $T \in {\cal Q}((0,1])$ implies $T \in {\cal P}(\geq 0)$. 
We see that $T \in {\cal P}(0)$ and $Q \in {\cal P}(1)$.
Since $\sigma$ is general,
$v(T)=\lambda_T v(A)+\mu_T v({\cal O}_x)$.
If $\rk A \ne 0$, then $\lambda_T =0$.
Then $T$ is 0-dimensional, which means $T \in {\cal P}(1)$.
Therefore $T=0$.
Then $A[-1]$ is a locally free rigid sheaf, which implies
$\Hom({\cal O}_x,A)=0$.
Therefore $\rk A=0$.
In this case, if there is a decomposition
$$
0 \to C \to A[-1] \to D \to 0
$$ 
in $\Coh(X)$, then
$C,D \in {\cal Q}((0,1])$ implies that
$C \in {\cal P}(0)$ and $D \in {\cal P}(1)$.
Hence $A[-1]$ is purely 1-dimensional.
Since $\sigma$ is general,
$v(C)=\lambda_C v(A)+\mu_C v({\cal O}_x)$.
Then $c_1(C)$ is a multiple of $c_1(A)$.
Hence $c_1(A)$ is a smooth rational curve $C$.
Therefore $A={\cal O}_C(k)[1]$.

Assume that $\rk A \ne 0$.
We set $r:=\rk A$.
Then there is an exact sequence
$$
0 \to B \to {\cal O}_x \to A^{\oplus r} \oplus A(K_X)^{\oplus r} \to 0
$$
or 
$$
0 \to B \to {\cal O}_x \to A^{\oplus r} \to 0
$$
according as $v(A)=-1,-2$.
We set $A':=A^{\oplus r} \oplus A(K_X)^{\oplus r}$.
Since $\Hom({\cal O}_x,A[i])=0$ for $i \ne 0$,
$\Hom(B,A[i])=\Hom(B,A(K_X)[i])=0$ for $i=0,2$.
Then we have $B(K_X) \cong B$.
Assume that $B$ is not $\sigma$-stable.
Then Lemma \ref{lem:12-2} implies that
there is a stable and rigid object $C \in {\cal P}(1)$ such that
$\Hom(C,B) \ne 0$ or $\Hom(B,C) \ne 0$.
Since $\sigma$ is general,
$v(C)=\lambda_C v(A) +\mu_C v({\cal O}_x)$, where
$\lambda_C,\mu_C \in {\Bbb Z}$.
Then $\rk C=r\lambda_C$ and
$\langle v(C)^2 \rangle=
\lambda_C(\lambda_C \langle v(A)^2 \rangle
+2\mu_C \langle v(A),v({\cal O}_x) \rangle)$.
If $ \langle v(C)^2 \rangle=-2$, then $\rk C$ is even.
Hence $2 \mid \lambda_C$ or $2 \mid \rk A$.
For the first case, we get $4 \mid \langle v(C)^2 \rangle$,
which is a contradiction. 
For the second case, we have $\langle v(A)^2 \rangle=-2$, which
implies that $\lambda_C=\pm 1$ and $\mu_C=0$.
Since $A,C \in {\cal P}(1)$, $v(A)=v(C)$.
If $ \langle v(C)^2 \rangle=-1$, then $\rk C$ is odd.
Hence $\lambda_C=\pm 1, \langle v(A)^2 \rangle=-1$
and $\mu_C=0$.
Then we also see that $v(A)=v(C)$.
By the Riemann-Roch theorem, we see that 
$C \cong A,A(K_X)$, which is a contradiction.

We normalize $Z(\bullet)$ as 
$\langle e^{\beta+i \omega},\bullet \rangle$.
Let $E_1$ be a subsheaf of $A[-2]$.
Since $A[-2] \in {\cal Q}((-1,0]) \cap \Coh(X)$,
we have $E_1 \in {\cal Q}((-1,0]) \cap \Coh(X)$.
Thus $\Ima W(E_1) \leq 0$, which implies 
$\Ima Z(E_1) \leq 0$.
Since $\Ima Z(A[-2])=0$,
$A$ is $\mu$-semi-stable with respect to
$\omega$.
If $\Ima Z(E_1)=0$, then by the stability of $A$,
$Z(E_1) \in {\Bbb R}_{>0}$.
Hence $\chi_\beta(A[-2])/r>\frac{(\omega^2)}{2}>
\chi_\beta(E_1)/\rk E_1$.
Thus $A$ is $\beta$-twisted stable. 
 
\end{NB}

\begin{NB}
Let $X$ be an Enriques surface.
Let ${\cal E}$ be a universal family of
stable sheaves $E$ with $v(E)=v_0$.

$\Phi:\Stab(X) \to \Stab(X')$
For a family of stable objects ${\cal F}$ on $X'$
with the Mukai vector $\varrho_X$,
$\Phi_{X' \to X}^{{\cal E}[2]}({\cal F})$
is a family of stable complexes with Mukai vector $v_0$.
\end{NB}

\begin{NB}
\subsection{}

\begin{lem}\label{lem:Hodge}
Let $H$ be a divisor with $(H^2)>0$.
Let $\omega$ be a divisor with $(\omega^2)>0$.
If $C=x H+D$ $D \in H^\perp$ satisfies
$(C,\omega)=0$, then
$$
(C^2) \leq \frac{(\omega^2)(H^2)}{(H,\omega)^2} (D^2).
$$
\end{lem}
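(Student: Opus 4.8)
The plan is to reduce the assertion to the Cauchy--Schwarz inequality on the negative semi-definite subspace $H^\perp$. First I would invoke the Hodge index theorem: since $(H^2)>0$, the intersection form restricted to $H^\perp$ is negative semi-definite, so the symmetric bilinear form $q:=-(\cdot,\cdot)$ on $H^\perp$ is positive semi-definite. I would also record that $(H,\omega)\ne 0$: otherwise $\omega\in H^\perp$ and then $(\omega^2)\le 0$, contradicting the hypothesis. In particular all denominators occurring below are nonzero.

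Next I would decompose $\omega$ into its $H$-component and its $H^\perp$-component: $\omega=\frac{(H,\omega)}{(H^2)}H+\omega'$ with $\omega'\in H^\perp$, noting $(D,\omega)=(D,\omega')$ because $D\in H^\perp$. The hypothesis $(C,\omega)=0$ with $C=xH+D$ then reads $x(H,\omega)+(D,\omega')=0$, so $x=-(D,\omega')/(H,\omega)$; hence, using $D\perp H$,
\[
(C^2)=x^2(H^2)+(D^2)=\frac{(D,\omega')^2(H^2)}{(H,\omega)^2}+(D^2),\qquad
\frac{(\omega^2)(H^2)}{(H,\omega)^2}=1+\frac{(H^2)(\omega')^2}{(H,\omega)^2}.
\]
Substituting these into the asserted inequality, cancelling the common term $(D^2)$, and multiplying by $(H,\omega)^2/(H^2)>0$, one finds that the claim is equivalent to
\[
(D,\omega')^2\le (D^2)(\omega')^2,
\]
i.e.\ to $q(D,\omega')^2\le q(D,D)\,q(\omega',\omega')$.

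This last inequality is Cauchy--Schwarz for the positive semi-definite form $q$, valid for any positive semi-definite symmetric bilinear form (in the degenerate case $q(D,D)=0$ one checks directly that $q(D,\omega')=0$, so both sides vanish). This finishes the argument. I do not anticipate a genuine obstacle: the only points requiring care are the non-vanishing of $(H,\omega)$, so that the normalization of $\omega$ along $H$ is legitimate, and the sign bookkeeping --- $(D^2)\le 0$, $(\omega')^2\le 0$, $(H^2)>0$ --- needed to see that every reduction step is an equivalence of inequalities rather than a one-sided implication.
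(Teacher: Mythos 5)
Your proof is correct and follows essentially the same route as the paper's: decompose $\omega=\frac{(H,\omega)}{(H^2)}H+\omega'$ with $\omega'\in H^\perp$, solve $(C,\omega)=0$ for $x$, and reduce the inequality to Cauchy--Schwarz on the negative semi-definite subspace $H^\perp$ (Hodge index theorem). The only difference is that you make explicit the observation $(H,\omega)\neq 0$ and the handling of the degenerate case, which the paper leaves implicit.
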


\begin{proof}
We set $\omega=pH+\lambda$ ($\lambda \in H^\perp$).
$0=(C,\omega)=p x (H^2)+(D,\lambda)$.
Hence
$x^2(H^2)^2 =\frac{(D,\lambda)^2}{p^2} \leq \frac{(D^2)(\lambda^2)}{p^2}$
by using Hodge index theorem and Schwarz inequality.
\begin{equation}
(C^2)=x^2(H^2)+(D^2) \leq \frac{(D^2)(\lambda^2)}{p^2(H^2)}+(D^2)
=(D^2)\frac{(\lambda^2)+p^2(H^2)}{p^2(H^2)}=
(D^2)\frac{(\omega^2)(H^2)}{(\omega,H)^2}.
\end{equation}
\end{proof}
 
\begin{lem}\label{lem:Bdd}
Let $B$ be a compact subset of $\NS(X)_{\Bbb R} \times P^+(X)_{\Bbb R}$.
Then there is finitely many $u=r+\xi+a \varrho_X \in \Delta(X)$
such that
$r>0$, $(\xi-r \beta,\omega)=0$ and 
$-\langle e^\beta,w \rangle>r \frac{(\omega^2)}{2}$
for $(\beta,\omega) \in B$.
\end{lem}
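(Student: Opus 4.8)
The plan is to reduce the statement, via the $e^\beta$-invariance of the Mukai pairing, to the finiteness of lattice vectors of bounded square in a negative definite form, exactly the mechanism used in the proof of \cite[Lem.\ 11.1]{Br:3} and in Lemma \ref{lem:Delta} (I read the displayed conditions as holding for some $(\beta,\omega)\in B$, and $-\langle e^\beta,u\rangle$ for the stated $u$). Write $u=(r,\xi,a)$ with $r>0$, and set $\eta:=\xi-r\beta\in\NS(X)_{\Bbb R}$, so that the hypothesis $(\xi-r\beta,\omega)=0$ is just $(\eta,\omega)=0$. Since $u=e^\beta(r,\eta,a_\beta(u))$ with $a_\beta(u):=-\langle e^\beta,u\rangle$, the $e^\beta$-invariance of $\langle\ ,\ \rangle$ gives the identity $\langle u^2\rangle=(\eta^2)-2r\,a_\beta(u)$, equivalently $a_\beta(u)=\bigl((\eta^2)-\langle u^2\rangle\bigr)/(2r)$. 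Two facts feed the estimate: $u\in\Delta(X)$ forces $\langle u^2\rangle\ge -2$, and $\omega$ lying in the positive cone forces the intersection form on the hyperplane $\omega^\perp\subset\NS(X)_{\Bbb R}$ to be negative definite, so $(\eta^2)\le 0$.

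The first step is to bound the rank. The hypothesis $-\langle e^\beta,u\rangle>r(\omega^2)/2$ reads $a_\beta(u)>r(\omega^2)/2$, i.e.\ $(\eta^2)-\langle u^2\rangle>r^2(\omega^2)$; with $(\eta^2)\le 0$ and $\langle u^2\rangle\ge -2$ this yields $r^2(\omega^2)<2$. Since $B$ is compact and $(\omega^2)>0$ on the positive cone, the image of $B$ in $P^+(X)_{\Bbb R}$ is compact and $(\omega^2)$ is bounded below there by some $\delta>0$; hence $r^2<2/\delta$, so only finitely many values of $r$ occur. The same inequalities give $-2<(\eta^2)\le 0$, so $\eta$ has square bounded independently of $u$ and of $(\beta,\omega)\in B$.

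The second step is to bound $\eta$, hence $\xi$. The point that needs a little care is that the intersection form is \emph{uniformly} negative definite on the hyperplanes $\omega^\perp$ as $\omega$ ranges over the compact image of $B$: fixing any norm $\|\cdot\|$ on $\NS(X)_{\Bbb R}$, the continuous function $(\omega,\eta)\mapsto -(\eta^2)$ on the compact set $\{(\omega,\eta):\ \omega\in\mathrm{pr}(B),\ (\omega,\eta)=0,\ \|\eta\|=1\}$ attains a positive minimum $c$, so $-(\eta^2)\ge c\|\eta\|^2$ for all $\eta\in\omega^\perp$. Then $(\eta^2)>-2$ gives $\|\eta\|<\sqrt{2/c}$, and since $r$ is bounded and $\beta$ stays in the compact image of $B$, the class $\xi=\eta+r\beta$ lies in a bounded subset of $\NS(X)_{\Bbb R}$; being in the lattice $\NS(X)$, it has only finitely many possible values.

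Finally, for each of the finitely many admissible pairs $(r,\xi)$, the relation $\langle u^2\rangle=(\xi^2)-2ra$ together with $\langle u^2\rangle\in\{-1,-2\}$ (and $=-2$ in the K3 case) pins $a$ down to at most two values, so only finitely many $u$ arise. I do not anticipate any genuine obstacle here: everything except the uniform negative-definiteness in the third step is the same bookkeeping already carried out for Lemma \ref{lem:Delta} and in \cite[Lem.\ 11.1]{Br:3}.
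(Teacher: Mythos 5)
Your proof is correct and follows essentially the same route as the paper's: both rewrite the hypothesis via the identity $-r\langle e^\beta,u\rangle=\tfrac{1}{2}((\xi-r\beta)^2)-\tfrac{1}{2}\langle u^2\rangle$, use $((\xi-r\beta)^2)\le 0$ (Hodge index on $\omega^\perp$) to bound $r$ by $r^2(\omega^2)<-\langle u^2\rangle\le 2$, then use the resulting two-sided bound $\langle u^2\rangle\le((\xi-r\beta)^2)\le 0$ to confine $\xi$ to a bounded set, and finally read off $a$ from $\langle u^2\rangle=(\xi^2)-2ra$. The only divergence is in one technical step: where you obtain the uniform estimate $-(\eta^2)\ge c\|\eta\|^2$ on $\omega^\perp$ by compactness of the unit-sphere bundle over the image of $B$, the paper instead writes $\xi-r\beta=pH+D$ with $D\in H^\perp$ and proves an explicit Hodge-index/Cauchy--Schwarz inequality bounding $(D^2)$, hence $D$ and $p$ separately; both implementations are valid and yield the same conclusion.
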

\begin{NB2}
cf. \cite[Lem. 11.1]{Br:3}.
\end{NB2}

\begin{proof}
Since $\frac{(\xi^2)}{r^2}+\frac{-\langle u^2 \rangle}{r^2}=2\frac{a}{r}$,
\begin{equation}
-r\langle e^\beta,w \rangle=ra-r(\xi,\beta)+r^2 \frac{(\beta^2)}{2}
=\frac{1}{2}((\xi-r\beta)^2)+\frac{-\langle u^2 \rangle}{2}.
\end{equation}
Hence $r^2(\omega^2)<((\xi-r\beta)^2)-\langle u^2 \rangle 
\leq -\langle u^2 \rangle$.
In particular $r$ is bounded, which implies the choice of $r$ is finite.
We also note that $-((\xi-r\beta)^2) \leq -\langle u^2 \rangle$.
We fix a divisor $H$ with $(H^2)>0$ and write $\xi-r\beta=pH+D$, 
$D \in H^\perp$.
Since $((\xi-r\beta)^2) \geq \langle u^2 \rangle$,
$(D^2)$ is also bounded by Lemma \ref{lem:Hodge}. 
Hence $\{p \}$ and $\{D \}$ are bounded sets.
Therefore the choice of $\xi$ is finite, which also implies
the choice of $a$ is finite.
\end{proof}
\end{NB}

\begin{NB}
If $A$ is a subobject of ${\cal O}_x$ in an abelian category 
${\cal P}_\tau((a,a+1])$ with 
$\phi_\sigma(A)=\phi_\sigma({\cal O}_x)$, then
$(c_1(A)-(\rk A) \beta,\omega)=0$ and
$\rk A \frac{(\omega^2)}{2}-\chi_\beta(A)<0$.
Hence $v(A)=e^\beta(\rk A+D+a_\beta \varrho_X)$
with $\langle v(A)^2 \rangle=(D^2)-2(\rk A) a_\beta<0$
unless $v(A)=v({\cal O}_x)$.
If $v(A)=v({\cal O}_x)$, then
${\cal O}_x /A \in {\cal P}_\tau((a,a+1])$
satisfies $Z_\tau(   {\cal O}_x /A )=0$.
Hence ${\cal O}_x /A =0$.
Therefore if $A$ is $\sigma'$-stable and $A \ne {\cal O}_x$, then 
$\langle v(A)^2 \rangle=-2$.
\end{NB}


\begin{NB}
We have an exact sequence
\begin{equation}
0 \to H^0(\pH^{-1}(B)) \to H^0(B) \to H^{-1}(H^0(B)) \to 0.
\end{equation}

\begin{equation}
\begin{CD}
@.  @. 0 @. 0 @.\\
@. @. @AAA @AAA @.\\
@. @. {\cal O}_x @= {\cal O}_x @.\\
@. @. @AAA @AAA @.\\
0 @>>> H^0(\pH^{-1}(B)) @>>> H^0(\pH^0(A)) @>>> H^0(C) @>>> 0\\
@. @| @AAA @AAA @.\\
0 @>>> H^0(\pH^{-1}(B)) @>>> H^0(B) @>>> H^{-1}(H^0(B)) @>>> 0\\
@. @. @AAA @AAA @.\\
@. @. 0 @. 0 @. .
\end{CD}
\end{equation}

\end{NB}

\begin{NB}
Replacing $B$ by a compact neighborhood of $(\beta_0,\omega_0)$,
we assume that all 
$w=r+\xi+a \varrho_X \in W$ satisfy 
$Z_{(\beta_0,\omega_0)}(w) \in {\Bbb R}_{\leq 0}$.
\begin{lem}
We set $B':=\{(\beta,\omega) \in B_0 \mid \omega \in \Amp(X),
\Ima Z_{(\beta,\omega)}(w)>0, w \in W' \}$.
\begin{enumerate}
\item[(1)]
Then  $B' \ne \emptyset$
and $(\beta_0,\omega_0) \in \overline{B'}$.
\item[(2)]
 Let 
\begin{equation}
0 \to A \to {\cal O}_x \to B \to 0
\end{equation}
be an exact sequence in ${\cal A}_{(\beta_0,\omega_0)}$. Then 
$\Ima Z_{(\beta,\omega)}(A)>0$ for $(\beta,\omega)  \in B'$.
\end{enumerate}
\end{lem}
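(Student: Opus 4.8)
The plan is to reuse, almost verbatim, the argument of the Lemma proved above for the boundary of the geometric chamber; only the ambient point changes, from a general nef and big $\omega_0$ to $\omega_0=tH$. Everything rests on one identity: expanding $Z_{(\beta,\omega)}(w)=\langle e^{\beta+i\omega},w\rangle$ and taking imaginary parts gives, for a class $w$ with $\rk w=r$ and $c_1(w)=\xi$,
$$
\Ima Z_{(\beta,\omega)}(w)=(\xi-r\beta)\cdot\omega .
$$
By the normalization made at the start of the block, $(\xi-r\beta_0)\cdot\omega_0=0$ for every $w\in W$, and for $w\in W'$ one also has $r=\rk w>0$.

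For part (1) I would substitute $\beta=\beta_0-x\omega_0$ and $\omega=y\omega_0+\eta$ with $x,y\in{\Bbb R}_{>0}$ and $\eta\in\omega_0^\perp$, which yields
$$
\Ima Z_{(\beta_0-x\omega_0,\;y\omega_0+\eta)}(w)=(\xi-r\beta_0)\cdot\eta+rxy\,(\omega_0^2).
$$
Since $(\xi-r\beta_0)\cdot\omega_0=0$ and $(\omega_0^2)>0$, the Hodge index theorem gives $((\xi-r\beta_0)^2)\le 0$, hence $|(\xi-r\beta_0)\cdot\eta|\le\sqrt{-((\xi-r\beta_0)^2)}\,\sqrt{-(\eta^2)}$; as $W'$ is finite, after fixing small $x,y>0$ keeping the point in $B_0$ one can take $\eta\in\omega_0^\perp$ small enough that the displayed quantity is positive for all $w\in W'$ at once, and --- using that $\omega_0=tH$ is nef and big, so ample classes accumulate at $\omega_0$ --- also small enough that $y\omega_0+\eta$ is ample. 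That point then lies in $B'$, so $B'\ne\emptyset$; letting $x\to0$, $y\to1$, $\eta\to0$ in turn gives $(\beta_0,\omega_0)\in\overline{B'}$.

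For part (2) (I read the hypothesis as $A\in S'$, matching the Lemma above), I would take perverse cohomology of the triangle $A\to{\cal O}_x\to B$: from $\pH^{-1}({\cal O}_x)=0$ one gets $\pH^{-1}(A)=0$, so $A\cong\pH^0(A)\in{\frak C}$ and in particular $\rk A\ge0$, along with an exact sequence $0\to\pH^{-1}(B)\to A\xrightarrow{\psi}{\cal O}_x\to\pH^0(B)\to 0$ in ${\frak C}$. If $\rk A>0$ then $v(A)\in W'$ and $\Ima Z_{(\beta,\omega)}(A)>0$ is precisely the defining inequality of $B'$. If $\rk A=0$, then $\pH^{-1}(B)$ is a rank-zero subobject of $A$ in ${\frak C}$, hence a torsion object of ${\frak C}$; but $\pH^{-1}$ of an object of ${\cal A}_{(\beta_0,\omega_0)}$ is torsion free in ${\frak C}$ (cf.\ the proof of Proposition \ref{prop:A}), so $\pH^{-1}(B)=0$ and $\psi$ is a monomorphism in ${\frak C}$; since $\coker_{\frak C}\psi\in{\frak C}$ (so $H^{-2}=0$) and $H^{-1}({\cal O}_x)=0$, the cohomology sequence of $A\to{\cal O}_x\to\coker_{\frak C}\psi$ forces $H^{-1}(A)=0$, so $A$ is an honest torsion sheaf. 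Finally, since $\Ima Z_{(\beta_0,\omega_0)}({\cal O}_x)=0$ and $A,B$ are a subobject and quotient in the heart, $\Ima Z_{(\beta_0,\omega_0)}(A)=0$, i.e.\ $c_1(A)\in\omega_0^\perp$; and $c_1(A)=0$ would give $v(A)=v({\cal O}_x)$ (integrality of Mukai vectors together with $v(A)+v(B)=\varrho_X$), hence $B=0$ and $A={\cal O}_x$, contradicting $A\in S$. So $c_1(A)$ is effective and nonzero, whence $\Ima Z_{(\beta,\omega)}(A)=c_1(A)\cdot\omega>0$ for $\omega\in\Amp(X)$.

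The routine ingredients are the imaginary-part formula and the $\rk A>0$ case of (2); the hard part will be the $\rk A=0$ case, where one must pass carefully between the perverse $t$-structure defining ${\frak C}$, its tilt ${\cal A}_{(\beta_0,\omega_0)}$, and the torsion pair $(T,S)$ of \eqref{eq:ST} in order to see that $A$ is a genuine torsion sheaf with effective nonzero first Chern class.
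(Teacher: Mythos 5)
Your proposal is correct and follows essentially the same route as the paper's proof: the same perturbation $(\beta_0-x\omega_0,\,y\omega_0+\eta)$ with the Hodge-index bound for (1), and for (2) the same case split on $\rk A$, killing $\pH^{-1}(B)$ via torsion-freeness in ${\frak C}$ and concluding from the effectivity of $c_1(A)$ against an ample $\omega$. Your reading of the hypothesis as $A\in S'$ is the intended one (it is stated explicitly in the version of the lemma appearing in the text), and your small rearrangement in the rank-zero case (using non-negativity and additivity of rank on ${\frak C}$ rather than first showing $A$ is a $0$-dimensional object) is only a cosmetic variation.
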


\begin{proof}
For $x, y \in {\Bbb R}_{>0}$ and
$\eta \in \omega_0^\perp$ with
$(\xi-r \beta_0,\eta)+rxy(\omega_0^2)>0$, 
$\Ima Z_{(\beta_0-x\omega_0,y\omega_0+\eta)}(w)>0$.
Hence (1) holds.

(2) We note that  $\pH^{-1}(A)=0$ and 
there is an exact sequence in ${\frak C}$
\begin{equation}
0 \to \pH^{-1}(B) \to \pH^0(A) 
\overset{\psi}{\to} {\cal O}_x \to \pH^0(B) \to 0.
\end{equation}
Since $A,B$ are semi-stable objects of phase 1, 
$\pH^0(B)$ is a 0-dimensional object,
$\pH^{-1}(B)$ is a $\mu$-semi-stable object with
$\deg_\beta \pH^{-1}(B)=0$
and $\pH^0(A)$ is an extension of a $\mu$-semi-stable object $F$ with
$\deg_\beta F=0$ by a 0-dimensional object $T$.
Moreover $F$ is generated by 
$\beta_0$-twisted stable objects $F_i$
with $\chi_{\beta_0}(F_i)/\rk F_i \geq \frac{(\omega_0^2)}{2}$. 
Thus $v(F_i) \in W$. Then $\Ima Z_{(\beta,\omega)}(F_i)>0$. 
Since $\pH^{-1}(B)$ is a torsion free object of ${\frak C}$,
$T$ is a subobject of ${\cal O}_x$. In particular,
$c_1(T)$ is effective, $T={\cal O}_x$ or $T=0$.
Since $\omega$ is ample, 
$\Ima Z_{(\beta,\omega)}(H^0(\pH^0(A)))>0$
unless $H^0(\pH^0(A))=0,{\cal O}_x$.

Let $C$ be the image of $\psi$ in ${\frak C}$.
Then $H^{-1}(C)=0$ by $H^{-1}({\cal O}_x)=0$.
Hence $H^{-1}(\pH^0(A))\cong H^{-1}(\pH^{-1}(B))=0$
by the torsion freeness of $\pH^{-1}(B)$.
Since $A \ne {\cal O}_x, 0$, we get $\phi_{(\beta,\omega)}(A)<1$.
\end{proof}

\begin{lem}[{cf. \cite[Lem. 13.3]{Br:3}}]
The stability condition
$\sigma_{(\beta_0,\omega_0)}=
({\cal A}_{(\beta_0,\omega_0)},Z_{(\beta_0,\omega_0)})$
defined in \cite{MYY:2011:1} 
is in the boundary of $U(X)$. 
\end{lem}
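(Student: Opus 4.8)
The plan is to realise $\sigma_{(\beta_0,\omega_0)}$ as a limit of geometric stability conditions, following the analysis of Subsection~\ref{subsect:relation}. First I would recall that $\sigma_{(\beta_0,\omega_0)}$ satisfies the support property (the Bogomolov inequality holds), so it has a wall-and-chamber structure, and by the argument behind \cite[Prop.~8.3]{Br:3} there are a small neighbourhood $B$ of $(\beta_0,\omega_0)$ in $\NS(X)_{\mathbb R}\times P^+(X)_{\mathbb R}$ and a continuous section ${\frak s}\colon B\to\Stab(X)$ with $Z_{{\frak s}(\beta,\omega)}=Z_{(\beta,\omega)}$ and ${\frak s}(\beta_0,\omega_0)=\sigma_{(\beta_0,\omega_0)}$, as in \eqref{eq:section}. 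By the construction in Subsection~\ref{subsect:perverse}, each ${\cal O}_x$ is $\sigma_{(\beta_0,\omega_0)}$-semi-stable of phase $1$, so every Jordan--H\"older factor of ${\cal O}_x$ has central charge in ${\mathbb R}_{<0}$. Combined with the finiteness of $W=\{v(E)\mid E\in S\}$ from \cite[Lem.~9.2]{Br:3} (which lets us compare the stable factors of ${\cal O}_x$ for $\sigma\in{\frak s}(B)$ near $\sigma_{(\beta_0,\omega_0)}$ with the Jordan--H\"older factors at $\sigma_{(\beta_0,\omega_0)}$ itself), this shows $Z_{\sigma_{(\beta_0,\omega_0)}}(E)\in{\mathbb R}_{<0}$ for all $E\in S'$, i.e.\ the hypothesis of the Lemma preceding Proposition~\ref{prop:limit} is satisfied.

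Next I would invoke that Lemma: with $W'$ and $B'$ as defined there, $B'\neq\emptyset$ and $(\beta_0,\omega_0)\in\overline{B'}$. Since $B'$ is open and, near $(\beta_0,\omega_0)$, is cut out by finitely many strict inequalities (ampleness near the face ${\mathbb R}_{>0}H$, and $\Ima Z_{(\beta,\omega)}(w)>0$ for the finitely many $w\in W'$), there is a connected component $B_0'$ of $B'$ with $(\beta_0,\omega_0)\in\overline{B_0'}$. Now Proposition~\ref{prop:limit} applies to $B_0'$ and yields that ${\cal O}_x$ $(x\in X)$ is $\sigma$-stable for every $\sigma\in{\frak s}(B_0')$, and moreover ${\frak s}(\beta,\omega)=\sigma_{(\beta,\omega)}$ there. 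For $(\beta,\omega)\in B_0'$ the divisor $\omega$ is ample, so $Z_{(\beta,\omega)}=\langle e^{\beta+i\omega},\bullet\rangle\in{\cal P}^+(X)$, and the ${\cal O}_x$ are $\sigma$-stable of the common phase $1$; therefore ${\frak s}(B_0')\subset U(X)$ by the characterisation of $U(X)$.

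Finally, since ${\frak s}$ is continuous and $(\beta_0,\omega_0)\in\overline{B_0'}$, we get $\sigma_{(\beta_0,\omega_0)}={\frak s}(\beta_0,\omega_0)\in\overline{{\frak s}(B_0')}\subset\overline{U(X)}$. When $\pi$ is not an isomorphism the decomposition \eqref{eq:decomp} furnishes stable factors $E_i\neq{\cal O}_x$ of ${\cal O}_x$ with respect to $\sigma_{(\beta_0,\omega_0)}$, so ${\cal O}_x$ is not $\sigma_{(\beta_0,\omega_0)}$-stable and $\sigma_{(\beta_0,\omega_0)}\notin U(X)$; hence $\sigma_{(\beta_0,\omega_0)}$ lies on the boundary of $U(X)$. (If $\pi$ is an isomorphism, then $H$ is ample, ${\frak C}=\Coh(X)$, and $\sigma_{(\beta_0,\omega_0)}\in U(X)$ already.)

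The genuinely substantial input is Proposition~\ref{prop:limit}, i.e.\ promoting the $\sigma$-semi-stability of ${\cal O}_x$ to $\sigma$-stability on a whole connected component $B_0'$ adjacent to $(\beta_0,\omega_0)$; this rests on the sign computation $\Ima Z_{(\beta,\omega)}(E_1)>0>\Ima Z_{(\beta,\omega)}(E_2)$ for the would-be destabilising sub/quotient pair together with the finiteness in \cite[Lem.~9.2]{Br:3}. Since that Proposition is already established, the remaining work is only bookkeeping: checking the hypothesis on $S'$, exhibiting the component $B_0'$, and verifying $\sigma_{(\beta_0,\omega_0)}\notin U(X)$. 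The one point demanding a little care is passing from the stable factors of ${\cal O}_x$ for nearby $\sigma$ to its Jordan--H\"older factors at $\sigma_{(\beta_0,\omega_0)}$, which is where the boundedness of $\{{\cal O}_x\}$ and \cite[Lem.~9.2]{Br:3} enter.
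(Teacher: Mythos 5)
Your proof is correct and follows essentially the same route as the paper's: both realise $\sigma_{(\beta_0,\omega_0)}$ as a limit of geometric stability conditions via the local section ${\frak s}$ over a component $B_0'$ of $B'$, with the key step being the sign computation $\Ima Z_{\sigma}(E_1)>0>\Ima Z_{\sigma}(E_2)$ for a would-be destabilising pair of ${\cal O}_x$ (the paper carries this out inline, whereas you delegate it to Proposition~\ref{prop:limit}, which is the same argument). Your extra remarks --- verifying the hypothesis on $S'$ by shrinking $B$, and noting that $\sigma_{(\beta_0,\omega_0)}\notin U(X)$ when $\pi$ is not an isomorphism --- are correct fillings-in of points the paper leaves implicit.
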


\begin{proof}
We first note that $\sigma_{(\beta_0,\omega_0)}$ satisfies the support 
property,
since the Bogomolov inequality holds.
In particular, we have a wall and chamber structure.
There is a neighborhood of $(\beta_0,\omega_0)$
in $\NS(X)_{\Bbb R} \times \Amp(X)_{\Bbb R}$
and a continuous map ${\frak s}:V \to \Stab(X)$
such that $Z_{{\frak s}(\beta, \omega)}=Z_{(\beta,\omega)}$
and ${\frak s}(\beta_0,\omega_0)=\sigma_{(\beta_0,\omega_0)}$.
We show that ${\frak s}(\beta, \omega)$ is geometric
for $(\beta,\omega) \in B_0'$.
We set $\sigma:={\frak s}(\beta, \omega)$ and
$\sigma_0:={\frak s}(\beta_0, \omega_0)$.
We may assume that
$$
\sup_{0 \ne E \in {\bf D}(X)}
|\phi_{\sigma}^\pm(E)-\phi_{\sigma_0}^\pm(E)|<\frac{1}{8}
$$
for $0 \leq s \leq 1$.
We set ${\cal A}:={\cal P}_{\sigma_0}((\tfrac{1}{2},\tfrac{3}{2}])
(\subset {\bf D}(X))$.
If ${\cal O}_x$ is not $\sigma$-semi-stable,
then we have $A,B \in {\cal A}$ 
and an exact sequence 
$$
0 \to A \to {\cal O}_x \to B \to 0
$$
in ${\cal A}$
such that $\phi_{\sigma}(A) \geq \phi_{\sigma}({\cal O}_x)$.
We may assume that $A$ is $\sigma$-stable.

$\phi_{\sigma_0}(A)=\phi_{\sigma_0}(B)$.
Thus we have an exact sequence in
${\cal A}_{(\beta_0,\omega_0)}$.
$$
0 \to A \to {\cal O}_x \to B \to 0. 
$$
By our assumption, we get
$\mathrm{Im} Z_{\sigma}(E_1)>0> \mathrm{Im} 
Z_{\sigma}(B)$, which implies
$\phi_{\sigma}(A)<1<\phi_{\sigma}(B)$.
Therefore ${\cal O}_x$ is $\sigma$-stable.
In particular ${\frak s}(\beta,\omega)=\sigma_{(\beta,\omega)}$
for $(\beta,\omega) \in B_0'$.
\end{proof}

\end{NB}

\subsection{A complement on the wall crossing in \cite{MYY:2011:1}}

Let $P_{\gamma,H}$ be a family of stability condition in \eqref{eq:Plane}.
We shall study the wall crossing in \cite{MYY:2011:1} by using the description
of stability conditions in section \ref{sect:MYY}.
For simplicity,
we assume that $X$ is a K3 surface.
Similar claims also hold for the case of an Enriques surface. 
Let $U$ be the open subset of $P_{\gamma,H}$ such that
$(\beta,\omega) \in U$ if and only if 
$Z_{(\beta,\omega)}(u) \not \in {\Bbb R}_{\leq 0}$
for any $u \in \Delta(X)$ with $\rk u>0$.
\begin{NB}
Let $U$ be the open subset of
$\NS(X)_{\Bbb R} \times \Amp(X)_{\Bbb R}$
such that $(\beta,\omega) \in U$ if and only if 
$Z_{(\beta,\omega)}(u) \not \in {\Bbb R}_{\leq 0}$
for any $u \in \Delta(X)$ with $\rk u>0$.
\begin{NB2}
For $u=e^\beta(r+D+a \varrho_X)$,
$Z_{(\beta,\omega)}(u)=
r(\omega^2)/2-a$.
\end{NB2}
\end{NB}
For a Mukai vector
$v=r+\xi+a \varrho_X$ ($r \in {\Bbb Z}_{>0}, \xi \in \NS(X), a \in {\Bbb Q}$),
we set $\delta:=\frac{\xi}{r}$. Then
$v=r e^\delta-\frac{\langle v^2 \rangle}{2r}\varrho_X$.
 
As in \cite{movable}, 
we set
\begin{equation}\label{eq:def-xi}
\begin{split}
\xi(\beta,\omega)
:=& \xi(\beta,\omega,1)/r\\
=& 
e^\gamma \left(
\frac{(\omega^2)-((\beta-\delta)^2)}{2}+
\frac{\langle v^2 \rangle}{2r^2} \right)
\omega\\
&+e^\gamma ((\beta-\delta) \cdot \omega)(\beta-\delta)
+((\beta-\delta) \cdot \omega)
\left(e^\delta+\frac{\langle v^2 \rangle}{2r^2}\varrho_X \right)
\in C^+(v)
\end{split}
\end{equation}
for $(\beta,\omega) \in \NS(X)_{\Bbb R} \times P^+(X)_{\Bbb R}$,
where $P^+(v)$ is the positive cone
of $v^\perp$, and
$C^+(v):=P^+(v)/{\Bbb R}_{>0}$. 
We have
$$
\xi(\beta,\omega) =
\mathrm{Im}\frac{e^{\beta+\sqrt{-1}\omega}}
{Z_{(\beta,\omega)}(v)} \in C^+(v).
$$
For $v_1 \in H^*(X,{\Bbb Q})_{\alg}$,
$Z_{(\beta,\omega)}(v_1) \in {\Bbb R}Z_{(\beta,\omega)}(v)$
if and only if $\xi(\beta,\omega) \in v_1^\perp$.
For the open set $U$,
semi-stability is constant on the fiber of $\xi$ \cite[Cor. 3.6]{Y:wall}.
We shall slightly generalize the result to a point of the boundary
of $U$.
Let $(\beta,t_0 H)$ be a point of $\partial U$ such that
$\beta \in \NS(X)_{\Bbb Q}$ and
$Z_{(\beta,t_0 H)}(u)\ne 0$ for all $u \in \Delta(X)$ with 
$\rk u>0$.

\begin{NB}
\begin{lem}[{cf. \cite[Lem. 13.3]{Br:3}}]
The stability condition
$\sigma_{(\beta,t' H)}=({\cal A}_{(\beta,t' H)},Z_{(\beta,t' H)})$
defined in \cite{MYY:2011:1} 
is in the boundary if $t'$ is general.
\end{lem}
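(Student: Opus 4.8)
The plan is to run, at the boundary point $(\beta,t'H)$, the same argument used above (and in \cite[Lem.~13.3]{Br:3}); write $\sigma_0:=\sigma_{(\beta,t'H)}=({\cal A}_{(\beta,t'H)},Z_{(\beta,t'H)})$ for the perverse-coherent-sheaf stability condition of \cite{MYY:2011:1}. First, $\sigma_0$ satisfies the support property, since the Bogomolov inequality holds for $\beta$-twisted semistable perverse coherent sheaves; hence $\sigma_0$ lies in a region of $\Stab(X)$ with a locally finite wall-and-chamber structure and, by deformation of stability conditions (\cite[Prop.~8.3]{Br:3}, cf.\ \eqref{eq:section}), there is a neighbourhood $B$ of $(\beta,t'H)$ in $\NS(X)_{\Bbb R}\times P^+(X)_{\Bbb R}$ and a continuous section ${\frak s}\colon B\to\Stab(X)$ with ${\frak s}(\beta,t'H)=\sigma_0$ and $Z_{{\frak s}(\gamma,\omega)}=Z_{(\gamma,\omega)}$. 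Since $\sigma_0$ is the perverse condition, each ${\cal O}_x$ is $\sigma_0$-semistable of phase $1$, so, after shrinking $B$ as in the proof of Proposition~\ref{prop:limit}, the top-phase stable factors $E$ of ${\cal O}_x$ (their Mukai vectors forming the finite set $W'$ attached to ${\frak s}(B)$) satisfy $\phi_{\sigma_0}(E)=1$, i.e.\ $Z_{\sigma_0}(E)\in{\Bbb R}_{<0}$; thus the hypothesis of the Lemma preceding Proposition~\ref{prop:limit} is met.

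Next I would apply that Lemma and Proposition~\ref{prop:limit}. The Lemma gives $B':=\{(\gamma,\omega)\in B\mid\omega\in\Amp(X),\ \Ima Z_{(\gamma,\omega)}(w)>0\ (w\in W')\}\neq\emptyset$ with $(\beta,t'H)\in\overline{B'}$; fix a connected component $B_0'$ of $B'$ with $(\beta,t'H)\in\overline{B_0'}$. Proposition~\ref{prop:limit} then shows ${\cal O}_x$ is ${\frak s}(\gamma,\omega)$-stable for all $(\gamma,\omega)\in B_0'$, so ${\frak s}(B_0')\subset U(X)$, and by continuity of ${\frak s}$ we get $\sigma_0={\frak s}(\beta,t'H)\in\overline{{\frak s}(B_0')}\subset\overline{U(X)}$. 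Finally, $\sigma_0\notin U(X)$ in the case of interest: when $H=\pi^*(\text{ample})$ is nef and big but not ample, for $x$ on the exceptional locus of $\pi$ the object ${\cal O}_x$ is strictly $\sigma_0$-semistable (its stable factors being irreducible objects of ${\frak C}$) and hence not $\sigma_0$-stable. Therefore $\sigma_0\in\partial U(X)$; the genericity of $t'$ together with the hypothesis $Z_{(\beta,t'H)}(u)\neq 0$ for $u\in\Delta(X)$ with $\rk u>0$ is used to guarantee that $\sigma_0$ is a well-defined stability condition (no spherical class in $u^\perp$) and lies at a regular point of $\partial U(X)$.

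The main obstacle is arranging that some connected component $B_0'$ of $B'$ has $(\beta,t'H)$ in its closure and that $\sigma_0$ is reached in $\overline{U(X)}$ along it; this is precisely part~(1) of the Lemma preceding Proposition~\ref{prop:limit} together with the content of Proposition~\ref{prop:limit}, and it hinges on the finiteness of $\Delta_B$ (Lemma~\ref{lem:Delta}), on the bounded mass of $\{{\cal O}_x\}$ (\cite[Lem.~9.2]{Br:3}), and on the explicit perturbation $(\gamma,\omega)=(\beta-xt'H,\ yt'H+\eta)$ with $\eta\in H^\perp$ small and $x,y\in{\Bbb R}_{>0}$, which keeps $\omega$ ample while making $\Ima Z_{(\gamma,\omega)}(w)>0$ for every $w\in W'$. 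A further point one must check, though it is subsumed in the construction of the section ${\frak s}$ (and, for the value $(\beta,t'H)$ itself, in Proposition~\ref{prop:limit2} / Corollary~\ref{cor:limit}), is that the abstract lift ${\frak s}(\beta,t'H)$ is the explicit condition $\sigma_{(\beta,t'H)}$ of \cite{MYY:2011:1} and not merely some lift with the same central charge.
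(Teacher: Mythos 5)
Your proof is correct in substance and reaches the statement, but it is packaged differently from the paper's own argument. The paper proves this lemma directly: it constructs the local section ${\frak s}$ through $\sigma_{(\beta,t'H)}$, cuts out the open set $V'$ by the explicit condition $\Ima Z_\sigma(G)>0$ for the $(-2)$-vectors $G$ with $v(G)=e^\beta(r+D+a\varrho_X)$, $r,a>0$, $D\in H^\perp$ (exactly the classes responsible for $(\beta,t'H)\in\partial U$), and then shows by hand that a destabilizing sequence $0\to E_1\to{\cal O}_x\to E_2\to 0$ in ${\cal A}_{(\beta,t'H)}$ forces $H^0(E_2)=0$ and $H^{-1}(E_2)^{\vee\vee}$ to be an extension of such bundles $G$, so that on $V'$ one gets $\Ima Z(E_1)>0>\Ima Z(E_2)$ and hence no destabilizer. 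You instead invoke the abstract machinery of Section \ref{sect:MYY} (the unnamed Lemma on $B'$ together with Proposition \ref{prop:limit}), with the finite set $W'$ of Mukai vectors of actual top-phase stable factors playing the role of the $(-2)$-classes $G$. The two routes are equivalent in effect --- the proof of Proposition \ref{prop:limit} and of the preceding Lemma carries out the same cohomological analysis --- and your version has the advantage of reusing stated results rather than repeating the computation; the paper's version has the advantage of identifying the destabilizing classes concretely. Your closing remark that ${\frak s}(\beta,t'H)=\sigma_{(\beta,t'H)}$ needs checking is not really an issue, since ${\frak s}$ is by construction the local section through the given stability condition.

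One point to correct: your explanation of why $\sigma_0\notin U(X)$ (that $H$ is nef and big but not ample, so ${\cal O}_x$ is strictly semistable for $x$ on the exceptional locus of $\pi$) is not the operative phenomenon for this lemma. Here $(\beta,t'H)$ lies on $\partial U$ because some $u\in\Delta(X)$ with $\rk u>0$ has $Z_{(\beta,t'H)}(u)\in{\Bbb R}_{<0}$, i.e.\ there is a rigid $\beta$-twisted stable locally free sheaf $G$ of positive rank and phase $1$; the evaluation map $G\otimes\Hom(G,{\cal O}_x)\to{\cal O}_x$ then shows that \emph{every} ${\cal O}_x$ is strictly $\sigma_0$-semistable, independently of whether $H$ is ample. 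This is the boundary type $(A^\pm)$ of \cite[Thm.~12.1]{Br:3}, not the contraction type $(C_k)$. The paper's proof leaves this direction implicit as well, so this does not affect the validity of your argument for $\sigma_0\in\overline{U(X)}$, but the stated reason should be replaced.
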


\begin{proof}
We first note that $\sigma_{(\beta,t' H)}$ satisfies the support 
property,
since the Bogomolov inequality holds.
In particular, we have a wall and chamber structure.
There is a neighborhood of $(\beta,t' H)$
in $\NS(X)_{\Bbb R} \times \Amp(X)_{\Bbb R}$
and a continuous map ${\frak s}:V \to \Stab(X)$
such that $Z_{{\frak s}(\gamma, \omega)}=Z_{(\gamma,\omega)}$
and ${\frak s}(\beta,t' H)=\sigma_{(\beta,t' H)}$.
Let $V'$ be an open subset of $V$ such that
$\sigma \in {\frak s}(V')$ if and only if 
$\mathrm{Im} Z_\sigma (G)>0$, i.e.,
$d_\gamma(G)>0$ for all
$(-2)$-vectors $G$ with $v(G)=e^\beta(r+D+a \varrho_X)$,
$r,a >0$, $D \in H^\perp$.
Let $\sigma_s$ $(0 \leq s \leq 1$) be a path connecting
$\sigma_1:=\sigma \in {\frak s}(V')$ and $\sigma_0:=\sigma_{(\beta,t' H)}$
such that $\sigma_s \in U'$ for $s>0$.
We may assume that
$$
\sup_{0 \ne E \in {\bf D}(X)}
|\phi_{\sigma_s}^\pm(E)-\phi_{\sigma_0}^\pm(E)|<\frac{1}{8}
$$
for $0 \leq s \leq 1$.
We set ${\cal A}:={\cal P}_{\sigma_0}((\tfrac{1}{2},\tfrac{3}{2}])
(\subset {\bf D}(X))$.
If $k_x$ is not $\sigma_s$-semi-stable for $s>0$,
then we have $E_1,E_2 \in {\cal A}$ 
and an exact sequence 
$$
0 \to E_1 \to k_x \to E_2 \to 0
$$
in ${\cal A}$
such that $\phi_{\sigma_s}(E_1) \geq \phi_{\sigma_s}(k_x)$ and
$\phi_{\sigma_0}(E_1)=\phi_{\sigma_0}(E_2)$.
Thus we have an exact sequence in
${\cal A}_{(\beta,t' H)}$.
$$
0 \to E_1 \to k_x \to E_2 \to 0. 
$$
Then $H^{-1}(E_1)=0$ and
we have an exact sequence
$$
0 \to H^{-1}(E_2) \to H^0(E_1) \to k_x \to H^0(E_2) \to 0.
$$
If $H^0(E_2) \cong k_x$, then
$H^{-1}(E_2) \to H^0(E_1)$ is an isomorphism, which is a contradiction.
Hence $H^0(E_2)=0$.
Then $H^{-1}(E_2)^{\vee \vee}$ is generated by
stable locally free sheaves $G$ with 
$v(G)=e^\beta(r+D+a \varrho_X)$,
$r,a >0$.
By our assumption, we get
$\mathrm{Im} Z_{\sigma_s}(E_1)>0> \mathrm{Im} 
Z_{\sigma_s}(E_2)$ $(s>0)$, which implies
$\phi_{\sigma_s}(E_1)<1<\phi_{\sigma_s}(E_2)$ $(s>0)$.
Therefore $k_x$ is $\sigma_s$-stable for
$s>0$ and $x \in X$.
In particular ${\frak s}(\gamma,\omega)=\sigma_{(\gamma,\omega)}$
for $(\gamma,\omega) \in V'$.
\end{proof}
\end{NB}

By Proposition \ref{prop:limit},
$\sigma_{(\beta,t_0 H)}$-semi-stability
is equivalent to $\sigma_{(\beta+sH,tH)}$-semi-stability
for $(\beta+sH,tH) \in \xi^{-1}(\xi(\beta,t_0 H)) \cap U$
and $s<0$
(\cite[Cor. 3.6]{Y:wall}).

We next consider a point $(\beta,t_0 H) \in \partial U$ such that
$Z_{(\beta,t_0 H)}(E)=0$ for a $\beta$-twisted stable
object $E$.
Let ${\frak S}$ be the set of $\beta$-twisted 
semi-stable objects $E$ with $Z_{(\beta,t_0 H)}(E)=0$, and let
${\frak E}=\{G_1,...,G_n \}$ be the set of $\beta$-twisted stable objects
$G_i \in {\frak S}$.

\begin{lem}\label{lem:G_i}
There is a positive number $\epsilon$ such that 
$G_i$ are $\sigma_{(\beta+sH,tH)}$-stable
for all $0>s \geq -\epsilon$ and $t_- \leq t_0 \leq t_+$.
Moreover $M_{(\beta+sH,tH)}(v(G_i))^{ss}=\{G_i \}$.
\end{lem}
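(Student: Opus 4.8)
The plan is to reduce the lemma to the numerical rigidity of the objects $G_i$ together with the already-established identification, for $0>s\gg-1$, of $\sigma_{(\beta+sH,tH)}$ with the stability condition attached to the category ${\frak C}$ of perverse coherent sheaves, under which $\beta$-twisted Gieseker stability becomes Bridgeland stability. \emph{First}, from $Z_{(\beta,t_0H)}(G_i)=\langle e^{\beta+\sqrt{-1}t_0H},v(G_i)\rangle=0$ the class $v(G_i)$ is orthogonal to the positive definite $2$-plane spanned by $\Rea e^{\beta+\sqrt{-1}t_0H}$ and $\Ima e^{\beta+\sqrt{-1}t_0H}$; the orthogonal complement of that plane in $v(K(X))_{\Bbb R}$ is negative definite, hence $\langle v(G_i)^2\rangle<0$, and since $G_i$ is $\beta$-twisted stable the Bogomolov inequality forces $\langle v(G_i)^2\rangle=-\epsilon_i\in\{-1,-2\}$, with $v(G_i)$ primitive (neither $-1$ nor $-2$ is $k^2m$ for $k\geq 2$). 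By Lemma \ref{lem:rigid} (used after the second step, once $\sigma$-stability of $G_i$ is known; or directly by Riemann--Roch, using $\Hom(G_i,G_i(K_X))=0$ for $\rk G_i$ odd and $\cong k$ for $\rk G_i$ even) one gets $\Ext^1(G_i,G_i)=0$, so $G_i$ is rigid and unobstructed and $[G_i]$ is a reduced isolated point of any moduli space of semistable objects of Mukai vector $v(G_i)$.

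\emph{Second}, $(\beta,t_0H)$ lies on $\partial U$ only because $Z_{(\beta,t_0H)}(G_i)=0$, so $Z_{(\beta,t_0H)}(u)\notin{\Bbb R}_{\leq0}$ for every $u\in\Delta(X)$ with $\rk u>0$; since $\Ima Z_{(\beta+sH,tH)}(G_i)>0$ for $s<0$ (computed below), continuity and Lemma \ref{lem:Delta} provide $\epsilon>0$ such that $Z_{(\beta+sH,tH)}(u)\notin{\Bbb R}_{\leq0}$ for all $u\in\Delta(X)$ with $\rk u>0$, all $0>s\geq-\epsilon$, and all $t$ in the prescribed range. Then $\sigma_{(\beta+sH,tH)}$ is a genuine stability condition, and by Corollary \ref{cor:limit} (cf.\ Proposition \ref{prop:limit3}) it is the stability condition attached to ${\frak C}$ in Subsection \ref{subsect:perverse}; by the correspondence of \cite[Cor.\ 3.2.10]{MYY:2011:1}, for such $(s,t)$ the $\sigma_{(\beta+sH,tH)}$-(semi)stability of an object of ${\frak C}$ of non-negative rank coincides with its $(\beta+sH)$-twisted Gieseker (semi)stability, and after shrinking $\epsilon$ the object $G_i$ is itself $(\beta+sH)$-twisted stable, twisted stability being open.

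\emph{Third}, writing $v(G_i)=e^\beta(\rk G_i+D_i+a_i\varrho_X)$ with $D_i\in H^\perp$, the Mukai pairing gives $Z_{(\beta+sH,tH)}(G_i)=-a_i-\tfrac{\rk G_i}{2}(s^2-t^2)(H^2)-\sqrt{-1}\,(\rk G_i)\,st\,(H^2)$, so $\Ima Z_{(\beta+sH,tH)}(G_i)=-(\rk G_i)\,st\,(H^2)>0$ for $\rk G_i>0$, whence $Z_{(\beta+sH,tH)}(G_i)\in{\Bbb H}$ and $G_i\in{\cal T}_{(\beta+sH,tH)}\subset{\cal A}_{(\beta+sH,tH)}$ with $0<\phi_{(\beta+sH,tH)}(G_i)<1$ (and if $\rk G_i=0$ then $G_i$ is a torsion object of ${\frak C}$, hence directly in ${\cal T}_{(\beta+sH,tH)}$). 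By the second step $G_i$ is $\sigma_{(\beta+sH,tH)}$-semistable; a strictly semistable $G_i$ would admit a proper $\sigma_{(\beta+sH,tH)}$-stable subobject of the same phase, which (using $\pH^{-1}(G_i)=0$) is a proper subobject of $G_i$ in ${\frak C}$ with $(\beta+sH)$-twisted reduced Hilbert polynomial $\geq$ that of $G_i$, contradicting $(\beta+sH)$-twisted stability; hence $G_i$ is $\sigma_{(\beta+sH,tH)}$-stable. Finally, any $\sigma_{(\beta+sH,tH)}$-semistable $E$ with $v(E)=v(G_i)$ is $(\beta+sH)$-twisted semistable in ${\frak C}$ by the second step, and since $v(G_i)$ is primitive of minimal self-intersection $-\epsilon_i$ every such object is stable and rigid and any two are isomorphic (cf.\ \cite{Y:Enriques}); so $M_H^{\beta+sH}(v(G_i))$ is the single reduced point $[G_i]$ and $E\cong G_i$, giving $M_{(\beta+sH,tH)}(v(G_i))^{ss}=\{G_i\}$.

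The crux is the third step: converting the numerical rigidity $\langle v(G_i)^2\rangle=-\epsilon_i$ into the geometric statements that $G_i$ is genuinely $\sigma_{(\beta+sH,tH)}$-stable and that it is the only $\sigma_{(\beta+sH,tH)}$-semistable object of Mukai vector $v(G_i)$; this has to be done by transporting the comparison of Jordan--H\"older filtrations through the torsion pair $({\cal T}_{(\beta+sH,tH)},{\cal F}_{(\beta+sH,tH)})$ of ${\frak C}$ and the Gieseker-to-Bridgeland dictionary of \cite{MYY:2011:1}. The remaining ingredients — the existence of $\epsilon$, the central-charge computation, and membership in ${\cal A}_{(\beta+sH,tH)}$ — are routine.
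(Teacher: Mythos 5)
Your strategy --- reduce everything to twisted Gieseker stability of the rigid objects $G_i$ and then quote uniqueness of the stable object with a primitive Mukai vector of negative square --- is genuinely different from the paper's, and it has a gap at what you yourself call the crux. In your second step you assert that for all $0>s\geq-\epsilon$ and all $t\in[t_-,t_+]$ the $\sigma_{(\beta+sH,tH)}$-(semi)stability of an object of ${\frak C}$ coincides with its $(\beta+sH)$-twisted Gieseker (semi)stability, citing \cite[Cor.~3.2.10]{MYY:2011:1}. That dictionary is only established in the Gieseker chamber (condition $(\star 1)$ of \cite{MYY:2011:1}, i.e.\ $t>f(s)$ in the notation of the proposition quoted from \cite{FM-duality}); it does not follow from Corollary \ref{cor:limit} or Proposition \ref{prop:limit3}, which only identify the deformed stability condition with the tilted-heart construction $({\cal A}_{(\beta+sH,tH)},Z_{(\beta+sH,tH)})$ --- a tilt of ${\frak C}$, in which a destabilizing subobject of $G_i$ of equal phase need not lie in ${\frak C}$ at all. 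Since $t_-$ may lie far below the large-volume bound, invoking the dictionary here begs the question: one must check that the whole region lies in the Gieseker chamber for $v(G_i)$ (this does hold, but only because $d_{\beta+sH}(v(G_i))=-\rk(G_i)\,s$ is smaller than every other positive degree, so the only candidates $v_1$ for $0<d_{\beta+sH}(v_1)<d_{\beta+sH}(v(G_i))$ have $d_\beta(v_1)=0$ and make the inequality in $(\star 1)$ vacuous), and you do not carry out this verification. The same omission undermines your third step, both for the stability of $G_i$ and for the uniqueness of the semistable object.

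The paper avoids the dictionary entirely: $G_i$ is $\sigma_{(\beta,t_-H)}$-stable of phase $1$ (since $Z_{(\beta,t_-H)}(G_i)\in{\Bbb R}_{<0}$) and $G_i[1]$ is $\sigma_{(\beta,t_+H)}$-stable, so openness of Bridgeland stability supplies $\epsilon$ and stability propagates to $t\in[t_-,t_+]$; uniqueness is then a two-line Hom argument: for any $\sigma_{(\beta+sH,tH)}$-semistable $G$ with $v(G)=v(G_i)$, $\chi(G,G_i)=-\langle v(G_i)^2\rangle>0$ yields a nonzero morphism $G\to G_i$ or $G_i(K_X)\to G$, which by equality of phases and of Mukai vectors must be an isomorphism. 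Your first step (negativity of $\langle v(G_i)^2\rangle$, rigidity, primitivity) is correct and agrees with Lemma \ref{lem:rigid}, but the rest needs either the wall verification above or the paper's direct route.
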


\begin{proof}
$G_i \in {\cal A}_{(\beta,t_- H)}$ are $\sigma_{(\beta,t_- H)}$-stable
and 
$G_i \in {\cal A}_{(\beta,t_+ H)}[-1]$ are $\sigma_{(\beta,t_+ H)}$-stable.
Hence 
there is a positive number $\epsilon$ such that 
$G_i$ are $\sigma_{(\beta+sH,t_\pm H)}$-stable
for all $0>s \geq -\epsilon$.
If $t_- \leq t_0 \leq t_+$, then
$G_i$ are $\sigma_{(\beta+sH,tH)}$-stable for 
all $0<s \leq \epsilon$.
Assume that $G$ is a $\sigma_{(\beta+sH,tH)}$-semi-stable 
object with $v(G)=v(G_i)$.
Since $\chi(G,G_i)>0$, there is a morphism 
$\psi_1:G \to G_i$ or a morphism $\psi_2:G_i(K_X) \to G$.
Since the phase are the same,
$\psi_1$ is injective and $\psi_2$ is surjective.
Since $v(G)=v(G_i)$, $\psi_1,\psi_2$ are isomorphisms.
Therefore our claim holds.
\end{proof}

\begin{NB}
We set $v(G_i)=e^\beta(r_i+d_i H+D_i+a_i \varrho_X)$.
The equation of walls are
$$
(x+a)^2+y^2=(p+a)^2-q.
$$
If $q:=\frac{\langle v(G_i)^2 \rangle-(D_i^2)}{(H^2)r_i^2} <0$,
then
all walls pass $(x,y)=(d_i/r_i,\sqrt{-q})$.
Thus the structure of walls is quite different from
the case of $\langle v^2 \rangle \geq 0$.
\end{NB}

\begin{defn}[{\cite[Defn. 4.2.1]{MYY:2011:1}}]
We take $t_+>t_0>t_-$ such that $t_+-t_-$ is sufficiently small.
\begin{enumerate}
\item[(1)]
$E \in {\cal A}_{(\beta,t_- H)}$ is 
$\sigma_{(\beta,t_0 H)}$-semi-stable, if
$\phi_{(\beta,t_0 H)}(E_1) \leq \phi_{(\beta,t_0 H)}(E)$
for any proper subobject $E_1 \ne 0$ of $E$ with 
$Z_{(\beta,t_0 H)}(E_1) \ne 0$.
If 
$\phi_{(\beta,t_0 H)}(E_1) < \phi_{(\beta,t_0 H)}(E)$
for any proper subobject $E_1 \ne 0$ of $E$ with 
$Z_{(\beta,t_0 H)}(E_1) \ne 0$, then
$E$ is $\sigma_{(\beta,t_0 H)}$-stable.
\item[(2)]
Let ${\cal M}_{(\beta,t_0 H)}(v)$ (resp. 
${\cal M}_{(\beta,t_0 H)}(v)^s$) be the moduli stack
of $\sigma_{(\beta,t_0 H)}$-semi-stable objects (resp. 
$\sigma_{(\beta,t_0 H)}$-stable objects) $E$ with
$v(E)=v$. 
\end{enumerate}
\end{defn}

\begin{rem}
If there is a homomorphism 
$\psi: E \to G_i$ for a
$\sigma_{(\beta,t_0 H)}$-semi-stable object $E$, then $\psi$ is surjective and 
$\phi_{(\beta,t_0 H)}(\ker \psi)=\phi_{(\beta,t_0 H)}(E)$.
Hence $\Hom(E,G_i)=0$ for a $\sigma_{(\beta,t_0 H)}$-stable object $E$.
\end{rem}

In order to relate $\sigma_{(\beta,t_0 H)}$-semi-stabilty with
Bridgeland semi-stability, we first prove the following.
\begin{lem}\label{lem:wall-stable}
Assume that $Z_{(\beta,t_0 H)}(v) \in {\Bbb R}_{> 0}e^{\pi \sqrt{-1} \phi}$,
$0<\phi<1$.
Then
\begin{equation}
{\cal M}_{(\beta,t_+ H)}(v)^s \cap {\cal M}_{(\beta,t_- H)}(v)^s 
={\cal M}_{(\beta,t_0 H)}(v)^s.
\end{equation}
\end{lem}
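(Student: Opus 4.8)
The plan is to deduce the set equality from a comparison of the three stability functions along the segment $t\in(t_-,t_+)$: the idea is that $\sigma_{(\beta,t_0H)}$-stability is exactly the ``limit'' of $\sigma_{(\beta,tH)}$-stability that disregards the finitely many wall objects $G_1,\dots,G_n$. First I would record two preliminaries. Writing $v(G_i)=e^\beta(r_i+d_iH+D_i+a_i\varrho_X)$ with $D_i\in H^\perp$, the equation $Z_{(\beta,t_0H)}(G_i)=0$ forces $d_i=0$ and $a_i=\tfrac{r_i(H^2)}2t_0^2$, while $G_i\in{\cal A}_{(\beta,t_-H)}$ (Lemma~\ref{lem:G_i}) forces $r_i>0$; thus $Z_{(\beta,tH)}(G_i)=\tfrac{r_i(H^2)}2(t^2-t_0^2)$ is real, negative for $t<t_0$ and positive for $t>t_0$. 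Together with Lemma~\ref{lem:G_i} (in whose proof it is noted that $G_i$ is $\sigma_{(\beta,t_-H)}$-stable and $G_i[1]$ is $\sigma_{(\beta,t_+H)}$-stable) this shows that $G_i$ and $G_i[1]$ are simple objects of phase $1$ in ${\cal A}_{(\beta,t_-H)}$ and ${\cal A}_{(\beta,t_+H)}$ respectively, and that, by Bridgeland's deformation estimate together with the finiteness (as in Lemma~\ref{lem:Delta}) of the set of Mukai vectors relevant for class $v$ near $t_0$, the two hearts ${\cal A}_{(\beta,t_\pm H)}$ differ only in the ``$G_i$-direction''. In particular an object of ${\cal A}_{(\beta,t_-H)}$ of class $v$ with no nonzero subobject or quotient in ${\cal W}:=\langle{\frak E}\rangle$ also lies in ${\cal A}_{(\beta,t_+H)}$.

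The quantitative input is the identity: for a subobject $E_1\subsetneq E$ with $v(E_1)=e^\beta(r_1+d_1H+D_1+a_1\varrho_X)$ and $v=v(E)=e^\beta(r+dH+D+a\varrho_X)$,
$$
\Ima\bigl(Z_{(\beta,tH)}(E_1)\,\overline{Z_{(\beta,tH)}(v)}\bigr)=(H^2)\,t\Bigl(\tfrac{(H^2)}2(d_1r-dr_1)\,t^2-(d_1a-da_1)\Bigr),
$$
an odd cubic in $t$. Hence whenever $Z_{(\beta,t_0H)}(E_1)\neq0$ exactly one of the following holds: $\phi_{(\beta,t_0H)}(E_1)\neq\phi_{(\beta,t_0H)}(v)$; or this cubic vanishes identically, so that $\phi_{(\beta,tH)}(E_1)=\phi_{(\beta,tH)}(v)$ for all $t$; or $t_0$ is a simple zero of the cubic and $\phi_{(\beta,tH)}(E_1)-\phi_{(\beta,tH)}(v)$ changes sign at $t_0$.

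For ``$\subseteq$'', let $E$ be $\sigma_{(\beta,t_+H)}$- and $\sigma_{(\beta,t_-H)}$-stable of class $v$. Then $E\in{\cal A}_{(\beta,t_-H)}$, and since $\phi_{(\beta,t_-H)}(E)<1$ it has no phase-$1$ subobject, in particular none in ${\cal W}$. For a proper nonzero $E_1\subsetneq E$ in ${\cal A}_{(\beta,t_-H)}$ with $Z_{(\beta,t_0H)}(E_1)\neq0$, stability of $E$ on $(t_-,t_0)$ gives $\phi_{(\beta,tH)}(E_1)<\phi_{(\beta,tH)}(v)$ there, hence $\phi_{(\beta,t_0H)}(E_1)\le\phi_{(\beta,t_0H)}(v)$ in the limit; equality is impossible by the trichotomy above — the ``identically equal'' case contradicts the strict stability of $E$ at $t_-$, and the ``sign change'' case would force $E$ to be destabilised (by the class $v(E_1)$) just beyond $t_0$, contradicting stability of $E$ at $t_+$. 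So $E$ is $\sigma_{(\beta,t_0H)}$-stable. For ``$\supseteq$'', let $E$ be $\sigma_{(\beta,t_0H)}$-stable. The first step is the sub-claim that $E$ has no nonzero subobject and no nonzero quotient in ${\cal W}$: a ${\cal W}$-quotient would leave a complementary subobject $E_1$ with $Z_{(\beta,t_0H)}(E_1)=Z_{(\beta,t_0H)}(v)$, hence $\phi_{(\beta,t_0H)}(E_1)=\phi_{(\beta,t_0H)}(v)$, against strict stability; for subobjects one argues dually, using the Remark preceding the statement (whence $\Hom(E,G_i)=0$) and the simplicity and rigidity of the $G_i$ from Lemma~\ref{lem:G_i}. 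Granting this, $E$ lies in both hearts ${\cal A}_{(\beta,t_\pm H)}$, and running the same cubic comparison backwards shows that a $\sigma_{(\beta,t_\pm H)}$-destabilising subobject of $E$ would either belong to ${\cal W}$ (impossible) or produce a $\sigma_{(\beta,t_0H)}$-destabilising subobject (impossible); hence $E$ is stable at both $t_+$ and $t_-$.

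I expect the genuine obstacle to be the sub-claim in the ``$\supseteq$'' direction — that a $\sigma_{(\beta,t_0H)}$-stable object has no subobject in ${\cal W}$ — since the defining inequality of $\sigma_{(\beta,t_0H)}$-stability says nothing about subobjects with vanishing central charge at $t_0$; the exclusion must be extracted from strict stability together with the $\Hom$-vanishing of the preceding Remark and the fact (Lemma~\ref{lem:G_i}) that the $G_i$ are precisely the $\sigma_{(\beta,t_-H)}$-stable objects of phase $1$ with $Z_{(\beta,t_0H)}=0$ and that $M_{(\beta+sH,tH)}(v(G_i))^{ss}=\{G_i\}$. A secondary technicality is to make precise, via Bridgeland's deformation lemma and the boundedness of the set of relevant classes, that at each step the objects under consideration lie in whichever of the hearts ${\cal A}_{(\beta,t_\pm H)}$ the argument requires.
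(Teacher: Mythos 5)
Your overall strategy coincides with the paper's: everything is reduced to the sign of $\Ima\bigl(Z_{(\beta,tH)}(F)\overline{Z_{(\beta,tH)}(v)}\bigr)$, which is $t$ times a linear function of $t^2$, and the objects of ${\frak S}$ (your ${\cal W}$), whose charge vanishes at $t_0$, are treated as the sole source of trouble. The identity you record is exactly the engine of the paper's argument. The gap is that the step you dismiss as ``a secondary technicality'' --- arranging that the destabilising short exact sequence lives in \emph{both} hearts ${\cal A}_{(\beta,t_\pm H)}$ --- is where the paper does essentially all of its work, and your argument does not close without it. Concretely, in the ``$\subseteq$'' direction the paper takes a subobject $F\subset E$ in ${\cal A}_{(\beta,t_-H)}$ with $\phi_{(\beta,t_0H)}(F)=\phi_{(\beta,t_0H)}(E)$, strips off its maximal ${\frak S}$-quotient $F\to F_2$, and replaces $F$ by $F_1=\ker(F\to F_2)$; using $\Hom(E,G)=0$ for $G\in{\frak S}$ one checks that $0\to F_1\to E\to E/F_1\to 0$ is exact in \emph{both} hearts, while $Z_{(\beta,t_0H)}(F_1)=Z_{(\beta,t_0H)}(F)$. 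The linear function of $t^2$ attached to $(F_1,E)$ is then $\le 0$ at $t_-^2$ and at $t_+^2$ (stability of $E$ at both ends) and $=0$ at the interior point $t_0^2$, hence vanishes identically, making $E$ properly semistable at $t_\pm$ --- a contradiction with no case split. Your ``sign change'' sub-case instead asserts that the class $v(E_1)$ destabilises $E$ just beyond $t_0$; but $E_1$ need not be a subobject of $E$ in ${\cal A}_{(\beta,t_+H)}$ (nor need $E/E_1$ lie there), and the object that survives the necessary modification has Mukai vector $v(E_1)-\sum_i m_i v(G_i)$, whose phase at $t_+$ is not the one your trichotomy controls. The ``$\supseteq$'' direction requires the dual modification (stripping the ${\frak S}[1]$-part off the quotient $E/F$), which you likewise do not carry out.

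As for what you single out as the genuine obstacle: it is only partly on target. What the proof actually consumes is $\Hom(E,G_i)=0$ (the Remark preceding the lemma, used to place $E$ and $E/F_1$ in ${\cal A}_{(\beta,t_+H)}$) and $\Hom(G_i[1],E)=0$, which is free because $G_i$ and $E$ both lie in the heart ${\cal A}_{(\beta,t_-H)}$, so $\Hom(G_i,E[-1])=0$. The residual issue --- an ${\frak S}$-subobject of $E$ inside ${\cal A}_{(\beta,t_-H)}$, which would destabilise $E$ at $t_-$ yet is invisible to the definition of $\sigma_{(\beta,t_0H)}$-stability because its charge vanishes at $t_0$ --- is real, but neither your sketch nor the displayed argument resolves it from first principles; it rests on the structure results of \cite[\S 4.2]{MYY:2011:1} for the tilt relating ${\cal A}_{(\beta,t_-H)}$ and ${\cal A}_{(\beta,t_+H)}$, not on the rigidity or $\Hom$-vanishing statements you propose to use.
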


\begin{proof}

\begin{NB}
Assume that $Z_{(\beta,tH)}(v)=0$.
Let $E$ be an object of ${\bf D}(X)$ with $v(E)=v$.
If $E \in {\cal A}_{(\beta,t_- H)}$, then
$E \in {\frak S}$, and
if $E \in {\cal A}_{(\beta,t_+ H)}$, then
 $E \in {\frak S}[1]$ 
by \cite[Lem. 4.2.3]{MYY:2011:1}.
Hence ${\cal M}_{(\beta,t H)}(v)^s \not = \emptyset$
if and only if there is a stable sheaf $G_i$ with $v(G_i)=v$,
and  ${\cal M}_{(\beta,t H)}(v)^s=\{G_i \}$.
We have ${\cal M}_{(\beta,t_- H)}(v)=\{ E \in {\frak S} \mid v(E)=v \}$
and ${\cal M}_{(\beta,t_+ H)}(v)=\{ E \in {\frak S} \mid v(E)=v \}$.
Since $Z_{(\beta,t_\pm H)}(E) \in {\Bbb R}$,
$E$ is $\sigma_{(\beta,t_\pm H)}$-stable if and only if $E$ is 
$\beta$-twisted stable.
Thus we also have ${\cal M}_{(\beta,t_\pm H)}(v)^s=\{G_i \}$.
Hence the claim holds. 
\end{NB}

Assume that $E \in 
{\cal M}_{(\beta,t_+ H)}(v)^s \cap {\cal M}_{(\beta,t_- H)}(v)^s$. 
If $E$ is not $\sigma_{(\beta,t_0 H)}$-stable, then
there is a subobject $F$ in ${\cal A}_{(\beta,t_- H)}$
such that 
$\phi_{(\beta,t_0 H)}(F)=\phi_{(\beta,t_0 H)}(E)$.
We take an exact sequence in ${\cal A}_{(\beta,t_- H)}$
\begin{equation}
0 \to F_1 \to F \to F_2 \to 0
\end{equation} 
such that $\pH^{-1}(F_1)=\pH^{-1}(F) \in 
{\cal F}_{(\beta,t_- H)} (\subset {\cal F}_{(\beta,t_+ H)})$,
$\pH^0(F_1) \in {\cal T}_{(\beta,t_+ H)}$ and
$F_2 \in {\frak S} (\subset {\cal T}_{(\beta,t_- H)})$.
Then $E/F_1 \in {\cal A}_{(\beta,t_- H)}$.
Since $\Hom(E,G)=0$ for $G \in {\frak S}$,
we have $\Hom(E/F_1,G)=0$ for $G \in {\frak S}$,
which implies $E/F_1 \in  {\cal A}_{(\beta,t_+ H)}$
by \cite[Lem. 4.2.2]{MYY:2011:1}.
Since $F_1 \in {\cal A}_{(\beta,t_+ H)}$,
we have an exact sequence in ${\cal A}_{(\beta,t_\pm H)}$:
\begin{equation}
0 \to F_1 \to E \to E/F_1 \to 0.
\end{equation}
By the stability of $E$,
we have $\phi_{(\beta,t_\pm H)}(F_1) \leq 
\phi_{(\beta,t_\pm H)}(E)$.
Since $\phi_{(\beta,t_0 H)}(F_1)=\phi_{(\beta,t_0 H)}(F)=
\phi_{(\beta,t_0 H)}(E)$,
we have $\phi_{(\beta,t_\pm H)}(F_1)= 
\phi_{(\beta,t_\pm H)}(E)$, which means
$E$ is properly $\sigma_{(\beta,t_\pm H)}$-semi-stable.
Therefore $E \in {\cal M}_{(\beta,t_0 H)}(v)^s$.

Conversely for $E \in {\cal M}_{(\beta,t_0 H)}(v)^s$, assume that 
$E \not \in 
{\cal M}_{(\beta,t_+ H)}(v)^s \cap {\cal M}_{(\beta,t_- H)}(v)^s$.
If $E \not \in {\cal M}_{(\beta,t_+ H)}(v)^s$, then
there is a subobject $F$ of $E$ in ${\cal A}_{(\beta,t_+ H)}$
such that $\phi_{(\beta,t_+ H)}(F) \geq \phi_{(\beta,t_+ H)}(E)$.
Then $\phi_{(\beta,t_0 H)}(F) \geq \phi_{(\beta,t_0 H)}(E)$.
For $E/F \in {\cal A}_{(\beta,t_+ H)}$, we have an exact sequence
in ${\cal A}_{(\beta,t_+ H)}$
\begin{equation}
0 \to F' \to E/F \to F_2 \to 0
\end{equation}
such that 
$F' \in {\frak S}[1]$, $\pH^{-1}(F_2) \in {\cal F}_{(\beta,t_- H)}$
and 
$\pH^0(E/F)=\pH^0(F_2) \in {\cal T}_{(\beta,t_+ H)}
(\subset {\cal T}_{(\beta,t_- H)})$.
Then $E \to F_2$ is surjective in ${\cal A}_{(\beta,t_+ H)}$.
We set $F_1=\ker(E \to F_2) \in {\cal A}_{(\beta,t_+ H)}$.
By the construction of $F_2$, 
we have $F_2 \in {\cal A}_{(\beta,t_- H)}$.
Since $\Hom(G[1],E)=0$ for $G \in {\frak S}$,
we have $\Hom(G[1],F_1)=0$.  
Hence $F_1 \in {\cal A}_{(\beta,t_- H)}$.
Thus we have an exact sequence
$$
0 \to F_1 \to E \to F_2 \to 0
$$
in ${\cal A}_{(\beta,t_\pm H)}$.
Then $\phi_{(\beta,t_0 H)}(F_1)=\phi_{(\beta,t_0 H)}(F) \geq
\phi_{(\beta,t_0 H)}(E)$, which shows that
$E$ is not $\sigma_{(\beta,t_0 H)}$-stable.

If $E \not \in {\cal M}_{(\beta,t_- H)}(v)^s$, then
there is a subobject $F$ of $E$ in ${\cal A}_{(\beta,t_- H)}$
such that $\phi_{(\beta,t_- H)}(F) \geq \phi_{(\beta,t_- H)}(E)$.
Then we have 
$\phi_{(\beta,t_0 H)}(F) \geq \phi_{(\beta,t_0 H)}(E)$, which shows
that
$E$ is not $\sigma_{(\beta,t_0 H)}$-stable.
Therefore we get our claim.
\end{proof}

\begin{NB}
For a small positive number $\epsilon$,
$\sigma_{(\beta+sH,tH)}$ ($-\epsilon<s<0, |t-t_0|<\epsilon$)
is a stability condition such that
all irreducible objects $A$ of ${\frak C}$ are stable and
$\sigma_{(\beta,t_0H)}$ is its limit. 
\end{NB}

\begin{prop}\label{prop:exceptional-case}
${\cal M}_{(\beta',\omega')}(v)={\cal M}_{(\beta,t_0 H)}(v)$
for $(\beta',\omega') \in \xi^{-1}(\xi(\beta,t_0 H))$ with
$(\beta',\omega')=(\beta+sH,tH)$, $-\epsilon < s <0$.
\end{prop}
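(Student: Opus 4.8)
The plan is to compare, for $(\beta',\omega')$ on the arc $\xi^{-1}(\xi(\beta,t_0H))\cap\{-\epsilon<s<0\}$, the notion of $\sigma_{(\beta+sH,tH)}$-semistability directly with the degenerate notion of \cite[Defn.\ 4.2.1]{MYY:2011:1} by letting $s\to0^-$. First I would reduce to a single test point: after shrinking $\epsilon$ the arc lies in $U$ and is connected, so by \cite[Cor.\ 3.6]{Y:wall} the substack of $\sigma_{(\beta+sH,tH)}$-semistable objects of class $v$ is the same for every such point; hence it suffices to prove that $\sigma_{(\beta+sH,tH)}$-semistability of an object of class $v$ agrees with $\sigma_{(\beta,t_0H)}$-semistability for one (equivalently all) such points close to $(\beta,t_0H)$. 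We may assume $Z_{(\beta,t_0H)}(v)$ has phase in $(0,1)$, the case $Z_{(\beta,t_0H)}(v)=0$ being $M_{(\beta+sH,tH)}(v)^{ss}=\{G_i\}$ by Lemma \ref{lem:G_i} and the case of phase $1$ being handled by an obvious shift.

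Next I would record the heart identification. Choosing $t_\pm$ in \cite[Defn.\ 4.2.1]{MYY:2011:1} close enough to $t_0$, the stability conditions $\sigma_{(\beta,t_-H)}$, $\sigma_{(\beta,t_+H)}$ and $\sigma_{(\beta+sH,tH)}$ (for $|s|$ small, $(\beta+sH,tH)$ on the fiber) are pairwise within distance $<\tfrac14$ in $\Stab(X)$, so their hearts agree on objects of bounded mass; since $v$ is fixed, every $\sigma_{(\beta+sH,tH)}$-semistable $E$ with $v(E)=v$, together with all its Jordan--H\"older factors, lies in ${\cal A}_{(\beta,t_-H)}$, and conversely the subobjects tested in \cite[Defn.\ 4.2.1]{MYY:2011:1} are subobjects of $E$ in ${\cal A}_{(\beta+sH,tH)}$. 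With this, the inclusion ${\cal M}_{(\beta+sH,tH)}(v)\subseteq{\cal M}_{(\beta,t_0H)}(v)$ is immediate: if $E$ is $\sigma_{(\beta+sH,tH)}$-semistable and $F\hookrightarrow E$ in the common heart with $Z_{(\beta,t_0H)}(F)\ne0$, then $\phi_{(\beta+sH,tH)}(F)\le\phi_{(\beta+sH,tH)}(E)$ for all small $s<0$ on the fiber, and letting $s\to0^-$ (both central charges stay off $0$, so phases vary continuously) gives $\phi_{(\beta,t_0H)}(F)\le\phi_{(\beta,t_0H)}(E)$, i.e.\ $\sigma_{(\beta,t_0H)}$-semistability.

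The main step is the reverse inclusion. Suppose $E$ is $\sigma_{(\beta,t_0H)}$-semistable but not $\sigma_{(\beta+sH,tH)}$-semistable for a sequence of fiber points with $s\to0^-$; take destabilizing subobjects $F_s\hookrightarrow E$ in ${\cal A}_{(\beta+sH,tH)}={\cal A}_{(\beta,t_-H)}$ with $\phi_{(\beta+sH,tH)}(F_s)>\phi_{(\beta+sH,tH)}(E)$. By \cite[Lem.\ 9.2]{Br:3} (bounded mass of $E$, membership in the fixed heart) the classes $w=v(F_s)$ range over a finite set, so fix one. If $Z_{(\beta,t_0H)}(w)\ne0$ then $\phi_{(\beta,t_0H)}(w)\le\phi_{(\beta,t_0H)}(v)$; a strict inequality would force $\phi_{(\beta+sH,tH)}(F_s)<\phi_{(\beta+sH,tH)}(E)$ for small $|s|$, so $\phi_{(\beta,t_0H)}(w)=\phi_{(\beta,t_0H)}(v)$, i.e.\ $\xi(\beta,t_0H)\in w^\perp$; but then $\xi(\beta+sH,tH)=\xi(\beta,t_0H)\in w^\perp$ along the whole arc, whence by continuity $Z_{(\beta+sH,tH)}(w)\in{\Bbb R}_{>0}Z_{(\beta+sH,tH)}(v)$ and $\phi_{(\beta+sH,tH)}(F_s)=\phi_{(\beta+sH,tH)}(E)$ --- a contradiction. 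If $Z_{(\beta,t_0H)}(w)=0$ then by \cite[Lem.\ 4.2.3]{MYY:2011:1} $F_s$ is an iterated extension of the $G_i$, each of which satisfies $\xi(\beta,t_0H)\in G_i^\perp$ and, by Lemma \ref{lem:G_i}, is $\sigma_{(\beta+sH,tH)}$-stable and lies in ${\cal A}_{(\beta,t_-H)}$, so its phase along the arc lies in $(0,1]\cap\{\phi_{(\beta+sH,tH)}(v),\phi_{(\beta+sH,tH)}(v)+1\}=\{\phi_{(\beta+sH,tH)}(v)\}$ (using $\phi_{(\beta+sH,tH)}(v)\in(0,1)$); hence $\phi_{(\beta+sH,tH)}(F_s)=\phi_{(\beta+sH,tH)}(E)$, again a contradiction. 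So $E$ is $\sigma_{(\beta+sH,tH)}$-semistable, and since every step is numerical or a heart-membership condition the equality holds at the level of stacks.

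The step I expect to be the main obstacle is this reverse inclusion, and within it the uniformity argument: one must know a priori that only finitely many classes $w$ can occur, which rests on the Bogomolov/support bound for $\sigma_{(\beta,t_0H)}$ together with the heart constraint, and one must use the fiber condition $\xi(\beta+sH,tH)=\xi(\beta,t_0H)$ to pin down the phase of $w$ (respectively of each $G_i$) relative to $v$ all along the arc, not merely at $t_0$ --- this is exactly what rules out a ``higher-order'' destabilization appearing as $s\to0^-$. The $Z=0$ subcase additionally leans on the description of the set ${\frak S}$ and on Lemma \ref{lem:G_i}.
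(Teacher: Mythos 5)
Your route is genuinely different from the paper's, but as written it has a gap at exactly the point you flag as delicate, and one step earlier than you think: the heart comparison. You assert ${\cal A}_{(\beta+sH,tH)}={\cal A}_{(\beta,t_-H)}$ (and, in weakened form, that the two hearts agree on all objects relevant to the test). This is false in general: a $\mu$-stable sheaf $F$ with $\deg_\beta(F)=0$, $\rk F>0$ and $Z_{(\beta,t_-H)}(F)>0$ lies in ${\cal F}_{(\beta,t_-H)}$, so $F[1]\in{\cal A}_{(\beta,t_-H)}$, whereas for $s<0$ one has $\deg_{\beta+sH}(F)=-s\rk F\, t(H^2)>0$, so $F\in{\cal T}_{(\beta+sH,tH)}\subset{\cal A}_{(\beta+sH,tH)}$. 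The ``distance $<\tfrac14$'' argument only identifies the hearts on objects whose phases stay away from $0$ and $1$; the destabilizing subobjects $F_s$ and especially their quotients $E/F_s$ can have pieces of phase tending to $0$ or $1$ as $s\to 0^-$ (these are precisely the classes being contracted), so it does not apply. Without the identification, both inclusions break: in the forward direction a test subobject of Definition 4.2.1 need not be a subobject in ${\cal A}_{(\beta+sH,tH)}$, and in the reverse direction your destabilizer $F_s\hookrightarrow E$ in ${\cal A}_{(\beta+sH,tH)}$ need not be (nor obviously produce) a legitimate test subobject in ${\cal A}_{(\beta,t_-H)}$, so you cannot invoke $\sigma_{(\beta,t_0H)}$-semi-stability of $E$ to rule out $\phi_{(\beta,t_0H)}(w)>\phi_{(\beta,t_0H)}(v)$. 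Repairing this requires a modification procedure (splitting off the parts lying in ${\frak S}$, ${\frak S}[1]$, and the larger discrepancy locus above) — i.e., essentially reproving Lemma \ref{lem:wall-stable} for the pair $({\cal A}_{(\beta+sH,tH)},{\cal A}_{(\beta,t_\pm H)})$, where the discrepancy between hearts is strictly larger than for $({\cal A}_{(\beta,t_-H)},{\cal A}_{(\beta,t_+H)})$. Your finiteness appeal to bounded mass also needs care, since the family $\sigma_{(\beta+sH,tH)}$ does not extend to a point of $\Stab(X)$ at $s=0$.

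The paper avoids the limit $s\to 0^-$ altogether. It takes the Jordan--H\"{o}lder filtration of $E$ at $(\beta,t_0H)$ with factors $E_i$ of class $v_i$, and reduces the statement to ${\cal M}_{(\beta',\omega')}(v_i)^s={\cal M}_{(\beta,t_0H)}(v_i)^s$ for each $i$. For $\lambda_i\ne 0$ this is done by observing $\xi_{v_i}^{-1}(\xi_{v_i}(\beta,t_0H))=\xi^{-1}(\xi(\beta,t_0H))$, transporting the two comparison points $(\beta,t_\pm H)$ along nearby $\xi_{v_i}$-fibers to points $(\beta'_\pm,\omega'_\pm)$ adjacent to $(\beta',\omega')$ using \cite[Cor.~3.6]{Y:wall}, and then using the two-sided characterizations ${\cal M}_{(\beta,t_+H)}(v_i)^s\cap{\cal M}_{(\beta,t_-H)}(v_i)^s={\cal M}_{(\beta,t_0H)}(v_i)^s$ (Lemma \ref{lem:wall-stable}) and ${\cal M}_{(\beta'_+,\omega'_+)}(v_i)^s\cap{\cal M}_{(\beta'_-,\omega'_-)}(v_i)^s={\cal M}_{(\beta',\omega')}(v_i)^s$; the case $\lambda_i=0$ is Lemma \ref{lem:G_i}. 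All heart comparisons are thereby confined to the pair $t_\pm$, where Lemma \ref{lem:wall-stable} controls them. I recommend either adopting that reduction or supplying the missing heart-comparison lemma for your degeneration argument.
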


\begin{proof}
For $E \in {\cal M}_{(\beta,t_0 H)}(v)$,
we have a filtration 
\begin{equation}\label{eq:JHF-H}
0 \subset F_1 \subset F_2 \subset \cdots \subset F_s=E
\end{equation}
such that $E_i:=F_i/F_{i-1}$ are $\sigma_{(\beta,t_0 H)}$-stable
and $Z_{(\beta,t_0 H)}(E_i)=\lambda_i Z_{(\beta,t_0 H)}(E)$
with $0 \leq \lambda_i \leq 1$.
If $\lambda_i=0$, then $E_i \in {\frak E}$.
\begin{NB}
Since $d_\beta(E)>0$ and $d_\beta(\bullet)$ is a discrete invariant 
for $\beta \in \NS(X)_{\Bbb Q}$,
there is a filtration \eqref{eq:JHF-H} such that
$E_i:=F_i/F_{i-1}$ does not have a subobject 
$E'$ with $Z_{(\beta,tH)}(E')=\lambda Z_{(\beta,tH)}(E_i)$
and $0<\lambda<1$.
Then applying Corollary \ref{cor:irreducible},
we get a desired decomposition.
\end{NB}
We set $v_i:=v(E_i)$.
Then $\langle v_i^2 \rangle \geq -2$ and $\xi(\beta,t_0 H) \in v_i^\perp$.
\eqref{eq:JHF-H} is the Jordan-H\"{o}lder filtration of
$E$ with respect to $\sigma_{(\beta,t_0 H)}$. 
For the proof of our claim, 
it is sufficient to prove that
\begin{equation}\label{eq:wall-stability}
{\cal M}_{(\beta',\omega')}(v_i)^s={\cal M}_{(\beta,t_0 H)}(v_i)^s
\end{equation}
for $(\beta',\omega') \in \xi^{-1}(\xi(\beta,t_0 H))$ 
with $\beta'=\beta+sH$, $s<0$ and 
any decomposition $v=\sum_i v_i$ of $v$ such that
$\xi(\beta,t_0 H) \in v_i^\perp$ with $\langle v_i^2 \rangle \geq -2$.
Indeed \eqref{eq:wall-stability} means that
semi-stability and its $S$-equivalence class with respect to 
$\sigma_{(\beta',\omega')}$
is the same as those for $\sigma_{(\beta,t_0 H)}$.
We take $t_\pm$ such that $t_-<t_0<t_+$ and 
$t_+ - t_- \ll 1$. 
By Lemma \ref{lem:wall-stable}, we have
\begin{equation}
{\cal M}_{(\beta,t_+ H)}(v_i)^s \cap {\cal M}_{(\beta,t_- H)}(v_i)^s 
={\cal M}_{(\beta,t_0 H)}(v_i)^s.
\end{equation}
We first assume that $\lambda_i \ne 0$.
As in \cite[Rem. 3.6]{Y:wall}, 
we set $\xi_{v_i}(\beta',\omega'):=
\mathrm{Im}(Z_{(\beta',\omega')}(v_i)^{-1}e^{\beta'+i \omega'})$.
Since $\mathrm{Im}(Z_{(\beta,t_0 H)}(v_i)^{-1}e^{\beta+i t_0 H})
=\lambda_i^{-1}
\mathrm{Im}(Z_{(\beta,t_0 H)}(v)^{-1}e^{\beta+i t_0 H})$,
 we get
$\xi_{v_i}(\beta,t_0 H) \in {\Bbb R} \xi(\beta,t_0 H)$.
\begin{NB}
By \cite[Rem. 3.6]{Y:wall},
$Z_{(\beta',\omega')}(x) \in {\Bbb R}Z_{(\beta',\omega')}(v_i)$
if and only if 
$x \in \xi_{v_i}(\beta',\omega')^\perp$.
\end{NB}
By \cite[Rem. 3.6]{Y:wall},
\begin{NB}
Remark \ref{rem:fiber-property2},
\end{NB}
$\xi_{v_i}^{-1}(\xi_{v_i}(\beta,t_0 H))=
\xi^{-1}(\xi(\beta,t_0 H))$.
We take $(\beta'_\pm,\omega'_\pm) \in 
\xi_{v_i}^{-1}(\xi_{v_i}(\beta,t_\pm H))$
which are in a neighborhood of $(\beta',\omega')$.
Then
${\cal M}_{(\beta,t_\pm H)}(v_i)^s=
{\cal M}_{(\beta'_\pm,\omega'_\pm)}(v_i)^s$.
Since 
$$
{\cal M}_{(\beta'_-,\omega'_-)}(v_i)^s 
\cap {\cal M}_{(\beta'_+,\omega'_+)}(v_i)^s=
{\cal M}_{(\beta',\omega')}(v_i)^s,
$$
\begin{NB}
$E_i$ is $\sigma_{(\beta',\omega')}$-stable
with $Z_{(\beta',\omega')}(E_i) \in {\Bbb R}_{>0}Z_{(\beta',\omega')}(v)$.
\end{NB}
we get \eqref{eq:wall-stability}.
If $E_i=G_j$, then
it is also $\sigma_{(\beta',\omega')}$-stable
with $Z_{(\beta',\omega')}(E_i) \in {\Bbb R}_{>0}Z_{(\beta',\omega')}(v)$
by Lemma \ref{lem:G_i}.
\begin{NB}
We don't need anymore.
Indeed we also have ${\cal M}_{(\beta'_\pm,\omega'_\pm)}(v_i)
={\cal M}_{(\beta,t_\pm H)}(v_i)=
\{ G_j \}$
and
${\cal M}_{(\beta',\omega')}(v_i)
={\cal M}_{(\beta,tH)}(v_i)=
\{ G_j \}$.
\end{NB}
Hence $E$ is $\sigma_{(\beta',\omega')}$-semi-stable with a
Jordan-H\"{o}lder filtration \eqref{eq:JHF-H}.

\begin{NB}
Let $v=\sum_i v_i$ be a decomposition of $v$ such that
$$
v_i=e^\beta(r_i+d_i H+D_i +a_i \varrho_X),\;d_i=r_i \frac{d}{r},
a_i=r_i \frac{a}{r}.
$$
We note that $v_i=e^\beta D_i$, if $r_i=0$.
We first prove that
\begin{equation}\label{eq:wall-stability}
{\cal M}_{(\gamma,\omega)}(v_i)^s={\cal M}_{(\beta,tH)}(v_i)^s=
{\cal M}_H^{\beta-\frac{1}{2}K_X}(v_i)^s
\end{equation}
for $(\gamma,\omega) \in \xi^{-1}(\xi(\beta,tH))$.

As in Remark \ref{rem:fiber-property2},
we set $\xi_{v_i}(\gamma,\omega):=
\mathrm{Im}(Z_{(\gamma,\omega)}(v_i)^{-1}e^{\gamma+\sqrt{-1}\omega})$.
By our assumption, we see that
$\xi_{v_i}(\beta,tH) \in {\Bbb R} \xi(\beta,tH)$.
\begin{NB2}
By Remark \ref{rem:fiber-property},
$Z_{(\gamma,\omega)}(x) \in {\Bbb R}Z_{(\gamma,\omega)}(v_i)$
if and only if 
$x \in \xi_{v_i}(\gamma,\omega)^\perp$.
\end{NB2}
By Remark \ref{rem:fiber-property2},
$\xi_{v_i}^{-1}(\xi_{v_i}(\beta,tH))=
\xi^{-1}(\xi(\beta,tH))$.
We first assume that $\rk v_i \ne 0$.
By Lemma \ref{lem:stable-on-wall},
we have 
$$
{\cal M}_H^{\beta-\frac{1}{2}K_X}(v_i)^s=
{\cal M}_H^{\beta_--\frac{1}{2}K_X}(v_i)^s 
\cap {\cal M}_H^{\beta_+-\frac{1}{2}K_X}(v_i)^s.
$$
We take $(\gamma_\pm,\omega_\pm) \in 
\xi_{v_i}^{-1}(\xi_{v_i}(\beta_\pm,tH))$
which are in a neighborhood of $(\gamma,\omega)$.
Then
${\cal M}_H^{\beta_\pm-\frac{1}{2}K_X}(v_i)^s
={\cal M}_{(\beta_\pm,tH)}(v_i)^s=
{\cal M}_{(\gamma_\pm,\omega_\pm)}(v_i)^s$, where $t$ is sufficiently large.
Since 
$$
{\cal M}_{(\gamma_-,\omega_-)}(v_i)^s 
\cap {\cal M}_{(\gamma_+,\omega_+)}(v_i)^s=
{\cal M}_{(\gamma,\omega)}(v_i)^s,
$$
we have ${\cal M}_H^{\beta-\frac{1}{2}K_X}(v_i)^s=
{\cal M}_{(\gamma,\omega)}(v_i)^s$.

Let $E \in {\bf D}(X)$ be an object which is a successive 
extension of $E_i \in {\bf D}(X)$ with
$v(E_i)=v_i$.   
Then 
$E$ is $\beta$-twisted semi-stable if
and only if $E$ is $\sigma_{(\gamma,\beta)}$-semi-stable.  

If $\rk v_i=0$, then 
we also have ${\cal M}_{(\gamma_\pm,\omega_\pm)}(v_i)
={\cal M}_{(\beta_\pm,tH)}(v_i)=
\{ {\cal O}_C(l-1)^{\oplus n} \}$
where $n$ is determined by $v_i=nv({\cal O}_C(l-1))$.
Hence 
${\cal M}_{(\gamma,\omega)}(v_i)
={\cal M}_{(\beta,tH)}(v_i)=
\{ {\cal O}_C(l-1)^{\oplus n} \}$, which implies that 
\eqref{eq:wall-stability} also holds for this case.
\end{NB}
\begin{NB}
If $r_i=0$, then we also have $M_{(\beta,H)}(v_i)=\{{\cal O}_C(l-1) \}$
by Lemma \ref{lem:C-stable}. 
\end{NB}
\end{proof}

\begin{NB}
As far as $(\beta+sH,tH)$ and $(\beta',\omega')$ can be connected 
by a connected curves in $U$,
semi-stability is constant. Hence we have the following.
\begin{cor}
Assume that $\xi^{-1}(\xi(\beta,t_0 H)) \cap V$ is connected in an open subset
$V$ of $U$ containing $(\beta+sH,tH)$ in 
Proposition \ref{prop:exceptional-case}. 
Then
$(\beta',\omega') \in \xi^{-1}(\xi(\beta,t_0 H)) \cap V$ 
belongs to a wall if and only
of $(\beta,t_0 H)$ belongs to a wall.
\end{cor}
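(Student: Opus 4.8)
The plan is to upgrade the constancy of Bridgeland semistability along the fibers of $\xi$ — established in \cite[Cor. 3.6]{Y:wall} for points of $U$ and extended to the boundary point $(\beta,t_0 H)$ by Proposition \ref{prop:exceptional-case} — into a statement about wall-membership. Recall that $(\beta',\omega')\in P_{\gamma,H}$ lies on a wall for $v$ precisely when there is a Mukai vector $v_1$, not proportional to $v$, appearing in a decomposition $0\to E_1\to E\to E_2\to 0$ of a $\sigma_{(\beta',\omega')}$-semistable object $E$ with $v(E)=v$ and $\phi_{(\beta',\omega')}(E_1)=\phi_{(\beta',\omega')}(E)$; the numerical condition $\phi_{(\beta',\omega')}(v_1)=\phi_{(\beta',\omega')}(v)$ is by construction equivalent to $\xi(\beta',\omega')\in v_1^{\perp}$, hence depends only on $\xi(\beta',\omega')$, while the existence of a genuine such decomposition depends only on the moduli stack ${\cal M}_{(\beta',\omega')}(v)$ together with its $S$-equivalence data. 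The same description applies at $(\beta,t_0 H)$ using $\sigma_{(\beta,t_0 H)}$-semistability in the sense of the definition preceding Lemma \ref{lem:wall-stable}. So it suffices to show that ${\cal M}_{(\beta',\omega')}(v)$ — with its stable locus and its $S$-equivalence classes — is the same for every $(\beta',\omega')$ in the connected set $\xi^{-1}(\xi(\beta,t_0 H))\cap V$ and coincides with ${\cal M}_{(\beta,t_0 H)}(v)$.

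First I would take the basepoint $(\beta+sH,tH)\in \xi^{-1}(\xi(\beta,t_0 H))\cap V$ from Proposition \ref{prop:exceptional-case}, where $-\epsilon<s<0$: that proposition gives ${\cal M}_{(\beta+sH,tH)}(v)={\cal M}_{(\beta,t_0 H)}(v)$, and its proof, which runs through the Jordan-H\"{o}lder filtration \eqref{eq:JHF-H} and the equalities ${\cal M}_{(\beta+sH,tH)}(v_i)^s={\cal M}_{(\beta,t_0 H)}(v_i)^s$ for the stable factors, also matches the stable loci and the $S$-equivalence classes. Next, for an arbitrary $(\beta',\omega')$ in the connected set $\xi^{-1}(\xi(\beta,t_0 H))\cap V\subset U$, I would join it to $(\beta+sH,tH)$ by a path inside that set and apply \cite[Cor. 3.6]{Y:wall} — which asserts that $\sigma$-semistability of objects of Mukai vector $v$ and their $S$-equivalence classes are constant along $\xi$-fibers inside $U$ — to obtain ${\cal M}_{(\beta',\omega')}(v)={\cal M}_{(\beta+sH,tH)}(v)$. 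Combining the two identifications yields ${\cal M}_{(\beta',\omega')}(v)={\cal M}_{(\beta,t_0 H)}(v)$, with stable loci and $S$-equivalence classes, for every point of $\xi^{-1}(\xi(\beta,t_0 H))\cap V$.

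It then follows that ${\cal M}_{(\beta',\omega')}(v)$ contains a properly semistable object with a subobject of non-proportional Mukai vector if and only if ${\cal M}_{(\beta,t_0 H)}(v)$ does, and by the first paragraph this is equivalent to $(\beta',\omega')$, resp. $(\beta,t_0 H)$, lying on a wall for $v$; this proves the corollary, and in fact shows that any wall for $v$ meeting $V$ either contains the whole fiber $\xi^{-1}(\xi(\beta,t_0 H))\cap V$ or is disjoint from it. The step I expect to be the main obstacle is the bookkeeping at the boundary point: one must verify that ``wall'' here refers to a wall for $v$ rather than to $\partial U$ itself — where the factors $G_i\in{\frak E}$ have vanishing central charge — and that Proposition \ref{prop:exceptional-case} transports not merely semistability but the full $S$-equivalence data, the behaviour of the $G_i$ being governed by Lemma \ref{lem:G_i}.
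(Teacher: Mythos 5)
Your argument is correct and is essentially the paper's own (very terse) justification: the paper derives the corollary in one sentence from the constancy of semistability along connected subsets of the $\xi$-fiber inside $U$ (via \cite[Cor. 3.6]{Y:wall}) together with Proposition \ref{prop:exceptional-case} matching the boundary point $(\beta,t_0H)$ to the interior basepoint $(\beta+sH,tH)$. Your write-up simply makes explicit the reduction of wall-membership to the existence of a properly semistable object with non-proportional stable factors, which is exactly the characterization the paper uses in the corollary immediately following.
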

\end{NB}

\begin{cor}
If $(\beta,t_0,H)$ belongs to any wall with respect to , that is,
there is $E_1 \oplus E_2 \in {\cal M}_{(\beta,t_0 H)}(v)$
such that $v(E_1) \not \in {\Bbb Q}v(E_2)$,
then $(\beta,t_+ H)$ and $(\beta,t_- H)$ belong to the same 
chamber, where $t_+>t_0>t_-$ and $t_+-t_- \ll 1$. 
\end{cor}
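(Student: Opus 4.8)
The plan is to deduce the statement from Proposition~\ref{prop:exceptional-case} together with the finiteness of the wall structure for $v$ near $(\beta,t_0H)$. First I would fix a small compact neighbourhood $B$ of $(\beta,t_0H)$ in $P_{\gamma,H}$ and invoke the boundedness underlying Lemma~\ref{lem:Delta} (in the form of \cite[Lem.~11.1]{Br:3}): only finitely many Mukai vectors $v_1$ with $0<\phi_{(\beta',\omega')}(v_1)\le\phi_{(\beta',\omega')}(v)$ can be realised as the class of a sub-object of a $\sigma_{(\beta',\omega')}$-semistable object of Mukai vector $v$ for $(\beta',\omega')\in B$, so only finitely many walls for $v$ meet $B$. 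Shrinking $t_+-t_-$, I may therefore assume that the vertical segment $\{(\beta,tH):t_-\le t\le t_+\}$ meets no wall for $v$ except possibly at $(\beta,t_0H)$, and it remains to show that the wall through $(\beta,t_0H)$, if any, does not separate the two ends of that segment.

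Next I would analyse the given decomposition. Refine $v=v(E_1)+v(E_2)$ into the Jordan--H\"older factors $E_i$ of the polystable object, as in the proof of Proposition~\ref{prop:exceptional-case}: each $E_i$ is $\sigma_{(\beta,t_0H)}$-stable with $Z_{(\beta,t_0H)}(v_i)=\lambda_iZ_{(\beta,t_0H)}(v)$, $0\le\lambda_i\le1$, $\sum_iv_i=v$, and some $v_i\notin{\Bbb Q}v$ by hypothesis. If some such $v_i$ has $\lambda_i=0$, then $v_i=e^\gamma(r_i+D_i+a_i\varrho_X)$ with $D_i\in H^\perp$, and a direct computation gives $\Ima Z_{(\gamma+sH,tH)}(v_i)=-t\,r_i\,s\,(H^2)$ and $\Rea Z_{(\beta,tH)}(v_i)=r_i(H^2)(t^2-t_0^2)/2$; hence $\{Z_{(\gamma+sH,tH)}(v_i)=0\}=\{(\beta,t_0H)\}$. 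Since a factor of $Z_{(\beta,t_0H)}$-value $0$ contributes to the wall structure only along its own vanishing locus, the corresponding wall component is this single point and does not separate the segment, and I am done. So assume from now on that every $v_i\notin{\Bbb Q}v$ has $\lambda_i>0$; then, locally around $(\beta,t_0H)$, the wall through $(\beta,t_0H)$ for $v$ coincides with the fibre $\Gamma:=\xi^{-1}(\xi(\beta,t_0H))$.

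By Proposition~\ref{prop:exceptional-case}, for points $(\beta',\omega')\in\Gamma$ of the form $(\gamma+sH,tH)$ with $s<0$ close to $0$ one has ${\cal M}_{(\beta',\omega')}(v)={\cal M}_{(\beta,t_0H)}(v)$, so every such point also lies on the wall, and the $s<0$ part of $\Gamma$ is a genuine wall for $v$ lying inside $U$. I would then use the hypothesis $(\beta,t_0H)\in\partial U$: for the $\beta$-twisted stable object $E$ with $Z_{(\beta,t_0H)}(E)=0$ one again has $v(E)=e^\gamma(r_0+D_0+a_0\varrho_X)$ with $D_0\in H^\perp$ and $r_0>0$, so $\{Z_{(\gamma+sH,tH)}(v(E))\in{\Bbb R}_{\le0}\}$ is the ray $\{s=0,\ t\le t_0\}$ and (shrinking $B$ if necessary, using Lemma~\ref{lem:Delta} once more) $U$ meets the line $\{s=0\}$ near $(\beta,t_0H)$ only along $\{s=0,\ t>t_0\}$. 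Since a wall for $v$ cannot leave $U$, the real-analytic arc $\Gamma$ cannot pass through $(\beta,t_0H)$ into the half $\{s>0\}$: it must be tangent at $(\beta,t_0H)$ to the boundary ray $\{s=0,\ t<t_0\}$ and stay in $\{s\le0\}$. Hence $\Gamma$ does not separate the vertical segment, and combining with the first step, $(\beta,t_+H)$ and $(\beta,t_-H)$ lie in the same chamber for $v$.

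The hard part will be this last geometric step: showing that near the boundary point $(\beta,t_0H)$ the wall-fibre $\Gamma$ is tangent to the boundary component $\{s=0,\ t<t_0\}$ of $\partial U$ rather than crossing $\{s=0\}$ transversally — in other words, that $(\beta,t_0H)$ is a ``corner'' of the chamber decomposition for $v$. One could try to make this rigorous either by the position argument sketched above, or by combining Lemma~\ref{lem:wall-stable} with Proposition~\ref{prop:exceptional-case} to compare $\sigma_{(\beta,t_+H)}$- and $\sigma_{(\beta,t_-H)}$-semistability of objects of Mukai vector $v$ directly; everything else is routine bookkeeping with the finiteness of walls.
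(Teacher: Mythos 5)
There is a genuine gap, and it sits exactly where you flag it yourself: the ``hard part'' is not only unproven but false as stated. Two concrete problems. First, in your treatment of a Jordan--H\"{o}lder factor $E_i$ with $\lambda_i=0$ (i.e.\ $v_i=v(G_j)\in{\frak E}$), you identify the wall contributed by $v_i$ with the vanishing locus $\{Z_{(\gamma+sH,tH)}(v_i)=0\}$, which is indeed the single point $(\beta,t_0H)$. But the wall defined by $v_i$ is the locus where $Z(v_i)$ and $Z(v)$ become ${\Bbb R}$-proportional (together with the existence of a destabilizing subobject), i.e.\ $\{\Ima(\overline{Z_{(\beta',\omega')}(v)}\,Z_{(\beta',\omega')}(v_i))=0\}$, and this is a curve through $(\beta,t_0H)$ which near that point coincides with the fibre $\Gamma=\xi^{-1}(\xi(\beta,t_0H))$ --- exactly as for the factors with $\lambda_i>0$. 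Indeed the proof of Proposition \ref{prop:exceptional-case} states that for $E_i=G_j$ one has $Z_{(\beta',\omega')}(E_i)\in{\Bbb R}_{>0}Z_{(\beta',\omega')}(v)$ along $\Gamma\cap\{s<0\}$ (Lemma \ref{lem:G_i}), so the polystable object with a $G_j$-factor stays properly semistable along that whole branch; its wall is a curve, not a point. Second, the tangency claim for $\Gamma$ at $(\beta,t_0H)$ is false. Writing $Z=Z_{(\beta+sH,tH)}(v)$, the wall defined by $v(G_j)=e^\beta(r_j+D_j+a_j\varrho_X)$ is the zero locus of $\Ima(\overline{Z}\cdot Z(v(G_j)))$, and since $Z(v(G_j))$ vanishes at $(0,t_0)$ the gradient there is $-r_j(H^2)t_0\,(\Rea Z,\Ima Z)$; as $\Ima Z_{(\beta,t_0H)}(v)=d_\beta(v)\,t_0(H^2)\neq 0$ (this is precisely the hypothesis $0<\phi<1$ of Lemma \ref{lem:wall-stable}), the curve $\Gamma$ meets the line $s=0$ \emph{transversally} and continues into $\{s>0\}$. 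Your supporting principle ``a wall for $v$ cannot leave $U$'' is also unfounded: the stability conditions $\sigma_{(\gamma+sH,tH)}$ are defined outside $U$ as limits via perverse coherent sheaves, and Proposition \ref{prop:Gieseker}(2) explicitly describes moduli in the region $s>0$, so walls certainly persist there.

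The upshot is that the geometric strategy (show the wall stays on one side of the vertical segment) cannot work, because the numerical wall genuinely crosses $s=0$ at $(\beta,t_0H)$. The route intended by the paper is the one you mention only as an alternative at the very end: compare $\sigma_{(\beta,t_+H)}$- and $\sigma_{(\beta,t_-H)}$-semistability of objects of Mukai vector $v$ directly, using Lemma \ref{lem:wall-stable} (an object is $\sigma_{(\beta,t_0H)}$-stable iff it is stable on \emph{both} sides) together with Proposition \ref{prop:exceptional-case} and its proof (every $E\in{\cal M}_{(\beta,t_0H)}(v)$ has a Jordan--H\"{o}lder filtration whose factors with $Z_{(\beta,t_0H)}\neq 0$ remain stable of the correct phase on both sides, the factors in ${\frak E}$ being controlled by Lemma \ref{lem:G_i}); this identifies the semistable objects and $S$-equivalence classes at $t_+$ and $t_-$ without any appeal to the shape of $\Gamma$ near $\partial U$. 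That comparison is the actual content of the corollary, and it is the step your proposal leaves open.
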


Assume that $v=r+\xi+b \varrho_X \in v(K(X))$ satisfies
$$
((\xi-r\beta) \cdot H)=\min\{((c_1(F)-\rk F \beta)\cdot H) 
>0 \mid F \in K(X) \}.
$$

Then $(\beta,t_0 H)$ lies on a wall for $v$ if and only if
there is a decomposition
$v=\sum_i v_i$ such that
$Z_{(\beta,t_0 H)}(v_i) \in {\Bbb R}_{\geq 0}Z_{(\beta,t_0 H)}(v)$
and $\langle v_i^2 \rangle \geq -2$.
We set
$v:=e^\beta(r+dH+D+a \varrho_X)$ and
$v_i:=e^\beta(r_i+d_i H+D_i+a_i \varrho_X)$ $(D_i \in H^\perp)$.
Then $d_i \geq 0$ for all $i$ and 
$\sum_i d_i=d$.
Hence we may assume that $d_1=d$ and
$d_i=0$ for $i \geq 2$.
Then $Z_{(\beta,t_0 H)}(v_i)=0$ for $i \geq 2$.
By $\langle v_i^2 \rangle \geq -2$, 
$v_i$ $(i \geq 2)$ satisfy
\begin{equation}\label{eq:min}
\begin{split}
& \langle v_i^2 \rangle= -2, \;\; (i \geq 2)\\
& \langle v^2 \rangle
-2\langle v,\sum_{i \geq 2} v_i \rangle+
\langle (\sum_{i \geq 2} v_i)^2 \rangle \geq -2.
\end{split}
\end{equation}

\begin{ex}
Let $X$ be a K3 surface with $\Pic(X)={\Bbb Z}H$.
Let $U$ be an exceptional vector bundle with
$v(U)=e^\beta(r_0+\frac{1}{r_0}\varrho_X)$.
If $v=-e^\beta(r-d H+a \varrho_X)$,
then \eqref{eq:min} is 
$r/r_0+a r_0 \leq \langle v^2 \rangle/2$, where
$v_1=v-v(U)$ and $v_2=v(U)$.
In particular if $r/r_0+a r_0 > \langle v^2 \rangle/2$, then
there is no wall in $\{(\beta+sH,tH) \mid s \leq 0 \}$. 

We shall see the wall by the computation in \cite{MYY:2011:1}.
We set $t_0:=\frac{1}{r_0}\sqrt{\frac{2}{(H^2)}}$.
Then $t=t_0$ is the candidate of a unique wall on the half line
$\{(\beta+sH,tH) \mid s=0\}$.
Let $t_-,t_+$ be numbers with $t_-<t_0<t_+$. 
We take $E:=F[1] \in {\cal M}_{(\beta,t_+ H)}(v)$.
We note that $F^{\vee}$ is a stable sheaf by
\cite[Cor. 3.2.1]{MYY:2011:1}.
If $\Hom(U,E) \ne 0$, then
we have a stable sheaf
$(F')^{\vee}$ fitting in the extension
$$
0 \to U^{\vee} \to (F')^{\vee} \to F^{\vee} \to 0,
$$
which gives an exact sequence
$$
0 \to U \to F[1] \to F'[1] \to 0
$$ 
in ${\cal A}_{(\beta+sH,tH)}$ for $s<0$ (and $t \gg 0$).
The condition for the existence of $F'$ is 
$\langle v(F')^2 \rangle=\langle v^2 \rangle-2(r/r_0+a r_0)-2 \geq -2$.
Therefore $U$ defines a wall in $s<0$ if 
$r/r_0+a r_0 \leq  \langle v^2 \rangle/2$.
\end{ex}

\begin{rem}
If $\langle v,v(U) \rangle=
r/r_0+a r_0 \leq \langle v^2 \rangle/2$,
then $U[1]$ defines a wall 
in $\{(\beta+sH,tH) \mid s > 0 \}$. 
Indeed for $E \in {\cal M}_{(\beta,t_+ H)}(v)$,
we have $\Hom(E,U[1]) \ne 0$. Hence we have an exact sequence
in ${\cal A}_{(\beta,t_+ H)}$
\begin{equation}\label{eq:U-2}
0 \to E' \to E \to U[1] \to 0,
\end{equation}
where $E'$ is also $\sigma_{(\beta,t_+ H)}$-stable.
In the region $\{(\beta+sH,tH) \mid s > 0 \}$,
\eqref{eq:U-2} gives a wall for $v$.
\end{rem}

 
\begin{NB}
For a Mukai vector $v=e^\beta(-r+dH+D-a \varrho_X)$ ($r,d>0$),
we shall study wall and chamber in the subspace 
$\{(\beta+sH,tH) \mid t>0, -\epsilon<s \leq 0 \}$,
where $\epsilon>0$ is sufficiently small.
We assume that $s=0$ is not a wall.
For this purpose, 
we shall study ${\cal M}_{(\beta,tH)}(v)$.
Let $E=\oplus_i E_i \in {\cal M}_{(\beta,tH)}(v)$  
be an object such that $E_i$ are $\sigma_{(\beta,tH)}$-stable objects
with $Z_{(\beta,tH)}(E_i) \in {\Bbb R}_{\geq 0}Z_{(\beta,tH)}(v)$.
We set
$v(E_i):=e^\beta(-r_i+d_i H+D_i-a_i \varrho_X)$.
Then $d_i \geq 0$, $\sum_i d_i=d$,
$\langle v(E_i)^2 \rangle \geq -2$ and
$(d r_i-d_i r)t^2(H^2)/2+(d_i a-d a_i)=0$.
If $d r_i-d_i r=0$ for all $i$, then
$d_i a-d a_i=0$ for all $i$.
Since $s=0$ is not a wall,
$v(E_i) \in {\Bbb Q}v$ for all $i$. 

If $d r_i-d_i r \ne 0$, there is $j$ such that 
$d r_j-d_j r>0$.
Since $d_i \geq 0$ for all $i$,
$0 \leq d_j \leq d$.
If $d_j=0$, then $d>0$ implies 
$r_j>0$.
By $d_j=0$ and $\Ima Z_{(\beta+sH,tH)}(E_j)>0$
for $s<0$, 
$E_j$ is a $\beta$-stable sheaf with $Z_{(\beta,tH)}(E_j)=0$.
In particular we have $-r_j=\rk E_j>0$, which is a contradiction.
Therefore $d_j>0$.
If $d_j a-d a_j \geq 0$, then $E_j$ does not define a wall.

Thus we need the following condition:

If $d r_1-d_1 r>0$, then $a_1 \leq d_1 a/d$.

If $d >N$, then this condition holds.
\end{NB}

\end{document}